\newcommand\closure{\operatorname{cl}}
\newcommand\interior{\operatorname{int}}
\newcommand\intersection{\cap}
\newcommand\grad{\nabla}
\newcommand\detourmet{\delta}
\newcommand\hil{d_H}
\newcommand\funk{d_F}
\newcommand\rfunk{d_R}
\newcommand\thomp{d_T}
\newcommand\after{\circ}
\newcommand\union{\cup}
\newcommand\R{\mathbb{R}}
\newcommand\C{\mathbb{C}}
\newcommand{\Rplus}{\mathbb{R}_+}
\newcommand\N{\mathbb{N}}
\newcommand\coll{\operatorname{Coll}}
\newcommand\isom{\operatorname{Isom}}
\newcommand\aut{\operatorname{Aut}}
\newcommand\gaugebigbracket[3]{M_{#1}\big({#2},{#3}\big)}
\newcommand\gauge[3]{M_{#1}({#2},{#3})}
\newcommand\inver{*}
\newcommand\barphi{\phi}
\newcommand\relint{\operatorname{rel}\operatorname{int}}
\newcommand\dotprod[2]{\langle{#1},{#2}\rangle}
\newcommand\linspace{V}
\newcommand\eucl{\mathcal{E}}
\newcommand\bilinear{\mathcal{B}}
\newcommand\mybilin{B}
\newcommand\dee{D}
\newtheorem*{mytheorem}{Theorem \ref{thm:thompson_isometries}}
\newtheorem{theorem}{Theorem}[section]
\newtheorem{Definition}[theorem]{Definition}
\newtheorem{lemma}[theorem]{Lemma}
\newtheorem{corollary}[theorem]{Corollary}
\newtheorem{proposition}[theorem]{Proposition}
\newtheorem{property}[theorem]{Property}
\newenvironment{definition}{\begin{Definition}\rm}{\end{Definition}}
\renewcommand{\theenumi}{\roman{enumi}}
\renewcommand{\theenumi}{\roman{enumi}}
\newcommand\proj{P}
\newcommand\directproduct{\oplus}
\title{Gauge-reversing maps on cones, and Hilbert and Thompson isometries}
\author{Cormac Walsh}
\address{INRIA \& CMAP, \'{E}cole Polytechnique,\\ 
91128 Palaiseau, France}
\begin{document}

\begin{abstract}
We show that a cone admits a gauge-reversing map if and only if
it is a symmetric cone.
We use this to prove that every isometry of a Hilbert geometry
is a collineation unless the Hilbert geometry is the projective space
of a non-Lorentzian symmetric cone,
in which case the collineation group is of index two in the isometry group.
We also determine the isometry group of the Thompson geometry on a cone.
\end{abstract}

%\thanks{This work was partially supported by the ANR `Finsler'.}
\date{\today}

\maketitle

\section{Introduction}

Consider a proper open convex cone $C$ in a finite-dimensional vector space
$\linspace$. Associated to $C$, there is a natural partial order on $\linspace$
defined by $x\le_C y$ if $y-x\in\closure C$.
The \emph{gauge} on $C$ is defined by
\begin{align*}
\gauge{C}{x}{y} := \inf\{ \lambda>0 \mid x\le_C \lambda y\},
\qquad\text{for all $x,y\in C$.}
\end{align*}
Related to the gauge are the following two metrics on $C$.
\emph{Hilbert's projective metric} is defined to be
\begin{align*}
\hil(x,y) := \log\gauge{C}{x}{y}\gauge{C}{y}{x},
\qquad\text{for all $x,y\in C$.}
\end{align*}
This is actually a pseudo-metric since $\hil(x,\lambda x)=0$ for any $x\in C$
and $\lambda>0$. On the projective space of the cone it is a genuine metric.
\emph{Thompson's metric} is defined to be
\begin{align*}
\thomp(x,y) := \log\max\big(\gauge{C}{x}{y},\gauge{C}{y}{x}\big),
\qquad\text{for all $x,y\in C$.}
\end{align*}

In this paper, we study the maps between cones that preserve or reverse
the gauge, or are isometries of one of the two metrics.
Recall that a map $\phi\colon C\to C'$ between two proper open convex cones
is said to be gauge preserving if
$\gauge{C'}{\phi x}{\phi y} = \gauge{C}{x}{y}$
and gauge-reversing  if $\gauge{C'}{\phi x}{\phi y} = \gauge{C}{y}{x}$,
for all $x$ and $y$ in $C$.
Obviously, both types of map are isometries of the Hilbert and Thompson metrics.

Any linear isomorphism between $C$ and $C'$ is clearly gauge-preserving.
Noll and Sch\"affer~\cite{noll_schaffer_orders_gauge} showed that the converse
is also true: every gauge-preserving bijection between two
finite-dimensional cones is a linear isomorphism.

On a symmetric cone, that is, one that is homogeneous and self-dual,
Vinberg's \emph{$*$-map} is
gauge-reversing~\cite{kai_order_reversing}.
We show that gauge-reversing maps exist only on symmetric cones.

\begin{theorem}
\label{thm:symmetric}
Let $C$ be a proper open convex cone in a finite-dimensional real vector
space. Then, $C$ admits a gauge-reversing map if and only if $C$ is
symmetric.
\end{theorem}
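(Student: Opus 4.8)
The forward implication is the cited fact that Vinberg's $\inver$-map is gauge-reversing, so the plan is to prove the converse: starting from a gauge-reversing map $\phi\colon C\to C$, I will produce the two defining features of a symmetric cone, self-duality and homogeneity.

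First I would record that $\phi$ is a bijection. It is injective because it is an isometry of $\thomp$: if $\phi x=\phi y$ then $\thomp(x,y)=\thomp(\phi x,\phi y)=0$, whence $x=y$. It is continuous, and since $C$ is open in $\linspace$, invariance of domain makes $\phi$ an open map; meanwhile completeness of $(C,\thomp)$ forces $\phi(C)$ to be closed in the topology of $C$ (which coincides with the one induced by $\thomp$). As $C$ is connected, $\phi(C)=C$. With bijectivity in hand, the composite $\phi\after\phi$ is gauge-preserving, since $\gauge{C}{\phi\phi x}{\phi\phi y}=\gauge{C}{\phi y}{\phi x}=\gauge{C}{x}{y}$, so by the theorem of Noll and Sch\"affer it is a linear automorphism of $C$. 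The same computation shows that the gauge-preserving and gauge-reversing bijections form a group in which the gauge-preserving ones---exactly the linear automorphisms $\aut(C)$---have index two, with $\phi$ representing the nontrivial coset.

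The tempting next move is to differentiate $\gauge{C}{\phi x}{\phi y}=\gauge{C}{y}{x}$ and read a linear duality off $D\phi$; this succeeds when the Funk Finsler structure is smooth, but it breaks down precisely in the delicate cases---for polyhedral cones the Thompson and Funk unit balls have corners---so instead I would exploit the observation that $\phi$ is exactly an isometry from the Funk metric $\funk=\log\gauge{C}{\cdot}{\cdot}$ to the reverse-Funk metric $\rfunk$. An isometry extends to a homeomorphism of horofunction compactifications and preserves the detour metric $\detourmet$ on the Busemann points, so $\phi$ induces a $\detourmet$-isometry between the Funk and reverse-Funk boundaries. The core of the argument is then to identify these boundaries: one shows that the Busemann points of the reverse-Funk metric are governed by the facial structure of $C$ and those of the Funk metric by that of the dual cone $C^{*}$, with the detour metric recording the gauge read along faces. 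The induced $\detourmet$-isometry becomes an incidence- and gauge-preserving correspondence between the boundaries of $C^{*}$ and of $C$, and I would upgrade it to a linear isomorphism $T\colon\linspace^{*}\to\linspace$ with $TC^{*}=C$; symmetrising $T$ and using positivity of the pairing then produces an inner product for which $C=C^{*}$, giving self-duality. The same correspondence, combined with the index-two group structure, is what I would use to force $\aut(C)$ to act transitively, and hence homogeneity.

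The hard part will be this last upgrade. Bijectivity and the index-two group structure are routine, and identifying the horofunction boundaries is technical but expected; the genuine difficulty is to show that a correspondence preserving only combinatorial incidence and the detour metric must be realised by a linear map, and that this single map delivers both the self-dual inner product and the transitivity of $\aut(C)$. Since self-duality by itself does not imply homogeneity---there exist self-dual cones that are not symmetric---transitivity has to be extracted from the full isometry between the Funk and reverse-Funk geometries rather than from self-duality alone; I expect the symmetry and positivity of $T$, and the passage from transitivity on the boundary to transitivity on $C$, to demand the most care.
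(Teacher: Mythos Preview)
Your outline has the right large-scale shape for self-duality---use the Funk/reverse-Funk boundary correspondence to match extremal rays of $C^*$ with those of $C$---but there are two genuine gaps.

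For homogeneity, the index-two structure by itself gives nothing: a single gauge-reversing $\phi$ contributes exactly one nontrivial coset of $\aut(C)$, and composing $\phi$ with linear automorphisms you already have does not enlarge $\aut(C)$. The paper does not use the boundary here at all. It applies Rademacher's theorem to the Thompson-Lipschitz map $\phi$ to obtain, at almost every $x\in C$, a gauge-reversing involution $\phi_x$ with $\phi_x(x)=x$ and $D_x\phi_x=-\id$ (Lemma~\ref{lem:almost_all}). For two points collinear with an extreme point of $\proj(\closure C)$, the straight segment joining them is a \emph{unique} Hilbert geodesic; the involution based near the midpoint must map this segment to itself with reversed orientation, swapping the two points. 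Composing with a second involution based near the target yields a linear automorphism carrying one point near the other, and a compactness/limit argument in $\coll(\proj(C))$ closes the gap (Lemmas~\ref{lem:exist_collineation} and~\ref{lem:homogeneous}). Your proposed ``transitivity on the boundary implies transitivity on $C$'' is not an argument, and runs in the wrong direction: the boundary correspondence produces no automorphisms moving interior points.

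For self-duality, you are missing exactly the ingredients that make the bilinear form work. The paper first replaces $\phi$ by the normalised involution fixing a basepoint $b$ with derivative $-\id$ there; only with $\phi(b)=b$ and $\phi^2=\id$ does the form $\mybilin(y,x):=\gauge{}{x}{\phi(y)}$ (initially for extremal generators $x$) come out symmetric, via the limit computation in Lemma~\ref{lem:extreme_extreme}. Positive-definiteness does not follow from the domain-of-positivity property alone (indefinite forms have domains of positivity too); the paper uses the unique projective fixed point of $\phi$ (Lemma~\ref{lem:unique_projective_fixed_point}) to prove $\mybilin(x,x)=\gauge{}{x}{b}^2>0$ on extremal generators (Lemma~\ref{lem:dot_extreme}), and then a separate variational lemma (Lemma~\ref{lem:dop_boundary}) to rule out indefinite forms. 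Your ``symmetrise $T$ and use positivity of the pairing'' skips precisely the steps that require the involution normalisation and the fixed-point analysis.
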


Noll and Sch\"affer~\cite[p.~377]{noll_schaffer_orders_gauge} raise the
question of whether the existence of a gauge-reversing bijection between
two cones requires that they be linearly isomorphic.
We answer this question in the affirmative for finite-dimensional cones.

\begin{corollary}
\label{cor:two_cones}
If there is a gauge-reversing bijection between two finite-dimensional cones,
then the cones are linearly isomorphic.
\end{corollary}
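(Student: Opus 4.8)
The plan is to reduce the statement to the gauge-preserving case already settled by Noll and Sch\"affer, by using Theorem~\ref{thm:symmetric} to convert the given gauge-reversing bijection into a gauge-preserving one. Let $\phi\colon C\to C'$ be the gauge-reversing bijection. The first step is to record two elementary facts about how the class of gauge-reversing maps behaves under inversion and composition. Writing $\psi=\phi^{-1}$ and setting $u=\phi x$, $v=\phi y$, one checks directly from $\gauge{C'}{\phi x}{\phi y}=\gauge{C}{y}{x}$ that $\gauge{C}{\psi u}{\psi v}=\gauge{C'}{v}{u}$, so $\psi\colon C'\to C$ is again gauge-reversing; and the composite of two gauge-reversing maps is gauge-preserving, since reversing the order twice restores it.

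Using the inverse observation, I would apply Theorem~\ref{thm:symmetric} twice. The map $\phi$ exhibits $C$ as the domain of a gauge-reversing map, so $C$ is symmetric; likewise $\psi=\phi^{-1}$ exhibits $C'$ as such a domain, so $C'$ is symmetric as well. On a symmetric cone Vinberg's $*$-map provides a gauge-reversing self-map, so let $\sigma\colon C'\to C'$ denote the $*$-map of $C'$, which is gauge-reversing by~\cite{kai_order_reversing}.

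The final step is to compose: $\sigma\after\phi\colon C\to C'$ is a composite of two gauge-reversing maps, hence gauge-preserving, and it is a bijection since both factors are. By the theorem of Noll and Sch\"affer, every gauge-preserving bijection between finite-dimensional cones is a linear isomorphism, so $\sigma\after\phi$ is a linear isomorphism carrying $C$ onto $C'$, which is exactly the desired conclusion. The genuine content is entirely carried by Theorem~\ref{thm:symmetric}; the only points needing care are the bookkeeping that $\phi^{-1}$ is gauge-reversing, so that the theorem applies to $C'$ as well as to $C$, and the sign check that composing two order-reversing maps yields an order-preserving one, after which the result follows immediately.
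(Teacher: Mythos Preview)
Your argument has a genuine gap at the step where you invoke Theorem~\ref{thm:symmetric}. That theorem concerns a gauge-reversing \emph{self-map} $\phi\colon C\to C$; it does not say anything about a gauge-reversing map $\phi\colon C\to C'$ between two possibly distinct cones. Having such a $\phi$ does not, by itself, produce a gauge-reversing self-map of either $C$ or $C'$: the obvious candidates $\phi^{-1}\circ\phi$ and $\phi\circ\phi^{-1}$ are identities, hence gauge-preserving rather than gauge-reversing. So the sentence ``The map $\phi$ exhibits $C$ as the domain of a gauge-reversing map, so $C$ is symmetric'' does not follow from Theorem~\ref{thm:symmetric} as stated.

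The paper handles exactly this point with a product-cone trick: one forms $C:=C_1\directproduct C_2$ and defines $\Phi(x_1,x_2):=\big(\phi^{-1}(x_2),\phi(x_1)\big)$, checks via the product gauge formula that $\Phi$ is a gauge-reversing self-map of $C$, and then applies Theorem~\ref{thm:symmetric} to conclude that $C$, and hence each factor $C_i$, is symmetric. After that, your final step is the same as the paper's: compose $\phi$ with the Vinberg $*$-map on the target to obtain a gauge-preserving bijection, and invoke Theorem~\ref{thm:linear}.
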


\newcommand\pos{\operatorname{Pos}}

Let $D:=\proj(C)$ be the projective space of the cone $C$,
and consider the Hilbert metric on $D$.
The isometry group of this metric was first studied by
de la Harpe~\cite{delaharpe}. He showed that if the closure of $D$ is strictly
convex, then every isometry is a collineation, that is, arises as the
projective action of a linear map on the cone.
He also noted that there exist isometries that are not collineations
in the case of the positive cone (where $D$ is an open simplex) and in the case
of the cone of positive-definite symmetric matrices.
Both of these cones are symmetric.
De la Harpe asked, in general, when do the isometry group $\isom(D)$
and the collineation $\coll(D)$ group coincide?

Moln\'ar~\cite{molnar_thompson_isometries} determined the isometry group
of the Hilbert metric in the case of another symmetric
cone, the cone $\pos(\C,n)$; $n\ge 3$ of positive definite Hermitian matrices
with complex entries. One may interpret his results as saying that each
isometry is the projective action of either a gauge-preserving
or a gauge-reversing map on the cone.
Moln\'ar and Nagy~\cite{molnar_2d} extended this result to the case where
$n=2$, in which case, of course, the Hilbert geometry is isometric to
$3$-dimensional hyperbolic space.

These results were generalised to all finite-dimensional symmetric cones
by Bosch\'e~\cite{bosche}, using Jordan algebra techniques.

In a different direction, it was shown in~\cite{lemmens_walsh_polyhedral}
that for polyhedral Hilbert geometries every isometry is a collineation,
unless the domain $D$ is a simplex, in which case the collineation group has
index two in the isometry group.

We show the following.

\begin{theorem}
\label{thm:hilbert_isometries}
Let $(D,\hil)$ be a finite-dimensional Hilbert geometry, and let $C$ be a
cone over $D$.
Every isometry of $(D,\hil)$ arises as the projective action of either
a gauge-preserving or gauge-reversing map of $C$.
\end{theorem}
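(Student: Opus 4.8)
The plan is to establish the nontrivial implication---that every Hilbert isometry comes from a gauge map---through the horofunction boundary, which is the natural place where the distinction between the cone $C$ and its dual becomes visible. First I would record the decomposition of the Hilbert metric into Funk pieces. Writing $\funk(x,y):=\log\gauge{C}{x}{y}$ for the Funk metric and $\rfunk(x,y):=\funk(y,x)$ for the reverse Funk metric, one has $\hil(x,y)=\funk(x,y)+\rfunk(x,y)$ on the cone. In this language a gauge-preserving map is exactly an isometry of $\funk$, whereas a gauge-reversing map is one that interchanges $\funk$ and $\rfunk$; both are therefore $\hil$-isometries, and the content of the theorem is that there are no others. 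So the goal reduces to showing that an arbitrary isometry of $(D,\hil)$ is induced by a bijection of $C$ that either preserves $\funk$ or swaps $\funk$ with $\rfunk$.

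Next I would take the given isometry $g$ of $(D,\hil)$ and pass to the horofunction compactification of $(D,\hil)$. A standard fact is that $g$ extends to a homeomorphism of the horofunction boundary that preserves the set of Busemann points and acts by isometries for the detour metric $\detourmet$. The crucial input is the structure of this boundary for the Hilbert geometry. Because $\hil=\funk+\rfunk$, its horofunctions split as sums of a Funk and a reverse-Funk horofunction, and the Busemann points are indexed by data attached to the faces of $\closure D$ together with dual data coming from the reverse piece; each $\detourmet$-part is naturally identified with a lower-dimensional Hilbert- or Thompson-type geometry associated to a face and its dual. I would compute these parts and isolate detour-metric invariants---the dimensions and internal isometry type of the parts and, most importantly, the incidence pattern by which they are glued---that distinguish the ``direct'' facial data of $\closure D$ from the ``dual'' data.

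The heart of the argument, and the step I expect to be the main obstacle, is a rigidity statement: the boundary homeomorphism induced by $g$ cannot mix the two kinds of data. Concretely, I would show that $g$ either matches the facial structure of $\closure D$ with itself throughout, or matches it with the dual structure throughout; an a priori possibility to be excluded is that $g$ behaves ``directly'' on some faces and ``dually'' on others. Ruling this out should follow from the combinatorics of how the detour-metric parts attach along their common boundaries, a pattern that any $\detourmet$-isometry must respect and that forces a single global choice. This is precisely where the asymmetry encoded by $\funk$ versus $\rfunk$ has to be pinned down using only $\hil$-invariant data, and where the bulk of the work lies.

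Finally I would convert the dichotomy back to the cone and produce the required lift $\phi\colon C\to C$ whose projective action is $g$. In the direct case the recovered boundary data shows that $\phi$ preserves $\funk$, hence the gauge, so $\phi$ is gauge-preserving (and, by the Noll--Sch\"affer theorem, a collineation). In the dual case $\phi$ interchanges $\funk$ and $\rfunk$ and is therefore gauge-reversing. Either way $g$ arises as the projective action of a gauge-preserving or gauge-reversing map, as claimed. I would remark that, combined with Theorem~\ref{thm:symmetric}, the gauge-reversing alternative can occur only when $C$ is symmetric, which is what separates the generic collineation case from the exceptional one and yields the statement about $\isom(D)$ and $\coll(D)$.
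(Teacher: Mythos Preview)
Your overall strategy---pass to the horofunction boundary, exploit the Funk/reverse-Funk decomposition of parts, and set up a dichotomy---is exactly the paper's. But two steps in your outline are real gaps, and in one of them the paper's route is genuinely different from what you sketch.

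\textbf{The rigidity step.} You frame it as proving a global consistency statement (``directly on all faces or dually on all faces'') from the incidence combinatorics of $\detourmet$-parts. The paper does \emph{not} prove this directly. Its dichotomy is asymmetric: either every maximal pure-Funk part goes to a pure-Funk part, or \emph{some} non-singleton maximal pure-Funk part $U$ goes to a pure-reverse-Funk part. In the second case the paper constructs the gauge-reversing lift starting from this single $U$ (Lemma~\ref{lem:def_gaugereversing}) and only then deduces, as a \emph{consequence}, that all Funk horofunctions push forward to reverse-Funk ones. The propagation argument is not pure $\detourmet$-combinatorics: it passes through \emph{unique geodesic lines} (Lemmas~\ref{lem:reverse_Funk_unique_geodesic} and~\ref{lem:segment_in_boundary}) to force extreme points joined to the base extreme point of $U$ to lie in a common boundary face, and then uses an ``opposite types'' lemma (Lemma~\ref{lem:opposite_types}) about parts whose closures meet. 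Your sketch does not supply anything that plays the role of these unique-geodesic lemmas, and without them the consistency claim is unsupported.

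\textbf{The gauge-preserving case.} You propose to build a lift $\phi$ and show from boundary data that it preserves $\funk$, then invoke Noll--Sch\"affer to get a collineation. But you have not said how to choose the lift (the Hilbert metric is scale-invariant, so there is no canonical one), and it is unclear how boundary data alone pins down $\funk$ rather than just $\hil$. The paper goes the other way around: assuming every maximal pure-Funk part maps to a pure-Funk part, it shows via unique geodesics that extreme points of $\closure D$ map to extreme points compatibly with straight half-lines (Property~\ref{property:cont_at_extremes}), and then invokes the collineation theorem of~\cite{lemmens_walsh_polyhedral} (Theorem~\ref{thm:collineation}). Linearity of the lift, and hence gauge-preservation, comes \emph{after} the collineation is established, not before.

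\textbf{The gauge-reversing lift.} Even granting the dichotomy, your proposal does not indicate how to produce $\phi$ on $C$. In the paper's Lemma~\ref{lem:def_gaugereversing} the scale is fixed by the condition $f^{(u)}(\phi(x))=r_u(x)$, where $r_u$ is the reverse-Funk horofunction at the extreme point $u$ attached to $U$ and $f^{(u)}$ is the Funk singleton attached to $\Phi(U)$; one then shows, by the opposite-types propagation above, that every Funk Busemann point pushes forward to a reverse-Funk one, and finally uses the supremum formula (Proposition~\ref{prop:sup_formula}) to conclude $\funk(x,y)=\rfunk'(\phi x,\phi y)$. These are the load-bearing steps, and your outline does not contain substitutes for them.
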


Combining this with Theorem~\ref{thm:symmetric}, gives us the isometry group
of any Hilbert metric.

\begin{corollary}
\label{cor:hilbert_isometries}
If $C$ is symmetric and not Lorentzian, then $\coll(D)$ is a normal subgroup
of index two in $\isom(D)$. Otherwise $\isom(D)=\coll(D)$.
\end{corollary}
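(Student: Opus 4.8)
The plan is to translate the statement about the isometry and collineation groups into one about gauge-preserving and gauge-reversing maps, and then to read off the three cases from Theorems~\ref{thm:symmetric} and~\ref{thm:hilbert_isometries}. Write $G:=\isom(D)$ and $H:=\coll(D)$. By definition a collineation is the projective action of a linear automorphism of $C$, and every linear automorphism is gauge-preserving; conversely, by the theorem of Noll and Sch\"affer, every gauge-preserving bijection of $C$ is linear. Hence $H$ is exactly the set of projective actions of gauge-preserving maps. Theorem~\ref{thm:hilbert_isometries} says that $G=H\union R$, where $R$ denotes the set of projective actions of gauge-reversing maps of $C$. If $C$ is not symmetric, then Theorem~\ref{thm:symmetric} tells us that no gauge-reversing map exists, so $R=\emptyset$ and $G=H$, as claimed.

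Now suppose $C$ is symmetric, so that gauge-reversing maps do exist (for instance Vinberg's $\inver$-map). First I would record the elementary composition rules: the composite of two gauge-reversing maps is gauge-preserving, while the composite of a gauge-preserving and a gauge-reversing map, in either order, is gauge-reversing. These give a clean dichotomy for the projectivised picture. Suppose some gauge-reversing map $\psi_0$ has $\bar\psi_0\in H$. Then for any gauge-reversing $\psi$ the map $\psi\after\psi_0^{-1}$ is gauge-preserving, so $\bar\psi=\overline{\psi\after\psi_0^{-1}}\,\bar\psi_0\in H$; thus $R\subseteq H$ and $G=H$. Otherwise no gauge-reversing map projects into $H$, so $G=H\union R$ with $R\intersection H=\emptyset$; the composition rules show that $R$ is a single coset of $H$, whence $[G:H]=2$, and an index-two subgroup is automatically normal. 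So everything reduces to a single question: is the projective action of a (equivalently, of any) gauge-reversing map a collineation?

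It remains to show that the answer is negative precisely when $C$ is non-Lorentzian. If $C$ is Lorentzian, then $\closure D$ is an ellipsoid and in particular strictly convex, so by de~la~Harpe's theorem every isometry of $(D,\hil)$ is already a collineation, and $G=H$. Consistently, the $\inver$-map of the Lorentzian cone is $x\mapsto\bar x/Q(x)$, where $Q$ is the defining form and $\bar x$ the conjugate of $x$, and its projective action agrees with that of the linear reflection $x\mapsto\bar x$. The crux, and the main obstacle, is the converse: if $C$ is symmetric but not Lorentzian, I must show the $\inver$-map is not projectively linear. Suppose it were, say $\bar\sigma=\bar A$ for a linear $A$ preserving $\closure C$. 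A linear automorphism carries each proper nonzero face of $\closure C$ to a face of the same dimension, whereas a gauge-reversing map reverses the order $\le_C$ and hence induces an \emph{inclusion-reversing} bijection of the lattice of proper nonzero faces. Since $\bar\sigma=\bar A$, the induced boundary maps coincide, so this face map would be simultaneously dimension-preserving and inclusion-reversing; this is possible only if all proper nonzero faces of $\closure C$ are equidimensional and mutually incomparable, which forces the boundary of $C$ to be a round cone, i.e.\ $C$ to be Lorentzian. (For the matrix cone $\pos(\R,n)$ the same point appears as the fact that the projectivised inverse is the adjugate, a map of degree $n-1$, which is linear only when $n=2$.) This contradiction shows that no gauge-reversing map is a collineation in the non-Lorentzian case, so by the dichotomy above $[G:H]=2$.
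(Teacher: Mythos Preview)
Your overall logical structure matches the paper's proof exactly: use Theorem~\ref{thm:hilbert_isometries} to write $\isom(D)$ as the union of projective actions of gauge-preserving and gauge-reversing maps, invoke Theorem~\ref{thm:symmetric} for the non-symmetric case, and in the symmetric case reduce everything (via the composition rules you state) to the single question of whether the Vinberg $\inver$-map acts as a collineation. The paper simply \emph{asserts} the answer to that question in both the Lorentzian and non-Lorentzian cases without proof; you try to supply arguments, and your Lorentzian argument (strict convexity plus de~la~Harpe, or the explicit formula) is fine.

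The gap is in your non-Lorentzian argument. You write that a gauge-reversing map ``reverses the order $\le_C$ and hence induces an inclusion-reversing bijection of the lattice of proper nonzero faces,'' and that since $\bar\sigma=\bar A$ this face map coincides with the one induced by $A$. Neither step is justified. The map $\sigma$ is defined only on the open cone; order-reversal on the interior does not by itself produce a well-defined map on boundary faces, let alone an inclusion-reversing one. And even granting some face map $\Psi_\sigma$ (say, via the horofunction boundary, where $\sigma$ sends reverse-Funk parts to Funk parts and hence faces of $C$ to faces of $C^*$), there is no reason it should agree with the face map of $A$: the latter sends reverse-Funk parts to reverse-Funk parts, so the two maps have different targets and cannot be compared directly.

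A clean fix, using the machinery already in the paper, is this. By Proposition~\ref{prop:gauge_reversing_swaps_F_RF}, a gauge-reversing map on $C$ interchanges Funk and reverse-Funk horofunctions, so on the Hilbert horofunction boundary it sends every maximal pure-Funk part to a pure-reverse-Funk part. A collineation, being gauge-preserving, sends pure-Funk parts to pure-Funk parts. If $\bar\sigma$ were a collineation, every maximal pure-Funk part would therefore be simultaneously pure-Funk and pure-reverse-Funk, hence a singleton. But for a non-Lorentzian symmetric cone the cross-section $D$ is not strictly convex, so (using self-duality) there exist non-singleton maximal pure-Funk parts---a contradiction. This replaces your face-lattice sketch with a rigorous argument.
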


This result had been conjectured in~\cite{lemmens_walsh_polyhedral}.
It also resolves some conjectures of de la Harpe, namely that $\isom(D)$
is a Lie group, and that $\isom(D)$ acts transitively on $D$ if and only if
$\coll(D)$ does,

We also determine the isometry group of the Thompson metric.

\begin{theorem}
\label{thm:thompson_isometries}
Let $C$ and $C'$ be proper open convex cones, and
let $\phi\colon C\to C'$ be a surjective isometry of the Thompson metric.
Then, there exist decompositions $C=C_1\directproduct C_2$ and
$C'=C'_1\directproduct C'_2$ such that $\phi$ takes the form
$\phi(x_1+x_2)= (\phi_1(x_1)+\phi_2(x_2))$, where $\phi_1$ is a
gauge-preserving map from $C_1$ to $C'_1$, and $\phi_2$ is a gauge-reversing
map from $C_2$ to $C'_2$.
\end{theorem}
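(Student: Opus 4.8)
The plan is to analyse the isometry through the horofunction compactification of the Thompson metric, exploiting the decomposition $\thomp(x,y)=\max\big(\funk(x,y),\rfunk(x,y)\big)$, where $\funk(x,y)=\log\gauge{C}{x}{y}$ is the Funk metric and $\rfunk(x,y)=\funk(y,x)=\log\gauge{C}{y}{x}$ is its reverse. The virtue of this decomposition is that it separates the two behaviours we wish to isolate: a gauge-preserving map is an isometry of $\funk$ and of $\rfunk$ \emph{separately}, whereas a gauge-reversing map \emph{interchanges} $\funk$ and $\rfunk$; both are Thompson isometries. The theorem asserts that a general Thompson isometry is, after splitting off a direct summand, a combination of these two pure types, so the goal is to recover this splitting from intrinsic metric data.

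First I would pass to the boundary. A surjective isometry $\phi$ extends to a homeomorphism of the horofunction compactification of $(C,\thomp)$ onto that of $(C',\thomp)$, and this extension restricts to a bijection between the sets of Busemann points that is an isometry for the detour metric $\detourmet$. The key structural input is a description of the Busemann points of the Thompson metric in terms of those of $\funk$ and $\rfunk$: each is, up to an additive constant, a pointwise maximum of a Funk Busemann function, indexed by a face of the dual cone $C^*$, and a reverse-Funk Busemann function, indexed by a face of $C$ (one of the two pieces possibly being absent). This yields a natural partition of the Thompson Busemann points into a \emph{forward} family and a \emph{backward} family, together with the mixed points that interpolate between them; membership in these families, and the incidence relations among them, are encoded in which detour distances are finite, and are therefore preserved by $\phi$.

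Next I would use the detour-metric isometry to read off the decomposition. The finiteness pattern of $\detourmet$ organises the Busemann points into parts indexed by the facial structure of $C$, and $\phi$ permutes these parts. On each part, $\phi$ either preserves the forward/backward roles or interchanges them; the directions on which it interchanges them must assemble into a face that splits off as a direct summand $C_2$, with complementary summand $C_1$ on which the roles are preserved, and similarly for $C'$. The restriction of $\phi$ to $C_1$ then preserves both $\funk$ and $\rfunk$, hence is gauge-preserving, while its restriction to $C_2$ interchanges them, hence is gauge-reversing; the Noll--Sch\"affer theorem makes the first restriction linear, and Theorem~\ref{thm:symmetric} forces the factor $C_2$ to be symmetric. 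The additive form $\phi(x_1+x_2)=\phi_1(x_1)+\phi_2(x_2)$ then follows because, under a direct-sum decomposition, $\gauge{C}{\cdot}{\cdot}$ is the maximum of the gauges of the two factors, so the factors are metrically orthogonal and $\phi$ must respect the product structure.

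The main obstacle is the middle step: proving that the forward/backward split visible on the boundary genuinely arises from a direct-sum decomposition of the cone, rather than from some looser gluing, and that $\phi$ is pure on each resulting factor. Concretely, one must show that the set of directions reversed by $\phi$ is exactly the set of extreme rays of a summand $C_2$, which requires translating the combinatorics of the detour metric on Busemann points into the lattice of faces of $C$ and recognising direct-sum decompositions there. Establishing the precise correspondence between Thompson Busemann points and pairs of Funk and reverse-Funk data, and controlling the mixed points, is where the real work lies; once the decomposition is secured, the reduction to the gauge-preserving and gauge-reversing cases and the appeal to Theorem~\ref{thm:symmetric} are comparatively routine.
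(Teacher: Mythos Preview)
Your outline is on the same track as the paper's proof---passing to the horofunction boundary, separating Funk-type and reverse-Funk-type behaviour, and reading off a direct-sum splitting from how $\phi$ permutes these two families---but the paper makes this work through two concrete devices that your sketch does not supply.

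First, the paper does not work with general parts or Busemann points, but with \emph{singletons} only. Corollary~\ref{cor:singletons_thompson} identifies these exactly: they are the reverse-Funk horofunctions $r_x$ for $x$ an extreme point of $\proj(\closure C)$, and the Funk horofunctions $\log\dotprod{y}{\cdot}-\log\dotprod{y}{b}$ for $y$ an extremal generator of $C^*$. The latter are (logarithms of) \emph{linear} functionals, and together they span $\linspace^*$. This linearity is the algebraic foothold: letting $F_1$ be those Funk singletons that $\phi$ sends to Funk singletons and $F_2$ those it sends to reverse-Funk singletons, one defines $C_1$ and $C_2$ as the exposed faces of $\closure C$ cut out by $F_2$ and $F_1$ respectively. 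Your proposal speaks of faces and incidence relations but never isolates the fact that half of the singletons are honest linear functionals, which is what lets you define the summands directly.

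Second, the step you flag as the main obstacle---proving that these faces are complementary summands---is handled in the paper by an explicit scaling trick: the maps $\proj_1(z,\alpha)=\alpha^{-1}\phi^{-1}(\alpha\phi(z))$ and $\proj_2(z,\alpha)=\alpha^{-1}\phi^{-1}(\alpha^{-1}\phi(z))$ converge as $\alpha\to\infty$ to the projections onto $C_1$ and $C_2$, because each $f\in F_1$ is homogeneous after transport by $\phi$ and each $f\in F_2$ is anti-homogeneous. This simultaneously proves $C=C_1\oplus C_2$ and, by the same computation on the $C'$ side, shows directly that $\phi_2$ is anti-homogeneous and $\phi_1$ homogeneous; Propositions~\ref{prop:gauge_preserving} and~\ref{prop:gauge_reversing} then finish the job.

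Finally, your last paragraph's claim that ``the factors are metrically orthogonal and $\phi$ must respect the product structure'' is too quick: an $\ell_\infty$-product of two Thompson geometries can have isometries that mix the factors, so one cannot conclude the block form of $\phi$ from the product structure alone. The paper closes this gap (Lemmas~\ref{lem:map_first_coord} and~\ref{lem:map_second_coord}) by again tracking the singletons, now using the product-space analysis of Section~\ref{sec:product} to show each singleton depends on only one factor and that $\phi$ preserves this dependence. The appeal to Theorem~\ref{thm:symmetric} that you mention is a corollary, not an ingredient of the proof.
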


The first to study the isometries of the Thompson metric were
Noll and Sch\"affer~\cite{noll_schaffer_orders_gauge}.
They showed that, in the case where the cone order
of either $C$ or $C'$ is \emph{loose}, every such isometry is either
gauge-preserving or gauge-reversing. Here loose means that for all $x$ and
$y$ in the cone, the set $\{x,y\}$ has neither an infimum nor a supremum
unless $x$ and $y$ are comparable. In particular, they showed that both
the Lorentz cone and the cone positive definite symmetric matrices
are loose.

The isometry group of the Thompson metric has been worked out
by Moln\'ar~\cite{molnar_thompson_isometries} in the case of
the cone of positive-definite complex Hermitian matrices,
and by Bosch\'e~\cite{bosche} for general symmetric cones.

The plan of the paper is as follows. We recall some background material
in Section~\ref{sec:preliminaries}. We then prove the homogeneity of
any cone admitting a gauge-reversing map in Section~\ref{sec:homogeneity}.
An important tool we will use in much of the paper is the
\emph{horofunction boundary};
we recall its definition in Section~\ref{sec:horoboundary} and describe known
results about it in the case of the Hilbert geometry in
Section~\ref{sec:hilbert_horoboundary}. Using these results, we finish the
proof of Theorem~\ref{thm:symmetric} and of Corollary~\ref{cor:two_cones}
in Section~\ref{sec:self_duality}.
In Section~\ref{sec:hilbert_isometries}, we study the isometries of the
Hilbert geometry and prove Theorem~\ref{thm:hilbert_isometries}
and~Corollary~\ref{cor:hilbert_isometries}.
Sections~\ref{sec:product} and~\ref{sec:horofunction_thompson} are devoted to
the study of the horofunction boundaries of, respectively, product spaces
and Thompson geometries. These results are then used
in Section~\ref{sec:thompson_isometries} to prove
Theorem~\ref{thm:thompson_isometries}.

\emph{Acknowledgements.}
I greatly benefited from many discussions with Bas Lemmens concerning this
work. This work was partially supported by the ANR `Finsler'.

\section{Preliminaries}
\label{sec:preliminaries}

\subsection{Gauge-preserving and gauge-reversing maps}

Let $C$ be an open convex cone in a real finite-dimensional vector space
$\linspace$. In other words, $C$ is an open convex set that is invariant under
multiplication by positive scalars. We use $\closure$ to denote
the closure of a set. If $\closure{C}\cap (-\closure{C})=\{0\}$,
then $C$ is called a \emph{proper} open convex cone.

As described in the introduction, $C$ induces a natural partial order $\le_C$
on $\linspace$, and this is used to define the gauge $\gauge{C}{\cdot}{\cdot}$,
which in turn is used to define Thompson's metric $\thomp$
and Hilbert's projective metric $\hil$ on $C$.

Let $\phi\colon C\to C'$ be a map between two proper open convex cones
in $\linspace$. We say that $\phi$ is \emph{isotone} if
$x\le_C y$ implies $\phi x\le_{C'} \phi y$,
and that is is \emph{antitone} if
$x\le_C y$ implies $\phi y\le_{C'} \phi x$.
We say that $\phi$ is \emph{homogeneous of degree $\alpha\in\R$}
if $\phi(\lambda x)=\lambda^\alpha\phi(x)$, for all $x\in C$ and $\lambda>0$.
Maps that are homogeneous of degree $-1$ we call \emph{anti-homogeneous},
and maps that are homogeneous of degree $1$ we just call \emph{homogeneous}.

For the proofs of the next two propositions,
see~\cite{noll_schaffer_orders_gauge}.

\begin{proposition}
\label{prop:gauge_preserving}
A map $\phi\colon  C\to C'$ is gauge-preserving if and only if it has
any two of the following three properties: isotone, homogeneous,
Thompson-distance preserving.
\end{proposition}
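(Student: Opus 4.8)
The plan is to reduce everything to two elementary facts about the gauge. First, since $\closure C$ is closed and $\{\lambda>0 : \lambda y - x\in\closure C\}$ is a closed half-line, the infimum defining the gauge is attained and one has the order characterisation $x\le_C y\Leftrightarrow\gauge{C}{x}{y}\le1$; together with properness, $\closure C\cap(-\closure C)=\{0\}$, this yields the antisymmetry statement that $\gauge{C}{u}{v}=\gauge{C}{v}{u}=1$ forces $u=v$. Second, the gauge scales as $\gauge{C}{\lambda x}{\mu y}=(\lambda/\mu)\gauge{C}{x}{y}$ for all $\lambda,\mu>0$. I would record these at the outset, since each implication below is a short manipulation of them.

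For the forward direction I would check that a gauge-preserving $\phi$ has all three properties. Isotonicity is immediate from the order characterisation, as $x\le_C y\Leftrightarrow\gauge{C}{x}{y}\le 1\Leftrightarrow\gauge{C'}{\phi x}{\phi y}\le 1\Leftrightarrow\phi x\le_{C'}\phi y$, and Thompson-invariance is immediate from the defining formula $\thomp(x,y)=\log\max(\gauge{C}{x}{y},\gauge{C}{y}{x})$, both entries of the maximum being preserved. For homogeneity I would compute the two mutual gauges of $\phi(\lambda x)$ and $\lambda\phi(x)$: using the scaling identity and gauge-preservation each equals $1$, so the antisymmetry statement gives $\phi(\lambda x)=\lambda\phi(x)$.

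The reverse direction splits into the three pairs, the common idea being to reconstruct the ordered pair $\big(\gauge{C}{x}{y},\gauge{C}{y}{x}\big)$ from the data at hand. The pair homogeneous-and-Thompson is the cleanest: homogeneity gives $\phi(\lambda y)=\lambda\phi(y)$, so Thompson-invariance yields $\thomp'(\phi x,\lambda\phi y)=\thomp(x,\lambda y)$ for all $\lambda>0$, where $\thomp'$ is the metric on $C'$. Writing $A=\gauge{C}{x}{y}$ and $B=\gauge{C}{y}{x}$, the scaling identity gives $\thomp(x,\lambda y)=\log\max(\lambda^{-1}A,\lambda B)$, which for all sufficiently small $\lambda$ equals $\log A-\log\lambda$; the same computation on $C'$ then forces $\gauge{C'}{\phi x}{\phi y}=\gauge{C}{x}{y}$, for every pair, so $\phi$ is gauge-preserving.

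The two pairs that contain isotonicity I would attack by proving the two gauge inequalities separately, and here lies the one genuinely delicate point. Isotonicity together with homogeneity gives one inequality for free: from $x\le_C\lambda y$ we get $\phi x\le_{C'}\phi(\lambda y)=\lambda\phi y$, whence $\gauge{C'}{\phi x}{\phi y}\le\gauge{C}{x}{y}$. The main obstacle is the reverse inequality, for isotonicity is a one-sided hypothesis and on its own only makes $\phi$ gauge-contracting, with the inequality possibly strict — the homogeneous order-preserving bijection $(x_1,x_2)\mapsto(x_1,\sqrt{x_1 x_2})$ of the positive orthant has a non-isotone inverse and contracts some gauges strictly. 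To close the gap one uses that $\phi$ is a bijection whose inverse is again order-preserving, and applies the first inequality to the homogeneous, order-preserving map $\phi^{-1}$, obtaining $\gauge{C}{x}{y}\le\gauge{C'}{\phi x}{\phi y}$ and hence equality. The pair isotone-and-Thompson is handled in the same spirit: Thompson-invariance determines $\max\big(\gauge{C'}{\phi x}{\phi y},\gauge{C'}{\phi y}{\phi x}\big)$, isotonicity identifies the dominant term for comparable $x,y$ and pins the larger gauge exactly, and passing to the inverse map together with the scaling identity upgrades this to equality of both gauges. I expect the conceptual content to be concentrated entirely in that reverse inequality, with the isotone-and-Thompson bookkeeping the most laborious but routine part.
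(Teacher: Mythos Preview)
The paper does not itself prove this proposition; it defers to Noll and Sch\"affer. So there is no in-paper argument to compare against, and I assess your proposal on its own terms.

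Your forward direction and the pair homogeneous-plus-Thompson are correct and cleanly argued; the trick of varying $\lambda$ in $\thomp(x,\lambda y)$ to isolate each gauge separately is exactly right. The difficulty lies precisely where you locate it, in the two pairs involving isotonicity, but your patch does not close the gap you yourself exposed. Your map $(x_1,x_2)\mapsto(x_1,\sqrt{x_1x_2})$ on the open positive quadrant is homogeneous, isotone, and bijective, yet strictly contracts the gauge: for $x=(1,4)$ and $y=(1,1)$ one has $\gauge{C}{x}{y}=4$ while $\gauge{C}{\phi x}{\phi y}=2$. So the implication ``isotone and homogeneous $\Rightarrow$ gauge-preserving'' is false even for bijections, and your fix---passing to $\phi^{-1}$ and assuming it is again isotone---imports a hypothesis (that $\phi$ be an order \emph{isomorphism}) which the proposition as worded here does not supply. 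The same unjustified appeal to $\phi^{-1}$ appears in your sketch for the pair isotone-plus-Thompson.

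In the Noll--Sch\"affer framework the isotonicity hypothesis is in fact two-sided, and under that reading your argument is the intended one and goes through. But you should be explicit that what you are proving is a slightly different statement from the one printed, since your own example shows the printed version cannot be established for those two pairs.
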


\begin{proposition}
\label{prop:gauge_reversing}
A map $\phi\colon  C\to C'$ is gauge-reversing if and only if it has
any two of the following three properties: antitone, anti-homogeneous,
Thompson-distance preserving.
\end{proposition}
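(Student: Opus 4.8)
The plan is to prove the two implications separately: first that a gauge-reversing map automatically enjoys all three listed properties, and then that any two of them force the map to be gauge-reversing. Since the first part already gives ``gauge-reversing $\Rightarrow$ any two'', establishing both parts yields the stated equivalence. Throughout I would rely on a handful of elementary identities that are immediate from the definition of the gauge and the closedness of $\closure C$: the scaling rules $\gauge{C}{x}{\lambda y}=\lambda^{-1}\gauge{C}{x}{y}$ and $\gauge{C}{\lambda x}{y}=\lambda\gauge{C}{x}{y}$ for $\lambda>0$; the order characterisation $\gauge{C}{x}{y}\le 1 \iff x\le_C y$ (the infimum in the definition being attained because $\closure C$ is closed); the normalisation $\gauge{C}{x}{x}=1$; and the separation property that $u\le_C v$ together with $v\le_C u$ forces $u=v$, which is exactly where properness of the cone enters.

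For the forward implication, suppose $\gauge{C'}{\phi x}{\phi y}=\gauge{C}{y}{x}$ for all $x,y$. Thompson-distance preservation is then immediate, since $\thomp$ is the logarithm of the larger of the two gauges and the reversal merely swaps the two arguments inside the maximum. Antitonicity follows from the order characterisation: if $x\le_C y$ then $\gauge{C'}{\phi y}{\phi x}=\gauge{C}{x}{y}\le 1$, whence $\phi y\le_{C'}\phi x$. For anti-homogeneity I would compute, using the scaling rules, that $\gauge{C'}{\phi(\lambda x)}{\phi x}=\gauge{C}{x}{\lambda x}=\lambda^{-1}$ and $\gauge{C'}{\phi x}{\phi(\lambda x)}=\gauge{C}{\lambda x}{x}=\lambda$; these two equalities say precisely that $\phi(\lambda x)\le_{C'}\lambda^{-1}\phi x$ and $\lambda^{-1}\phi x\le_{C'}\phi(\lambda x)$, so the separation property gives $\phi(\lambda x)=\lambda^{-1}\phi x$. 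Note that this argument needs no surjectivity of $\phi$.

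For the converse I would treat the pairs involving Thompson-distance preservation by a scaling-and-limit argument, which I regard as the engine of the proof. Assuming $\phi$ anti-homogeneous and Thompson-preserving, I apply distance preservation to the pair $x,\lambda y$ and move the scalar through $\phi$ by anti-homogeneity, obtaining for every $\lambda>0$
\begin{equation*}
\max\big(\lambda^{-1}\gauge{C}{x}{y},\ \lambda\,\gauge{C}{y}{x}\big)
=\max\big(\lambda\,\gauge{C'}{\phi x}{\phi y},\ \lambda^{-1}\gauge{C'}{\phi y}{\phi x}\big).
\end{equation*}
Dividing by $\lambda$ and letting $\lambda\to\infty$ isolates the dominant terms and yields $\gauge{C'}{\phi x}{\phi y}=\gauge{C}{y}{x}$, which is the gauge-reversing identity (letting $\lambda\to 0$ gives the companion identity). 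The pair consisting of antitonicity and Thompson preservation I would reduce to this case by first extracting anti-homogeneity, which I expect to get from Thompson preservation restricted to rays combined with antitonicity; and for the pair consisting of antitonicity and anti-homogeneity, applying $\phi$ to the relation $y\le_C \gauge{C}{y}{x}\,x$ gives at once the upper bound $\gauge{C'}{\phi x}{\phi y}\le\gauge{C}{y}{x}$.

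The step I expect to be the real obstacle is the reverse inequality $\gauge{C'}{\phi x}{\phi y}\ge\gauge{C}{y}{x}$. Antitonicity, being a one-directional implication, supplies only the upper bound, so the matching lower bound amounts to showing that $\phi$ reflects the cone order, that is, that $\phi y\le_{C'}\phi x$ forces $x\le_C y$. For the pairs containing Thompson preservation this reflection is effectively encoded in the scaling-limit identity above, which is why those pairs go through cleanly. For the purely order-and-homogeneity pair I would have to extract order reflection from the combined strength of antitonicity and anti-homogeneity (using injectivity and the finite dimension of the cone), and I expect this order-reflection lemma to be the technical crux of the whole proposition: without it the lower bound genuinely can fail, so I would organise the write-up so that the scaling-limit argument carries the Thompson-preserving pairs and a separate order-reflection lemma supplies the missing direction for the remaining pair.
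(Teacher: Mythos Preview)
The paper does not actually prove this proposition: it simply refers the reader to Noll and Sch\"affer~\cite{noll_schaffer_orders_gauge} for the proof. So there is no argument in the paper to compare your approach against.

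On the substance of your proposal: the forward implication and the pair ``anti-homogeneous $+$ Thompson-preserving'' are handled correctly, and your scaling-and-limit trick is exactly the right engine there. Your reduction of ``antitone $+$ Thompson-preserving'' to the previous case via extracting anti-homogeneity on rays is plausible but left as a sketch; it does go through, though it takes a little more care than you indicate.

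Your instinct about the remaining pair is well-founded, and in fact decisive: antitone and anti-homogeneous alone do \emph{not} force a map to be gauge-reversing. On $C=C'=(0,\infty)^2$ the map $\phi(x_1,x_2)=\big(\tfrac{1}{x_1+x_2},\tfrac{1}{x_1+x_2}\big)$ is antitone and anti-homogeneous, yet for $u=(1,1)$ and $v=(2,1)$ one computes $\gauge{C}{\phi u}{\phi v}=\tfrac{3}{2}$ while $\gauge{C}{v}{u}=2$. So the ``order-reflection lemma'' you anticipate needing cannot be proved from these two hypotheses alone; some form of bijectivity (or at least surjectivity) is genuinely required, and you should consult the original Noll--Sch\"affer paper for the precise standing assumptions under which the statement holds. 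The paper's phrasing here is slightly loose, but since the only converse directions actually used later are those involving Thompson-distance preservation, this does not affect the rest of the paper.
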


We see from Proposition~\ref{prop:gauge_preserving} that every
linear isomorphism from $C$ to $C'$ is gauge-preserving.
The following theorem shows that the converse is also true.

\begin{theorem}
[\cite{rothaus_order_isomorphisms, noll_schaffer_orders_gauge}]
\label{thm:linear}
Let $\phi\colon  C\to C'$ be a gauge-preserving bijection.
Then, $\phi$ is the restriction to $C$ of a linear isomorphism.
\end{theorem}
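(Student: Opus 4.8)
The plan is to extract enough rigidity from the order and homogeneity of $\phi$ to prove that $\phi$ is additive on $C$; together with homogeneity this gives linearity. Recall from the introduction that a gauge-preserving map is a Thompson isometry, and by Proposition~\ref{prop:gauge_preserving} it is also isotone and homogeneous of degree one. Since the Thompson metric induces the usual topology on $C$, the isometry $\phi$ is a homeomorphism, hence continuous. The inverse of a gauge-preserving bijection is again gauge-preserving, so $\phi^{-1}$ is isotone as well, and therefore $\phi$ is an \emph{order isomorphism}: $x\le_C y$ if and only if $\phi x\le_{C'}\phi y$. This reduces the theorem to showing that a homogeneous order isomorphism is additive. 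Indeed, once $\phi(x+y)=\phi x+\phi y$ holds for all $x,y\in C$, the assignment $\Phi(x-y):=\phi x-\phi y$ is well defined on $\linspace=C-C$ (using that $C$ is closed under addition), it extends $\phi$, and it is $\R$-linear; since $\phi^{-1}$ extends in the same way, $\Phi$ is a linear isomorphism.

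The heart of the matter is to recover the affine structure from the order alone. My main tool would be the following order-theoretic description of certain line segments: an order interval $[a,b]=\{x:a\le_C x\le_C b\}$ is totally ordered by $\le_C$ precisely when $b-a$ lies on an extreme ray of $C$, and in that case $[a,b]$ is exactly the segment from $a$ to $b$. One direction is immediate, since if $b-a$ spans an extreme ray then any $x\in[a,b]$ gives a decomposition $(x-a)+(b-x)=b-a$ with both terms in $\closure C$, forcing each to be a multiple of $b-a$; in the other direction an order interval that is not a segment contains incomparable points. As an order isomorphism sends order intervals to order intervals and chains to chains, $\phi$ maps each such \emph{extremal segment} onto an extremal segment and hence permutes the extreme directions of the cone. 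This provides a distinguished family of line segments that $\phi$ is guaranteed to respect.

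It then remains to upgrade this to genuine affinity and to assemble the pieces. On an extremal segment $\phi$ acts as a monotone homeomorphism $a+t(b-a)\mapsto\phi a+s(t)(\phi b-\phi a)$ onto the image segment, and I would force $s(t)=t$ by using gauge-preservation directly: the gauge $\gauge{C}{a+td}{a+t'd}$ between two collinear interior points is a concrete function of $t$, $t'$ and the local shape of $C$ at the endpoints, and equating it with the gauge between the images constrains the two parametrisations to coincide, so that $\phi$ preserves midpoints of extremal segments. To pass from affinity along individual extreme directions to full additivity, I would induct on the dimension of the faces of $C$: on a two-dimensional face $\phi$ is a homogeneous order isomorphism of a planar sector that is affine on its two bounding segments and hence linear, and Minkowski's theorem---that $\closure C$ is the closed conical hull of its extreme rays, valid because $C$ is proper and finite-dimensional---together with continuity and homogeneity propagates linearity from the boundary faces to interior pairs $x,y\in C$.

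I expect this last step to be the main obstacle. The order structure and homogeneity pin $\phi$ down comfortably on the one-dimensional extremal skeleton of the cone, but an arbitrary pair $x,y\in\interior C$ spans a two-plane whose intersection with $C$ need not be a face, so affinity there is not immediate and must be obtained through the continuity and facial-hull arguments above. It is precisely in this gluing that properness and finite-dimensionality of $C$ are genuinely used.
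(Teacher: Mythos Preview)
The paper does not prove this theorem. It is stated with citations to Rothaus and to Noll--Sch\"affer and then used as a black box; there is no argument in the paper to compare your proposal against.

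As for your outline itself, the overall strategy---reduce to additivity, recognise that an order interval $[a,b]$ is a chain exactly when $b-a$ lies on an extreme ray, and then propagate affinity from extremal segments to all of $C$---is indeed the standard route taken in the cited literature. Two places in your sketch are genuinely incomplete, though, and you flag them yourself. First, the claim that gauge-preservation ``forces $s(t)=t$'' on an extremal segment is not substantiated: the gauge $\gauge{C}{a+td}{a+t'd}$ is not a function of $t$ and $t'$ alone but depends on how $a$ sits relative to $d$ inside $C$, so equating gauges before and after applying $\phi$ does not by itself pin down the reparametrisation. One needs a separate argument (for instance, that a homogeneous order isomorphism already preserves the additive structure along each extreme ray through the origin, and then transfers this to parallel segments). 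Second, the ``induction on the dimension of the faces'' and the appeal to Minkowski's theorem to glue things together is only a plan; the actual work of showing $\phi(x+y)=\phi(x)+\phi(y)$ for arbitrary interior points $x,y$ is where the cited papers spend their effort, and your proposal does not carry it out.

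In short: your proposal is a plausible roadmap in the spirit of the original proofs, but since the paper itself defers entirely to the literature, there is nothing here to match against, and your sketch as written still has real gaps at the two steps you identify as difficult.
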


\subsection{Hilbert's metric}

\begin{figure}
%\fbox{
\input{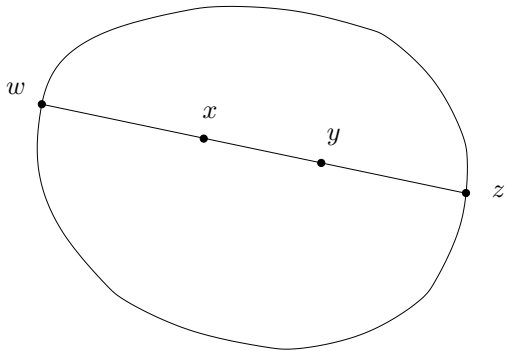}
%}
\caption{Definition of the Hilbert distance.}
\label{fig:hilbert_def}
\end{figure}

%We use $\partial C$ to denote the topological boundary of a set $C$, that is,
%its closure minus its interior.

Hilbert originally defined his metric on bounded open convex sets.
One can recover his definition by taking a cross section of the cone,
that is, by defining $D:=\{ x\in C \mid f(x)=1 \}$,
where $f\colon \linspace\to\R$ is some linear functional that is positive
with respect to the partial order associated to $C$.
Suppose we are given two distinct points $x$ and $y$ in $D$.
Define $w$ and $z$ to be the points in the boundary $\partial D$ of $D$
such that $w$, $x$, $y$, and $z$ are collinear and arranged in this
order along the line in which they lie.
The Hilbert distance between $x$ and $y$ is then
defined to be the logarithm of the cross ratio of these four points:
\begin{equation*}
\hil(x,y):= \log \frac{|zx|\,|wy|}{|zy|\,|wx|}.
\end{equation*}
On $D$, this definition agrees with the previous one.

If $D$ is an ellipsoid, then the Hilbert metric is Klein's model for
hyperbolic space. At the opposite extreme, if $D$ is an open simplex,
then the Hilbert metric is isometric to a normed space with a
polyhedral unit ball~\cite{delaharpe,nussbaum:hilbert}.

One may of course identify the cross section $D$ with the projective space
$\proj(C)$ of the cone.

Let $(X,d)$ be a metric space and $I\subseteq\R$ an interval.
A map $\gamma\colon I\to X$ is called a \emph{geodesic} if
\begin{align*}
d(\gamma(s),\gamma(t))=|s-t|,
\qquad\text{for all $s,t\in I$}.
\end{align*}
If $I$ is a compact interval $[a,b]$, then the image of $\gamma$ is called a
\emph{geodesic segment} connecting $\gamma(a)$ and $\gamma(b)$.
In the Hilbert geometry, straight-line segments are geodesic segments.
If $I=\R$, then we call the image of $\gamma$ a \emph{geodesic line}.

A subset of $X$ of $\linspace$ is said to be \emph{relatively open}
if it is open in its affine hull.
We denote by $\relint X$ the relative interior of $X$, that is, its
interior, considering it a subset of its affine hull.

A geodesic line is said to be \emph{unique}
if for each compact interval $[s,t]\subset\R$, the geodesic segment
$\gamma([s,t])$ is the only one connecting $\gamma(s)$ and $\gamma(t)$.
The following result characterises the unique geodesic lines~\cite{delaharpe}.

\begin{proposition}
%[{\cite{delaharpe}}]
\label{prop:unique_geodesics}
Let $(D,\hil)$ be a Hilbert geometry, and let $w,z\in\partial D$ be such that
the relatively-open line segment $(w,z)$ lies in $D$.
Then, $(w,z)$ is a unique geodesic line if and only if
there is no pair of relatively-open line segments in $\closure D$,
containing $w$ and $z$, respectively, that span a two-dimensional affine space. 
\end{proposition}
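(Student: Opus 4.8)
The plan is to reduce everything to a single pair $\{w,z\}$ and to a two-dimensional section, and then to translate the non-uniqueness of a geodesic into a statement about supporting hyperplanes of $\closure D$ via the dual description of the gauge. Two reductions come for free. Since every compact sub-segment of the geodesic line carried by the chord $(w,z)$ has the \emph{same} boundary endpoints $w,z$, the line $(w,z)$ is a unique geodesic line if and only if the straight segment $[x,y]$ is the unique geodesic segment between $x$ and $y$ for all $x,y\in(w,z)$; so the word ``line'' costs nothing and I may argue about segments, using the common pair $\{w,z\}$. Moreover, for any line $\ell$ contained in a plane $P$ one has $\ell\cap D=\ell\cap(D\cap P)$, so the cross-ratio formula shows that $\hil$ restricted to $D\cap P$ is exactly the Hilbert metric of the planar body $D\cap P$; hence I may pass to a two-plane (the span of the chord and a competing off-chord point, or the span of the two given segments) and freely apply projective transformations of $P$, which are isometries.

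Working in the cone $C$ over $D$, I would use the dual form of the gauge, $\gauge{C}{x}{y}=\sup\{f(x)/f(y):f\in C^*,\ f\neq 0\}$, with the supremum attained. A point $p$ lies on a geodesic from $x$ to $y$ exactly when $\hil(x,p)+\hil(p,y)=\hil(x,y)$, and since $\hil$ splits as a sum of two submultiplicative gauge terms, this additivity forces equality in both submultiplicativities:
\[
\gauge{C}{x}{y}=\gauge{C}{x}{p}\gauge{C}{p}{y}
\quad\text{and}\quad
\gauge{C}{y}{x}=\gauge{C}{y}{p}\gauge{C}{p}{x}.
\]
The key lemma is that such an equality is equivalent to a common supporting hyperplane. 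If $f\in C^*$ attains $\gauge{C}{x}{y}$, so that $f(x)=\gauge{C}{x}{y}\,f(y)$, then feeding $x\le_C\gauge{C}{x}{p}\,p$ and $p\le_C\gauge{C}{p}{y}\,y$ through $f$ produces a chain of inequalities that must collapse; hence $f$ vanishes on $\gauge{C}{x}{p}\,p-x$ and on $\gauge{C}{p}{y}\,y-p$, that is, at the boundary points $A$ (exit of the ray $x\to p$ beyond $p$) and $B$ (exit of $p\to y$ beyond $y$). Crucially, $f$ also vanishes at $z$, because $z$ is the projectivisation of $\gauge{C}{x}{y}\,y-x$, the exit of the chord beyond $y$.

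This settles the direction ``no spanning pair $\Rightarrow$ unique'' by contraposition. From a competing geodesic I get an off-chord point $p$; the two attaining functionals (one for $\gauge{C}{x}{y}$, one for $\gauge{C}{y}{x}$) give two supporting hyperplanes, the first through $A,B,z$ and the second through $A',B',w$. As $A$ lies strictly on the $p$-side of the chord and $B$ strictly on the other side while $z$ lies on the chord, the point $z$ is strictly between $A$ and $B$, so $z\in\relint[A,B]$; symmetrically $w\in\relint[A',B']$. These two segments lie in the plane spanned by the chord and $p$, and, sitting on distinct supporting lines, they are not collinear — exactly the forbidden spanning configuration. For the converse I run the construction backwards: after a projective transformation sending the meeting point of the two supporting lines of $S_w,S_z$ to infinity, I may assume these lines are $\{u=-1\}$ and $\{u=+1\}$ in coordinates $(u,v)$ on $P$, with $w=(-1,0)$, $z=(1,0)$, the chord along $v=0$, and $D\cap P$ inside the slab $|u|\le 1$. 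Taking $x=(-c,0)$, $y=(c,0)$ deep on the chord and $p=(0,v_0)$ just off it, one checks that the rays $x\to p$ and $p\to y$ exit through $S_z$ on $\{u=1\}$ while $p\to x$ and $y\to p$ exit through $S_w$ on $\{u=-1\}$, so the single facet functionals $t\mp u$ attain all four relevant gauges and yield $\gauge{C}{x}{p}\gauge{C}{p}{y}=\gauge{C}{x}{y}$ together with its mirror; thus $x\to p\to y$ is a geodesic distinct from $[x,y]$.

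The step I expect to be the main obstacle is precisely this last verification: one must choose $c$ close enough to $0$ and $v_0$ small enough that all four exit points genuinely land inside the given flat segments $S_w,S_z$, so that the functionals $t\mp u$ — rather than some other support of $D\cap P$ coming from the uncontrolled remainder of the boundary — are the ones attaining the gauges. Making this ``binding-functional'' bookkeeping rigorous, together with the projective normalisation that reduces the non-parallel case to parallel supporting lines, is where the real work lies; by contrast the direction ``no spanning pair $\Rightarrow$ unique'' is comparatively clean once the attaining-functional lemma is in place.
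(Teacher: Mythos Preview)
The paper does not actually prove this proposition: it is quoted from de~la~Harpe and used as a black box, so there is nothing in the paper to compare against. That said, your argument is essentially correct and would constitute a valid proof.

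Both directions are sound. In the ``non-unique $\Rightarrow$ spanning pair'' direction, your attaining-functional lemma is exactly the right mechanism: once $\gauge{C}{x}{y}=\gauge{C}{x}{p}\gauge{C}{p}{y}$, the functional $f$ realising $\gauge{C}{x}{y}$ must also realise the two factors, hence vanishes at $A$, $B$ and $z$; the height argument (with $A$ strictly on the $p$-side, $B$ strictly on the other) then places $z\in\relint[A,B]$, and the two segments cannot be collinear since their common line would be the chord, which meets $D$. This part is clean.

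For the converse, the worry you flag is real but easily dispatched, and you have slightly misdiagnosed the constraint. After your projective normalisation, shrink the flat pieces to $S_w=\{-1\}\times(-\delta,\delta)$ and $S_z=\{1\}\times(-\delta,\delta)$; then $R:=[-1,1]\times[-\delta,\delta]\subset\closure(D\cap P)$ by convexity, and a supporting-line argument gives $\interior R\subset D\cap P$. Now fix any $c\in(0,1)$ and take $v_0<\delta c/(1+c)$: all six exit points of the three lines $xp$, $py$, $xy$ at $u=\pm 1$ have $|v|<\delta$, so they lie in $S_w\cup S_z$, and the rays stay inside $\interior R\subset D\cap P$ until they hit $u=\pm1$. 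Thus the Hilbert computation reduces to the slab $|u|<1$, where $\hil$ depends only on the $u$-coordinate and additivity is immediate. Note it is not ``$c$ close to $0$'' that is needed, but rather $v_0$ small relative to $c$; indeed $c\to 0$ makes the constraint on $v_0$ \emph{worse}. The projective normalisation is harmless because Hilbert metrics are projectively invariant on properly convex domains, bounded or not.
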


Recall that an \emph{exposed face} of a convex set is the intersection
of the set with a supporting hyperplane.
A convex subset $E$ of a convex set $D$ is an \emph{extreme set}
if the endpoints of any line segment in $D$ are contained in $E$ whenever
any point of the relative interior of the line segment is.
The relative interiors of the extreme sets of a convex set $D$ partition $D$.
If an extreme set consists of a single point, we call the point an
\emph{extreme point}. We call a point in the relative interior of a
$1$-dimensional extreme set of the closure of a cone an
\emph{extremal generator} of the cone. Alternatively, an extremal generator
of a cone $C$ is a point $x\in C$ such that $\proj(x)$ is an extreme point
of $\proj(\closure C)$.

\subsection{Symmetric cones}

A proper open convex cone $C$ in a real finite-dimensional vector space
$\linspace$ is called \emph{symmetric} if it is homogeneous and self-dual.
Recall that $C$ is \emph{homogeneous} if its linear automorphism group
$\aut(C) := \{A\in\mathrm{GL}(\linspace)\mid A(C)=C\}$ acts transitively on it,
and it is self-dual if there exists an inner product $\dotprod{\cdot}{\cdot}$
on $\linspace$ for which $C=C^\star$, where
\begin{align*}
C^\star
   := \{ y\in \linspace
               \mid \text{ $\dotprod{y}{x}>0$ for all $x\in\closure{C}$} \}
\end{align*}
is the \emph{open dual} of $C$.
The \emph{characteristic function} $\phi\colon C\to\R$ is defined by
\begin{align*}
\phi(x) =\int_{C^\star} e^{-\dotprod{y}{x}}\,\textrm{d}y,
\qquad\text{for all $x\in C$}.
\end{align*}
This map is homogeneous of degree $-\dim \linspace$,
and so Vinberg's \emph{$*$-map},
\begin{align*}
C\to C^\star,\quad x\mapsto x^*:=-\nabla\log\phi(x),
\end{align*}
is anti-homogeneous.
On symmetric cones, the $*$-map coincides up to a scalar multiple with the
inverse map in the associated Euclidean Jordan algebra~\cite{faraut_koranyi}.
It was shown in~\cite{kai_order_reversing} that on symmetric cones
the $*$-map is antitone. Hence, by Proposition~\ref{prop:gauge_reversing},
it is gauge-reversing for these cones.
It was also shown in~\cite{kai_order_reversing} that the
symmetric cones are the only homogeneous cones for which this is true.

\subsection{The Funk and reverse-Funk metrics}

It will be convenient to consider the Hilbert and Thompson metrics as
symmetrisations of the following function.
Define
\begin{align*}
\funk(x,y) := \log\gauge{C}{x}{y},
\qquad\text{for all $x\in \linspace$ and $y\in C$}.
\end{align*}
We call $\funk$ the \emph{Funk metric} after P.~Funk~\cite{Funk},
and we call its reverse $\rfunk(x,y):=\funk(y,x)$
the \emph{reverse-Funk metric}.

Like Hilbert's metric, the Funk metric was first defined on bounded open
convex sets. On a cross section $D$ of the cone $C$, one can show that
\begin{align*}
\funk(x,y) = \log\frac{|xz|}{|yz|}
\qquad \text{and} \qquad
\rfunk(x,y) = \log\frac{|wy|}{|wx|},
\end{align*}
for all $x,y\in D$. Here $w$ and $z$ are the points of the boundary
$\partial D$ shown in Figure~\ref{fig:hilbert_def}.

On $D$, the Funk metric is a \emph{quasi-metric}, in other words, it satisfies
the usual metric space axioms except that of symmetry.
On $C$, it satisfies the triangle inequality but is not
non-negative. It has the following homogeneity property:
\begin{align*}
\funk(\alpha x, \beta y) = \funk(x,y) + \log\alpha - \log\beta,
\qquad\text{for all $x,y\in C$ and $\alpha,\beta>0$}.
\end{align*}

Observe that both the Hilbert and Thompson metrics are symmetrisations
of the Funk metric:
for all $x,y\in C$,
\begin{align*}
\hil(x,y) &= \funk(x,y) + \rfunk(x,y)
\qquad\text{and} \\
\thomp(x,y) &= \max\big(\funk(x,y), \rfunk(x,y)\big).
\end{align*}

\section{Homogeneity}
\label{sec:homogeneity}

In this section, we will prove that the existence of a gauge-reversing map
on a cone implies that the cone is homogeneous.

\newcommand\id{\operatorname{Id}}

Throughout the paper, we assume that $C$ is a proper open convex
cone in a real finite-dimensional vector space $\linspace$.

\begin{proposition}
\label{prop:gauge_reversing_bijective}
Let $\phi\colon C\to C$ be gauge-reversing. Then, $\phi$ is a bijection.
\end{proposition}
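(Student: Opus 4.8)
The plan is to prove injectivity and surjectivity separately; injectivity is immediate, while surjectivity is the real content, which I would extract from a topological open-and-closed argument. For injectivity, note first that directly from the definition of gauge-reversing we have $\thomp(\phi x,\phi y)=\log\max\big(\gauge{C}{\phi x}{\phi y},\gauge{C}{\phi y}{\phi x}\big)=\log\max\big(\gauge{C}{y}{x},\gauge{C}{x}{y}\big)=\thomp(x,y)$, so $\phi$ is a Thompson isometry, as already observed in the introduction. Unlike Hilbert's metric, Thompson's metric is a genuine metric on $C$: indeed $\thomp(x,\lambda x)=|\log\lambda|$, so it separates distinct points, including distinct positive scalar multiples of a common point. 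Hence $\phi x=\phi y$ forces $\thomp(x,y)=\thomp(\phi x,\phi y)=0$, and therefore $x=y$.

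For surjectivity I would show that $\phi(C)$ is both open and closed in $C$, and then invoke the connectedness of $C$. Openness comes from Brouwer's invariance of domain: $\phi$ is a continuous (it is a Thompson isometry, and the Thompson topology agrees with the usual one on $C$) injective map from the open set $C\subseteq\linspace$ into the finite-dimensional space $\linspace$, so its image is open in $\linspace$ and hence open in $C$. Closedness I would obtain from completeness of the metric space $(C,\thomp)$: the closed Thompson ball of radius $r$ about a point $x$ is exactly the order interval $\{y:e^{-r}x\le_C y\le_C e^r x\}$, which, because $C$ is proper, is a compact subset of $C$; thus $(C,\thomp)$ is complete (indeed proper). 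Now if a point $z\in C$ is a limit of a sequence $\phi(x_n)$, then, $z$ being interior, the $\phi(x_n)$ converge to $z$ in the Thompson metric and so are Thompson-Cauchy; since $\phi$ is a Thompson isometry, $(x_n)$ is Thompson-Cauchy and hence converges to some $x\in C$ by completeness, whereupon continuity of $\phi$ gives $\phi(x)=z$. So $z\in\phi(C)$, proving $\phi(C)$ closed in $C$. Being open, closed, and nonempty in the connected set $C$, it must equal $C$, and injectivity then upgrades $\phi$ to a bijection.

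The main obstacle is surjectivity, and within it the delicate point is to guarantee that a Thompson-Cauchy sequence in $C$ does not escape to the boundary $\partial C$; this is precisely what completeness — equivalently, compactness of order intervals in a proper cone — delivers. The invariance-of-domain input, which promotes injectivity into openness of the image, is the other essential ingredient. Everything else in the argument is formal.
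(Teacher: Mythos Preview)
Your proof is correct and follows essentially the same route as the paper: show $\phi$ is a Thompson isometry, hence injective and continuous, then use invariance of domain for openness of the image and completeness of $(C,\thomp)$ for closedness, and conclude by connectedness of $C$. You supply more justification than the paper does (why $\thomp$ is a genuine metric, why the Thompson and Euclidean topologies agree, and why $(C,\thomp)$ is complete via compactness of order intervals), but the argument is otherwise identical.
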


\begin{proof}
By Proposition~\ref{prop:gauge_reversing}, $\phi$ is an isometry of the
Thompson metric. It is therefore injective and continuous.
So, from invariance of domain, we get that $\phi(C)$ is an open set
in~$C$.

Let $y_n$ be a sequence in $\phi(C)$ converging to $y\in C$.
So, there exists a sequence $x_n$ in $C$ such that $\phi(x_n)= y_n$, for all
$n\in\N$. Moreover, $(y_n)$ satisfies the Cauchy criterion, and so $(x_n)$
does also. Therefore, since $(C,\thomp)$ is complete,
$x_n$ converges to some point $x\in C$. From continuity,
we get that $\phi(x)=y$. We have proved that $\phi(C)$ is closed.

Since $C$ is connected and $\phi(C)$ is non-empty and both open and closed,
we conclude that $\phi(C)=C$.
\end{proof}

We use $\id$ to denote the identity operator.

\begin{lemma}
\label{lem:point_fixed}
Let $\phi\colon C\to C$ be a gauge-reversing map that is differentiable at some
point $x\in C$ with derivative $\dee_x\phi = -\id$.
Then, $x$ is a fixed point of $\phi$.
\end{lemma}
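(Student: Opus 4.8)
The plan is to extract the fixed-point equation by computing the directional derivative of $\phi$ at $x$ in the radial direction $x$ in two independent ways and equating the results.

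First I would record that, being gauge-reversing, $\phi$ is anti-homogeneous by Proposition~\ref{prop:gauge_reversing}; that is, $\phi(\lambda x)=\lambda^{-1}\phi(x)$ for all $\lambda>0$. Since $\lambda x\in C$ for every $\lambda>0$, the curve $\gamma(\lambda):=\phi(\lambda x)$ is well-defined for $\lambda$ near $1$. On the one hand, anti-homogeneity gives the explicit formula $\gamma(\lambda)=\lambda^{-1}\phi(x)$, whose derivative at $\lambda=1$ is $\gamma'(1)=-\phi(x)$. On the other hand, because $\phi$ is differentiable at $x$, the chain rule identifies this same derivative with the value of $\dee_x\phi$ on the tangent vector $x$, so that
\begin{equation*}
\gamma'(1)=\dee_x\phi(x)=-\id(x)=-x,
\end{equation*}
using the hypothesis $\dee_x\phi=-\id$. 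Comparing the two expressions for $\gamma'(1)$ yields $-\phi(x)=-x$, hence $\phi(x)=x$.

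The argument is essentially a one-step computation, so there is no substantial obstacle. The only points requiring care are that the radial anti-homogeneity relation is genuinely available (which Proposition~\ref{prop:gauge_reversing} supplies, since gauge-reversing maps are in particular anti-homogeneous), and that full differentiability of $\phi$ at $x$ legitimately allows the radial directional derivative to be read off as $\dee_x\phi(x)$. Note that differentiability is needed only at the single point $x$, and nothing is assumed about the behaviour of $\phi$ at $\phi(x)$.
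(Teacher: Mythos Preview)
Your proof is correct and follows essentially the same approach as the paper: both arguments use anti-homogeneity to compute the radial directional derivative $\dee_x\phi(x)$ as $-\phi(x)$, then compare with the hypothesis $\dee_x\phi(x)=-x$ to conclude $\phi(x)=x$. The only cosmetic difference is that the paper parameterises the curve as $\lambda\mapsto\phi((1+\lambda)x)$ rather than $\lambda\mapsto\phi(\lambda x)$.
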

\begin{proof}
Since $\phi$ is anti-homogeneous,
we have $\phi(x+\lambda x) = \phi(x) / (1+\lambda)$ for all $\lambda>0$.
This implies that $\dee_x\phi(x) = -\phi(x)$.
But, by hypothesis $\dee_x\phi(x) = -x$. Therefore, $\phi(x)=x$.
\end{proof}

Recall that an involution is a map $\phi$ satisfying $\phi\after\phi=\id$.

\begin{lemma}
\label{lem:almost_all}
Assume there exists a gauge-reversing map $\phi\colon C\to C$.
Then, for almost all $x$ in $C$, there exists a gauge-reversing map
$\phi_x\colon C\to C$ that fixes $x$, has derivative $\dee_x\phi_x = -\id$
at $x$, and is an involution.
\end{lemma}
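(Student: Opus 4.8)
The plan is to manufacture $\phi_x$ by post-composing the given gauge-reversing map $\phi$ with a linear automorphism of $C$ chosen to normalise the derivative to $-\id$, and then to read off the involution property for free from Theorem~\ref{thm:linear}. Everything hinges on understanding the derivative of $\phi$ at a generic point, so I would begin with differentiability. By Proposition~\ref{prop:gauge_reversing_bijective}, $\phi$ is a bijection, and a one-line computation with the defining identity $\gauge{C}{\phi x}{\phi y}=\gauge{C}{y}{x}$ shows that $\phi^{-1}$ is again gauge-reversing. Hence both $\phi$ and $\phi^{-1}$ are Thompson isometries, and since the Thompson metric is locally bi-Lipschitz to the Euclidean metric on the interior of the cone, both maps are locally bi-Lipschitz. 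Rademacher's theorem makes each differentiable off a null set; because locally Lipschitz maps carry null sets to null sets, the set of $x\in C$ at which $\phi$ is differentiable \emph{and} $\phi^{-1}$ is differentiable at $\phi(x)$ has full measure. At any such $x$, applying the chain rule to $\phi^{-1}\after\phi=\id$ and to $\phi\after\phi^{-1}=\id$ shows that $B:=\dee_x\phi$ is invertible, with inverse $\dee_{\phi(x)}\phi^{-1}$.

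The next step is to identify $-B$ as a linear automorphism of $C$. Since $\phi$ is antitone, for $h\in\closure C$ and small $t>0$ we have $x\le_C x+th$, hence $\phi(x)-\phi(x+th)\in\closure C$; dividing by $t$ and letting $t\to 0^+$ gives $-Bh\in\closure C$, so $-B(\closure C)\subseteq\closure C$. The identical argument applied to $\phi^{-1}$ shows that its derivative at $\phi(x)$, namely $B^{-1}$, satisfies $-B^{-1}(\closure C)\subseteq\closure C$. As $-B$ and $(-B)^{-1}=-B^{-1}$ each map $\closure C$ into itself, together they give $-B(\closure C)=\closure C$; a linear homeomorphism preserving $\closure C$ preserves its interior, so $g:=-B\in\aut(C)$.

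With $g$ in hand the construction and its verification are purely formal. Set $\phi_x:=g^{-1}\after\phi$; this is gauge-reversing, being the composite of the gauge-reversing map $\phi$ with the linear isomorphism $g^{-1}$, and $\dee_x\phi_x=g^{-1}\after B=(-B)^{-1}\after B=-\id$. By Lemma~\ref{lem:point_fixed}, $\phi_x$ fixes $x$. Consider $\psi:=\phi_x\after\phi_x$: as a composite of two gauge-reversing maps it is gauge-preserving, and it is a bijection, so Theorem~\ref{thm:linear} forces it to be the restriction of a linear isomorphism. Because $\phi_x(x)=x$, the chain rule at the fixed point gives $\dee_x\psi=(-\id)\after(-\id)=\id$; but a linear map coincides with its own derivative, whence $\psi=\id$ and $\phi_x$ is an involution. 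Running this for every $x$ in the full-measure good set proves the lemma.

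The main obstacle is the second step, i.e. establishing the almost-everywhere invertibility of $\dee_x\phi$ and the cone-automorphism property of $-\dee_x\phi$. Differentiability alone is immediate from Rademacher, but extracting invertibility requires the bi-Lipschitz character of $\phi$ together with the preservation of null sets under Lipschitz maps, and the identification $-B\in\aut(C)$ requires marrying this invertibility with the order-theoretic (antitone) estimate on the derivative. Once these facts are secured, the normalisation $\phi_x=g^{-1}\after\phi$ and the Theorem~\ref{thm:linear} argument close the proof with no further analysis.
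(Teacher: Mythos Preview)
Your proof is correct and follows essentially the same route as the paper's: Rademacher for almost-everywhere differentiability, antitonicity of $\phi$ and $\phi^{-1}$ to show $-\dee_x\phi\in\aut(C)$, post-composition by $(-\dee_x\phi)^{-1}$ to normalise the derivative to $-\id$, Lemma~\ref{lem:point_fixed} for the fixed point, and Theorem~\ref{thm:linear} on $\phi_x\circ\phi_x$ to get the involution. The only difference is cosmetic: you explicitly arrange, via the null-set-preservation argument, for $\phi^{-1}$ to be differentiable at $\phi(x)$ so that the chain rule gives invertibility of $\dee_x\phi$, whereas the paper simply writes $(-\dee_x\phi)^{-1}=-\dee_{\phi(x)}\phi^{-1}$ and leaves this step implicit.
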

\begin{proof}
The map $\phi$ is $1$-Lipschitz in the Thompson metric on $C$.
However, this metric is Lipschitz equivalent to the Euclidean metric
on any ball of finite radius in the Thompson metric.
So, we may apply Rademacher's theorem to deduce that $\phi$ is differentiable
almost everywhere within every ball of finite radius, and hence almost
everywhere within all of~$C$.

By Proposition~\ref{prop:gauge_reversing_bijective}, the map $\phi$ is
bijective, and so has an inverse, which is also gauge-reversing.

Let $x$ be a point of $C$ where $\phi$ is differentiable. It follows from
the anti-tonicity of $\phi$ that the linear map $\dee_x\phi$ is antitone,
and hence that $-\dee_x\phi$ is isotone. Similarly, from the anti-tonicity
of $\phi^{-1}$, we deduce that $(-\dee_x\phi)^{-1}=-\dee_{\phi(x)}\phi^{-1}$
is isotone. Therefore, $(-\dee_x\phi)^{-1}$ is a linear isomorphism of $C$,
and hence gauge-preserving.

So the map $\phi_x\colon C\to C$ defined by
$\phi_x:=(-\dee_x\phi)^{-1}\after\phi$ is gauge-reversing.
By the chain rule, $\dee_x\phi_x = -\id$.
So, from Lemma~\ref{lem:point_fixed}, $x$ is a fixed point of $\phi_x$.
The map $\phi_x\after\phi_x$ is gauge-preserving, and therefore linear,
and its derivative at $x$ is $\id$.
We conclude that $\phi_x\after\phi_x=\id$.
\end{proof}

\begin{figure}
\input{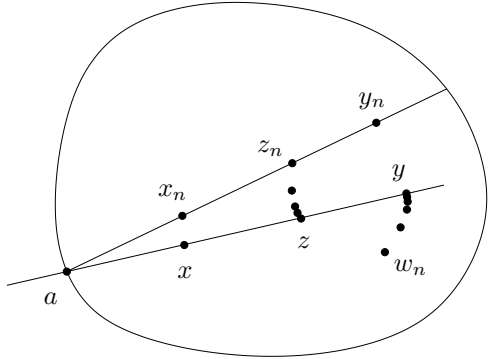}
\caption{Illustration of the proof of Lemma~\ref{lem:exist_collineation}.}
\label{fig:proof_lemma}
\end{figure}

\begin{lemma}
\label{lem:exist_collineation}
Assume there exists a gauge-reversing self-map on $C$.
Let $x$ and $y$ be two points in $\proj(C)$ collinear with an extreme point
of $\proj(\closure C)$.
Then, there exists an element of $\coll(\proj(C))$ mapping $x$ to $y$.
\end{lemma}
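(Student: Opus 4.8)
The plan is to use the gauge-reversing involutions furnished by Lemma~\ref{lem:almost_all} as point-reflections, and to realise the required collineation as a composite of two of them. The basic observation is that the composite of two gauge-reversing maps is gauge-preserving, hence, by Theorem~\ref{thm:linear}, the restriction of a linear isomorphism of $C$, and so acts on $\proj(C)$ as an element of $\coll(\proj(C))$. Thus, whenever the involutions $\phi_p$ and $\phi_q$ of Lemma~\ref{lem:almost_all} both exist, $\phi_q\after\phi_p$ is a collineation; and since $\phi_p$ fixes $p$, this collineation sends $p$ to $\phi_q(p)$. Writing $m$ for the midpoint of the Hilbert segment from $x$ to $y$, I would take $p=x$ and $q=m$, so that the composite sends $x$ to $\phi_m(x)$, and then show $\phi_m(x)=y$.

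The role of the collinearity hypothesis is to make the relevant geodesic unique. Since $x$ and $y$ are collinear with an extreme point $a$ of $\proj(\closure C)$, the line $L$ through them has $a$ as an endpoint. As no non-degenerate segment of $\closure D$ contains an extreme point in its relative interior, Proposition~\ref{prop:unique_geodesics} shows that $L$, and in fact every line through $a$, is a unique geodesic. A Hilbert isometry fixing $m$ with derivative $-\id$ at $m$ must reverse the unique geodesic through $m$; hence $\phi_m$ carries the segment $[m,x]$ to the segment of equal length on the opposite side of $m$, which is $[m,y]$. Therefore $\phi_m(x)=y$, and $\phi_m\after\phi_x$ is a collineation taking $x$ to $y$.

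The main difficulty is that Lemma~\ref{lem:almost_all} provides the involutions only for almost every point, whereas $x$ and $m$ are prescribed and may lie in the exceptional null set; so the argument above must be carried out in a limit. Applying Fubini's theorem to the family of lines through $a$, each of which is a unique geodesic, I would choose lines $L_n\to L$ and points $x_n,m_n\in L_n$ with $x_n\to x$ and $m_n\to m$ at which $\phi_{x_n}$ and $\phi_{m_n}$ exist. Reflection in $m_n$ along $L_n$ then gives $\phi_{m_n}(x_n)=y_n$ with $y_n\to y$, so the collineations $T_n:=\phi_{m_n}\after\phi_{x_n}$ satisfy $T_n(x_n)=y_n$. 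The crux is to pass to the limit: after extracting a subsequence I expect the $T_n$ to converge to a collineation $T$ with $T(x)=\lim T_n(x_n)=\lim y_n=y$. Securing the compactness of $\{T_n\}$ in $\coll(\proj(C))$ and the continuity needed for this last equality is the real work; here I would use the Lipschitz control of Thompson isometries together with the uniqueness of the geodesics through $a$ to keep the limiting collineation non-degenerate.
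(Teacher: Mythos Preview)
Your approach is essentially the paper's: compose two gauge-reversing involutions from Lemma~\ref{lem:almost_all} to get a linear map, use the extreme-point hypothesis via Proposition~\ref{prop:unique_geodesics} to make the relevant line uniquely geodesic so that the midpoint involution acts as a reflection, and pass to a limit of collineations. The only difference is cosmetic: you take the two involutions at approximations of $x$ and of the midpoint $m$ (forcing both onto a common line through $a$, which needs your Fubini step), whereas the paper takes them at approximations of $m$ and of $y$, with the second sequence unconstrained---this avoids Fubini but requires instead a short $1$-Lipschitz argument to show $\phi_{w_n}(y_n)\to y$; the compactness endgame is identical.
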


\begin{proof}
Let $z$ be the mid-point, in the Hilbert metric on $\proj(C)$, between $x$
and $y$ on the straight line joining them. By Lemma~\ref{lem:almost_all},
we may find a sequence $z_n$ in $\proj(C)$ converging to $z$ such that,
for all $n\in\N$, there is a gauge-reversing map $\phi_{z_n}\colon C\to C$
that fixes some representative $\tilde z_n\in C$ of $z_n$, and has derivative
$-\id$ at $\tilde z_n$. Considering the action of $\phi_{z_n}$ on the
projective space $\proj(C)$, we see that $\phi_{z_n}$
fixes $z_n=\proj(\tilde z_n)$ and its derivative there is $-\id$.

By assumption, there is an extreme point $a$ of $\proj(\closure C)$ such that
$a$, $x$, and $y$ are collinear. We may assume, by relabeling if necessary,
that $x$ lies between $a$ and $y$. For each $n\in\N$, define the
straight line segment $L_n:= az_n\intersection \proj(C)$,
and let $x_n$ and $y_n$ be the two points on $L_n$ satisfying
\begin{align*}
\hil(x_n,z_n) = \hil(z_n,y_n) = \frac{1}{2}\hil(x,y).
\end{align*}
We label these two points in such a way that $x_n$ lies between $a$ and $y_n$;
see Figure~\ref{fig:proof_lemma}. Clearly, $(x_n)$ and $(y_n)$ converge,
respectively, to $x$ and $y$ as $n$ tends to infinity.

Since $az_n$ passes through an extreme point of $\proj(\closure C)$, the line
segment $L_n$ is uniquely geodesic with respect to the Hilbert metric.
So, $\phi_{z_n}(L_n)$ is also uniquely geodesic, and hence a straight line
segment. Using now that $\phi_{z_n}$ fixes $z_n$ and has derivative $-\id$
there, we get that $\phi_{z_n}$ leaves $L_n$ invariant and reverses its
orientation. So we have, $\phi_{z_n}(x_n)=y_n$ for all $n\in\N$.

Again by Lemma~\ref{lem:almost_all}, there exists a sequence $w_n$
in $\proj(C)$ converging to $y$ such that, for all $n\in\N$,
there is a gauge-reversing map $\phi_{w_n}\colon C\to C$ that fixes
a representative $\tilde w_n\in C$ of $w_n$, and therefore fixes $w_n$.
For each $n\in\N$, the map $f_n\colon C\to C$ defined by
$f_n:=\phi_{w_n}\after\phi_{z_n}$ is gauge-preserving, and hence linear,
by Theorem~\ref{thm:linear}. Therefore, the action
of $f_n$ on $\proj(C)$ is in $\coll(\proj(C))$, for each $n\in\N$.

Observe that the sequences $(y_n)$ and $(w_n)$ have the same limit, and that
$\phi_{w_n}(w_n)=w_n$ converges to $y$. So, using that the $\{\phi_{w_n}\}$
are all $1$-Lipschitz, we get that $\phi_{w_n}(y_n)$ converges to $y$.
But $\phi_{w_n}(y_n)=f_n(x_n)$ for all $n\in\N$, and $(x_n)$ converges
to $x$. We conclude that $f_n(x)$ converges to $y$.
This implies that the maps $\{f_n\}$
all lie in some bounded subset of $\coll(\proj(C))$.
It follows that there exists
$f\in\coll(\proj(C))$ such that some subsequence of $(f_n)$ converges
to $f$ uniformly on compact sets of $\proj(C)$. Evidently, $f(x)=y$.
\end{proof}

\begin{lemma}
\label{lem:homogeneous}
Assume there exists a gauge-reversing map $\phi\colon C\to C$.
Then, $C$ is an homogeneous cone.
\end{lemma}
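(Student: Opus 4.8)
The plan is to deduce homogeneity of $C$ from the transitivity of the collineation group on $\proj(C)$, which in turn will follow from Lemma~\ref{lem:exist_collineation} by an accessibility argument. First I would reduce the statement: the cone $C$ is homogeneous if and only if $\coll(\proj(C))$ acts transitively on $\proj(C)$. Indeed, given $x,y\in C$, any collineation taking $\proj(x)$ to $\proj(y)$ lifts to a linear map $A\in\aut(C)$ with $Ax=\lambda y$ for some $\lambda>0$; since positive scalings preserve $C$, the map $\lambda^{-1}A$ lies in $\aut(C)$ and sends $x$ to $y$. So it suffices to prove transitivity on $\proj(C)$.

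Fix $x\in\proj(C)$ and let $O\subseteq\proj(C)$ be its orbit under $\coll(\proj(C))$. By Lemma~\ref{lem:exist_collineation}, for every $p\in O$ and every extreme point $a$ of $K:=\proj(\closure C)$, the entire relatively-open segment in which the line through $a$ and $p$ meets $\proj(C)$ lies in $O$. The key step is to show that $O$ is open. By Minkowski's theorem, the compact convex set $K$ is the convex hull of its extreme points, so the directions from the interior point $p$ to the extreme points of $K$ positively span the ambient space; hence $p$ can be surrounded by a full-dimensional simplex $\Delta=\operatorname{conv}\{a_0,\dots,a_n\}$ whose vertices $a_0,\dots,a_n$ are affinely independent extreme points of $K$, where $n=\dim\proj(C)$. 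Working in barycentric coordinates on $\Delta$, the line through $p$ and the vertex $a_j$ consists of the points obtained from $p$ by rescaling the $j$th coordinate and renormalising; moving along such a line is precisely the operation permitted by Lemma~\ref{lem:exist_collineation}, since $a_j$ is an extreme point and the whole path stays inside $\interior\Delta\subseteq\proj(C)$. Rescaling the coordinates one at a time, I can reach any point of $\interior\Delta$ from $p$ by composing finitely many such collineations. Hence $O$ contains the neighbourhood $\interior\Delta$ of $p$, proving that $O$ is open.

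Finally, the orbits of $\coll(\proj(C))$ partition $\proj(C)$, and the argument above shows that every orbit is open; therefore each orbit is also closed, being the complement of a union of open orbits. Since $\proj(C)$ is connected, there is a single orbit, so $\coll(\proj(C))$ acts transitively on $\proj(C)$, and by the reduction above $C$ is homogeneous.

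I expect the main obstacle to be the openness of the orbit, that is, verifying the accessibility argument: one must ensure that the extreme points of $K$ are plentiful enough to surround $p$ by a full-dimensional simplex, and that moving toward the simplex vertices suffices to sweep out a whole neighbourhood rather than merely a lower-dimensional subset. The barycentric-coordinate picture on the simplex is what makes this transparent, reducing the local transitivity to the obvious transitivity of the positive diagonal rescalings on the open simplex.
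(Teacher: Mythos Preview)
Your strategy—reduce to transitivity of $\coll(\proj(C))$, show each orbit is open via moves along lines through extreme points, and conclude by connectedness—is the same as the paper's, only phrased projectively rather than in the cone. The paper observes that if $y=x+z$ with $z$ an extremal generator then $\proj(x),\proj(y),\proj(z)$ are collinear with $\proj(z)$ extreme, invokes Lemma~\ref{lem:exist_collineation} to get an element of $\aut(C)$ sending $x$ to $y$, and then simply asserts that any two points of $C$ can be linked by finitely many such additions and subtractions of extremal generators.

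There is, however, a genuine gap in your openness argument. The claim that an interior point $p$ always lies in the interior of a simplex whose vertices are extreme points of $K$ is false: take $K$ to be a square in the plane and $p$ its centre. The centre lies on both diagonals, hence on the boundary of every triangle formed by three of the four corners, so no such simplex exists. The implication ``the directions from $p$ to the extreme points positively span, therefore such a simplex exists'' simply does not hold. The repair is easy and in fact your barycentric picture almost gives it: drop the containment requirement, choose any $n+1$ affinely independent extreme points $a_0,\dots,a_n$, and consider the smooth map $\Phi(t_0,\dots,t_n)$ obtained by moving successively from $p$ toward $a_0$, then toward $a_1$, and so on, by parameters $t_0,\dots,t_n$. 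Its differential at $0$ has columns $a_0-p,\dots,a_n-p$, which span the $n$-dimensional tangent space since the $a_i$ are affinely independent; hence $\Phi$ is open at $0$ and the orbit contains a neighbourhood of $p$. Equivalently, argue in the cone as the paper does: pick extremal generators $z_1,\dots,z_n$ forming a linear basis, and note that $x\mapsto x+\sum_i t_i z_i$ sweeps out a neighbourhood of $x$ inside $C$ for small $t_i$, each step being an addition or subtraction of an extremal generator.
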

\begin{proof}
Let $x$ and $y$ be points in $C$ such that $y=x+z$, where $z$ is an extremal
generator of $C$. So, $\proj(x)$, $\proj(y)$, and $\proj(z)$ are collinear
in the projective space $\proj(C)$, and $\proj(z)$ is an extreme point
of $\proj(\closure C)$. Therefore, by Lemma~\ref{lem:exist_collineation},
some element of the linear automorphism group $\aut(C)$ maps $x$ to a positive
multiple of $y$. Combining this
automorphism with multiplication by a positive scalar, we can in fact
find an element of $\aut(C)$ that maps $x$ to $y$. The result now follows
since one can get from any element of $C$ to any other by adding
and subtracting a finite number of extremal generators of $C$.
\end{proof}

\section{The Horofunction boundary}
\label{sec:horoboundary}

\newcommand\iso{f}
\newcommand\mapclass{h}
\newcommand\distfn{\psi}
\newcommand\horocyclic{\phi}
\newcommand\dist{d}
\newcommand\symdist{d_{\text{sym}}}
\newcommand\geo{\gamma}
\newcommand\horofunction{h}
\newcommand\isometric{\cong}

We recall in this section the definition of the horofunction boundary,
which will be used extensively in the rest of the paper.
The setting will be that of quasi-metric spaces since some of the metrics
with which we will be dealing, namely, the Funk and reverse-Funk metrics,
are not symmetric.

Let $(X,\dist)$ be a quasi-metric space, that is, a space that satisfies
the usual metric space axioms apart from that of symmetry.
We endow $X$ with the topology induced by the symmetrised metric
$\symdist(x,y):=\dist(x,y)+\dist(y,x)$, which for Funk and reverse-Funk metrics
is the Hilbert metric.

To each point $z\in X$, associate the function
$\distfn_z\colon X\to \R$,
\begin{equation*}
\distfn_z(x) := \dist(x,z)-\dist(b,z),
\end{equation*}
where $b$ is some fixed base-point.
Consider the map $\distfn\colon X\to C(X),\, z\mapsto \distfn_z$ from
$X$ into $C(X)$, the space of continuous real-valued functions
on $X$ endowed with the topology of uniform convergence on bounded sets
of $\symdist$.
This map can be shown to be injective and
continuous~\cite{BGS}.
The \emph{horofunction boundary} is defined to be
\begin{align*}
X(\infty):=\closure\{\distfn_z\mid z\in X\}\backslash\{\distfn_z\mid z\in X\},
\end{align*}
and its elements are called horofunctions.

It is easy to check that the horofunction boundaries obtained using different
base-points are homeomorphic to one another, and that indeed corresponding
horofunctions differ only by an additive constant.

\renewcommand{\theenumi}{\Roman{enumi}}
\renewcommand{\labelenumi}{(\theenumi)}

A geodesic in a quasi-metric space $(X,\dist)$ is a map
$\gamma$ from an interval of $\R$ to $X$ such that
$\dist(\gamma(s),\gamma(t)) = t-s$,
for all $s$ and $t$ in the domain, with $s<t$.
The space $(X,\dist)$ is said to be \emph{geodesic} if for any pair
of points $x$ and $y$ in $X$, there is a geodesic $\gamma\colon[s,t]\to X$
with respect to $d$ that starts at $x$ and ends at $y$.

We make the following assumptions:
\begin{enumerate}
\item
\label{ass:proper}
the metric $\symdist$ is proper, that is, its closed balls are compact;
%the metric $\symdist$ is proper;
\item
\label{ass:geodesic}
 $(X,\dist)$ is geodesic;
\item
\label{ass:topology}
for any point $x$ and sequence $x_n$ in $X$, we have $\dist(x_n,x)\to 0$
if and only if $\dist(x,x_n) \to 0$.
\end{enumerate}

These assumptions are satisfied by the Funk and reverse-Funk metrics.

Under assumptions~(\ref{ass:proper}), (\ref{ass:geodesic}),
and~(\ref{ass:topology}), it can be shown that $\distfn$ is an embedding
of $X$ into $C(X)$, in other words, that it is a homeomorphism from $X$
to its image. From now on we identify $X$ with its image.

We will need the next proposition in Section~\ref{sec:product}.

\begin{proposition}
\label{prop:inf_minus_infinity}
Let $(X,d)$ be a proper geodesic metric space. Then $\inf\xi=-\infty$,
for any horofunction $\xi$.
\end{proposition}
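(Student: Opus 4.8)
The plan is to exploit geodesics emanating from the basepoint $b$ towards the ``direction'' defining $\xi$, along which the approximating distance functions take the simple value $-t$ at arc-length $t$. First I would choose a sequence $(z_n)$ in $X$ with $\distfn_{z_n}\to\xi$ uniformly on bounded sets. Since $\xi$ is a horofunction it is not of the form $\distfn_z$ for any $z\in X$, and so $(z_n)$ can contain no subsequence converging in $X$: if $z_{n_k}\to z$, then by continuity of the map $z\mapsto\distfn_z$ we would have $\distfn_{z_{n_k}}\to\distfn_z$, forcing $\xi=\distfn_z$. Because $\dist$ is proper, every bounded sequence subconverges, so $(z_n)$ must be unbounded; more precisely $\dist(b,z_n)\to\infty$, since any subsequence with $\dist(b,z_n)$ bounded would lie in a compact ball and subconverge.

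Next, I would fix $T>0$ and restrict to those $n\in\N$ with $\dist(b,z_n)>T$. As $X$ is geodesic, I can pick a unit-speed geodesic $\geo_n\colon[0,\dist(b,z_n)]\to X$ from $b$ to $z_n$. Since geodesics are distance-minimising, $\dist(b,\geo_n(T))=T$ and $\dist(\geo_n(T),z_n)=\dist(b,z_n)-T$, so that
\[
\distfn_{z_n}(\geo_n(T)) = \dist(\geo_n(T),z_n) - \dist(b,z_n) = -T
\qquad\text{for all such $n$.}
\]
The points $\geo_n(T)$ all lie in the closed $\dist$-ball of radius $T$ about $b$, which is compact by properness; hence, after passing to a subsequence, $\geo_n(T)\to p_T$ for some $p_T\in X$.

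The step requiring a little care is interchanging the two limits: the convergence $\distfn_{z_n}\to\xi$ is uniform on the bounded ball of radius $T$, while the arguments $\geo_n(T)$ move (though they remain in that ball). Splitting
\[
\big|\distfn_{z_n}(\geo_n(T))-\xi(p_T)\big|
\le \big|\distfn_{z_n}(\geo_n(T))-\xi(\geo_n(T))\big|
  + \big|\xi(\geo_n(T))-\xi(p_T)\big|,
\]
the first term tends to $0$ by uniform convergence on the ball and the second by continuity of $\xi$; hence $\distfn_{z_n}(\geo_n(T))\to\xi(p_T)$. Combined with the displayed identity this gives $\xi(p_T)=-T$. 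As $T>0$ was arbitrary, $\inf\xi\le -T$ for every $T$, and therefore $\inf\xi=-\infty$.
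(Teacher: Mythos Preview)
Your proof is correct and follows essentially the same approach as the paper's: take geodesics from the basepoint to the approximating points $z_n$, evaluate the normalised distance functions at the point at arc-length $T$ (getting $-T$), extract a convergent subsequence of these midpoint positions by properness, and pass to the limit. The only cosmetic difference is in the limit-interchange step: you invoke uniform convergence on bounded sets together with continuity of $\xi$, whereas the paper uses directly that the functions $d(\cdot,z_n)-d(b,z_n)$ are $1$-Lipschitz (which of course underlies both the uniform convergence and the continuity you cite).
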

\begin{proof}
Let $x_n$ be a sequence converging to $\xi$. Since $X$ is proper, we have that
$d(b,x_n)$ converges to infinity, where $b$ is the base-point.
For each $n\in\N$, let $\gamma_n\colon[0,d(b,x_n)]\to X$ be a geodesic segment
between $b$ and $x_n$. Choose $t>0$. For $n$ large enough,
$d(\gamma_n(t),x_n) = d(b,x_n)-t$. Since the sequence $(\gamma_n(t))_n$ lies
in a compact set, we may, by taking a subsequence if necessary, assume that
it has a limit $y$. Using that the functions $d(\cdot,x_n)-d(b,x_n)$ are
$1$-Lipschitz, we get that
\begin{align*}
\xi(y) = \lim_{n\to\infty} d(\gamma_n(t),x_n)-d(b,x_n)
       = -t.
\end{align*}
The result follows since $t$ is arbitrary.
\end{proof}

Isometries between quasi-metric spaces extend continuously
to homeomorphisms between their horofunction compactifications.
Assume that $\iso$ is an isometry from one quasi-metric
space $(X,d)$ to another $(X',d')$, with base-points $b$ and $b'$,
respectively. Then, for every horofunction $\xi$ and point $x\in X$,
\begin{align*}
\label{prop:transform_horo}
\iso\cdot\xi(x) = \xi(\iso^{-1}(x)) - \xi(\iso^{-1}(b')).
\end{align*}

\subsection{Almost-geodesics and Busemann points}

Let $(X,d)$ be a metric space.
We call a path $\gamma\colon T\to X$,
with $T$ an unbounded subset of $\Rplus$ containing $0$,
an \emph{almost-geodesic} if, for each $\epsilon>0$,
there exists $N\in\R$ such that
\begin{equation*}
|\dist(\gamma(0),\gamma(s))+\dist(\gamma(s),\gamma(t))-t|<\epsilon,
\qquad\text{for all $s$ and $t$ with $N\le s\le t$}.
\end{equation*}

Rieffel proved in ~\cite{rieffel_group} that every almost-geodesic converges.
We say that a horofunction is a \emph{Busemann point}
if there exists an almost-geodesic converging to it,
and denote by $X_B(\infty)$ the set of all Busemann points in $X(\infty)$.

The following alternative characterisation of almost-geodesics will be useful.

\begin{lemma}
\label{lem:alt_almost_geo}
Let $(X,d)$ be a proper geodesic metric space.
A map $\gamma\colon T\to X$, with $T$ an unbounded subset of $\Rplus$
containing $0$, is an almost-geodesic if and only if, given any $\epsilon>0$,
\begin{align}
\label{eqn:alt_geo_1}
|d(\gamma(0),\gamma(s))-s| &< \epsilon
\qquad\text{and} \\
\label{eqn:alt_geo_2}
|d(\gamma(s),\gamma(t)) - t + s| &< \epsilon,
\end{align}
for all $s,t\in T$ large enough, with $s\le t$.
\end{lemma}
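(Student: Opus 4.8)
The plan is to prove both implications by elementary manipulation of the defining inequality of an almost-geodesic, namely $|d(\gamma(0),\gamma(s))+d(\gamma(s),\gamma(t))-t|<\epsilon$. It is worth noting at the outset that neither properness nor the geodesic hypothesis is actually needed for this particular equivalence; the only structural fact I will use is that $d(\gamma(s),\gamma(s))=0$, which lets the argument go through even when the domain $T$ is a sparse subset of $\Rplus$.

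For the ``if'' direction, I would assume \eqref{eqn:alt_geo_1} and \eqref{eqn:alt_geo_2} hold. Given $\epsilon>0$, apply both with $\epsilon/2$ in place of $\epsilon$ to obtain an $N$ such that $|d(\gamma(0),\gamma(s))-s|<\epsilon/2$ and $|d(\gamma(s),\gamma(t))-t+s|<\epsilon/2$ whenever $N\le s\le t$. Adding these two inequalities and cancelling the $s$ terms yields $|d(\gamma(0),\gamma(s))+d(\gamma(s),\gamma(t))-t|<\epsilon$, which is exactly the almost-geodesic condition.

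For the ``only if'' direction, suppose $\gamma$ is an almost-geodesic and fix $\epsilon>0$. Choose $N$ so that the defining inequality holds with $\epsilon/2$ for all $N\le s\le t$. The key step is to specialise to $s=t$: since $d(\gamma(s),\gamma(s))=0$, this immediately gives $|d(\gamma(0),\gamma(s))-s|<\epsilon/2$ for all $s\ge N$, which is \eqref{eqn:alt_geo_1}. To recover \eqref{eqn:alt_geo_2}, I would write the quantity $d(\gamma(s),\gamma(t))-t+s$ as the difference $[\,d(\gamma(0),\gamma(s))+d(\gamma(s),\gamma(t))-t\,]-[\,d(\gamma(0),\gamma(s))-s\,]$ and apply the triangle inequality for absolute values, bounding the first bracket by $\epsilon/2$ via the defining inequality and the second by $\epsilon/2$ via the $s=t$ specialisation. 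This gives $|d(\gamma(s),\gamma(t))-t+s|<\epsilon$ for $N\le s\le t$, as required.

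Since both directions reduce to adding or subtracting the two defining quantities and keeping track of the $\epsilon$'s, I do not expect any genuine obstacle. The only point that requires a moment's thought is recognising that setting $s=t$ in the almost-geodesic inequality isolates \eqref{eqn:alt_geo_1}, after which \eqref{eqn:alt_geo_2} drops out by subtraction.
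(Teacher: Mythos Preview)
Your argument is correct. The paper itself does not spell out a proof here: it cites Rieffel for the ``only if'' direction and simply declares the converse ``equally straightforward'', so your write-up is in fact more complete than what appears in the paper, and your observation that neither properness nor the geodesic hypothesis is used is accurate.
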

\begin{proof}
That~(\ref{eqn:alt_geo_1}) and~(\ref{eqn:alt_geo_2}) hold for any
almost-geodesic was proved by Rieffel~\cite{rieffel_group}.
The implication in the opposite direction is equally straightforward.
\end{proof}

\begin{lemma}
\label{lem:whole_of_Rplus}
Let $(X,d)$ be a proper geodesic metric space,
and let $\xi$ be a Busemann point.
Then, there exists an almost-geodesic defined on the whole of $\Rplus$
that converges to $\xi$.
\end{lemma}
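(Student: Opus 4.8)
The plan is to take a Busemann point $\xi$ together with an almost-geodesic $\gamma\colon T\to X$ converging to it, and upgrade it to an almost-geodesic defined on all of $\Rplus$ that still converges to $\xi$. The point of the statement is that the domain $T$ of an arbitrary almost-geodesic is only assumed to be an unbounded subset of $\Rplus$ containing $0$, so it may have gaps; I want to fill in those gaps by inserting genuine geodesic segments, using the properness and geodesic hypotheses on $(X,d)$, without destroying the almost-geodesic estimates from Lemma~\ref{lem:alt_almost_geo}.

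First I would fix $\epsilon>0$ and use Lemma~\ref{lem:alt_almost_geo} to obtain $N$ such that $|d(\gamma(0),\gamma(s))-s|<\epsilon$ and $|d(\gamma(s),\gamma(t))-t+s|<\epsilon$ for all $s\le t$ in $T$ beyond $N$. The idea is then, between two consecutive parameter values $s<t$ of $T$ (or, more carefully, between successive sample points chosen from $T$), to connect $\gamma(s)$ and $\gamma(t)$ by a geodesic segment, which exists because $(X,d)$ is geodesic. I would reparametrise the whole concatenation by arc length so as to produce a single path $\tilde\gamma\colon[0,\infty)\to X$. Along each inserted geodesic segment the defining almost-geodesic inequality holds exactly, with no error, so the only contributions to the error come from the original sample points, where the estimates above are in force. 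The bookkeeping to check is that the inequality $|d(\tilde\gamma(0),\tilde\gamma(s))+d(\tilde\gamma(s),\tilde\gamma(t))-t|<\epsilon'$ survives the reparametrisation and the splicing; since inserting a geodesic segment between $\gamma(s)$ and $\gamma(t)$ changes the traversal distance from the value $t-s$ controlled by~(\ref{eqn:alt_geo_2}) to the exact distance $d(\gamma(s),\gamma(t))$, which differs from $t-s$ by at most $\epsilon$, the accumulated discrepancy stays uniformly bounded by a constant multiple of $\epsilon$, and this can be absorbed by diminishing $\epsilon$ at the outset.

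The second thing to verify is that the new path $\tilde\gamma$ still converges to the same horofunction $\xi$. Here I would use that $\tilde\gamma$ passes through the original points $\gamma(s_n)$ (for a cofinal sequence $s_n$ in $T$) and that these converge to $\xi$ in the horofunction compactification; since $\tilde\gamma$ is an almost-geodesic it converges (by Rieffel's theorem), and its limit must agree with the limit of the subsequence $\gamma(s_n)$, namely $\xi$. To make this clean I would note that the functions $d(\cdot,\tilde\gamma(r))-d(b,\tilde\gamma(r))$ are $1$-Lipschitz, so uniform control on bounded sets passes to the limit and the limiting function is forced to be $\xi$.

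The main obstacle I anticipate is the splicing estimate: one must confirm that concatenating geodesic segments across the gaps of $T$ does not let the almost-geodesic error accumulate additively over infinitely many segments. The resolution is that the almost-geodesic condition is an asymptotic one — the estimates of Lemma~\ref{lem:alt_almost_geo} need only hold for $s,t$ large — so for a fixed $\epsilon$ there are finitely many ``bad'' early segments, and beyond $N$ each splice introduces an error bounded by the same fixed $\epsilon$ rather than a fresh increment per gap, because the cumulative deficit is exactly the sum telescoping through the sample points, each term of which is controlled by~(\ref{eqn:alt_geo_1}) and~(\ref{eqn:alt_geo_2}). Properness of $d$ guarantees the inserted segments stay in compact sets so that the limiting arguments go through.
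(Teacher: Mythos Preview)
Your strategy---sampling a cofinal sequence from $T$ and bridging the gaps by geodesic segments---is exactly the paper's. The difference is in the reparametrisation, and that is where your proposal has a real gap.

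The paper does \emph{not} reparametrise by arc length. Instead it linearly rescales each geodesic segment so that the interpolated path $\gamma$ satisfies $\gamma(t_n)=\alpha(t_n)$ at the original parameter values. The key computation is then that, for $s,t$ in the same interval $[t_n,t_{n+1}]$,
\[
\frac{d(\gamma(s),\gamma(t))}{d(\alpha(t_n),\alpha(t_{n+1}))}=\frac{t-s}{t_{n+1}-t_n}\le 1,
\]
so $|d(\gamma(s),\gamma(t))-(t-s)|\le |d(\alpha(t_n),\alpha(t_{n+1}))-(t_{n+1}-t_n)|<\epsilon$ once $n$ is large. For general $s,t$, one brackets by the nearest sample points and uses the triangle inequality together with~(\ref{eqn:alt_geo_1}) at $t_n$ and $t_{n+1}$. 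Crucially, because $\gamma(t_n)=\alpha(t_n)$, there is no drift between the new parameter and the old one, and the error never accumulates.

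Under your arc-length parametrisation the sample point $\alpha(t_n)$ sits at the new parameter $s_n=\sum_{k<n} d(\alpha(t_k),\alpha(t_{k+1}))$. Condition~(\ref{eqn:alt_geo_1}) for $\tilde\gamma$ then requires $s_n-d(\alpha(0),\alpha(t_n))$ to stay bounded; but each summand of $s_n$ may exceed $t_{k+1}-t_k$ by up to $\epsilon$, so the discrepancy can grow like $n\epsilon$. Your claim that ``the cumulative deficit is exactly the sum telescoping through the sample points'' is not correct: nothing telescopes, and ``diminishing $\epsilon$ at the outset'' cannot compensate for an error that grows without bound in $n$. (One \emph{can} rescue arc-length by first extracting an $\epsilon$-almost-geodesic in the sense of~\cite{AGW-m}, for which $s_n\le d(\alpha(0),\alpha(t_n))+\epsilon$ by definition, but that is an extra step you have not supplied.)

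The quickest repair is simply to adopt the paper's linear rescaling so that the interpolated path hits $\alpha(t_n)$ at time $t_n$; then your remaining outline (checking~(\ref{eqn:alt_geo_1}) and~(\ref{eqn:alt_geo_2}), and identifying the limit via the subsequence $\gamma(t_n)\to\xi$) goes through as you describe. The properness hypothesis is in fact not used in the paper's argument for this lemma; geodesicity alone suffices.
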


When $X$ is proper and geodesic, one may take $T$ to be $\Rplus$, as the
following lemma demonstrates.

\begin{proof}
Since $\xi$ is a Busemann point, there exists an almost-geodesic
$\alpha\colon T\to X$ converging to it, where $T$ is an unbounded subset of $\Rplus$
containing $0$. Choose a strictly increasing sequence $(t_n)$ in $T$, starting
at $t_0:=0$ and converging to infinity.
Since $(X,d)$ is a geodesic space, we may find, for each $n\in\N$, a geodesic
segment $\beta_n\colon [0,d(\alpha(t_{n}),\alpha(t_{n+1}))]\to X$ from $\alpha(t_n)$
to $\alpha(t_{n+1})$. We interpolate between the points $\alpha(t_n);n\in\N$
by reparametrising these geodesic segments and concatenating them.
Define $\gamma\colon \Rplus\to X$ by
\begin{align*}
\gamma(t):=
   \beta_n\Big( \frac{t-t_n}{t_{n+1}-t_n}
                   d(\alpha(t_{n}),\alpha(t_{n+1})) \Big),
\qquad\text{for all $t\in\Rplus$},
\end{align*}
where $n$ depends on $t$ and is such that $t_n\le t< t_{n+1}$.
Observe that $\gamma(t_n)=\alpha(t_n)$, for all $n\in\N$.

Suppose we are given an $\epsilon>0$.
Let $s$ and $t$ both lie in $[t_n,t_{n+1}]$ for some $n\in\N$. So,
\begin{align*}
\frac{d(\gamma(s),\gamma(t))}{d(\alpha(t_{n}),\alpha(t_{n+1}))}
   = \frac{t-s}{t_{n+1}-t_{n}}
   \le 1.
\end{align*}
From Lemma~\ref{lem:alt_almost_geo}, if $n$ is large enough, then
$|d(\alpha(t_{n}),\alpha(t_{n+1}))-t_{n+1}+t_{n}|<\epsilon/2$.
Therefore, in this case, $|d(\gamma(s),\gamma(t))-t+s|<\epsilon/2$.
This shows that~(\ref{eqn:alt_geo_2}) holds when $s$ and $t$ are large enough
and lie in the same interval $[t_n,t_{n+1}]$.

Now let $p$ lie in $[t_n,t_{n+1}]$ with $n\in\N$. The triangle inequality
gives
\begin{align*}
d\big(\gamma(0),\gamma(t_{n+1})\big) - d\big(\gamma(p),\gamma(t_{n+1})\big)
   &\le d\big(\gamma(0),\gamma(p)\big) \\
   &\le d\big(\gamma(0),\gamma(t_{n})\big)
        + d\big(\gamma(t_{n}),\gamma(p)\big).
\end{align*}
From Lemma~\ref{lem:alt_almost_geo}, both $|d(\gamma(0),\gamma(t_{n}))-t_n|$
and $|d(\gamma(0),\gamma(t_{n+1}))-t_{n+1}|$ are less than $\epsilon/2$,
if $n$ is large enough.
Using this and what we proved in the previous paragraph, we get that,
when $p$ is large enough,~(\ref{eqn:alt_geo_1}) holds, with $p$ substituted
for $s$.

The proof that~(\ref{eqn:alt_geo_2}) holds goes along similar lines.
\end{proof}

In the reverse-Funk metric, one may approach the boundary along a path
of finite length. So, for such spaces we must modify the definition of
almost-geodesic.
We drop the requirement that $T$ be unbounded and instead require that
it that $\sup T$ is a limit point but not an element of $T$.

A path $\gamma\colon T\to X$ is now said to be
an almost-geodesic if, for each $\epsilon>0$,
there exists $N<\sup T$ such that
\begin{equation*}
|\dist(\gamma(0),\gamma(s))+\dist(\gamma(s),\gamma(t))-t|<\epsilon,
\qquad\text{for all $s$ and $t$ with $N\le s\le t$}.
\end{equation*}
One may show again that every almost-geodesic converges, however the limit
may now be a point in $X$.
Again, a Busemann point is a horofunction that is the limit of an
almost-geodesic.
One may verify that most of the results concerning Busemann points
carry over to this new definition.

The Busemann functions provide enough information, in certain cases, to
recover the metric.
We will use the following result in Section~\ref{sec:hilbert_isometries}.

\begin{proposition}
\label{prop:sup_formula}
Let $(X,d)$ be a quasi-metric space satisfying
assumptions~(\ref{ass:proper}), (\ref{ass:geodesic}), and~(\ref{ass:topology}).
Also assume that for each pair of points $x$ and $y$ in $X$ there exists
a geodesic starting at $x$, passing through $y$, and converging to a Busemann
point. Then,
\begin{align*}
d(x,y) = \sup_{\xi} \big(\xi(x) - \xi(y)\big),
\qquad\text{for all $x,y\in X$},
\end{align*}
where the supremum is taken over all Busemann points $\xi$.
\end{proposition}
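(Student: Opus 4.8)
The plan is to prove the two inequalities separately, with the triangle inequality handling one direction and the existence hypothesis handling the other. For the upper bound, I would use that every Busemann point $\xi$ is, by definition, a horofunction, hence a limit, uniformly on bounded sets of $\symdist$, of functions $\distfn_{z_n}(\cdot) = d(\cdot,z_n) - d(b,z_n)$ for some sequence $(z_n)$ in $X$. Fixing $x,y\in X$ and subtracting, the base-point terms cancel, so $\xi(x)-\xi(y) = \lim_n \big(d(x,z_n)-d(y,z_n)\big)$. The triangle inequality $d(x,z_n)\le d(x,y)+d(y,z_n)$ gives $d(x,z_n)-d(y,z_n)\le d(x,y)$, and passing to the limit yields $\xi(x)-\xi(y)\le d(x,y)$. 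Taking the supremum over all Busemann points then shows $\sup_{\xi}\big(\xi(x)-\xi(y)\big)\le d(x,y)$. I would note that this half uses only the triangle inequality and in fact holds for arbitrary horofunctions, not merely Busemann points.

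For the reverse inequality I would invoke the hypothesis directly: given $x,y\in X$, there is a geodesic $\gamma$ starting at $x$, passing through $y$, and converging to some Busemann point $\xi$. I parametrise $\gamma$ so that $\gamma(0)=x$ and $\gamma(s_0)=y$; since $\gamma$ is a geodesic, $d(\gamma(0),\gamma(s_0))=s_0$, that is, $s_0=d(x,y)$. As $\gamma(t)\to\xi$, pointwise convergence at $x$ and $y$ gives, after the base-point terms cancel as above, $\xi(x)-\xi(y)=\lim_t\big(d(x,\gamma(t))-d(y,\gamma(t))\big)$. For $t>s_0$ the geodesic identity supplies $d(x,\gamma(t))=d(\gamma(0),\gamma(t))=t$ and $d(y,\gamma(t))=d(\gamma(s_0),\gamma(t))=t-s_0$, so the difference is constantly $s_0=d(x,y)$. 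Hence $\xi(x)-\xi(y)=d(x,y)$, and this particular Busemann point attains the supremum.

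Combining the two steps gives the claimed equality. The argument is short precisely because the existence hypothesis does the essential work, furnishing for each ordered pair $(x,y)$ a single Busemann point that realises the distance. The only points requiring care are bookkeeping ones stemming from the asymmetry of the quasi-metric: one must respect the order of the arguments in $d(\cdot,\cdot)$ throughout, remembering that the horofunctions $\distfn_z$ vary their \emph{first} argument and that the geodesic identity $d(\gamma(s),\gamma(t))=t-s$ is only asserted for $s<t$. I therefore expect the main, and rather minor, obstacle to be confirming that these directional conventions line up so that both $d(x,\gamma(t))=t$ and $d(y,\gamma(t))=t-s_0$ emerge with the correct signs; once that is checked, the pointwise convergence $\distfn_{\gamma(t)}\to\xi$ at $x$ and $y$ (a consequence of convergence in $C(X)$) closes the argument.
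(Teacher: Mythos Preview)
Your proposal is correct and follows essentially the same route as the paper: the upper bound comes from the $1$-Lipschitz property of horofunctions (which you unpack via the approximating sequence and the triangle inequality), and the lower bound comes from taking the geodesic through $x$ and $y$ to a Busemann point and using the geodesic identity to see that this particular $\xi$ attains $\xi(x)-\xi(y)=d(x,y)$. The paper's proof is the same argument, stated slightly more tersely by invoking the $1$-Lipschitz inequality $\xi(x)\le d(x,y)+\xi(y)$ directly rather than rederiving it.
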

\begin{proof}
Let $x,y\in X$. Horofunctions are $1$-Lipschitz, that is,
$\xi(x) \le d(x,y) + \xi(y)$ for each horofunction $\xi$.
This implies that the left-hand-side of the equation above is
greater than or equal to the right-hand-side.

To prove the opposite inequality, let $\gamma$ be a geodesic
starting at $x$, passing through $y$, and converging to a Busemann point $\xi$.
For $t>d(x,y)$, we have $d(x,y)+ d(y,\gamma(t)) = d(x,\gamma(t))$.
It follows that $d(x,y) + \xi(y) = \xi(x)$
\end{proof}

\subsection{The detour metric}

We define the \emph{detour cost} for any two horofunctions $\xi$ and $\eta$
in $X(\infty)$ to be
\begin{align*}
H(\xi,\eta) := \sup_{W\ni\xi} \inf_{x\in W\intersection X}
                       \big(d(b,x) + \eta(x) \big),
\end{align*}
where the supremum is taken over all neighbourhoods $W$ of $\xi$ in
$X\union X(\infty)$. 
An equivalent definition is
\begin{align*}
H(\xi,\eta) := \inf_{\gamma} \liminf_{t\to\sup T}
                       \big(d(b,\gamma(t)) + \eta(\gamma(t)) \big),
\end{align*}
where the infimum is taken over all paths $\gamma\colon T\to X$ converging to $\xi$.
This concept first appears in~\cite{AGW-m}.
More detail about it can be found in~\cite{walsh_stretch}.

The detour cost satisfies the triangle inequality and is non-negative.
The Busemann points can be characterised as those horofunctions $\xi$
satisfying $H(\xi,\xi)=0$.

By symmetrising the detour cost, we obtain a metric on the set of Busemann
points:
\begin{align*}
\delta(\xi,\eta):= H(\xi,\eta) + H(\eta,\xi),
\qquad\text{for all Busemann points $\xi$ and $\eta$}.
\end{align*}
We call $\delta$ the \emph{detour metric}. It is possibly infinite valued,
so it actually an \emph{extended metric}.
One may partition the set of Busemann points into disjoint subsets
in such a way that $\delta(\xi,\eta)$ is finite if and only if $\xi$
and $\eta$ lie in the same subset. We call these subsets the \emph{parts}
of the horofunction boundary.

The following expression for the detour cost will prove useful in
Sections~\ref{sec:product} and~\ref{sec:horofunction_thompson}.

\begin{proposition}
\label{prop:detour}
Let $\xi$ be a Busemann point, and $\eta$ a horofunction of a metric space
$(X,d)$. Then,
\begin{align}
\label{eqn:detour}
H(\xi,\eta)
   = \sup_{x\in X}\big(\eta(x)-\xi(x)\big)
   = \inf\big\{\lambda\in\R \mid \eta(\cdot) \le \xi(\cdot) + \lambda \big\}.
\end{align}
\end{proposition}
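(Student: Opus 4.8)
The plan is to dispose first of the second equality, which is immediate. Writing $\lambda^* := \sup_{x\in X}(\eta(x)-\xi(x))$, the condition $\eta(\cdot)\le\xi(\cdot)+\lambda$ holds for every $x$ precisely when $\lambda\ge\lambda^*$, so the infimum of the admissible $\lambda$ is exactly $\lambda^*$. It then remains to prove $H(\xi,\eta)=\lambda^*$, which I would split into the two inequalities.

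For $H(\xi,\eta)\ge\lambda^*$ I would work with the neighbourhood form of the detour cost, and observe that this direction needs only that $\xi$ be a horofunction. Fix $x\in X$. Every horofunction $\eta$ is forward $1$-Lipschitz, that is $\eta(x)-\eta(x')\le d(x,x')$, since each $\psi_{x'}$ has this property and the inequality passes to the limit. Hence for any $x'\in X$,
\[
d(b,x')+\eta(x') \ge \eta(x) - \big(d(x,x')-d(b,x')\big) = \eta(x)-\psi_{x'}(x).
\]
Taking the infimum over $x'$ in a neighbourhood $W$ of $\xi$ and then the supremum over $W$, and using that $\psi_{x'}(x)\to\xi(x)$ as $x'\to\xi$, I would conclude $H(\xi,\eta)\ge\eta(x)-\xi(x)$; taking the supremum over $x$ gives this inequality.

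For the reverse inequality $H(\xi,\eta)\le\lambda^*$ I would use that $\xi$ is a Busemann point and choose an almost-geodesic $\gamma\colon T\to X$ converging to it. By the definition of $\lambda^*$ applied at the point $\gamma(t)$ we have $\eta(\gamma(t))\le\xi(\gamma(t))+\lambda^*$, so $d(b,\gamma(t))+\eta(\gamma(t))\le \big(d(b,\gamma(t))+\xi(\gamma(t))\big)+\lambda^*$. Feeding $\gamma$ into the path form of the detour cost, so that $H(\xi,\eta)\le\liminf_{t\to\sup T}\big(d(b,\gamma(t))+\eta(\gamma(t))\big)$, it then suffices to show that $d(b,\gamma(t))+\xi(\gamma(t))\to 0$ as $t\to\sup T$.

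This last convergence is the crux of the argument and the only place the Busemann hypothesis genuinely enters. The termwise bound $d(b,\gamma(t))+\xi(\gamma(t))\ge 0$ is the triangle inequality, after writing $\xi(\gamma(t))=\lim_s\big(d(\gamma(t),\gamma(s))-d(b,\gamma(s))\big)$. For the matching upper bound I would invoke the almost-geodesic estimates of Lemma~\ref{lem:alt_almost_geo}: taking the base-point to be $\gamma(0)$, which alters $d(b,\gamma(t))+\xi(\gamma(t))$ only by a quantity tending to $0$ since corresponding horofunctions differ by an additive constant, one obtains $d(\gamma(0),\gamma(s))\approx s$ and $d(\gamma(t),\gamma(s))\approx s-t$ for $t\le s$ large, whence $\psi_{\gamma(s)}(\gamma(t))\approx -t\approx -d(\gamma(0),\gamma(t))$ and the sum vanishes in the limit. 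Equivalently, this is exactly the statement $H(\xi,\xi)=0$ characterising Busemann points. Combining the two inequalities yields $H(\xi,\eta)=\lambda^*$, and the second equality completes the proof.
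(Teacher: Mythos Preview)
Your proof is correct and follows the same overall architecture as the paper's: the second equality is immediate, and the first is split into two inequalities, the upper bound being the one that genuinely uses the Busemann hypothesis via an almost-geodesic. The paper dispatches both inequalities by citing external results (Lemmas~5.1 and~5.2 of \cite{walsh_stretch}), whereas you unpack these and give self-contained arguments.

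A couple of remarks on the details. For the lower bound $H(\xi,\eta)\ge\lambda^*$, your computation is exactly a direct proof of the cited inequality $\eta(\cdot)\le\xi(\cdot)+H(\xi,\eta)$; note that after taking the infimum over $x'\in W\cap X$ and then the supremum over $W$, what appears is $\eta(x)-\inf_W\sup_{x'\in W\cap X}\psi_{x'}(x)$, and the inner quantity is precisely $\limsup_{x'\to\xi}\psi_{x'}(x)=\xi(x)$ by pointwise convergence. For the upper bound, the paper actually invokes the stronger fact that $\lim_t\big(d(b,\gamma(t))+\eta(\gamma(t))\big)=H(\xi,\eta)$ for \emph{any} horofunction $\eta$ along an almost-geodesic to $\xi$, then subtracts the two limits; your route---bounding $\eta(\gamma(t))\le\xi(\gamma(t))+\lambda^*$ pointwise and only needing $d(b,\gamma(t))+\xi(\gamma(t))\to 0$---is slightly more economical and avoids proving that stronger statement. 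Your basepoint-change justification is fine: the difference between $d(b,\gamma(t))+\xi(\gamma(t))$ and $d(\gamma(0),\gamma(t))+\xi^{\gamma(0)}(\gamma(t))$ is $\big(d(b,\gamma(t))-d(\gamma(0),\gamma(t))\big)+\xi(\gamma(0))=-\psi_{\gamma(t)}(\gamma(0))+\xi(\gamma(0))\to 0$ since $\gamma(t)\to\xi$.
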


\begin{proof}
First observe that the second equality of~(\ref{eqn:detour}) is easy to prove.

According to~\cite[Lemma~5.1]{walsh_stretch},
$\eta(\cdot)\le\xi(\cdot)+H(\xi,\eta)$.
This implies that $H(\xi,\eta)$ is greater than or equal to the
right-hand-side of~(\ref{eqn:detour}).

Let $\gamma$ be an almost-geodesic converging to $\xi$.
By~\cite[Lemma~5.2]{walsh_stretch},
\begin{align*}
\lim_{t\to\infty}\big(d(b,\gamma(t))+\eta(\gamma(t))\big)
   &= H(\xi,\eta),
\qquad\text{and} \\
\lim_{t\to\infty}\big(d(b,\gamma(t))+\xi(\gamma(t))\big)
   &= 0.
\end{align*}
Therefore,
$\lim_{t\to\infty}\big(\eta(\gamma(t))-\xi(\gamma(t))\big)=H(\xi,\eta)$.
%This shows that $H(\xi,\eta)\le\sup_{x\in X}\big(\eta(x)-\xi(x)\big)$.
This shows that $H(\xi,\eta)\le\sup(\eta-\xi)$.
\end{proof}

\section{The horofunction boundary of the Hilbert geometry}
\label{sec:hilbert_horoboundary}

Let $(D,\hil)$ be a Hilbert geometry with base-point $b$.
The horofunction boundary of this geometry is best understood by first
considering the horofunction boundaries of the Funk and the reverse-Funk
geometries.

\subsection{The horofunction boundary of the reverse-Funk geometry}

The following proposition says essentially that the horofunction boundary
of the reverse-Funk geometry is just the usual boundary, and gives an explicit
formula for the horofunctions.
For each $x\in \linspace$, define the function
$r_x(\cdot) := \rfunk(\cdot, x) - \rfunk(b, x)$.

\begin{proposition}[\cite{walsh_hilbert}]
\label{prop:reverseFunk}
The set of horofunctions of the reverse-Funk geometry on $D$ is
$\{r_x \mid x\in\partial D\}$. Every horofunction is a Busemann point.
A sequence in $D$ converges to $r_x$, with $x\in\partial D$, in the horofunction
boundary if and only if it converges to $x$ in the usual topology.
\end{proposition}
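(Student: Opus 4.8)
The plan is to use the explicit formula for the reverse-Funk distance functions together with the continuity of the gauge, and to identify everything at the level of $\closure D$.

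First I would unwind the definitions. Since $\rfunk(y,z)=\funk(z,y)=\log\gauge{C}{z}{y}$, the distance function based at $z\in D$ is $\distfn_z(y)=\log\gauge{C}{z}{y}-\log\gauge{C}{z}{b}$, so that $r_x$ is literally $\distfn_x$ evaluated at a boundary point $x\in\partial D$. For fixed $y\in C$ the map $z\mapsto\gauge{C}{z}{y}$ is a finite, strictly positive, sublinear (hence continuous) function of $z$ on all of $\linspace$, because $y$ is interior. I would also record that each $\distfn_z$ and each $r_x$ is $1$-Lipschitz for $\rfunk$, and therefore, since $\rfunk\le\hil$, also $1$-Lipschitz for $\hil$.

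Next I would treat convergence and exhaustiveness. If $z_n\to x\in\partial D$ in the usual topology, continuity of the gauge gives $\distfn_{z_n}\to r_x$ pointwise; as the family is equi-Lipschitz for $\hil$ and $\hil$-bounded sets are relatively compact by assumption~(\ref{ass:proper}), an Arzel\`a--Ascoli argument upgrades this to uniform convergence on $\hil$-bounded sets, i.e. convergence in the horofunction topology. Conversely, any horofunction is a limit of some $\distfn_{z_n}$ with $(z_n)$ leaving every compact subset of $D$; compactness of $\closure D$ gives a subsequence $z_{n_k}\to x\in\closure D$, and since $\distfn$ is an embedding, $x\in D$ would force the limit to be the interior function $\distfn_x$ rather than a horofunction. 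Hence $x\in\partial D$ and the limit is $r_x$, so the horofunctions are exactly $\{r_x\mid x\in\partial D\}$; the embedding property likewise guarantees each $r_x$ is a genuine boundary point. For injectivity of $x\mapsto r_x$: if $r_x=r_{x'}$ then $\gauge{C}{x}{y}=\kappa\,\gauge{C}{x'}{y}$ for all $y\in C$ and some $\kappa>0$, whence $x\le_C\lambda y\iff\kappa x'\le_C\lambda y$ for all $\lambda>0$; the up-sets $(x+\closure C)\cap C$ and $(\kappa x'+\closure C)\cap C$ then coincide, and taking closures gives $x+\closure C=\kappa x'+\closure C$, forcing $x=\kappa x'$ and hence $x=x'$ after normalising on $D$. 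Injectivity, continuity, and compactness of $\partial D$ then give a homeomorphism onto the boundary, and the two directions of the final topological assertion follow (the converse direction by a subsequence argument invoking injectivity).

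Finally, for the Busemann claim I would take, for each $x\in\partial D$, the half-open straight segment $[b,x)$, which lies in $D$, is a reverse-Funk geodesic, and has finite reverse-Funk length $\log\gauge{C}{x}{b}$. Reparametrised by $\rfunk$-arc length it is a genuine geodesic whose points tend to $x$, hence by the previous paragraph to $r_x$, exhibiting $r_x$ as a Busemann point. I expect the main obstacle to be precisely this finite-length-to-the-boundary feature of the reverse-Funk metric: the standard proper-geodesic machinery (for instance Proposition~\ref{prop:inf_minus_infinity}) does not apply, so one must work throughout with the modified, finite-$\sup T$ notion of almost-geodesic. Beyond that point, the genuine content is the exhaustiveness and injectivity above, the convergence and Busemann parts being comparatively routine once one knows that straight segments are reverse-Funk geodesics and that $\closure D$ is compact.
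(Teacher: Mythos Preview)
The paper does not supply a proof of this proposition; it is quoted from~\cite{walsh_hilbert} and stated without argument. So there is nothing in the present paper to compare your attempt against.

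That said, your sketch is sound and would serve as a self-contained proof. The key ingredients you identify are the right ones: continuity of $z\mapsto\gauge{C}{z}{y}$ on all of $\linspace$ for interior $y$ (which follows, as you say, from convexity and finiteness, or from the dual formula $\gauge{C}{z}{y}=\sup_{w\in C^*\setminus\{0\}}\dotprod{w}{z}/\dotprod{w}{y}$), equi-Lipschitzness in the Hilbert metric to pass from pointwise to locally uniform convergence, compactness of $\closure D$ to exhaust the boundary, and the straight segment $[b,x)$ as a genuine reverse-Funk geodesic of finite length converging to $x$. Your injectivity argument via coincidence of the up-sets $x+\closure C$ and $\kappa x'+\closure C$ is correct and, combined with the cross-section normalisation, gives $x=x'$; the same argument also shows $r_x\neq\distfn_z$ for $z\in D$, confirming that $r_x$ is a genuine horofunction. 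You are right to flag that the finite-length-to-the-boundary phenomenon forces the use of the modified almost-geodesic definition introduced in Section~\ref{sec:horoboundary}; this is indeed the one delicate point, and the paper handles it exactly by introducing that modified definition rather than by any further argument.
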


We also have a description of the detour metric for this geometry.

\begin{proposition}
[\cite{lemmens_walsh_polyhedral}]
Let $x$ and $y$ be in $\partial D$.
If $x$ and $y$ are in the relative interior of the same extreme set $E$ of
$\closure D$, then the detour metric of the
reverse-Funk geometry is $\delta_R(x,y) = \hil^E(x,y)$,
where $\hil^E$ is the Hilbert metric on $\relint E$.
Otherwise, $\delta_R(x,y)$ is infinite.
\end{proposition}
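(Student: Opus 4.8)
The plan is to reduce everything to the detour-cost formula of Proposition~\ref{prop:detour} and then evaluate the resulting supremum in terms of the gauge. By Proposition~\ref{prop:reverseFunk}, every reverse-Funk horofunction $r_x$ is a Busemann point, so Proposition~\ref{prop:detour} applies and gives $H(r_x,r_y)=\sup_{p\in D}\big(r_y(p)-r_x(p)\big)$. Writing out $r_x(p)=\rfunk(p,x)-\rfunk(b,x)$ and $r_y(p)=\rfunk(p,y)-\rfunk(b,y)$, the base-point terms contribute a constant that is antisymmetric in $x$ and $y$; since $\delta_R(x,y)=H(r_x,r_y)+H(r_y,r_x)$, this constant cancels, and I am left with
\[
\delta_R(x,y)=\sup_{p\in D}\big(\rfunk(p,y)-\rfunk(p,x)\big)+\sup_{p\in D}\big(\rfunk(p,x)-\rfunk(p,y)\big).
\]
So the whole problem reduces to computing a single supremum of the form $\sup_{p}\big(\rfunk(p,y)-\rfunk(p,x)\big)$.

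The claim I would establish is that, for $x,y\in\partial D$,
\[
\sup_{p\in D}\big(\rfunk(p,y)-\rfunk(p,x)\big)=\log\gauge{C}{y}{x},
\]
where the gauge between the two boundary points is interpreted as $\inf\{\lambda>0\mid y\le_C\lambda x\}$ and is permitted to be $+\infty$. The upper bound is immediate from submultiplicativity of the gauge: if $y\le_C a\,x$ and $x\le_C b\,p$, then $y\le_C ab\,p$, so $\gauge{C}{y}{p}\le\gauge{C}{y}{x}\,\gauge{C}{x}{p}$, which in logarithmic form is exactly $\rfunk(p,y)-\rfunk(p,x)\le\log\gauge{C}{y}{x}$. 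For the reverse inequality I would push $p$ to the boundary point $x$ along the segment $p_n:=(1-1/n)x+(1/n)b$. Using the representation $\gauge{C}{u}{v}=\sup\{\ell(u)/\ell(v)\}$, the supremum taken over the nonzero functionals $\ell$ that are nonnegative on $C$, one checks that $\gauge{C}{x}{p_n}\to1$ while $\gauge{C}{y}{p_n}\to\gauge{C}{y}{x}$, with both quantities tending to $+\infty$ simultaneously; the divergent case is detected by a functional $\ell$ with $\ell(x)=0<\ell(y)$, which is precisely the obstruction to $y$ being dominated by a multiple of $x$. This already yields the dichotomy: $\delta_R(x,y)=\log\gauge{C}{y}{x}+\log\gauge{C}{x}{y}$ is finite if and only if each of $x,y$ is dominated by a multiple of the other, i.e.\ if and only if $x$ and $y$ are Thompson-comparable. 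By the standard fact that the Thompson-comparability classes of $\closure C$ are exactly the relative interiors of its faces (extreme sets), this holds precisely when $x,y\in\relint E$ for a common extreme set $E$; otherwise $\delta_R(x,y)=\infty$.

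It remains to identify $\log\gauge{C}{y}{x}+\log\gauge{C}{x}{y}$ with $\hil^E(x,y)=\log\big(\gauge{E}{x}{y}\,\gauge{E}{y}{x}\big)$ when $x,y\in\relint E$. Let $F$ be the face of $\closure C$ lying over $E$, so that representatives of $x$ and $y$ lie in $\relint F$. I would show the gauge computed inside $F$ agrees with the gauge computed in $C$: if $\lambda y-x\in\closure C$, then writing $\lambda y=x+(\lambda y-x)$ expresses the relative-interior point $\lambda y$ of $F$ as a sum of two elements of $\closure C$, and the defining property of an extreme set forces both summands, in particular $\lambda y-x$, to lie in $F$. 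Hence $\gauge{E}{x}{y}=\gauge{C}{x}{y}$, and symmetrically $\gauge{E}{y}{x}=\gauge{C}{y}{x}$, giving $\delta_R(x,y)=\hil^E(x,y)$.

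I expect the main obstacle to be the second paragraph: justifying that the supremum is attained in the limit $p\to x$. Since the gauge $p\mapsto\gauge{C}{y}{p}$ is in general only lower semicontinuous up to the boundary, some care is needed to pass to the limit along the chosen path, to verify $\gauge{C}{x}{p_n}\to1$, and to control the two points $x$ and $y$ simultaneously (especially in separating the finite from the infinite case). This is the one step that is genuinely analytic rather than order-theoretic; the reduction of the first paragraph and the face identification of the third are comparatively formal.
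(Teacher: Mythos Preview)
The paper does not give its own proof of this proposition; it is quoted from \cite{lemmens_walsh_polyhedral} as a cited result. So there is no in-paper argument to compare your approach against.

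On its own merits, your argument is correct and is in fact close to how the result is proved in the source. The reduction via Proposition~\ref{prop:detour} to the supremum $\sup_{p\in D}\bigl(\rfunk(p,y)-\rfunk(p,x)\bigr)$, its identification with $\log\gauge{C}{y}{x}$ by submultiplicativity together with the limiting sequence $p_n=(1-1/n)x+(1/n)b$, the comparability dichotomy, and the equality of the ambient gauge with the gauge on the face are all sound. Your dual formula $\gauge{C}{u}{v}=\sup_\ell \ell(u)/\ell(v)$ makes the limit computation clean and correctly separates the finite case from the case where some functional vanishes on $x$ but not on $y$. One small caveat: Proposition~\ref{prop:detour} is stated in this paper for metric spaces, whereas the reverse-Funk geometry is only a quasi-metric; the formula does extend to this setting (the proofs in \cite{walsh_stretch} and \cite{AGW-m} carry over, and the paper's modified almost-geodesic framework in Section~\ref{sec:horoboundary} is set up precisely for this), but it would be worth saying so explicitly rather than invoking the proposition as written.
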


Thus, the parts of the horofunction boundary are the relative interiors
of the extreme sets of $\closure D$.
We will be particularly interested in parts consisting of a single point.
We call these parts \emph{singletons}.
In the reverse-Funk geometry, there is a singleton
for each extreme point of $\closure D$.

\subsection{The horofunction boundary of the Funk geometry}

The horofunction boundary of the Funk geometry is more complicated than
that of the reverse-Funk geometry. Its Busemann points were worked out
explicitly in~\cite{walsh_hilbert}. We will not need all the details here.

\begin{proposition}
[\cite{walsh_hilbert}]
There is a part associated to every proper extreme set of the polar $D^\circ$
of $D$.
Endowed with its detour metric, each part is isometric to a Hilbert geometry
of the same dimension as the associated extreme set.
If one extreme set $E_1$ is contained in another $E_2$, then the part
associated to $E_1$ is contained in the closure of the part associated
to $E_2$.
\end{proposition}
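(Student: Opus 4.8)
The plan is to reduce everything to an explicit support-function description of the Funk horofunctions and then to compute the detour metric directly via Proposition~\ref{prop:detour}. Normalising so that the base-point $b$ is the origin, so that $D^\circ=\{a\mid \dotprod{a}{x}\le 1 \text{ for all } x\in\closure D\}$, one has for $x,y\in D$ the representation
\begin{equation*}
\funk(x,y)=\log\sup_{a\in D^\circ}\frac{1-\dotprod{a}{x}}{1-\dotprod{a}{y}},
\end{equation*}
the supremum being attained at an extreme point of $D^\circ$ because the integrand is quasi-linear in $a$. Feeding a sequence $y_n\to\partial D$ into the definition of a horofunction and normalising the weights $a\mapsto 1/(1-\dotprod{a}{y_n})$ by their maximum, I would first show that every horofunction of the Funk geometry has the form $\xi(x)=\log\sup_{a\in D^\circ}\rho(a)\,(1-\dotprod{a}{x})$, where $\rho$ is an upper semicontinuous weight with $\sup\rho=1$ arising as a limit of normalised reciprocals of positive affine functions.

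The next step is to single out the Busemann points among these and to attach an extreme set of $D^\circ$ to each one. Following~\cite{walsh_hilbert}, I would show that a horofunction is a Busemann point exactly when its weight $\rho$ is concentrated on the relative interior of a single proper extreme set $E$ of $D^\circ$ and is there of reciprocal-affine type, i.e.\ proportional to $a\mapsto 1/(1-\dotprod{a}{p})$ for a parameter $p$ ranging over a Hilbert geometry; this extreme set $E$ is the \emph{index} of the Busemann point. This is the technical heart and is where the explicit almost-geodesic computation of~\cite{walsh_hilbert} enters: one must check both that such weights are limits of almost-geodesics and that no genuinely broader weight can be.

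With this description in hand, the detour metric is computed from Proposition~\ref{prop:detour}, which gives $\delta(\xi,\eta)=\sup_{x}(\eta(x)-\xi(x))+\sup_{x}(\xi(x)-\eta(x))$. I would show that $\sup_x(\eta-\xi)$ is finite precisely when $\xi$ and $\eta$ share the same index $E$; this yields the partition of the Busemann points into parts indexed by the proper extreme sets of $D^\circ$. When the common index is $E$, the supremum over $x\in D$ localises to a supremum over the positive affine functions $a\mapsto 1-\dotprod{a}{x}$ restricted to $E$ (the only data the two horofunctions see), and this reduces the detour cost to a Funk distance between the two weight-parameters inside the Hilbert geometry carried by $\relint E$. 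Symmetrising recovers exactly the Hilbert metric of that $\dim E$-dimensional geometry, proving the second assertion. The main obstacle is precisely this identification: one must verify that the localisation is legitimate and that the resulting expression is the genuine Hilbert metric (rather than merely a bi-Lipschitz or a flat normed metric), a point on which the reciprocal-affine, as opposed to exponential-affine, nature of the weights is essential — for a non-simplicial face $E$ only the former yields the correct non-flat geometry.

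Finally, for the containment statement, suppose $E_1\subseteq E_2$ and let $\xi$ be a Busemann point indexed by $E_1$. I would construct a sequence of Busemann points indexed by $E_2$ by spreading an arbitrarily small, vanishing amount of weight over $E_2\setminus E_1$ while keeping the weight on $E_1$ fixed, and check that these converge to $\xi$ uniformly on bounded sets of $\hil$. Since the horofunction topology is that of uniform convergence on bounded sets, this places $\xi$ in the closure of the part indexed by $E_2$, giving the asserted inclusion of the part of $E_1$ in the closure of that of $E_2$.
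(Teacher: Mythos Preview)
The paper does not prove this proposition at all: it is quoted as a result from~\cite{walsh_hilbert} and stated without proof, so there is no ``paper's own proof'' to compare your proposal against.

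That said, your outline is broadly the right shape for the argument in~\cite{walsh_hilbert}: one does represent Funk horofunctions through weights on $D^\circ$ (there via convex-analytic support functions), identifies the Busemann points as those whose weight is affine-reciprocal on the relative interior of a single proper extreme set, and then computes the detour cost to recover a Hilbert metric on each part. Two caveats. First, what you have written is a plan rather than a proof: each ``I would show'' hides substantial work, particularly the characterisation of Busemann points (you correctly flag this as the technical heart) and the claim that the localised supremum over affine functions on $E$ gives exactly the Hilbert metric on $\relint E$ rather than something merely comparable to it. Second, your description of the closure step is slightly informal --- ``spreading an arbitrarily small, vanishing amount of weight over $E_2\setminus E_1$'' needs to be made precise in terms of the actual parametrisation of Busemann points, and one must check that the perturbed weights remain of the admissible reciprocal-affine type on $\relint E_2$. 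None of this is wrong as a strategy, but it is a sketch, not a proof; since the present paper simply imports the result, that is arguably all that is called for here.
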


Let $C$ be the cone over $D$.
Recall that the polar of $D$ may be identified with a cross
section of
\begin{align*}
C^*
   := \{ y\in \linspace^* \mid \text{$\dotprod{y}{x}\ge 0$ for all $x\in C$} \},
\end{align*}
the (closed) dual cone of $C$.

\begin{proposition}
\label{prop:funk_singletons}
[\cite{walsh_hilbert}]
Let $y$ be an extremal generator of $C^*$ normalised so that $\dotprod{y}{b}=1$.
Then, $\log\dotprod{y}{\cdot}$ restricted to $D$ is a singleton of the
Funk geometry of $D$, and every singleton arises in this way.
\end{proposition}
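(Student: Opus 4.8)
The plan is to combine a dual description of the gauge with the structural description of the Funk horofunction boundary recalled just above. The computational engine is the dual formula for the gauge. Since $\closure C=(C^*)^*$ by the bipolar theorem, for $x,w\in C$ one has $\lambda w-x\in\closure C$ if and only if $\lambda\dotprod{u}{w}\ge\dotprod{u}{x}$ for every $u\in C^*$; using $\dotprod{u}{w}>0$ for $w\in C$ and $u\neq0$, this gives
\[
\gauge{C}{x}{w}=\sup_{u\in C^*\setminus\{0\}}\frac{\dotprod{u}{x}}{\dotprod{u}{w}},
\]
and since for fixed $x,w$ the map $u\mapsto\dotprod{u}{x}/\dotprod{u}{w}$ is linear-fractional, its supremum over the (compact) cross section of $C^*$ is attained at an extreme point, i.e.\ the supremum may be taken over the extremal generators of $C^*$. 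Thus $\funk(x,w)=\log\sup_u\dotprod{u}{x}/\dotprod{u}{w}$.

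For the correspondence itself I would first invoke the structural proposition recalled above: the parts of the Funk boundary are indexed by the proper extreme sets of $D^\circ$, and each part, with its detour metric, is isometric to a Hilbert geometry of the same dimension as the associated extreme set. A part is therefore a singleton precisely when the associated Hilbert geometry is a single point, that is, precisely when the associated extreme set is $0$-dimensional, hence an extreme point of $D^\circ$. Since $D^\circ$ is a cross section of $\closure{C^*}$, its extreme points correspond bijectively to the extremal generators of $C^*$. This already yields the bijection and the assertion that every singleton arises from some extremal generator $y$ of $C^*$; what remains is to identify the corresponding horofunction explicitly.

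To identify it, take a sequence $z_n\in D$ approaching the boundary with $\dotprod{y}{z_n}\to0$ and compute $\funk(x,z_n)-\funk(b,z_n)=\log\big(\gauge{C}{x}{z_n}/\gauge{C}{b}{z_n}\big)$. In the dual formula, every generator $u$ with $\dotprod{u}{z_n}$ bounded away from $0$ contributes a bounded term, while the term from $y$ blows up; when $y$ is the dominant generator for both $x$ and $b$ the factors $\dotprod{y}{z_n}$ cancel and the limit is $\log\dotprod{y}{x}-\log\dotprod{y}{b}=\log\dotprod{y}{x}$ after normalizing $\dotprod{y}{b}=1$. Setting $h_y:=\log\dotprod{y}{\cdot}$, this is exactly the claimed function, and approaching along a straight line (hence a Funk geodesic) shows $h_y$ is a Busemann point.

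The main obstacle is the concentration step. The supremum concentrates cleanly on $y$ alone only when the face of $C^*$ conjugate to the exposed face $\{\dotprod{y}{\cdot}=0\}\cap\closure C$ is the single ray $\R_+ y$, which is the case exactly when $y$ is an \emph{exposed} generator. For a general extremal generator $y$, which may be an extreme but non-exposed ray of $C^*$, approaching a relative-interior point of that exposed face produces instead the higher-dimensional Busemann point of the whole conjugate part, so one must approach along a path adapted to $y$ inside that part. The cleanest way around this is to use Straszewicz's theorem to write $y=\lim_n y_n$ with $y_n$ exposed generators, observe that each $h_{y_n}=\log\dotprod{y_n}{\cdot}$ is a singleton by the exposed case, and pass to the limit in the closed set of horofunctions to obtain $h_y$; the structural proposition then guarantees that the part of $h_y$, being indexed by the $0$-dimensional extreme set $\proj(y)$, is the singleton $\{h_y\}$. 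Alternatively one appeals directly to the finer parametrization of Funk Busemann points established in the cited work.
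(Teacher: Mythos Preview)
The paper does not prove this proposition; it is quoted with a citation to \cite{walsh\_hilbert} and no argument is given. There is therefore no proof in the present paper to compare your proposal against.

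As a standalone sketch, your argument is mostly sound: the dual formula for the gauge is correct, and deducing the bijection between singletons and extreme points of $D^\circ$ from the preceding structural proposition (also cited from \cite{walsh\_hilbert}) is legitimate, as is the limit computation for exposed $y$. The one genuine soft spot is the Straszewicz step for non-exposed $y$. From $h_{y_n}\to h_y$ pointwise with each $h_{y_n}$ a singleton Busemann point you get that $h_y$ lies in the horofunction compactification, and it is easy to see it is not an interior point. But you still need that $h_y$ is a \emph{Busemann} point and that it lies in the part indexed by $\proj(y)$. Your sentence ``the structural proposition then guarantees that the part of $h_y$, being indexed by the $0$-dimensional extreme set $\proj(y)$, is the singleton $\{h_y\}$'' assumes what is to be shown: the structural proposition tells you there \emph{is} a singleton part for each extreme point of $D^\circ$, but not that the pointwise limit of the $h_{y_n}$ is the element of that part. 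To close this you either need the explicit parametrisation of Funk Busemann points from \cite{walsh\_hilbert} (as you yourself note at the end), or a direct construction of an almost-geodesic in $D$ converging to $h_y$---which is exactly the step that becomes delicate when $y$ is non-exposed.
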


\subsection{The horofunction boundary of the Hilbert geometry}

From the expression of the Hilbert metric as the symmetrisation of the
Funk metric, it is clear that every Hilbert horofunction is the sum of a
Funk horofunction and a reverse-Funk horofunction.
We have the following criterion for convergence.

\begin{proposition}
[\cite{walsh_hilbert}]
A sequence in $D$ converges to a point in the Hilbert-geometry horofunction
boundary if and only if it converges to a horofunction in both the Funk and
reverse-Funk geometries.
\end{proposition}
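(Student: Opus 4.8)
The forward implication follows immediately from additivity of the metric. For every $z\in D$ and every $x\in D$,
\[
\hil(x,z)-\hil(b,z) = \big(\funk(x,z)-\funk(b,z)\big) + \big(\rfunk(x,z)-\rfunk(b,z)\big),
\]
so the function associated to $z$ in the Hilbert geometry is the sum of those associated to $z$ in the Funk and reverse-Funk geometries. Since addition is continuous in the topology of uniform convergence on bounded sets, a sequence $z_n$ converging to a Funk horofunction $\eta^F$ and to a reverse-Funk horofunction $\eta^R$ has its Hilbert functions converging to $\eta^F+\eta^R$. By Proposition~\ref{prop:reverseFunk}, convergence in the reverse-Funk geometry to a boundary horofunction forces $z_n$ to leave every compact subset of $D$; hence $\eta^F+\eta^R$ is not the function of any interior point, and is therefore a genuine Hilbert horofunction.

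For the converse, suppose $z_n$ converges to a Hilbert horofunction $\xi$. The functions in play are $1$-Lipschitz with respect to the Hilbert metric and vanish at the base-point, so by Assumption~\ref{ass:proper} and the Arzel\`a--Ascoli theorem both the Funk and the reverse-Funk horofunction compactifications of $D$ are compact. Passing to a subsequence, I may therefore assume $z_n$ converges in the Funk geometry, to some $\eta^F$, and in the reverse-Funk geometry, to some $\eta^R$. Since $\xi$ is a boundary point, $z_n$ leaves every compact subset of $D$, so both $\eta^F$ and $\eta^R$ are boundary horofunctions, and by the forward direction their sum is the Hilbert limit, $\eta^F+\eta^R=\xi$.

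It remains to promote subsequential convergence to convergence of the whole sequence, and for this it suffices to show that the decomposition $\xi=\eta^F+\eta^R$ determines $\eta^F$ and $\eta^R$ uniquely; the standard subsequence argument then finishes the proof. By Proposition~\ref{prop:reverseFunk}, the reverse-Funk limit is $\eta^R=r_x$, where $x\in\partial D$ is the limit of $z_n$ in the usual topology, so it is enough to prove that $x$ is determined by $\xi$ alone. If two subsequences of $z_n$ converged in the usual topology to distinct boundary points $x$ and $x'$, then refining as above would yield $\eta^F+r_x=\xi=\eta'^F+r_{x'}$ with Funk horofunctions $\eta^F,\eta'^F$, whence $r_x-r_{x'}=\eta'^F-\eta^F$.

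The main obstacle is ruling this last identity out when $x\neq x'$, and the plan is to exploit the contrasting asymptotic behaviour of the two families of horofunctions. A reverse-Funk horofunction $r_x$ carries a singularity sharply concentrated at its own boundary point $x$, whereas a Funk horofunction remains comparatively controlled there. Concretely, I would evaluate the identity $r_x-r_{x'}=\eta'^F-\eta^F$ along a path in $D$ approaching $x$: the left-hand side is governed by $r_x$ and diverges in a manner dictated by $x$, while the right-hand side, being a difference of Funk horofunctions, cannot reproduce this divergence unless $x=x'$, giving the required contradiction. Once $x$, and hence $\eta^R=r_x$, is pinned down, $\eta^F=\xi-r_x$ is determined as well, and the proof is complete.
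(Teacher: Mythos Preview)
The paper does not itself prove this proposition; it is quoted from~\cite{walsh_hilbert} with no argument given, so there is no in-paper proof to compare against.

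On its own merits: your treatment of the direction ``Funk and reverse-Funk convergence imply Hilbert convergence'' is correct, and for the converse your strategy --- extract subsequential limits in each geometry by compactness, then prove that the decomposition $\xi=r_x+\eta^F$ is unique so that the full sequence converges --- is sound in outline. The final paragraph, however, is a plan rather than a proof, and the plan has a real gap.

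You claim that along a path to $x$ the quantity $r_x-r_{x'}$ ``diverges in a manner dictated by $x$'' while a difference of Funk horofunctions cannot match it. But your picture of the singularities is inverted: a reverse-Funk horofunction $r_z$ is bounded near its own defining point $z$ and blows up to $+\infty$ only near the ``opposite'' parts of $\partial D$. Thus as $y\to x$ it is $r_{x'}(y)$ that may diverge, giving $r_x-r_{x'}\to-\infty$, while the Funk horofunction $\eta^F\in A(x)$ typically tends to $-\infty$ at $x$, so $\eta'^F-\eta^F\to+\infty$. In one dimension this sign mismatch does yield a contradiction, but in higher dimensions it need not: if $x$ and $x'$ lie in the closure of a common proper face of $\overline D$, then $r_{x'}$ need not blow up along paths to $x$ at all, and the naive divergence comparison gives nothing. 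You have not indicated how to handle such configurations, nor supplied any precise statement about boundary behaviour that separates Funk from reverse-Funk horofunctions in general. The uniqueness step therefore remains unproved.
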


Combined with Proposition~\ref{prop:reverseFunk},
this implies that every sequence
converging to a Hilbert geometry horofunction also converges to a point
in the usual boundary $\partial D$.

For each $x\in\partial D$, let $B(x)$ denote the set of Funk-geometry
Busemann functions that can be approached by a sequence converging to
$x$ in the usual topology.

\begin{proposition}
[\cite{walsh_hilbert}]
The set of Busemann points of the Hilbert geometry is
\begin{align*}
\{r_x+f \mid \text{$x\in\partial D$ and $f\in B(x)$}\}.
\end{align*}
\end{proposition}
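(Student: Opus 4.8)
The plan is to exploit the identity $\hil=\funk+\rfunk$ together with the descriptions already obtained of the Funk and reverse-Funk boundaries, and to match Hilbert almost-geodesics with Funk almost-geodesics. The basic mechanism is the following. For any path $\gamma$ and any $s\le t$, the Funk and reverse-Funk triangle inequalities give
\begin{align*}
\funk(\gamma(0),\gamma(s)) + \funk(\gamma(s),\gamma(t)) &\ge \funk(\gamma(0),\gamma(t)) \\
\rfunk(\gamma(0),\gamma(s)) + \rfunk(\gamma(s),\gamma(t)) &\ge \rfunk(\gamma(0),\gamma(t)),
\end{align*}
and summing recovers the Hilbert triangle inequality. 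Hence, along a Hilbert almost-geodesic, where by Lemma~\ref{lem:alt_almost_geo} the Hilbert inequality is asymptotically an equality, both of these inequalities are forced to become asymptotic equalities as well. This lets me pass freely between the three metrics.

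For the necessity direction, suppose $\xi$ is a Hilbert Busemann point. Since every Hilbert horofunction splits as a Funk part plus a reverse-Funk part, and the reverse-Funk horofunctions are exactly the $r_x$ with $x\in\partial D$ by Proposition~\ref{prop:reverseFunk}, we may write $\xi=r_x+f$. Let $\gamma$ be a Hilbert almost-geodesic converging to $\xi$. Since $\gamma(t)$ tends to $\partial D$, the cross-section formulas show that $\funk(\gamma(0),\gamma(t))\to\infty$ while $\rfunk(\gamma(0),\gamma(t))$ stays bounded. By the mechanism above, the Funk no-backtracking quantity tends to $0$, so after reparametrising by Funk arclength $\gamma$ becomes a Funk almost-geodesic; its limit is unchanged, hence equals $f$, which is therefore a Funk Busemann point. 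Moreover, $\gamma\to\xi$ in the Hilbert boundary forces $\gamma\to r_x$ in the reverse-Funk boundary, which by Proposition~\ref{prop:reverseFunk} means $\gamma(t)\to x$ in the usual topology. The points $\gamma(t_n)$ thus witness $f\in B(x)$.

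For the sufficiency direction, suppose $f\in B(x)$, so that $f$ is a Funk Busemann point and there is a sequence $z_k\to x$ in the usual topology with $z_k\to f$ in the Funk boundary. I would build a single Funk almost-geodesic $\gamma$ converging to $f$ and, at the same time, to $x$ in the usual topology, by passing to a rapidly increasing subsequence of $(z_k)$ and concatenating the straight-line segments joining consecutive terms, which are Funk geodesics. Such a path lies near $x$ eventually, by convexity, and so converges to $x$ in the usual topology. Running the mechanism of the first paragraph in reverse, I then check that $\gamma$ is in fact a Hilbert almost-geodesic: the Funk no-backtracking holds by construction, while the reverse-Funk no-backtracking term tends to $0$ because consecutive points both approach $x$, so that $\rfunk(\gamma(s),\gamma(t))\to 0$ whereas $\rfunk(\gamma(0),\gamma(\cdot))$ converges; this last point is a routine computation from Proposition~\ref{prop:reverseFunk}. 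The limit of this Hilbert almost-geodesic is then $r_x+f$, as required.

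The delicate step is the sufficiency construction. An arbitrary Funk almost-geodesic converging to $f$ need not exit through the prescribed boundary point $x$: a single Funk Busemann point, for instance a singleton $\log\dotprod{y}{\cdot}$ from Proposition~\ref{prop:funk_singletons}, can be approached along sequences leaving $D$ through an entire face. The work is therefore to arrange the subsequence and the concatenation so that the resulting path is genuinely a Funk almost-geodesic, that is, so that the no-backtracking estimate holds, while still converging to $x$. Here one uses that $f$ is a Busemann point, conveniently phrased through $H(f,f)=0$ and the detour-cost formula of Proposition~\ref{prop:detour}, to control the quantity $\funk(\gamma(0),\gamma(t))+f(\gamma(t))$ along the path. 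Everything else reduces to the bookkeeping between the three metrics described in the first paragraph.
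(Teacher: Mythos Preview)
The paper does not prove this proposition; it is quoted from \cite{walsh_hilbert} without argument. The only trace of the underlying proof in the present paper is the appeal, in the proof of Proposition~\ref{prop:busemann_thompson}, to \cite[Lemma~4.3]{walsh_hilbert}, which produces a sequence that is an $\epsilon$-almost-geodesic \emph{simultaneously} in the Funk and reverse-Funk metrics. Your overall strategy---decompose $\hil=\funk+\rfunk$, observe that the Hilbert triangle defect equals the sum of the Funk and reverse-Funk defects, and transfer almost-geodesics between the three metrics---is exactly the right framework, and your necessity direction is correct as written.

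The gap is in the sufficiency direction, at precisely the point you flag as delicate. Your claim that $\rfunk(\gamma(s),\gamma(t))\to 0$ ``because consecutive points both approach $x$'' is not valid: two points of $D$ can both be arbitrarily close to the same boundary point $x$ while the reverse-Funk distance between them is arbitrarily large. For instance, in the positive cone $\R^3_+$, take $p=(1,1/m^2,1/m)$ and $q=(1,1/(m+1),1/(m+1))$; both tend to $e_1$, yet $\rfunk(p,q)=\log\max_i q_i/p_i\ge\log(m^2/(m+1))\to\infty$. So merely concatenating straight segments through a subsequence of the $z_k$ and controlling the Funk side via $H(f,f)=0$ does not force the reverse-Funk defect to vanish. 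What is actually required is a construction that is an almost-geodesic in \emph{both} the Funk and the reverse-Funk metrics at once, converging to $f$ and to $r_x$ respectively; this is the content of \cite[Lemma~4.3]{walsh_hilbert}, and it is not a routine consequence of Proposition~\ref{prop:reverseFunk}. Once that simultaneous almost-geodesic is in hand, your bookkeeping in the first paragraph finishes the job.
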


The detour metric was calculated in~\cite{lemmens_walsh_polyhedral}.

\begin{proposition}
[\cite{lemmens_walsh_polyhedral}]
Every part of the horofunction boundary of the Hilbert geometry can be
expressed as the cartesian product of a part of the reverse-Funk geometry
and a part of the Funk geometry.
The detour metric distance between two Hilbert-geometry Busemann points
$h_1 := r_{x_1} + f_1$ and $h_2 := r_{x_2} + f_2$,
with $f_1\in B(x_1)$ and $f_2\in B(x_2)$, is
\begin{align*}
\delta(f_1,f_2)=\delta_R(x_1,x_2) + \delta_F(f_1,f_2),
\end{align*}
where $\delta_R$ and $\delta_F$ are the detour metrics on the set of
Busemann points of, respectively, the reverse-Funk and Funk geometries.
\end{proposition}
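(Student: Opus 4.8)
The plan is to reduce both assertions to a single formula for the \emph{non-symmetrised} detour cost, namely
$H(h_1,h_2) = H_R(r_{x_1},r_{x_2}) + H_F(f_1,f_2)$,
where $H_R$ and $H_F$ denote the detour costs of the reverse-Funk and Funk geometries. Symmetrising this identity gives the stated detour-metric formula, while reading off when it is finite gives the product decomposition of the parts. Throughout I use that $h_1$, $r_{x_1}$, and $f_1$ are Busemann points of their respective geometries, so that Proposition~\ref{prop:detour} applies.

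The heart of the argument is a splitting lemma for almost-geodesics: if $\gamma$ is an almost-geodesic of the Hilbert geometry converging to $h = r_x + f$, then, after reparametrisation by reverse-Funk (resp.\ Funk) arc length, $\gamma$ is an almost-geodesic of the reverse-Funk (resp.\ Funk) geometry converging to $r_x$ (resp.\ $f$). To prove this I would use that $\hil = \funk + \rfunk$ together with the fact that, on $D$, both $\funk$ and $\rfunk$ are non-negative and satisfy the triangle inequality. Hence, for $0\le s\le t$, the quantity $\funk(\gamma(0),\gamma(s)) + \funk(\gamma(s),\gamma(t)) - \funk(\gamma(0),\gamma(t))$ and its reverse-Funk analogue are both non-negative and sum to the corresponding Hilbert quantity, which the almost-geodesic hypothesis forces to be small; so each individual ``defect'' is small. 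Reparametrising by the monotone functions $t\mapsto\funk(\gamma(0),\gamma(t))$ and $t\mapsto\rfunk(\gamma(0),\gamma(t))$, this is exactly the almost-geodesic condition in the form of Lemma~\ref{lem:alt_almost_geo}. Convergence to the correct limits $r_x$ and $f$ comes from the criterion that Hilbert convergence is equivalent to simultaneous Funk and reverse-Funk convergence. This is consistent with the two conventions for almost-geodesics, since the reverse-Funk arc length to the boundary is finite whereas the Funk arc length is infinite.

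Granting the splitting lemma, I would compute the detour cost via Proposition~\ref{prop:detour}, which gives $H(h_1,h_2) = \sup_{z\in D}(h_2-h_1)(z)$, and similarly $H_R(r_{x_1},r_{x_2}) = \sup_z(r_{x_2}-r_{x_1})(z)$ and $H_F(f_1,f_2)=\sup_z(f_2-f_1)(z)$. Writing $h_2-h_1 = (r_{x_2}-r_{x_1}) + (f_2-f_1)$ and using that the supremum of a sum is at most the sum of suprema yields the upper bound $H(h_1,h_2)\le H_R(r_{x_1},r_{x_2}) + H_F(f_1,f_2)$. For the matching lower bound I would take an almost-geodesic $\gamma$ converging to $h_1$, which by the splitting lemma is simultaneously a reverse-Funk almost-geodesic converging to $r_{x_1}$ and a Funk almost-geodesic converging to $f_1$. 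Applying the limit relation $\lim_t\big(\dist(b,\gamma(t))+\eta(\gamma(t))\big)=H(\xi,\eta)$ (valid for an almost-geodesic $\gamma$ converging to a Busemann point $\xi$, as used in the proof of Proposition~\ref{prop:detour}) in each geometry, to $\eta=r_{x_2},r_{x_1}$ and to $\eta=f_2,f_1$, and subtracting the $H(\xi,\xi)=0$ terms, gives $(r_{x_2}-r_{x_1})(\gamma(t))\to H_R(r_{x_1},r_{x_2})$ and $(f_2-f_1)(\gamma(t))\to H_F(f_1,f_2)$; adding these shows that the same sequence $\gamma(t)$ realises both suprema simultaneously, proving the lower bound. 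Symmetrising then gives $\delta(h_1,h_2)=\delta_R(x_1,x_2)+\delta_F(f_1,f_2)$.

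For the product structure, this formula shows $\delta(h_1,h_2)$ is finite exactly when both $\delta_R(x_1,x_2)$ and $\delta_F(f_1,f_2)$ are finite, so two Hilbert Busemann points lie in the same part if and only if $x_1,x_2$ lie in a common reverse-Funk part and $f_1,f_2$ in a common Funk part. The step I expect to demand the most care is not the computation above but promoting this finiteness criterion to a genuine cartesian product: one must check that whenever $r_{x_1}+f_1$ and $r_{x_2}+f_2$ are Busemann points of the same part, the mixed combinations $r_{x_1}+f_2$ and $r_{x_2}+f_1$ are again Busemann points, that is, $f_2\in B(x_1)$ and $f_1\in B(x_2)$. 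I would obtain this rectangle-closure from the explicit descriptions recalled earlier: the reverse-Funk part of $x$ is the relative interior of the extreme set of $\closure D$ containing $x$, and one verifies that $B(x')$ is constant as $x'$ ranges over this relative interior. Together with the detour-metric identity, this exhibits each Hilbert part as the product of a reverse-Funk part and a Funk part carrying the sum of their detour metrics.
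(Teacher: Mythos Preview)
The paper does not prove this proposition: it is quoted from \cite{lemmens_walsh_polyhedral} without proof, so there is no argument in the present paper to compare against. Your outline is essentially the standard route and matches the approach in the cited reference: one shows that the Hilbert detour cost splits as $H(h_1,h_2)=H_R(r_{x_1},r_{x_2})+H_F(f_1,f_2)$ by combining the sup-formula of Proposition~\ref{prop:detour} for the upper bound with the almost-geodesic limit formula for the lower bound, the key input being that a Hilbert almost-geodesic is simultaneously (after the obvious reparametrisations) a Funk and a reverse-Funk almost-geodesic because the two non-negative triangle defects sum to the Hilbert defect.

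Two small points worth tightening. First, your lower-bound step uses the limit $(\eta-\xi)(\gamma(t))\to H(\xi,\eta)$ in each geometry; this is parametrisation-independent, so you do not actually need to juggle the three different time scales---it is enough that the same underlying path is an almost-geodesic in each of the three geometries. Second, your identification of the delicate step is correct: the ``rectangle closure'' showing $B(x')=B(x)$ whenever $x'$ and $x$ lie in the relative interior of the same extreme set is what promotes the finiteness criterion to a genuine cartesian product, and it requires the explicit description of the Funk Busemann points from \cite{walsh_hilbert} rather than just the abstract detour machinery.
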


So, each part is the $\ell_1$-product of two lower-dimensional Hilbert
geometries.

The following proposition tells us which reverse-Funk parts combine with
with which Funk parts to form a Hilbert part.

\begin{proposition}
[\cite{lemmens_walsh_polyhedral}]
Let $R$ be the reverse-Funk part associated to an extreme set $E$ of $D$,
and let $F$ be the Funk part associated to an extreme set $E'$ of $D^\circ$.
Then $\{r+f \mid r\in R, f\in F\}$ is a part of the Hilbert horofunction
boundary if and only if $\dotprod{y}{x}=0$ for all $y\in E'$ and $x\in E$.
\end{proposition}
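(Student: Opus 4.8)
Throughout, identify the reverse-Funk part $R$ with $\{r_x\mid x\in\relint E\}$ via Proposition~\ref{prop:reverseFunk}, and write $E^\diamond:=\{y\in\closure{D^\circ}\mid \dotprod{y}{x}=0\}$ for the extreme set of $\closure{D^\circ}$ conjugate to $E$; since $x\in\relint E$ and every $y\in C^*$ is nonnegative on $E$, a functional $y$ vanishes at $x$ if and only if it vanishes on all of $E$, so the condition in the statement is precisely the inclusion $E'\subseteq E^\diamond$, and this inclusion does not depend on the choice of $x\in\relint E$. The plan is first to reduce to a statement about $B(x)$, and then to analyse the Funk horofunctions by hand. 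By the product-decomposition proposition, every Hilbert part is a cartesian product of a reverse-Funk part and a Funk part, and the detour metric splits as $\delta=\delta_R+\delta_F$; hence any two elements of $\{r+f\mid r\in R,\ f\in F\}$ are at finite detour distance, so this set lies inside a single part. It is therefore a part exactly when all of its elements are Hilbert Busemann points, in which case the fact that reverse-Funk parts and Funk parts each partition their respective Busemann sets forces it to coincide with a whole part. Using the description of the Hilbert Busemann points as $\{r_x+f\mid x\in\partial D,\ f\in B(x)\}$, the proposition thus reduces to the claim that, for $x\in\relint E$ and $f$ in the Funk part $F$ associated to $E'$, one has $f\in B(x)$ if and only if $E'\subseteq E^\diamond$.

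To establish this I would compute the Funk horofunctions directly. Normalising the extremal generators $w$ of $C^*$ so that $\dotprod{w}{b}=1$, the gauge admits the dual expression $\gauge{C}{u}{v}=\sup_w\dotprod{w}{u}/\dotprod{w}{v}$, and a short manipulation yields, for any sequence $(x_n)$ in $D$,
\begin{align*}
\funk(u,x_n)-\funk(b,x_n)
 =\log\sup_w\big[\dotprod{w}{u}\,a_n(w)\big],
\qquad
a_n(w):=\frac{\inf_{w'}\dotprod{w'}{x_n}}{\dotprod{w}{x_n}}\in(0,1].
\end{align*}
Now suppose $x_n\to x\in\relint E$. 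For a generator $w\notin E^\diamond$ we have $\dotprod{w}{x}>0$ while $\inf_{w'}\dotprod{w'}{x_n}\to 0$, so $a_n(w)\to 0$; only the generators in $E^\diamond$ survive in the limit. Passing to a subsequence so that $a_n(w)\to a(w)$ for every $w$, any Funk Busemann point arising as such a limit is therefore of the form $\log\sup_{w\in E^\diamond}[\dotprod{w}{u}\,a(w)]$, and by the explicit description of the Funk Busemann points in~\cite{walsh_hilbert} its part is the one associated to the extreme set of $\closure{D^\circ}$ generated by the support $\{w\in E^\diamond\mid a(w)>0\}$, which is contained in $E^\diamond$. This proves the necessity of $E'\subseteq E^\diamond$.

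For the converse, assume $E'\subseteq E^\diamond$ and let $f$ be any Busemann point of the Funk part associated to $E'$. By~\cite{walsh_hilbert}, $f$ has the form $\log\sup_{w\in E'}[\dotprod{w}{u}\,a(w)]$ for a suitable weighting $a$ supported on $E'$, and I would realise it as a limit of points $x_n$ chosen so that $x_n\to x$: since every generator of $E'$ vanishes at $x$, one can approach $x$ through $x_n\in C$ for which the ratios $a_n(w)$ converge to $a(w)$ when $w\in E'$, while $a_n(w)\to 0$ both for $w\notin E^\diamond$ (automatic) and for $w\in E^\diamond\setminus E'$. The resulting sequence converges to $f$ in the Funk geometry and, being convergent to $x\in\partial D$ in the usual topology, also converges to $r_x$ in the reverse-Funk geometry, so $f\in B(x)$. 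The main obstacle is exactly this construction: one must approach $x$ so that the generators of $E'$ vanish strictly faster than those of $E^\diamond\setminus E'$ and at the prescribed relative rates, all the while keeping $x_n$ inside $C$. This is possible precisely because $E'$ is a face of the conjugate set $E^\diamond$, which supplies enough directions along which to steer the approach.
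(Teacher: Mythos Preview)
The paper does not actually prove this proposition: it is stated with the citation \cite{lemmens_walsh_polyhedral} and no proof is given in the present paper. There is therefore nothing in this paper's own argument to compare your proposal against.

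That said, your sketch follows the natural strategy one would extract from the surrounding propositions in Section~\ref{sec:hilbert_horoboundary}: reduce the question to whether $F\subset B(x)$ for $x\in\relint E$, and then analyse which Funk Busemann points can be reached by sequences converging to $x$. The necessity direction is sound. The sufficiency direction is where the real work lies, and you acknowledge this: constructing a sequence $x_n\to x$ in $D$ whose Funk limit is a \emph{prescribed} Busemann point $f$ in the part associated to $E'$ requires controlling the rates at which all the functionals $\dotprod{w}{x_n}$ vanish, not just showing some sequence works. Your appeal to ``enough directions'' because $E'$ is a face of $E^\diamond$ is the right intuition, but it is not a proof; in the original reference this step uses the explicit parametrisation of Funk Busemann points and a careful construction of approaching sequences, and your sketch stops short of supplying that construction.
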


Let $U=R\times F$ be a Hilbert part expressed as the product of
a reverse-Funk part and a Funk part. If either of the component parts
is a singleton, then $U$ will itself be a Hilbert geometry when endowed
with its detour metric. We call such parts of the Hilbert horofunction
boundary \emph{pure} parts.
When $R$ is a singleton, $U$ will be called a \emph{pure Funk part}, and
when $F$ is a singleton, $U$ will be called a \emph{pure reverse-Funk part}.

It was shown in~\cite{lemmens_walsh_polyhedral} that the $\ell_1$-product
of two Hilbert geometries, each consisting of more than one point,
can not be isometric to a Hilbert geometry.
It follows that the extension to the horofunction boundary of any isometry
maps pure parts to pure parts. 

Of particular interest will be the \emph{maximal} pure parts.
These are the pure parts that are not contained in the closure of any other
pure part. This property is also preserved by isometries.

Let $W$ be a maximal pure-Funk part. From what we have seen above,
there is some extreme point $w$ of $D$ such that
each element of $W$ can be written $r_w+f$, where $f$ is in the part of the
Funk horofunction boundary associated to the exposed face of $D^\circ$ defined
by $w$.

Similarly, if $U$ is a maximal pure-reverse-Funk part, then any element of
$U$ can be written $r_x+f^{(u)}$, where $f^{(u)}$ is a fixed singleton Funk
horofunction corresponding to some extreme point $u$ of $D^\circ$ that defines
an exposed face $F$ of $D$, and $x\in\relint F$.  

Note that
\begin{align*}
\closure W &= \{ r_w + f \mid f\in B(w) \}
\qquad\text{and} \\
\closure U &= \{ r_x + f^{(u)} \mid f^{(u)}\in B(x) \}.
\end{align*}
Here the closures are taken in the set of Busemann points, not the set of
horofunctions.

\subsection{Horofunctions extended to the cone}
\label{sec:horofunctions_extended}

Recall that Funk and reverse-Funk metrics can be extended to all of $C$.
We may do likewise with the Funk and reverse-Funk horofunctions.
In particular, for $x\in\partial D$,
we have $r_x(y) = \rfunk(y,x)-\rfunk(b,x)$
for all $y\in C$.

Observe that the extension to $C$ of a reverse-Funk horofunction
is the logarithm of an anti-homogeneous function, whereas
that of a Funk horofunction is the logarithm of an homogeneous function.
So, the natural extension to $C$ of any Hilbert horofunction is
homogeneous of degree $0$, that is, constant on the projective class of each
point of $C$.

Recall that every isometry of a metric space extends continuously to
a homeomorphism on the compactification. One may consider a gauge-reversing
map on a cone $C$ to be an isometry from $C$ with the reverse-Funk metric
to $C$ with the Funk metric.
This is formalised in the following proposition, which will be crucial in
our study of gauge-reversing maps.
It says that the extension of a gauge-reversing map to the horofunction
boundary takes reverse-Funk horofunctions to Funk horofunctions
and \emph{vice versa}.
Recall that we may consider $D$ to be a cross-section of the cone $C$.

\begin{proposition}
\label{prop:gauge_reversing_swaps_F_RF}
Let $\phi\colon C\to C$ be a gauge-reversing map.
If $r_x\colon C\to\R$, with $x\in\partial D$,
is an (extended) reverse-Funk horofunction, then
\begin{align*}
\phi r_x(\cdot) := r_x \after \phi^{-1} (\cdot) - r_x \after \phi^{-1}(b)
\end{align*}
%\begin{align*}
%\phi r_x(\cdot) = r_x (\phi^{-1}(\cdot)) - r_x(\phi^{-1}(b))
%\end{align*}
is an (extended) Funk horofunction.
Likewise, if $f\colon C\to\R$ is an (extended) Funk horofunction, then
\begin{align*}
\phi f(\cdot) := f \after \phi^{-1} (\cdot) - f \after \phi^{-1}(b)
\end{align*}
is an (extended) reverse-Funk horofunction.
\end{proposition}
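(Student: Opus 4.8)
The plan is to exploit the key observation already made in Section~\ref{sec:horofunctions_extended}: a gauge-reversing map $\phi$ can be viewed as an isometry from $(C,\rfunk)$ to $(C,\funk)$, and hence its extension to the horofunction boundary must carry reverse-Funk horofunctions to Funk horofunctions. I would therefore first verify that $\phi$ genuinely is such an isometry. Since $\phi$ is gauge-reversing, we have $\gauge{C}{\phi x}{\phi y} = \gauge{C}{y}{x}$, so taking logarithms gives $\funk(\phi x, \phi y) = \rfunk(x,y)$ for all $x,y\in C$; equivalently $\phi$ is an isometry from $(C,\rfunk)$ to $(C,\funk)$. By Proposition~\ref{prop:gauge_reversing_bijective}, $\phi$ is a bijection, so $\phi^{-1}$ exists and is also gauge-reversing, hence likewise an isometry from $(C,\funk)$ to $(C,\rfunk)$. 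This symmetric setup is what makes both halves of the proposition work in parallel.

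**Transporting horofunctions.** Next I would invoke the general principle, recalled in the paragraph following Proposition~\ref{prop:inf_minus_infinity}, that an isometry $\iso\colon(X,d)\to(X',d')$ extends to a homeomorphism of horofunction compactifications via the formula $\iso\cdot\xi(x) = \xi(\iso^{-1}(x)) - \xi(\iso^{-1}(b'))$. Applying this with $\iso=\phi$, $(X,d)=(C,\rfunk)$ and $(X',d')=(C,\funk)$, the image of a reverse-Funk horofunction $r_x$ under $\phi$ is precisely the function $\phi r_x(\cdot) = r_x\after\phi^{-1}(\cdot) - r_x\after\phi^{-1}(b)$ as displayed in the statement, and this image is a horofunction of the \emph{target} geometry, namely the Funk geometry. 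Symmetrically, applying the principle to $\phi^{-1}\colon(C,\funk)\to(C,\rfunk)$ — or equivalently to $\phi$ regarded as the isometry $(C,\funk)\to(C,\rfunk)$ in the other direction — sends a Funk horofunction $f$ to the reverse-Funk horofunction $\phi f(\cdot) = f\after\phi^{-1}(\cdot) - f\after\phi^{-1}(b)$. Thus both displayed formulas are instances of the same change-of-horofunction rule, once one identifies the correct source and target metrics.

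**The main obstacle.** The point requiring genuine care is that the two metrics $\rfunk$ and $\funk$ live on the \emph{same} underlying set $C$, and their symmetrised metrics both equal the Hilbert metric, so the topologies on $C$ and the base-points coincide. I would check that the horofunction boundary of $(C,\rfunk)$ really does consist of the extended reverse-Funk horofunctions $r_x$, and that of $(C,\funk)$ of the extended Funk horofunctions, so that ``horofunction of the target geometry'' means exactly what the statement claims. This is where one must be slightly delicate: the boundaries were described in Section~\ref{sec:hilbert_horoboundary} for the cross-section $D$, and I would confirm that passing to the cone $C$ merely replaces each boundary function by its degree-$0$, respectively homogeneous or anti-homogeneous, extension as discussed in Section~\ref{sec:horofunctions_extended}, with no new boundary points appearing. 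Granting this identification, a sequence $z_n\in C$ converging to $r_x$ in the reverse-Funk boundary is mapped by the homeomorphism $\phi$ to a sequence $\phi(z_n)$ converging to $\phi r_x$ in the Funk boundary; since the limit of a convergent sequence in $C$ under the Funk metric is a Funk horofunction, $\phi r_x$ is a Funk horofunction, as required. The reverse implication is identical with the roles of $\funk$ and $\rfunk$ interchanged, completing the proof.
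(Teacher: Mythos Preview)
Your proposal is correct and follows essentially the same approach as the paper: both arguments rest on the observation (which the paper states immediately before the proposition) that a gauge-reversing $\phi$ is an isometry from $(C,\rfunk)$ to $(C,\funk)$, and hence carries horofunctions of the source to horofunctions of the target. The paper simply writes out the sequence argument explicitly rather than invoking the general isometry-extends-to-boundary principle: it takes $x_n\in D$ converging to $r_x$, rescales $\phi(x_n)$ back into the cross-section $D$ as $z_n=\lambda_n\phi(x_n)$, and computes directly that $\funk(\cdot,z_n)-\funk(b,z_n)=\rfunk(\phi^{-1}(\cdot),x_n)-\rfunk(\phi^{-1}b,x_n)$, which converges to $\phi r_x$. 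This rescaling trick sidesteps precisely the ``main obstacle'' you identify --- the Funk and reverse-Funk functions on $C$ are not genuine quasi-metrics --- by keeping the approximating sequence inside $D$, where the horofunction theory of Section~\ref{sec:hilbert_horoboundary} applies without qualification; your argument handles the same issue by appealing to the homogeneity-based identification of the two boundaries.
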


\begin{proof}
Let $x_n$ be a sequence in $D$ converging to $r_x$ in the reverse-Funk
horofunction boundary.
For each $n\in\N$, let $z_n\in D$ and $\lambda_n>0$ be such that
$z_n = \lambda_n \phi(x_n)$. For all $n\in\N$, we have
\begin{align*}
\funk(\cdot, z_n) - \funk(b, z_n)
   &= \funk(\cdot, \phi x_n) - \funk(b, \phi x_n) \\
   &= \rfunk(\phi^{-1}(\cdot), x_n) - \rfunk(\phi^{-1}b, x_n).
\end{align*}
So, as $n$ tends to infinity, the sequence $z_n$ converges in the Funk
geometry to the function
$r_x\after \phi^{-1}(\cdot) - r_x\after \phi^{-1}(b)$,
which must therefore be a horofunction.

The proof of the second part is similar.
\end{proof}

\section{Self duality}
\label{sec:self_duality}

\newcommand\dompos{\mathcal{C}}

Max Koecher defined a \emph{domain of positivity} for a symmetric
non-degenerate bilinear form $\bilinear$ on a real finite-dimensional vector
space $\linspace$ to be a non-empty open set $\dompos$ such that $\bilinear(x,y)>0$
for all $x$ and $y$ in $\dompos$, and such that if $\bilinear(x,y)>0$
for all $y\in\closure\dompos\backslash\{0\}$, then $x\in\dompos$.
See~\cite{koecher}

A domain of positivity is always a proper open convex cone.
If the bilinear form is positive definite, then the cone is self-dual.
A negative-definite bilinear form can not have a domain of positivity.

Assume we have a proper open convex cone $C$ in a finite-dimensional vector
space that admits a gauge-reversing map. By Lemma~\ref{lem:almost_all},
there exists a gauge-reversing map $\phi\colon C\to C$ that is an involution,
has a fixed point $b$, and is differentiable at $b$ with derivative
$-\id$. We take $b$ to be the base-point.

We wish to define a positive-definite bilinear form that makes the cone $C$
a domain of positivity.

We use the fact that certain Funk geometry
horofunctions are log-of-linear functions. For an illustration of our method,
consider the positive cone $\interior\R^n_+$ with the gauge-reversing map
$\rho\colon \interior\R^n_+\to\interior\R^n_+$ defined by
$(\rho x)_i := 1/x_i$, for all $x\in\interior\R^n_+$ and coordinates $i$.
The singleton parts of the reverse-Funk horofunction boundary correspond
to the extremal rays of the cone, in this case the (positive) coordinate axes
of $\R^n$.
More precisely, associated to the $i$th coordinate axis $e_i$ is the
reverse-Funk horofunction $r_{e_i}(x)=-\log x_i$.
Each of these singletons is mapped by $\rho$ to a singleton of the Funk
horofunction boundary. In the particular case under consideration, we have
$\rho(r_{e_i})(x) = r_{e_i} \after \rho^{-1}(x) = \log x_i$,
for all $x\in\interior\R^n_+$.
In general, the singleton Funk horofunction is the logarithm of a linear
functional. Thus, we have
a correspondence between the extremal rays of the cone and linear
functionals. This will allow us to define a bilinear form on the space.  

For $x$ an extremal generator of $C$, define
$h_x(\cdot) := \exp(r_x \after \phi(\cdot))$ on $C$, where
$r_x(\cdot):= \log(M(x,\cdot)/M(x,b))$ is the reverse-Funk horofunction
associated to the point $x\in\partial C\backslash\{0\}$.
Observe that $h_x$ is the exponential of the image of $r_x$ under the
map $\phi$.

%asdf
\begin{lemma}
\label{lem:linear_funcs}
For each extremal generator $x$ of $C$, the function $h_x$ is linear on $C$.
Moreover, $h$ defines a bijection between the projective classes of extremal
generators of $C$ and those of its dual $C^*$.
\end{lemma}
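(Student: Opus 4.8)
The plan is to establish the linearity of $h_x$ by invoking Proposition~\ref{prop:gauge_reversing_swaps_F_RF}, and then to build the claimed bijection out of the structure of the singleton parts of the two horofunction boundaries.

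First I would show that each $h_x$ is log-of-linear, hence linear. Since $x$ is an extremal generator of $C$, the point $\proj(x)$ is an extreme point of $\proj(\closure C)$, so $r_x$ is a singleton of the reverse-Funk geometry (by the description of the reverse-Funk detour metric, the singletons correspond exactly to the extreme points of $\closure D$). By Proposition~\ref{prop:gauge_reversing_swaps_F_RF}, the image of $r_x$ under $\phi$ is a Funk horofunction; since $\phi$ extends to a homeomorphism of the compactifications that carries the detour metric over, it maps the part containing $r_x$ isometrically onto a part of the Funk boundary, so the singleton $r_x$ goes to a singleton Funk horofunction. By Proposition~\ref{prop:funk_singletons}, every singleton of the Funk geometry is the logarithm of a linear functional (an extremal generator of $C^*$ evaluated against the argument). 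Therefore $r_x\after\phi$ is, up to the normalising additive constant $r_x\after\phi(b)$, the logarithm of a linear functional, and exponentiating gives that $h_x=\exp(r_x\after\phi(\cdot))$ is a positive scalar multiple of a linear functional on $C$, hence linear. Here it is convenient that $\phi$ is the fixed involution from Lemma~\ref{lem:almost_all}, so $\phi^{-1}=\phi$ and the formula $\phi r_x(\cdot)=r_x\after\phi(\cdot)-r_x\after\phi(b)$ of Proposition~\ref{prop:gauge_reversing_swaps_F_RF} matches the definition of $h_x$ exactly.

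Next I would assemble the bijection. The previous paragraph shows $h$ sends each projective class of extremal generators of $C$ (equivalently, each extreme point of $\proj(\closure C)$, equivalently each reverse-Funk singleton) to a linear functional that is an extremal generator of $C^*$ (via Proposition~\ref{prop:funk_singletons}, the singleton Funk horofunctions being exactly $\log\dotprod{y}{\cdot}$ for $y$ an extremal generator of $C^*$). To see this is a bijection, I would use that $\phi$ extends to a \emph{homeomorphism} of the horofunction compactifications that swaps the roles of the Funk and reverse-Funk boundaries: it carries reverse-Funk singletons bijectively onto Funk singletons. Applying the same argument to the inverse map $\phi^{-1}=\phi$ (which is itself gauge-reversing) produces the inverse correspondence, sending Funk singletons back to reverse-Funk singletons, so $h$ is a bijection. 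Since reverse-Funk singletons correspond to projective classes of extremal generators of $C$ and Funk singletons correspond (by Proposition~\ref{prop:funk_singletons}) to projective classes of extremal generators of $C^*$, this is precisely the asserted bijection.

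The main obstacle I anticipate is the bookkeeping that makes ``$\phi$ maps singletons to singletons'' rigorous: one must check that the induced homeomorphism on compactifications really restricts to a bijection on the singleton parts, which requires knowing that being a singleton part is an intrinsic property preserved under the detour-metric isometry induced by $\phi$. Proposition~\ref{prop:gauge_reversing_swaps_F_RF} gives the pointwise statement that individual reverse-Funk horofunctions map to Funk horofunctions, but promoting this to a clean bijection of parts — and in particular confirming that a \emph{singleton} (a part with trivial detour metric) cannot map to a higher-dimensional part — is the delicate point. I would handle it by noting that $\phi$, being gauge-reversing, is an isometry from $(C,\rfunk)$ to $(C,\funk)$, so it induces a detour-metric isometry between the corresponding boundaries; since the detour metric detects the dimension of a part, singletons (zero-dimensional parts) must be carried to singletons, closing the argument.
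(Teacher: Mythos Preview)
Your proposal is correct and follows essentially the same route as the paper: use Proposition~\ref{prop:gauge_reversing_swaps_F_RF} to see that $\phi$ carries reverse-Funk horofunctions to Funk horofunctions, observe that parts (and hence singletons) are preserved because $\phi$ is an isometry inducing a detour-metric isometry, and then invoke Proposition~\ref{prop:funk_singletons} to identify the Funk singletons with log-linear functionals coming from extremal generators of $C^*$. Your discussion of the ``obstacle'' is more explicit than the paper's, which simply asserts that $\phi$ maps parts to parts and hence singletons to singletons, but the underlying argument is the same.
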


\begin{proof}
Since $\phi$ reverses the gauge, it maps parts of the reverse-Funk
horofunction boundary to parts of the Funk horofunction boundary.
In particular, it maps singletons to singletons.

The parts of the reverse-Funk boundary correspond to the relative interiors
of the extreme sets of $\proj(\closure C)$. So, the singletons correspond
to the projective classes of extremal generators of $C$.

We have seen in Proposition~\ref{prop:funk_singletons} that singletons of the
Funk horofunction boundary correspond to projective classes of extremal
generators of $C^*$, and that each of these horofunctions is the logarithm
of a linear function.
\end{proof}

\begin{definition}
\label{def:dot_product}
For $y$ in $C$, and $x$ an extremal generator of $C$, let
\begin{align*}
\mybilin(y,x) := h_x(y) M(x,b)  = M(x,\phi(y)).
\end{align*}
Extend this definition to all $y\in \linspace$ using Lemma~\ref{lem:linear_funcs}.
We obtain a function on $\linspace$ that is linear in $y$ for every fixed extremal
generator $x$ of $C$.
\end{definition}

Observe that $\mybilin(y,x)$ is homogeneous in $x$.

\begin{lemma}
\label{lem:extreme_extreme}
Let $x$ and $x'$ be extremal generators of $C$.
Then, $\mybilin(x,x')=\mybilin(x',x)$.
\end{lemma}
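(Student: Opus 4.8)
The claim is that the bilinear form $\mybilin$, restricted to pairs of extremal generators, is symmetric. The plan is to unwind the two expressions for $\mybilin(x,x')$ and $\mybilin(x',x)$ using Definition~\ref{def:dot_product} and exploit the involutive nature of $\phi$ together with the fact that $\phi$ swaps Funk and reverse-Funk horofunctions. By Lemma~\ref{lem:almost_all} we have arranged $\phi$ to be a gauge-reversing involution fixing the base-point $b$, with $\dee_b\phi=-\id$, and this involution structure is what I expect to drive the symmetry.

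\textbf{The main computation.} Writing out the definitions, $\mybilin(x,x') = M(x',\phi(x))$ and $\mybilin(x',x)=M(x,\phi(x'))$, so the assertion to prove is
\begin{align*}
M(x',\phi(x)) = M(x,\phi(x')).
\end{align*}
The first step is to relate $M(x,\phi(x'))$ to a reverse-Funk horofunction evaluated after $\phi$. Recall $r_x(\cdot)=\log\big(M(x,\cdot)/M(x,b)\big)$, so that $h_x(\cdot)=\exp(r_x\after\phi(\cdot))=M(x,\phi(\cdot))/M(x,b)$, giving $\mybilin(y,x)=M(x,\phi(y))$. The idea is to apply $\phi$ inside these expressions and use that $\phi$ is an involution: substituting $y=\phi(y')$ should let me move $\phi$ from one argument to the other. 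Concretely, I would evaluate $h_x(\phi(x'))$ and compare it with $h_{x'}(\phi(x))$, using the interchange $\phi\after\phi=\id$ to rewrite one gauge expression in terms of the other.

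\textbf{Where the symmetry comes from.} The crucial input is Proposition~\ref{prop:gauge_reversing_swaps_F_RF}, which tells us that $\phi$ carries the reverse-Funk horofunction $r_x$ to the Funk horofunction $\phi r_x(\cdot)=r_x\after\phi^{-1}(\cdot)-r_x\after\phi^{-1}(b)$, and since $\phi$ is an involution, $\phi^{-1}=\phi$. Thus $\phi r_x(\cdot)=r_x\after\phi(\cdot)-r_x\after\phi(b)$. By Lemma~\ref{lem:linear_funcs} this Funk horofunction is log-of-linear, i.e.\ $h_x$ is linear, and it corresponds to an extremal generator of $C^*$. I expect the symmetry $M(x',\phi(x))=M(x,\phi(x'))$ to follow by pairing the linear functional $h_x$ associated to $x$ against the extremal generator $x'$ and recognizing, via the involution, that this equals the reverse pairing. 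The normalization at the fixed base-point $b$ (where $\phi(b)=b$ and $\dee_b\phi=-\id$) is what makes the constants match up so that the two pairings are genuinely equal rather than merely proportional.

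\textbf{The main obstacle.} The delicate point will be handling the normalizing constants and verifying that the involution property transfers cleanly between the gauge $M(\cdot,\cdot)$ and its transpose under $\phi$. The relation $\mybilin(y,x)=M(x,\phi(y))$ is only \emph{a priori} linear in $y$ for fixed extremal $x$; establishing that swapping $x$ and $y$ (both extremal) yields the same value requires showing that the linear functional $h_x$ evaluated at $\phi(x')$ agrees symmetrically. I anticipate that the heart of the argument is a short duality computation: because $\phi$ is an involution sending $x\mapsto\phi(x)$ and fixing $b$, the quantity $M(x,\phi(x'))$ can be read simultaneously as ``the functional dual to $x$, applied to $x'$'' and ``the functional dual to $x'$, applied to $x$,'' and these coincide precisely because $\phi=\phi^{-1}$. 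Making this interchange rigorous, rather than merely suggestive, is the step I would spend the most care on.
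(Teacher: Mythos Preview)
Your proposal has a genuine gap at the very first step. You write $\mybilin(x,x') = M(x',\phi(x))$, but this expression is undefined: extremal generators lie on the boundary $\partial C$, and the gauge-reversing map $\phi$ is only defined on the \emph{open} cone $C$. The formula $\mybilin(y,x)=M(x,\phi(y))$ in Definition~\ref{def:dot_product} is stated for $y\in C$; for $y\in\partial C$ one must use the linear extension of $h_x$ supplied by Lemma~\ref{lem:linear_funcs}. Your plan to ``evaluate $h_x(\phi(x'))$'' and use $\phi\after\phi=\id$ to swap arguments therefore cannot get off the ground, because $\phi(x')$ does not exist. The obstacle you flag---normalising constants---is not the real difficulty; the real difficulty is that the involution identity $M(a,\phi(c))=M(c,\phi(a))$ is only available for interior points $a,c\in C$, and must be transported to the boundary.

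The paper's proof addresses exactly this. It takes sequences $x_n\to x$ and $x'_n\to x'$ in $C$; for these, the gauge-reversing involution property gives $M(x'_n,\phi(x_n))=M(x_n,\phi(x'_n))$ immediately. The substance of the argument is then to show that both sides converge and to identify the limits as $\mybilin(x',x)$ and $\mybilin(x,x')$, respectively. This uses the computation $r_{x_n}\after\phi = \log j_{\phi(x_n)}$ (with $j_y(z):=M(z,y)/M(b,y)$), the convergence $j_{\phi(x_n)}\to h_x$, and an equi-Lipschitz estimate (from \cite[Lemma~3.16]{walsh_hilbert}) allowing one to pass the moving evaluation point $x'_n$ through the limit. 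Your intuition that the involution drives the symmetry is correct, but it operates only on $C$; carrying it to extremal generators via this limiting procedure is the content of the lemma.
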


\begin{proof}
Let $x_n$ and $x'_n$ be sequences in $C$ converging, respectively,
to $x$ and $x'$.
Define
\begin{align*}
j_y(z):= \frac{M(z,y)}{M(b,y)},
\qquad\text{for all $y\in C$ and $z\in \linspace$}.
\end{align*}
Since $x_n$ converges to $x$, the functions $r_{x_n}$ converge pointwise to
the reverse-Funk horofunction $r_x$.
But $\phi$ is a gauge-reversing involution that fixes $b$, and so
\begin{align*}
r_{x_n}\after \phi(\cdot)
   = \log\frac{M(x_n,\phi(\cdot))}{M(x_n,b)}
   = \log\frac{M(\cdot,\phi(x_n))}{M(b,\phi(x_n))}
   = \log j_{\phi(x_n)}(\cdot).
\end{align*}
We deduce that the functions $\log j_{\phi(x_n)}$
converge pointwise to the Funk horofunction $\log h_x$.
Therefore, $j_{\phi(x_n)}$ converges pointwise to $h_x$.
By~\cite[Lemma~3.16]{walsh_hilbert}, the functions $\{j_y\};y\in C$ are
equi-Lipschitzian. We deduce that $j_{\phi(x_n)}(x'_n)$ converges to
$h_x(x')$ as $n$ tends to infinity.
Using in addition that $M(b,\phi(x_n))=M(x_n,b)$ for all $n$,
and that $M(\cdot,b)$ is continuous, we get
\begin{align*}
\lim_{n\to\infty} M(x'_n, \phi(x_n))
   &= \lim_{n\to\infty} j_{\phi(x_n)}(x'_n) M(b,\phi(x_n))\\
   &= h_x(x') M(x,b) \\
   &= \mybilin(x',x).
\end{align*}
But, for each $n\in\N$, we have that $M(x'_n, \phi(x_n))$ equals
$M(x_n, \phi(x'_n))$, and similar reasoning to the above shows that
the limit of this latter quantity is $\mybilin(x,x')$.
\end{proof}

Further extend the definition of $\mybilin$ to $\linspace\times \linspace$ by taking
$\mybilin(y,z) := \sum_j z_j \mybilin(y,x_j)$, where $z=\sum_j z_j x_j$
for some basis of extremal generators $\{x_j\}$ of $C$.

\begin{proposition}
\label{prop:independent}
This definition is independent of the basis of extremal generators chosen.
\end{proposition}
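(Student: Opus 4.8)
The plan is to reduce everything to the symmetry relation $\mybilin(x,x')=\mybilin(x',x)$ on pairs of extremal generators (Lemma~\ref{lem:extreme_extreme}), combined with the linearity of $\mybilin(\cdot,x)$ in its first slot, which we already have from Definition~\ref{def:dot_product} and Lemma~\ref{lem:linear_funcs}. The key observation is that linearity in the \emph{second} slot is precisely what we are trying to pin down, whereas linearity in the \emph{first} slot is already known; the symmetry lemma is the device that converts the one into the other.

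First I would record that the extremal generators span $\linspace$. Since $C$ is proper, $\closure C$ is a pointed closed convex cone in a finite-dimensional space, hence the conical hull of its extremal generators, and as $C$ is open its linear span is all of $\linspace$. In particular the given vector $y\in\linspace$ can be written as a finite linear combination $y=\sum_i y_i w_i$ of extremal generators $w_i$.

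Now I would fix such a decomposition of $y$, let $\{x_j\}$ be any basis of $\linspace$ consisting of extremal generators, and write $z=\sum_j z_j x_j$. Applying linearity of $\mybilin(\cdot,x_j)$ in its first argument, then the symmetry $\mybilin(w_i,x_j)=\mybilin(x_j,w_i)$ of Lemma~\ref{lem:extreme_extreme}, and finally linearity once more to recombine $z$, I would compute
\begin{align*}
\sum_j z_j\,\mybilin(y,x_j)
   = \sum_{i,j} y_i z_j\,\mybilin(w_i,x_j)
   = \sum_{i,j} y_i z_j\,\mybilin(x_j,w_i)
   = \sum_i y_i\,\mybilin(z,w_i).
\end{align*}
The right-hand side makes no reference to the basis $\{x_j\}$ nor to the coefficients $z_j$: each term $\mybilin(z,w_i)=h_{w_i}(z)\,M(w_i,b)$ is defined directly from Definition~\ref{def:dot_product}, with $z$ placed in the slot where linearity already holds. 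Hence $\sum_j z_j\,\mybilin(y,x_j)$ takes the same value for every choice of basis of extremal generators, which is exactly the assertion.

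The computation is routine; the only points needing care are that the extremal generators genuinely span $\linspace$ (so that $y$ admits such a decomposition) and, more importantly, that there is no circularity. The quantity $\mybilin(z,w_i)$ on the right is legitimate precisely because $z$ occupies the first argument, where $\mybilin$ is linear by construction, so the argument never presupposes the linearity in the second slot that it is establishing. I expect this last point to be the only genuine subtlety; once it is clear, the same manipulation also shows that the resulting form on $\linspace\times\linspace$ is symmetric, since the displayed chain is invariant under interchanging the roles of $y$ and $z$.
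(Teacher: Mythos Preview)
Your proof is correct and follows essentially the same route as the paper: expand $y$ in extremal generators, apply linearity in the first slot together with the symmetry of Lemma~\ref{lem:extreme_extreme} to rewrite $\sum_j z_j\,\mybilin(y,x_j)$ as $\sum_i y_i\,\mybilin(z,w_i)$, and observe that this last expression is basis-free. The paper's version differs only cosmetically, choosing a third \emph{basis} of extremal generators $\{x''_k\}$ for $y$ rather than an arbitrary finite linear combination, and omitting the explicit remarks on spanning and non-circularity that you include.
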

\begin{proof}
Suppose $z=\sum_j z_j x_j= \sum_j z'_j x'_j$ for two bases of extremal
generators $\{x_j\}$ and $\{x'_j\}$. Take any $y\in \linspace$ and write
$y=\sum_j y''_j x''_j$, for some basis of extremal generators $\{x''_j\}$
of $C$. Using Lemma~\ref{lem:extreme_extreme}, we get
\begin{align*}
\sum_j z_j \mybilin(y,x_j)
       &= \sum_{j,k} z_j y''_k \mybilin(x''_k,x_j) \\
       &= \sum_{j,k} z_j y''_k \mybilin(x_j,x''_k) \\
       &= \sum_{k} y''_k \mybilin(z,x''_k).
\end{align*}
Similarly, $\sum_j z'_j \mybilin(y,x'_j)$ can be shown to be equal to the same
expression.
\end{proof}

\begin{lemma}
\label{lem:dop}
The function $\mybilin$ is a symmetric non-degenerate bilinear form,
and $C$ is a domain of positivity for $\mybilin$.
\end{lemma}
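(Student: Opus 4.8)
The plan is to verify the four assertions — bilinearity, symmetry, non-degeneracy, and the domain-of-positivity property — in turn, leaning throughout on the identity $\mybilin(y,x) = M(x,\phi(y))$ for $y\in C$ and $x$ an extremal generator, together with the bijection of Lemma~\ref{lem:linear_funcs}.

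Bilinearity is essentially already in hand: Definition~\ref{def:dot_product} makes $\mybilin(\cdot,x)$ linear for each fixed extremal generator $x$, the extension makes $\mybilin(y,\cdot)$ linear, and Proposition~\ref{prop:independent} shows the latter is well defined. Expanding $y=\sum_i y_i x_i$ and $z=\sum_j z_j x_j$ in a basis $\{x_j\}$ of extremal generators then gives $\mybilin(y,z)=\sum_{i,j}y_i z_j\,\mybilin(x_i,x_j)$, exhibiting joint bilinearity. Symmetry follows immediately from this expansion and Lemma~\ref{lem:extreme_extreme}: since $\mybilin(x_i,x_j)=\mybilin(x_j,x_i)$ on the basis elements, we get $\mybilin(y,z)=\mybilin(z,y)$ in general.

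For non-degeneracy I would consider the linear map $\Psi\colon\linspace\to\linspace^*$, $\Psi(y):=\mybilin(y,\cdot)$. If $\Psi(y)=0$, then $\mybilin(y,x)=h_x(y)M(x,b)=0$, and hence $h_x(y)=0$, for every extremal generator $x$ of $C$. By Lemma~\ref{lem:linear_funcs} the functionals $h_x$ run projectively over all extremal generators of the dual cone $C^*$, and since $C$ is proper, $C^*$ is full-dimensional, so its extremal generators span $\linspace^*$. Thus $y=0$, the form $\mybilin$ is non-degenerate, and $\Psi$ is a linear isomorphism.

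Finally, for the domain-of-positivity property I would first establish positivity: for $y\in C$ and any extremal generator $v$, $\mybilin(y,v)=M(v,\phi(y))>0$ since $\phi(y)\in C$ is interior and $v\in\closure C\setminus\{0\}$; expanding an arbitrary $z\in\closure C\setminus\{0\}$ as a nonnegative combination of extremal generators and using linearity gives $\mybilin(y,z)>0$, and hence $\mybilin(x,y)>0$ for all $x,y\in C$ by symmetry. For the converse condition I would show that $C$ is self-dual with respect to $\mybilin$, i.e. that $\Psi(\closure C)=C^*$. The key computation is that, using symmetry, $\Psi(x)=\mybilin(x,\cdot)=M(x,b)\,h_x$ for each extremal generator $x$, so $\Psi$ carries the extremal generators of $C$ exactly onto those of $C^*$, again by Lemma~\ref{lem:linear_funcs}; since a proper closed convex cone is the closed convex hull of its extremal rays, this forces $\Psi(\closure C)=C^*$ and hence $\Psi(C)=\interior C^*$. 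Consequently the hypothesis that $\mybilin(x,y)>0$ for all $y\in\closure C\setminus\{0\}$ says precisely $\Psi(x)\in\interior C^*$, which is equivalent to $x\in C$. This identification of $\Psi(\closure C)$ with the dual cone is where the genuine content lies, and the main obstacle I expect is the bookkeeping and closure arguments needed to make it rigorous: extending positivity from extremal generators to all of $\closure C\setminus\{0\}$, and upgrading the inclusion $\Psi(\closure C)\subseteq C^*$ to an equality via the extremal-ray description rather than settling for mere containment.
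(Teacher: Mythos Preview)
Your argument is correct, and the treatment of bilinearity, symmetry, non-degeneracy, and the positivity half of the domain-of-positivity property matches the paper's essentially verbatim. For the converse half the paper is more direct: given $\mybilin(y,z)>0$ for all $z\in\closure C\setminus\{0\}$, it simply notes that $h_x(y)=\mybilin(y,x)/M(x,b)>0$ for every extremal generator $x$ of $C$, and since the $h_x$ exhaust (up to scale) the extremal generators of $C^*$ by Lemma~\ref{lem:linear_funcs}, this already forces $y\in C$. Your route through the identity $\Psi(\closure C)=C^*$ is correct and rests on exactly the same input, but it proves more than the lemma needs; what you have written down is effectively the self-duality of $C$ with respect to $\mybilin$, which is where the paper is ultimately headed (Lemma~\ref{lem:self_dual}), though the paper postpones that conclusion until after $\mybilin$ is shown to be positive definite, since an indefinite non-degenerate form can equally well have $C$ as a domain of positivity.
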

\begin{proof}
It is clear from its definition that $\mybilin$ is bilinear, and the symmetry
comes from the bilinearity and Lemma~\ref{lem:extreme_extreme}.

Let $z\in \linspace$ be such that $\mybilin(z,x)=0$, for all $x\in \linspace$.
So, in particular, for all extremal generators $x$ of $C$ we have
$h_x(z) \gauge{}{x}{b} = \mybilin(z,x) = 0$, and hence $h_x(z)=0$.
But, by Lemma~\ref{lem:linear_funcs}, $h_x$ is an extremal generator of
$C^*$, and all extremal generators of $C^*$ arise in this way, up to scale.
Also, $C$ is proper, and so the extremal generators of $C^*$ span $\linspace^*$.
It follows that $z=0$. We deduce that $\mybilin$ is non-degenerate.

If $y\in C$, and $x$ is an extremal generator of $C$, then
$\mybilin(y,x) = M(x,\phi(y))>0$.
It follows that $\mybilin(y,z)>0$ for all $y,z\in C$.

Let $y\in \linspace$ be such that $\mybilin(y,z)>0$
for all $z\in\closure C\backslash\{0\}$.
So, for every extremal generator $x$ of $C$, we have
$h_x(y) = \mybilin(y,x) /  M(x,b) > 0$.
We use again that each $h_x$ is an extremal generator of $C^*$,
and that all extremal generators of $C^*$ arise in this way, up to scale.
We conclude that $y$ is in $C$.
\end{proof}

We must now show that $\mybilin$ is positive definite.

Given a symmetric non-degenerate bilinear form $\bilinear$, diagonalise
it to get a normal basis $\{e_j\}$ such that each $\bilinear(e_j,e_j)$
is either $-1$ or $+1$.
Let $S\colon \linspace\to \linspace$ be the map that changes the sign of each coordinate
associated to a basis element satisfying $\bilinear(e_j,e_j)=-1$.
Also, let $\eucl(x,y) := \bilinear(Sx,y)$ be the Euclidean bilinear form with the
$\{e_j\}$ as an orthonormal basis.
See~\cite{mathoverflow} for a discussion of domains of positivity for
bilinear forms that are not positive definite.

The next lemma uses the following result from~\cite{koecher}:
let $\dompos$ be a domain of positivity with respect to $\bilinear$;
then, $x\in\closure \dompos$ if and only if $\bilinear(x,y)\ge0$
for all $y\in \dompos$.

\newcommand\ystar{z}
\newcommand\yminus{\ystar_-}
\newcommand\yplus{\ystar_+}

\begin{lemma}
\label{lem:dop_boundary}
Let $\dompos$ be a domain of positivity with respect to a symmetric indefinite
non-degenerate bilinear form~$\bilinear$. Then, $\bilinear(z,z)=0$ for some
$z\in\closure \dompos\backslash\{0\}$.
\end{lemma}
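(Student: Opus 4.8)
The plan is to argue at the level of the Rayleigh quotient $R(z):=\bilinear(z,z)/\eucl(z,z)$ and to exploit the self-duality enjoyed by a domain of positivity. Since $\bilinear$ is indefinite and non-degenerate, in the normal basis the sign operator $S$ is $\eucl$-orthogonal and $\eucl$-self-adjoint with $S^2=\id$ and eigenvalues $\pm 1$; as $S^2=\id$, the defining relation $\eucl(x,y)=\bilinear(Sx,y)$ also gives $\bilinear(x,y)=\eucl(Sx,y)$. Write $\linspace=\linspace_+\oplus\linspace_-$ for the eigenspaces of $S$, both non-trivial by indefiniteness. The first thing I would record is the dual reading of the quoted Koecher criterion: since $\eucl(w,y)=\bilinear(Sw,y)$, the statement that $z\in\closure\dompos$ iff $\bilinear(z,y)\ge 0$ for all $y\in\dompos$ yields the implication that if $\eucl(w,y)\ge 0$ for all $y\in\dompos$, then $Sw\in\closure\dompos$; that is, $S\,\closure\dompos$ contains the Euclidean dual cone of $\closure\dompos$.

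Next I would minimise the degree-zero homogeneous quotient $R$ over the compact cross-section $\closure\dompos\cap\{\eucl(z,z)=1\}$, obtaining a minimiser $z^*$ with $\eucl(z^*,z^*)=1$ and minimal value $m=\bilinear(z^*,z^*)\in[-1,1]$ (the range of $R$ being the spectrum of $S$). The heart of the argument is the first-order optimality condition: for each $y\in\closure\dompos$ the path $t\mapsto R(z^*+ty)$ stays in $\closure\dompos$ and has non-negative derivative at $t=0$, giving $\eucl(Sz^*-mz^*,y)\ge 0$ for all $y\in\closure\dompos$. By the dual reading above this forces $u:=z^*-mSz^*\in\closure\dompos$, and a short expansion using $\eucl(Sz^*,z^*)=m$ and $\eucl(Sz^*,Sz^*)=\eucl(z^*,z^*)=1$ gives
\[
\bilinear(u,u)=m(m^2-1).
\]

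I would then rule out $m>0$. If $m=1$, then $R\equiv 1$ on $\closure\dompos\setminus\{0\}$, which forces $\closure\dompos\subseteq\linspace_+$, impossible since $\dompos$ is open and $\linspace_+$ a proper subspace; so $0<m<1$. In that range $u\ne 0$ (otherwise $z^*=mSz^*$ would give $m^2=1$), and $\bilinear(u,u)=m(m^2-1)<0$, whence $R(u)<0<m$, contradicting the minimality of $m$. Hence $m\le 0$. If $m=0$ we are done with $z=z^*$; if $m<0$, I would join any interior point $p\in\dompos$, where $\bilinear(p,p)>0$, to $z^*$ by the straight segment, which lies in $\closure\dompos\setminus\{0\}$ by convexity and pointedness of $\closure\dompos$, and apply the intermediate value theorem to the continuous function $t\mapsto\bilinear(\gamma(t),\gamma(t))$ along it to produce a null vector $z\in\closure\dompos\setminus\{0\}$.

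The step I expect to demand the most care is the first-order optimality computation and its conversion, through the Koecher criterion, into the membership $u\in\closure\dompos$; once that identity $\bilinear(u,u)=m(m^2-1)$ is in hand, the remaining case analysis is routine linear algebra together with a compactness and intermediate-value argument.
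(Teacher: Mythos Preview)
Your argument is correct and follows essentially the same route as the paper: minimise the Rayleigh quotient $R=\bilinear/\eucl$ over $\closure\dompos\setminus\{0\}$, use the first-order optimality condition together with Koecher's self-duality criterion to land a point of $\closure\dompos$ (your $u=z^*-mSz^*$ is, up to a scalar, the paper's $Sv$ with $v=\nabla f(z^*)$), and then compute $\bilinear$ of that point to force $m=0$. The only cosmetic differences are that the paper observes $m\ge 0$ at the outset (so your intermediate-value case $m<0$ never actually arises), and your closed-form identity $\bilinear(u,u)=m(m^2-1)$ replaces the paper's computation in the $\pm$ coordinates.
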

\begin{proof}
Define
\begin{align*}
f(y):= \frac{\bilinear(y,y)}{\eucl(y,y)},
\qquad\text{for all $y\in \linspace\backslash\{0\}$}.
\end{align*}
One can calculate that $\grad \eucl(y,y)=2 y$,
and that $\grad \bilinear(y,y)=2S(y)$.
So, the gradient of $f$ is
\begin{align}
\label{eqn:gradient}
\grad f(y) = \frac{2\eucl(y,y)S(y) - 2\bilinear(y,y)y}{\eucl(y,y)^2}.
\end{align}

We wish to minimise $f$ over $\closure \dompos\backslash\{0\}$.
Since $f$ is homogeneous of degree zero and $\closure \dompos\backslash\{0\}$
is projectively compact, the minimum is attained at some point $\ystar$
of $\closure \dompos\backslash\{0\}$.
The minimum is non-negative since $\dompos$ is a domain of positivity.
Let $v:= \grad f(\ystar)$ be the gradient of $f$ at $\ystar$,
and let $\dee_\ystar f$ be the derivative of $f$ at $\ystar$.
These quantities are related by the equation
$\dee_\ystar f(\cdot)= \eucl(v, \cdot)$. Near $\ystar$, we have
\begin{align*}
f(\ystar+\delta\ystar)
   = f(\ystar) + \dee_\ystar f(\delta\ystar) + o(\delta\ystar).
\end{align*}
Since the minimum of $f$ over the convex set $\closure \dompos\backslash\{0\}$
is attained at $\ystar$, we have $\dee_\ystar f(x)\ge 0$
for all $x\in \dompos$, since all such tangent vectors $x$ point into the cone.
So, $\bilinear(Sv, x)=\eucl(v,x)\ge 0$ for all $x\in \dompos$.
It follows that $Sv$ is in $\closure \dompos$.
We conclude that
\begin{align}
\label{eqn:B(v,v)_is_non-negative}
\bilinear(v,v) = \bilinear(Sv,Sv) \ge 0.
\end{align}

Write $\ystar=\yplus + \yminus$, where $\yplus$ is in the linear span
of the basis vectors with $\bilinear(e_j,e_j)=+1$, and $\yminus$ is in
the linear span of the basis vectors with $\bilinear(e_j,e_j)=-1$. So,
\begin{align*}
\eucl(\ystar,\ystar) = \eucl(\yplus,\yplus) + \eucl(\yminus,\yminus)
\quad\text{and}\quad
\bilinear(\ystar,\ystar) = \eucl(\yplus,\yplus) - \eucl(\yminus,\yminus).
\end{align*}

Since $\dompos$ is open and $\bilinear$ is not positive definite,
$\dompos$ contains some element $x$ such that $\bilinear(x,x)<\eucl(x,x)$.
So, from the minimising property of $\ystar$,
we get $\bilinear(\ystar,\ystar)<\eucl(\ystar,\ystar)$,
or, equivalently, $\eucl(\yminus,\yminus)>0$.

We also have that $\ystar\in\closure \dompos$,
and so $\bilinear(\ystar,\ystar)\ge 0$.
Hence $\eucl(\yplus,\yplus)\ge \eucl(\yminus,\yminus)>0$.

One can calculate from~(\ref{eqn:gradient})
and~(\ref{eqn:B(v,v)_is_non-negative}) that
\begin{align*}
0 \le \bilinear(v,v)
  = \frac{16}{\eucl(\ystar,\ystar)^4}
       \eucl(\yminus,\yminus)\eucl(\yplus,\yplus)
          \Big(\eucl(\yminus,\yminus)-\eucl(\yplus,\yplus)\Big).
\end{align*}
So, we see that $\eucl(\yminus,\yminus)\ge \eucl(\yplus,\yplus)$.
In fact equality holds, since we proved the reverse inequality earlier.
We have proved that $\bilinear(\ystar,\ystar)=0$.
\end{proof}

\newcommand\phidot{\varphi}

\begin{lemma}
\label{lem:isolated_fixed}
Let $\phidot\colon C\to C$ be a gauge-reversing map with fixed point $b$.
Then, $b$ is an isolated fixed point of $\phidot$ if and only if $\proj(b)$
is an isolated fixed point of the action of $\phidot$ on $\proj(C)$.
\end{lemma}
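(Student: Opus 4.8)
The plan is to exploit the anti-homogeneity of $\phidot$ to set up a bijection between the fixed points of $\phidot$ in $C$ and the fixed points of its induced action on $\proj(C)$, and then to check that this bijection carries isolated points to isolated points in both directions.

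First I would record the key elementary fact. Since $\phidot$ is anti-homogeneous, $\phidot(\lambda x)=\lambda^{-1}\phidot(x)$ for all $\lambda>0$. Hence if $\proj(x)$ is fixed by the projective action, say $\phidot(x)=\mu x$ with $\mu>0$, then along the ray $\{\lambda x:\lambda>0\}$ we have $\phidot(\lambda x)=\lambda^{-1}\mu\,x$, which equals $\lambda x$ exactly when $\lambda=\sqrt{\mu}$. Thus each projectively fixed ray contains precisely one genuine fixed point of $\phidot$, so $\proj$ restricts to a bijection between $\operatorname{Fix}(\phidot)\subseteq C$ and the fixed-point set of the projective action in $\proj(C)$. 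Moreover, because $\phidot(b)=b$ corresponds to $\mu=1$, the point $b$ is the unique fixed point lying on its own ray.

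The easy direction is that $\proj(b)$ isolated implies $b$ isolated: if $b$ were not isolated there would be fixed points $x_n\to b$ with $x_n\neq b$, and continuity of $\proj$ would give $\proj(x_n)\to\proj(b)$; since $b$ is the only fixed point on its ray we have $\proj(x_n)\neq\proj(b)$, contradicting the isolation of $\proj(b)$. For the converse, suppose $\proj(b)$ is not isolated, so there are projectively fixed points $\proj(x_n)\to\proj(b)$, all distinct from $\proj(b)$. Using that $\proj$ is open (or passing to a fixed cross-section and rescaling), I may choose representatives $\tilde x_n$ of $\proj(x_n)$ with $\tilde x_n\to b$. Writing $\phidot(\tilde x_n)=\mu_n\tilde x_n$ and putting $y_n:=\sqrt{\mu_n}\,\tilde x_n$, the genuine fixed point on that ray, these $y_n$ are distinct from $b$ since they lie on different rays, and it remains only to show $y_n\to b$.

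This last convergence is the heart of the matter and the step I expect to require the most care. Here I would use that $\phidot$ is continuous for the usual topology, being a Thompson isometry by Proposition~\ref{prop:gauge_reversing} (the Thompson metric induces that topology, as is used in the proof of Lemma~\ref{lem:almost_all}). From $\tilde x_n\to b$ and continuity, $\mu_n\tilde x_n=\phidot(\tilde x_n)\to\phidot(b)=b$; comparing this with $\tilde x_n\to b\neq 0$ through any norm forces $\mu_n\to 1$, whence $y_n=\sqrt{\mu_n}\,\tilde x_n\to b$. This produces fixed points $y_n\to b$ with $y_n\neq b$, contradicting the isolation of $b$ and finishing the proof. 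Equivalently, one can phrase the whole argument as the statement that $\proj$ restricts to a homeomorphism between the two fixed-point sets, the continuity of its inverse being exactly the limit $\mu_n\to 1$ above; isolatedness then transfers automatically.
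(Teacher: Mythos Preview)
Your proof is correct, and it takes a route that differs from the paper's in one interesting respect. Both arguments identify the unique fixed point $y_n=\sqrt{\mu_n}\,\tilde x_n$ on each projectively fixed ray and must show $y_n\to b$. You do this analytically: continuity of the Thompson isometry $\phidot$ gives $\mu_n\tilde x_n=\phidot(\tilde x_n)\to b$, and together with $\tilde x_n\to b$ this forces $\mu_n\to1$. The paper instead argues order-theoretically: since $\phidot$ is antitone and fixes $b$, it exchanges the sets $\{x\ge_C b\}$ and $\{x\le_C b\}$, so every fixed point other than $b$ is incomparable to $b$; it then observes that a sequence that is projectively convergent to $b$ and termwise incomparable to $b$ must actually converge to $b$ in $C$. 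Your argument is more direct and avoids having to justify that last step, while the paper's argument highlights the order-reversing structure and does not need to invoke continuity of $\phidot$ in the ambient topology.
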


\begin{proof}
Let $(x_n)$ be a sequence of points in $\proj(C)$ distinct from $\proj(b)$
that converge to $\proj(b)$ and are fixed by the action of $\phidot$.
Using the anti-homogeneity of $\phidot$, we get that there exists a sequence
$(y_n)$ in $C$ of fixed points of $\phidot$ such that $y_n$ is in the
projective class $x_n$, for each $n\in\N$. The set of elements greater than
or equal to $b$
and the set of elements less than or equal to $b$ are exchanged by $\phidot$,
and the only element they have in common is $b$. Therefore, each $y_n$ is
incomparable to $b$. It follows from this and the projective convergence
of $(y_n)$ to $b$ that $(y_n)$ converges to $b$ in $C$. We have proved that
$b$ is not an isolated fixed point if $\proj(b)$ is not.

The converse is easy.
\end{proof}

\begin{lemma}
\label{lem:unique_fixed}
Let $\phidot\colon C\to C$ be a gauge-reversing map, and consider its action
on the projective space $P(C)$ of the cone.
If $P(b)$ is an isolated fixed point of this action, then $P(b)$ is the unique
fixed point.
\end{lemma}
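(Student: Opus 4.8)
The plan is to argue by contradiction, so suppose the action $g:=\proj(\phidot)$ has a second fixed point $\proj(c)\neq\proj(b)$. Since $\phidot$ is anti-homogeneous, any projective fixed point lifts to a genuine one: if $\phidot(c)=\lambda c$ then rescaling $c$ by $\sqrt{\lambda}$ gives $\phidot(c)=c$, and likewise we take $\phidot(b)=b$ in $C$. First I would form $A:=\phidot\after\phidot$, which is gauge-preserving, hence by Theorem~\ref{thm:linear} the restriction of a linear isomorphism. As $A$ fixes both $b$ and $c$, it fixes the plane $\Pi:=\operatorname{span}(b,c)$ pointwise, and so the collineation $g^2=\proj(A)$ fixes the straight projective line $\bar\ell:=\proj(\Pi)\intersection D$ through $\proj(b)$ and $\proj(c)$ pointwise.

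The clean situation is when the geodesic segment $[\proj(b),\proj(c)]$ is unique. Then, as $g$ is a Hilbert isometry fixing its two endpoints, it carries the segment to itself and restricts there to an isometry of an interval fixing both ends, hence to the identity; this produces a whole segment of fixed points and contradicts the isolation of $\proj(b)$. The difficulty is that Hilbert geodesics need not be unique (Proposition~\ref{prop:unique_geodesics}), and simple examples on a simplex show that the fixed set of such a $g$ can be a \emph{curved}, positive-dimensional geodesic rather than a straight segment; so one cannot in general conclude that $[\proj(b),\proj(c)]$ itself is fixed.

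To make progress I would reduce the dimension. Set $F:=\operatorname{Fix}(A)$, the eigenvalue-one eigenspace of $A$ (which contains $\Pi$), and consider $C_F:=F\intersection C$, a proper open convex cone in $F$. Since $A$ commutes with $\phidot$, one checks $\phidot(C_F)=C_F$; the order and gauge of $C_F$ being the restrictions of those of $C$, the map $\phidot|_{C_F}$ is gauge-reversing and satisfies $\phidot^2|_{C_F}=A|_{C_F}=\id$, so it is an involution. The line $\bar\ell$ lies in $G:=\proj(C_F)$, and $\proj(b)$ remains an isolated fixed point of $g|_G$. If $A\neq\id$ then $\dim C_F<\dim C$, and one concludes by induction on the dimension, the lowest case $\dim C_F=2$ being immediate: there $G=\bar\ell\isometric\R$ and an isometric involution of $\R$ fixing two distinct points must be the identity, contradicting isolation once more. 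Thus the reduction shows it suffices to treat the case $A=\id$, that is, $\phidot$ a gauge-reversing \emph{involution} of $C$ — and this is exactly the case relevant to Section~\ref{sec:self_duality}.

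The main obstacle is precisely this full-involution case, where no dimension drop is available and geodesics need not be unique. Here I would exploit local rigidity at $\proj(b)$: because $g$ is an involution and $\proj(b)$ is isolated in its fixed set, any derivative of $g$ at $\proj(b)$ can have no $+1$-eigendirection and so must be $-\id$ (equivalently $\dee_b\phidot=-\id$, consistent with Lemma~\ref{lem:point_fixed}), so that $g$ behaves near $\proj(b)$ like a point reflection. I would then bring in the geodesic-symmetry maps furnished almost everywhere by Lemma~\ref{lem:almost_all} — gauge-reversing involutions $\phidot_{z_n}$ fixing points $z_n$ converging to the Hilbert midpoint $m$ of $\proj(b)$ and $\proj(c)$, each with derivative $-\id$ — and run an approximation argument in the spirit of the proof of Lemma~\ref{lem:exist_collineation} to show that $g$ must fix $m$ as well. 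Iterating the midpoint construction then yields fixed points of $g$ accumulating at $\proj(b)$, the desired contradiction. Making this limiting/symmetry argument precise in the absence of unique geodesics — in particular identifying exactly which ``midpoint'' is forced to be fixed — is the crux of the proof.
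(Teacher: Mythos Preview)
Your proposal has a genuine gap, which you yourself identify: after the dimension reduction to a gauge-reversing involution, you never actually produce the required fixed point near $\proj(b)$. The approximation scheme via Lemma~\ref{lem:almost_all} is only a sketch, and the difficulty you flag --- pinning down which ``midpoint'' must be fixed when geodesics are not unique --- is real and is not resolved by the argument as written.

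The paper bypasses all of this with a short convexity-and-Brouwer argument. For any other projective fixed point $\proj(b')$ and any $\alpha\in(0,1)$, set
\[
Z_\alpha := \big\{ z\in \proj(C) \;\big|\; \hil(b,z)=\alpha\,\hil(b,b')\ \text{and}\ \hil(z,b')=(1-\alpha)\,\hil(b,b') \big\}.
\]
By the triangle inequality, $Z_\alpha$ coincides with the intersection of the closed Hilbert ball of radius $\alpha\,\hil(b,b')$ about $\proj(b)$ and the closed Hilbert ball of radius $(1-\alpha)\,\hil(b,b')$ about $\proj(b')$; since Hilbert balls are convex in the ordinary affine sense, $Z_\alpha$ is compact, convex, and non-empty, and it is clearly invariant under the Hilbert isometry $g=\proj(\phidot)$ because $g$ fixes both $\proj(b)$ and $\proj(b')$. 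Brouwer's theorem then yields a fixed point of $g$ in $Z_\alpha$, lying at Hilbert distance exactly $\alpha\,\hil(b,b')$ from $\proj(b)$. Letting $\alpha\to 0$ contradicts the assumed isolation unless $\proj(b')=\proj(b)$.

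Thus there is no need for unique geodesics, no dimension induction via $\operatorname{Fix}(\phidot\circ\phidot)$, and no appeal to Lemma~\ref{lem:almost_all}: the ``which midpoint'' problem you isolate is dissolved by observing that the whole set of $\alpha$-midpoints is convex and applying a fixed-point theorem to it.
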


\begin{proof}
The projective action of $\phidot$ is an isometry of the Hilbert metric $\hil$.
Let $P(b')$ be any fixed point of this projective action.
For each $\alpha\in(0,1)$, the set
\begin{align*}
Z_\alpha := \big\{ z\in P(C) \mid
   \text{$\hil(b,z) = \alpha\hil(b,b')$ and
   $\hil(z,b') = (1-\alpha)\hil(b,b')$} \big\}
\end{align*}
is invariant under $\phidot$.
It is also compact, convex, and non-empty. Therefore, by the Brouwer fixed
point theorem, $Z_\alpha$ contains a fixed point, which will be a distance
$\alpha\hil(b,b')$ from $b$ in the Hilbert metric. Since $\alpha$ can be
made as small as we wish, and we have assumed that $P(b)$ is an isolated
fixed point, applying Lemma~\ref{lem:isolated_fixed}, we see that $P(b')=P(b)$.
\end{proof}

\begin{lemma}
\label{lem:unique_projective_fixed_point}
The projective action of $\phi$ has a unique fixed point in $\proj(C)$.
\end{lemma}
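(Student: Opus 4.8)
The plan is to reduce the statement to an isolation claim and then invoke the two preceding lemmas. Specifically, I would show that $\proj(b)$ is an isolated fixed point of the projective action of $\phi$, and then apply Lemma~\ref{lem:unique_fixed} to promote isolation to uniqueness. By Lemma~\ref{lem:isolated_fixed}, $\proj(b)$ is isolated among the projective fixed points precisely when $b$ is an isolated fixed point of $\phi$ on the cone $C$ itself, so it suffices to establish the latter.

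To see that $b$ is isolated, I would use only the two facts about $\phi$ recorded at the start of the section: that $\phi(b)=b$ and that $\phi$ is differentiable at $b$ with $\dee_b\phi=-\id$. Suppose $x\in C$ is a fixed point of $\phi$ lying near $b$. Expanding $\phi$ to first order about $b$ gives
\[
x = \phi(x) = b + \dee_b\phi(x-b) + o(\|x-b\|) = b - (x-b) + o(\|x-b\|),
\]
whence $2(x-b) = o(\|x-b\|)$. Taking norms yields $2\|x-b\| = o(\|x-b\|)$, which is impossible for $x\neq b$ once $x$ is sufficiently close to $b$. Hence there is a neighbourhood of $b$ in which $b$ is the only fixed point of $\phi$, so $b$ is an isolated fixed point on the cone.

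With $b$ shown to be an isolated fixed point of $\phi$, Lemma~\ref{lem:isolated_fixed} gives that $\proj(b)$ is an isolated fixed point of the projective action, and Lemma~\ref{lem:unique_fixed} then yields that $\proj(b)$ is its unique fixed point, completing the proof. I do not expect a genuine obstacle here: all the substantive work—the equivalence between isolation on the cone and in projective space, and the Brouwer-fixed-point argument promoting an isolated fixed point to a unique one—has already been carried out in the two preceding lemmas, and the only new ingredient is the elementary first-order computation above exploiting $\dee_b\phi=-\id$.
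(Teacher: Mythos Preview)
Your proposal is correct and follows essentially the same approach as the paper: the paper's proof also argues that $D_b\phi=-\id$ makes $b$ an isolated fixed point of $\phi$, then applies Lemma~\ref{lem:isolated_fixed} and Lemma~\ref{lem:unique_fixed} in turn. The only difference is that you spell out the first-order expansion showing isolation, whereas the paper simply asserts ``It follows that $b$ is an isolated fixed point of $\phi$.''
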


\begin{proof}
The derivative of $\phi$ at $b$ satisfies $D_b\phi= -\id$.
It follows that $b$ is an isolated fixed point of $\phi$.
So, by Lemma~\ref{lem:isolated_fixed}, $P(b)$ is an isolated fixed point
of the projective action.
Applying Lemma~\ref{lem:unique_fixed}, we get the result.
\end{proof}

\begin{lemma}
\label{lem:fixed_between}
Let $z\in C$. Then, $M(z,b) M(b,\phi(z)) = M(z,\phi(z))$.
\end{lemma}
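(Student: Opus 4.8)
The plan is to reduce the statement to the single identity $M(z,\phi(z)) = M(z,b)^2$, after first disposing of the factor $M(b,\phi(z))$. The gauge-reversing property $M(\phi x,\phi y)=M(y,x)$ applied with $x=b$, $y=z$, together with $\phi(b)=b$, gives immediately $M(b,\phi(z)) = M(z,b)$. Hence $M(z,b)\,M(b,\phi(z)) = M(z,b)^2$, and the whole lemma comes down to showing $M(z,\phi(z)) = M(z,b)^2$.

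I would first prove the easy upper bound. For any $w\in C$, write $\mu:=M(w,b)$, so that $w\le_C\mu\, b$. Since $\phi$ is antitone and anti-homogeneous, applying it gives $\mu^{-1}b = \phi(\mu b)\le_C\phi(w)$, hence $b\le_C\mu\,\phi(w)$, and therefore $w\le_C\mu\, b\le_C\mu^2\phi(w)$. This shows $M(w,\phi(w))\le M(w,b)^2$ for every $w\in C$. Taking $w=z$ yields $M(z,\phi(z))\le M(z,b)^2$; taking $w=\phi(z)$ and using that $\phi$ is an involution, together with the gauge-reversing identity $M(\phi(z),b)=M(b,z)$, yields $M(\phi(z),z)\le M(b,z)^2$.

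The matching lower bound is the step I expect to be the main obstacle, and I would obtain it by showing that $\proj(b)$ is a metric midpoint of $\proj(z)$ and $\proj(\phi(z))$ for the Hilbert metric, i.e.\ that $\hil(z,\phi(z))=2\hil(z,b)$. Being gauge-reversing, $\phi$ descends to an isometric involution of $(\proj(C),\hil)$ fixing $\proj(b)$, and by Lemma~\ref{lem:unique_projective_fixed_point} this is its only fixed point. Setting $\rho:=\hil(z,\phi(z))$, I would consider the set $Z$ of Hilbert midpoints of $\proj(z)$ and $\proj(\phi(z))$, that is, the points lying at distance $\rho/2$ from each; exactly as in the proof of Lemma~\ref{lem:unique_fixed}, this set is compact, convex, and non-empty. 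Since the projective action of $\phi$ interchanges $\proj(z)$ and $\proj(\phi(z))$, it maps $Z$ into itself, so by the Brouwer fixed point theorem it has a fixed point in $Z$; by uniqueness this fixed point must be $\proj(b)$. Therefore $\proj(b)\in Z$, which is precisely the assertion $\hil(z,\phi(z))=2\hil(z,b)$. The delicate point here is that $\phi$ is \emph{not} a collineation, so one cannot argue by mapping the straight segment from $z$ to $\phi(z)$; the Brouwer argument sidesteps this by using only the isometry and the uniqueness of the fixed point.

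To finish, I would write each Hilbert distance as $\hil(x,y)=\log\big(M(x,y)\,M(y,x)\big)$, so that the midpoint relation $\hil(z,\phi(z))=2\hil(z,b)$ becomes $M(z,\phi(z))\,M(\phi(z),z) = \big(M(z,b)\,M(b,z)\big)^2$. Combining this with the two upper bounds $M(z,\phi(z))\le M(z,b)^2$ and $M(\phi(z),z)\le M(b,z)^2$ forces equality in both, since their product already saturates the constraint. In particular $M(z,\phi(z)) = M(z,b)^2 = M(z,b)\,M(b,\phi(z))$, which is the desired identity.
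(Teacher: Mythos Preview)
Your proof is correct and follows essentially the same route as the paper's: both apply Brouwer's theorem to the set of Hilbert midpoints between $\proj(z)$ and $\proj(\phi(z))$, which is non-empty, compact, convex, and $\phi$-invariant, and then invoke Lemma~\ref{lem:unique_projective_fixed_point} to conclude that $\proj(b)$ lies in it. The only cosmetic difference is in the passage from the Hilbert identity $\hil(z,b)+\hil(b,\phi(z))=\hil(z,\phi(z))$ to the Funk identity: the paper simply notes that equality in the Hilbert triangle inequality forces equality in the Funk and reverse-Funk triangle inequalities separately (since both satisfy the triangle inequality on $C$ and their sum is $\hil$), whereas you re-derive those Funk upper bounds by hand from antitonicity and anti-homogeneity and then saturate them via the product identity.
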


\begin{proof}
Let
\begin{align*}
Y:= \{y\in P(C) \mid
   \text{$\hil(z,y) = \hil(y,\phi(z)) = \hil(z,\phi(z))/2$}
   \}.
\end{align*}   
This set is non-empty, closed, bounded, invariant under $\phi$,
and convex in the usual sense.
So, the Brouwer theorem implies that $Y$ contains a fixed point, which by
Lemma~\ref{lem:unique_projective_fixed_point} must be~$P(b)$.
We deduce that $\hil(z,b) + \hil(b,\phi(z)) = \hil(z,\phi(z))$.
This implies that $\funk(z,b) + \funk(b,\phi(z)) = \funk(z,\phi(z))$.
The result now follows on taking exponentials.
\end{proof}

\begin{lemma}
\label{lem:dot_extreme}
If $x$ is an extremal generator of $C$, then $\mybilin(x,x)=M(x,b)^2$.
\end{lemma}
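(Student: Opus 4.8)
The plan is to reduce the identity at the boundary point $x$ to a clean pointwise identity on the interior of the cone, and then pass to the limit. Fix an extremal generator $x$ of $C$ and choose a sequence $(x_n)$ in $C$ converging to $x$. Since the second argument of $\mybilin$ in $\mybilin(x,x)$ is the extremal generator $x$, the definition gives $\mybilin(x,x)=h_x(x)\,M(x,b)$, and the whole point is to compute this quantity as a limit of interior data. The two ingredients I would assemble are: an interior identity $M(z,\phi(z))=M(z,b)^2$ valid for every $z\in C$, and the fact that $\mybilin(x,x)=\lim_n M(x_n,\phi(x_n))$.

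For the interior identity, I would argue that $M(z,\phi(z))=M(z,b)^2$ for all $z\in C$. Indeed, Lemma~\ref{lem:fixed_between} gives $M(z,\phi(z))=M(z,b)\,M(b,\phi(z))$, and since $\phi$ is the gauge-reversing involution fixing $b$, the gauge-reversing property applied to the pair $(b,z)$ yields $M(b,\phi(z))=M(\phi(b),\phi(z))=M(z,b)$. Substituting gives the claim. This is the step that exploits the special structure of the involution constructed at the start of the section (fixed point $b$, derivative $-\id$ at $b$), funneled through Lemma~\ref{lem:fixed_between}.

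For the passage to the limit, I would reuse the computation from the proof of Lemma~\ref{lem:extreme_extreme}, specialised to the diagonal case in which both approximating sequences are taken to be $(x_n)$. That computation shows $\lim_n M(x_n,\phi(x_n))=h_x(x)\,M(x,b)=\mybilin(x,x)$; its substance is that the normalised functions $j_{\phi(x_n)}$ converge pointwise to $h_x$ and, being equi-Lipschitzian by \cite[Lemma~3.16]{walsh_hilbert}, also converge along the moving points $x_n\to x$. Combining the three pieces then gives
\begin{align*}
\mybilin(x,x)=\lim_{n\to\infty} M(x_n,\phi(x_n))=\lim_{n\to\infty} M(x_n,b)^2=M(x,b)^2,
\end{align*}
where the final equality uses continuity of $M(\cdot,b)$.

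I expect the only genuine obstacle to be this limit: because $x$ lies on the boundary, $\phi(x_n)$ escapes to infinity in the cone, so $M(x_n,\phi(x_n))$ cannot be controlled by naive joint continuity of $M$. The equi-Lipschitz device inherited from the proof of Lemma~\ref{lem:extreme_extreme} is exactly what legitimises identifying $\lim_n M(x_n,\phi(x_n))$ with $\mybilin(x,x)$; once that is in hand, the interior identity and the continuity of $M(\cdot,b)$ close the argument by direct substitution.
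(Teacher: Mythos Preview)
Your proof is correct and follows essentially the same route as the paper: reduce $\mybilin(x,x)$ to $\lim_n M(x_n,\phi(x_n))$ via the equi-Lipschitz argument from Lemma~\ref{lem:extreme_extreme}, then use Lemma~\ref{lem:fixed_between} together with the gauge-reversing property and $\phi(b)=b$ to rewrite $M(x_n,\phi(x_n))$ as $M(x_n,b)^2$, and finish by continuity of $M(\cdot,b)$. The paper compresses your ``interior identity'' step into the displayed chain of equalities but the content is identical.
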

\begin{proof}
Let $x_n$ be a sequence in $C$ converging to $x$. Using the same reasoning
as in the proof of Lemma~\ref{lem:extreme_extreme}, we get
\begin{align*}
\mybilin(x,x) &= \lim_{n\to\infty} M(x_n,\phi(x_n)).
\end{align*}
So, by Lemma~\ref{lem:fixed_between},
\begin{align*}
\mybilin(x,x)
       &= \lim_{n\to\infty} M(x_n,b) M(b,\phi(x_n)) \\
       &= \lim_{n\to\infty} M(x_n,b)^2 \\
       &= M(x,b)^2.
\qedhere
\end{align*}
\end{proof}

\begin{lemma}
\label{lem:positive_definite}
The bilinear form $\mybilin$ is positive definite.
\end{lemma}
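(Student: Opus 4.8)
The plan is to argue by contradiction: a failure of positive definiteness must, by Lemma~\ref{lem:dop_boundary}, produce a nonzero isotropic vector in $\closure C$, and I would show that the extremal decomposition of such a vector is incompatible with Lemma~\ref{lem:dot_extreme}. First I would assemble what is already available. By Lemma~\ref{lem:dop}, the form $\mybilin$ is symmetric, non-degenerate, and $C$ is a domain of positivity for it. Since a negative-definite form admits no domain of positivity, $\mybilin$ is not negative definite; so if it fails to be positive definite it must be indefinite. Lemma~\ref{lem:dop_boundary} then applies and yields a vector $z\in\closure C\setminus\{0\}$ with $\mybilin(z,z)=0$.

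Next I would decompose this isotropic vector into extremal generators. Because $C$ is a proper cone in a finite-dimensional space, it has a compact base (a cross-section), and Carath\'eodory's theorem applied to this base expresses any element of $\closure C$ as a finite non-negative combination of extremal generators. Thus I may write $z=\sum_i z_i$ with each $z_i$ an extremal generator of $C$, and expand bilinearly to get $\mybilin(z,z)=\sum_{i,j}\mybilin(z_i,z_j)$. The final step is to bound this sum from below. Since $\mybilin(y,y')>0$ for all $y,y'\in C$ by Lemma~\ref{lem:dop}, and $\mybilin$ is continuous, every pairwise term satisfies $\mybilin(z_i,z_j)\ge 0$ (the $z_i$ lie in $\closure C$). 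By Lemma~\ref{lem:dot_extreme}, each diagonal term equals $M(z_i,b)^2$, and this is strictly positive because $z_i\in\closure C\setminus\{0\}$ together with the properness of $C$ forces $M(z_i,b)>0$. Hence $\mybilin(z,z)\ge\sum_i M(z_i,b)^2>0$, contradicting $\mybilin(z,z)=0$. Therefore $\mybilin$ is positive definite.

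The crux of the argument — rather than any single hard computation — is recognizing how cleanly the earlier lemmas interlock: positivity of $\mybilin$ on $C\times C$ makes every off-diagonal term of the extremal expansion non-negative, so the strictly positive diagonal contributions supplied by Lemma~\ref{lem:dot_extreme} can never be cancelled out. The only points requiring technical care are the finite extremal decomposition of a boundary point and the strict inequality $M(z_i,b)>0$, both of which are immediate consequences of $C$ being a proper cone.
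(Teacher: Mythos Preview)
Your proof is correct and follows essentially the same route as the paper: both decompose an arbitrary nonzero element of $\closure C$ into extremal generators, use non-negativity of the off-diagonal terms together with Lemma~\ref{lem:dot_extreme} for strict positivity of the diagonal ones, and then invoke Lemma~\ref{lem:dop_boundary}. The only cosmetic difference is that the paper proves $\mybilin(y,y)>0$ for every $y\in\closure C\setminus\{0\}$ directly and then applies the contrapositive of Lemma~\ref{lem:dop_boundary}, whereas you phrase the same computation as a contradiction starting from a hypothetical isotropic vector.
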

\begin{proof}
We have shown in Lemma~\ref{lem:dop} that $C$ is a domain of positivity of
$\mybilin$. Since $C$ is non-empty, $\mybilin$ can not be negative definite.
Let $y\in\closure C\backslash\{0\}$.
So, we can write $y=\sum_j y_j x_j$ as a positive
combination of finitely many extremal generators $\{x_j\}$ of $C$.
Therefore, $\mybilin(y,y) = \sum_{j,k} y_j y_k \mybilin(x_j,x_k)$.
Since $C$ is a domain of positivity, $\mybilin(x_j,x_k)\ge 0$ for all $j$
and $k$.
Also, $\mybilin(x_j,x_j)>0$ for all $j$ by Lemma~\ref{lem:dot_extreme}.
We conclude that $\mybilin(y,y)>0$.
Therefore, by Lemma~\ref{lem:dop_boundary}, $\mybilin$ is positive definite.
\end{proof}

\begin{lemma}
\label{lem:self_dual}
Assume there exists a gauge-reversing map $\phi\colon C\to C$.
Then, $C$ is self dual.
\end{lemma}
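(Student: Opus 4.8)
The plan is to use the positive-definite form $\mybilin$ constructed above as the required inner product and to prove directly that $C=C^\star$, where $C^\star=\{y\in\linspace\mid \mybilin(y,x)>0\text{ for all }x\in\closure C\setminus\{0\}\}$ is the open dual of $C$ with respect to $\mybilin$. By Lemma~\ref{lem:positive_definite}, $\mybilin$ is positive definite and hence a genuine inner product, and by Lemma~\ref{lem:dop}, $C$ is a domain of positivity for $\mybilin$. These two facts are the only inputs I need.

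One inclusion is immediate from the definition. If $x\in C^\star$, then $\mybilin(x,y)>0$ for every $y\in\closure C\setminus\{0\}$, which is exactly the hypothesis of the second defining property of a domain of positivity; that property then yields $x\in C$. Hence $C^\star\subseteq C$.

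For the reverse inclusion, I would first observe that the first defining property, $\mybilin(a,b)>0$ for all $a,b\in C$, extends by continuity to $\mybilin(a,b)\ge 0$ whenever $a\in C$ and $b\in\closure C$. Now fix $x\in C$ and $y\in\closure C\setminus\{0\}$; the aim is to upgrade $\mybilin(x,y)\ge 0$ to strict positivity, which places $x$ in $C^\star$. Here the openness of $C$ is decisive. Write $\|v\|:=\mybilin(v,v)^{1/2}$ for the norm induced by $\mybilin$, choose $r>0$ with the ball $B(x,r)\subseteq C$, and set $u:=y/\|y\|$ (well defined since $\mybilin$ is positive definite and $y\neq 0$). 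Then $x-\tfrac{r}{2}u\in C$, while, assuming $\mybilin(x,y)=0$,
\[
\mybilin\!\left(x-\tfrac{r}{2}u,\,y\right)=\mybilin(x,y)-\tfrac{r}{2}\|y\|=-\tfrac{r}{2}\|y\|<0 .
\]
This contradicts the non-negativity established a moment ago, so in fact $\mybilin(x,y)>0$. As $y$ was an arbitrary point of $\closure C\setminus\{0\}$, this shows $x\in C^\star$, and therefore $C\subseteq C^\star$.

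Combining the two inclusions gives $C=C^\star$, so $C$ is self-dual with respect to $\mybilin$. The argument is short, and the only genuinely delicate point is obtaining the strict inequality in the second inclusion; I expect no real obstacle, since the heavy lifting—producing a positive-definite form for which $C$ is a domain of positivity—was already carried out in Lemmas~\ref{lem:dop} and~\ref{lem:positive_definite}. This step is essentially Koecher's correspondence between positive-definite domains of positivity and self-dual cones.
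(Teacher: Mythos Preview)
Your proof is correct and follows the same route as the paper: invoke Lemma~\ref{lem:dop} and Lemma~\ref{lem:positive_definite} to obtain that $C$ is a domain of positivity for the positive-definite form $\mybilin$, and conclude self-duality. The only difference is cosmetic: the paper simply asserts that a domain of positivity for a positive-definite form is self-dual (this being Koecher's correspondence), whereas you unpack that implication explicitly via the two inclusions $C^\star\subseteq C$ and $C\subseteq C^\star$. One small point: you might note, as the paper does, that Lemma~\ref{lem:almost_all} is what justifies replacing the given gauge-reversing map by one with the special properties needed to construct $\mybilin$ in the first place.
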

\begin{proof}
As we have seen, by Lemma~\ref{lem:almost_all}, we may assume that
$\phi$ is an involution, has $b$ as a fixed point, and is differentiable at $b$
with derivative $-\id$.
It was shown in Lemma~\ref{lem:dop} that the function $\mybilin(\cdot,\cdot)$
is a symmetric non-degenerate bilinear form,
having $C$ as a domain of positivity.
But $\mybilin$ is positive definite by Lemma~\ref{lem:positive_definite},
and so $C$ is self dual.
\end{proof}

\begin{proof}[Proof of Theorem~\ref{thm:symmetric}]
Assume there exists a gauge-reversing map on $C$. It was proved in
Lemmas~\ref{lem:homogeneous} and~\ref{lem:self_dual} that $C$ is then,
respectively, homogeneous and self dual.

On the other hand, if $C$ is symmetric, then Vinberg's $*$-map
is gauge-reversing, as discussed in the introduction.
\end{proof}

\begin{proof}[Proof of Corollary~\ref{cor:two_cones}]
Suppose $\phi\colon C_1\to C_2$ is a gauge-reversing map between the two cones.
Let $C:=C_1\directproduct C_2$ be the product cone, and define the map $\Phi\colon C\to C$ by
\begin{align*}
\Phi(x_1,x_2):=\big(\phi^{-1}(x_2),\phi(x_1)\big),
\qquad\text{for all $x_1\in C_1$ and $x_2\in C_2$}.
\end{align*}
Since $C$ is a product cone,
\begin{align*}
\gaugebigbracket{C}{(x_1,x_2)}{(y_1,y_2)}
   = \max\Big\{\gauge{C_1}{x_1}{y_1},\gauge{C_2}{x_2}{y_2} \Big\},
\end{align*}
for all $(x_1, x_2)$ and $(y_1, y_2)$ in $C$.
Using this and the fact that both $\phi$ and $\phi^{-1}$ are gauge-reversing,
it is easy to show that $\Phi$ is gauge-reversing.
So, by Theorem~\ref{thm:symmetric}, $C$ is a symmetric cone. It follows that
both $C_1$ and $C_2$ are symmetric.
Vinberg's $*$-map $\inver_{C_2}\colon C_2\to C_2$
on $C_2$ is gauge-reversing. So the map $\inver_{C_2}\after\phi$ is a
gauge-preserving map from $C_1$ to $C_2$, and hence,
by Theorem~\ref{thm:linear}, a linear isomorphism.
\end{proof}

\section{Isometries of the Hilbert metric}
\label{sec:hilbert_isometries}

In this section, we use Theorem~\ref{thm:symmetric}
to determine the isometry group of the Hilbert geometry.

%\begin{theorem}
%\label{thm:hilbert_isometries}
%Let $(D,\hil)$ be a finite-dimensional Hilbert geometry, and let $C$ be the
%cone over $D$. If $C$ is symmetric and not Lorentzian, then $\coll(D)$
%is a subgroup of index two in $\isom(D)$. Otherwise $\isom(D)=\coll(D)$.
%\end{theorem}

We begin with some Lemmas.

\begin{lemma}
\label{lem:reverse_Funk_unique_geodesic}
Let $(D,\hil)$ be a Hilbert geometry.
Assume there exists a unique-geodesic line connecting some point $\xi$ in the
horofunction boundary to a point $\eta$ in a non-singleton pure-reverse-Funk
part. Then, there exists another unique geodesic line, connecting a point in
the same part as $\xi$ to a point, distinct from $\eta$, in the same part as
$\eta$.
\end{lemma}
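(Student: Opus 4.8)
The statement is about the structure of unique geodesic lines in a Hilbert geometry that connect a boundary horofunction $\xi$ to a point $\eta$ in a non-singleton pure-reverse-Funk part. The plan is to exploit the product structure of the Hilbert horofunction boundary together with the characterisation of unique geodesic lines in Proposition~\ref{prop:unique_geodesics}. Recall that a pure-reverse-Funk part $U$ is a part of the boundary that, with its detour metric, is itself a (genuine) Hilbert geometry; concretely $\closure U=\{r_x+f^{(u)}\mid f^{(u)}\in B(x)\}$, where $u$ is a fixed extreme point of $D^\circ$ defining an exposed face $F$ of $D$, and the points $x$ range over $\relint F$. The hypothesis that $U$ is non-singleton means $\relint F$ contains more than one point, so there is genuine room to move $\eta$ within its part.

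First I would unpack what $\eta\in U$ looks like: write $\eta=r_{x_0}+f^{(u)}$ for some $x_0\in\relint F$. The geodesic line from $\xi$ to $\eta$ being \emph{unique} forces, via Proposition~\ref{prop:unique_geodesics}, a rigidity condition: there can be no pair of relatively-open line segments in $\closure D$, one containing the boundary representative of $\xi$ and one containing that of $\eta$, that together span a $2$-dimensional affine space. I expect the key is that the reverse-Funk part structure records exactly the extreme-set data of $\closure D$, so the non-singleton hypothesis on $U$ means $x_0$ lies in the relative interior of the positive-dimensional exposed face $F$. The strategy is then to \emph{perturb} $\eta$ to a nearby point $\eta'=r_{x_1}+f^{(u)}$ with $x_1\in\relint F$, $x_1\neq x_0$, staying in the same part $U$, and simultaneously perturb the other endpoint within the part of $\xi$, and argue that the new line segment is still a unique geodesic. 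Because $x_0$ and $x_1$ lie in the relative interior of the same exposed face $F$, the detour metric between them is finite (it equals $\hil^F(x_0,x_1)$ in the reverse-Funk factor), confirming they are in the same part.

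The construction of the second unique geodesic is where the real work lies. I would take the geodesic line realising $\xi$-to-$\eta$, move its $\eta$-endpoint slightly along $\relint F$ to $x_1$, and transport the whole line accordingly, checking that the no-two-dimensional-segment obstruction of Proposition~\ref{prop:unique_geodesics} is preserved under this perturbation. The point is that the obstruction is an \emph{open} condition on the configuration of supporting segments near the endpoints, so a sufficiently small displacement of the endpoints inside their respective parts cannot create a forbidden pair of coplanar segments if none existed for the original line; conversely, one must verify that the displaced line still terminates at genuine boundary points of $D$ so that Proposition~\ref{prop:unique_geodesics} applies. Keeping $f^{(u)}$ fixed (equivalently, keeping the exposed face $F$ and its defining functional $u$ fixed) guarantees the perturbed endpoints stay in the same pure-reverse-Funk part, which is the content of the conclusion.

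\textbf{Main obstacle.} The hard part will be showing that uniqueness of the geodesic \emph{persists} after perturbation rather than being merely generic. The existence of one unique geodesic line to $\eta$ gives a rigidity condition at $\eta$, but I must translate this into a condition that survives moving $\eta$ to a nearby $x_1$ in $\relint F$; a priori the no-coplanar-segment condition could fail for some directions of perturbation. I expect the resolution to use that $\xi$ and $\eta$ lie in specified parts (so their boundary representatives have controlled extreme-set membership) and that the relevant obstruction is governed by the local geometry of $\closure D$ near the fixed face $F$, which does not change as $x_1$ varies over $\relint F$. Thus the correct target for the perturbation is not an arbitrary nearby point but one chosen in the same face, and the uniqueness then follows because the pair of extreme sets carrying the two endpoints is unchanged.
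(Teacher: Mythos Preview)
Your overall strategy matches the paper's: keep the same boundary endpoint $x\in\partial D$ on the $\xi$-side, replace the endpoint $y\in\relint E$ (where $E$ is the relative interior of the extreme set carrying $\eta$) by another point $y'\in E$, and argue that the new straight segment $xy'$ is again a unique geodesic whose limits lie in the same parts. Two points in your proposal need sharpening, though.

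First, the persistence of uniqueness is not handled by an ``open condition'' argument. In the paper the step is completely explicit and does not use smallness of the perturbation: since $y$ and $y'$ both lie in the relative interior of the \emph{same} extreme set $E$, any relatively-open segment in $\closure D$ through $y'$ can be translated to a parallel one in $\closure D$ through $y$. Hence a forbidden coplanar pair of segments at $(x,y')$ would produce a forbidden pair at $(x,y)$, contradicting the assumed uniqueness of $\gamma$. Your phrasing ``the obstruction is an open condition \dots\ a sufficiently small displacement cannot create a forbidden pair'' is both weaker and harder to justify; the parallel-translate observation is the clean replacement. Relatedly, you should \emph{not} perturb the $x$-endpoint: keep $x$ fixed so that the no-segment condition at $x$ is inherited verbatim.

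Second, you do not address why the new $\xi'$ (the horofunction limit of $xy'$ at the $x$-end) lies in the same part as $\xi$. The paper does this by noting that $\lim_{t\to-\infty}\hil(\gamma(t),\gamma'(t))$ is finite (both rays head to the same boundary point $x$), which forces $\delta(\xi,\xi')<\infty$. Your plan omits this verification entirely, yet it is exactly what ties the second geodesic back to the part of $\xi$.
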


\begin{proof}
Let $\gamma$ be the unique geodesic connecting $\xi$ and $\eta$.
The image of $\gamma$ is a relatively-open line segment $xy$ with
$x,y\in\partial D$.
Let $U$ be the part of the horofunction boundary containing~$\eta$.
The point $y$ is contained in the the relative interior $E$ of some extreme set
of $\closure D$.
Since $U$ is a pure-reverse-Funk part, it may be written
$U=\{r_z + f \mid z\in E\}$, where $f$ is some Funk horofunction.
By assumption, $U$ contains a Hilbert horofunction $\eta'$ distinct from $\eta$.
So, we have $\eta' = r_{y'} + f$, for some $y'\in E$ distinct from $y$.
Let $\gamma'$ be the relatively-open line segment $xy'$, parametrised by
arc length in the Hilbert metric.

Since $\gamma$ is uniquely geodesic, there is,
by Lemma~\ref{prop:unique_geodesics}, no pair of relatively-open line segments
in $\closure D$, containing $x$ and $y$, respectively, that span
a two-dimensional affine space. 
Observe that, given any relatively-open line segment in $\closure D$
containing $y'$, we may find a parallel one in $\closure D$ containing $y$. 
Therefore, there is no pair of relatively-open line segments
in $\closure D$, containing $x$ and $y'$, respectively, that span
a two-dimensional affine space. 
We conclude, using Lemma~\ref{prop:unique_geodesics} again,
that $\gamma'$ is uniquely geodesic.

It is not hard to show that $\lim_{t\to-\infty}\hil(\gamma(t),\gamma'(t))$
is finite.
This implies that the points $\xi'$ and $\eta'$ in the horofunction boundary
connected by $\gamma'$ are such that $\xi'$ lies in the same part as $\xi$.
We have already seen that $\eta'$ lies in the same part as $\eta$.
\end{proof}

\begin{lemma}
\label{lem:segment_in_boundary}
Let $\Phi\colon D \to D'$ be a surjective isometry from one Hilbert geometry
$(D,\hil)$ to another $(D',\hil')$,
and let $U$ be a non-singleton maximal pure-Funk part
associated to an extreme point $u$ of $\closure D$.
If $U$ is mapped by $\Phi$ to a reverse-Funk part of $D'$,
then the line segment connecting $u$ to any other extreme point
of $\closure D$ lies in the boundary $\partial D$.
\end{lemma}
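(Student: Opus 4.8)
The plan is to argue by contradiction, reducing the statement to one about unique geodesic lines and then invoking Lemma~\ref{lem:reverse_Funk_unique_geodesic}. Fix an extreme point $u'\neq u$ of $\closure D$. Since $D$ is convex, the relatively-open segment $(u,u')$ is either entirely contained in $\partial D$ or entirely contained in the interior of $D$; in the first case there is nothing to prove, so I would assume the second. Then $(u,u')$ is a geodesic line $\gamma$ of $(D,\hil)$. Because both endpoints $u$ and $u'$ are extreme points of $\closure D$, neither lies in the relative interior of any nondegenerate segment of $\closure D$, so the criterion of Proposition~\ref{prop:unique_geodesics} holds vacuously and $\gamma$ is a \emph{unique} geodesic line. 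I would also record that the segment joining the two boundary points $u$ and $u'$ is the only one there is, so any geodesic line of $D$ whose two endpoints in $\partial D$ are $u$ and $u'$ must coincide with $\gamma$.

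Next I would transport everything through $\Phi$, which extends to a homeomorphism of horofunction compactifications carrying Busemann points to Busemann points, parts to parts, and unique geodesic lines to unique geodesic lines. Let $\eta$ and $\zeta$ be the endpoints of $\gamma$ in the horofunction boundary, with $\gamma(t)\to u$ as $t\to+\infty$ and $\gamma(t)\to u'$ as $t\to-\infty$. By Proposition~\ref{prop:reverseFunk} the reverse-Funk component of $\eta$ is $r_u$, a singleton since $u$ is extreme, so $\eta$ lies in a pure-Funk part over $u$, in fact (the crux, see below) in the maximal such part $U$; similarly $\zeta$ lies in a pure-Funk part over $u'$. Applying $\Phi$, the point $\Phi(\eta)$ lies in $\Phi(U)=U'$, which is a reverse-Funk part by hypothesis and non-singleton because $\Phi$ preserves the detour metric; and $\Phi\after\gamma$ is a unique geodesic line of $D'$ from $\Phi(\zeta)$ to $\Phi(\eta)\in U'$.

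I would then apply Lemma~\ref{lem:reverse_Funk_unique_geodesic} to $\Phi\after\gamma$: it produces a second unique geodesic line $\gamma'$ of $D'$ connecting a point in the same part as $\Phi(\zeta)$ to a point $\eta''\neq\Phi(\eta)$ lying in the same part $U'$ as $\Phi(\eta)$. Pulling $\gamma'$ back by $\Phi^{-1}$ yields a unique geodesic line of $D$ whose forward endpoint $\Phi^{-1}(\eta'')$ lies in $U$, hence over $u$, and whose backward endpoint lies in the same part as $\zeta$, hence over $u'$. By Proposition~\ref{prop:reverseFunk} this geodesic has boundary endpoints $u$ and $u'$, so by uniqueness of the segment it must equal $\gamma$; but then its forward horofunction is $\eta$, contradicting $\Phi^{-1}(\eta'')\neq\eta$. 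This contradiction forces $(u,u')\subseteq\partial D$.

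The hard part will be the claim that $\eta$ lands in the maximal (hence non-singleton) pure-Funk part $U$ rather than in a singleton sub-part of $\closure U$, since this is precisely what makes Lemma~\ref{lem:reverse_Funk_unique_geodesic} applicable. The key observation is that, because $(u,u')$ enters the interior of $D$, the direction $u'-u$ pairs strictly positively with every supporting functional of $D$ at $u$, that is, with every element of the exposed face of $D^\circ$ at $u$. In the explicit description of the Funk Busemann points from~\cite{walsh_hilbert}, the whole of this exposed face is therefore ``activated'' in the limit, so that $\eta$ is the Busemann point attached to the full exposed face, namely the generator of $U$. I would confirm this by computing the Funk horofunction along $\gamma$ directly: the outer boundary point of the defining cross ratio approaches $u$ at a rate governed by the maximum over the supporting functionals at $u$, producing the log-of-a-supremum Busemann point characteristic of the maximal part.
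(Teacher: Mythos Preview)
Your argument is correct and follows essentially the same route as the paper's: assume the open segment $(u,v)$ lies in $D$, note it is a unique geodesic by Proposition~\ref{prop:unique_geodesics}, push it forward by $\Phi$, and derive a contradiction from Lemma~\ref{lem:reverse_Funk_unique_geodesic}, using that the only straight segment with usual-boundary endpoints $u$ and $v$ is the one we started with.

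The paper's proof is slightly terser than yours: rather than pulling the second geodesic back through $\Phi^{-1}$ to reach a contradiction, it simply observes that $\Phi\circ\gamma$ is the \emph{unique} unique-geodesic joining a point of $\Phi V$ to a point of $\Phi U$ (since any such geodesic, pulled back, would be a unique-geodesic from $V$ to $U$ and hence a straight segment from $v$ to $u$), which immediately contradicts the conclusion of Lemma~\ref{lem:reverse_Funk_unique_geodesic}. But this is a cosmetic difference.

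You are also right to single out as the delicate point the claim that the horofunction endpoint $\eta$ over $u$ lies in the \emph{maximal} pure-Funk part $U$, not merely in $\closure U$. The paper asserts this (``connects some horofunction $\xi$ in $V$ to some horofunction $\eta$ in $U$'') without comment; your observation that every functional in the exposed face of $D^\circ$ at $u$ pairs strictly positively with $u'-u$ (since otherwise the segment $[u,u']$ would lie in a proper face of $\closure D$) is the correct ingredient for justifying it via the explicit description of Funk Busemann points. So your proof is the paper's proof with this step fleshed out.
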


\begin{proof}
Suppose there is an extreme point $v$ of $\closure D$ distinct from $u$
such that the
relatively-open line segment $vu$ is contained in $D$.
Let $V$ be the maximal pure-Funk part associated to $v$.
We parameterise $vu$ to get a unit-speed unique-geodesic~$\gamma$.
This geodesic connects some horofunction $\xi$ in $V$ to some horofunction
$\eta$ in $U$.
Observe that any sequence in $D$ converging to a horofunction in $V$
must converge in the usual topology to $v$,
and any sequence converging to a horofunction in $U$ 
must converge in the usual topology to $u$.
Since every unique-geodesic is a parametrised straight line segment,
we conclude that $\gamma$ is the only unique-geodesic, up to
reparameterisation, connecting a horofunction in $V$ to a horofunction in $U$.
So, $\Phi\after\gamma$ is the only unique-geodesic, up to reparameterisation,
connecting a horofunction in $\Phi V$ to a horofunction in $\Phi U$.
Applying Lemma~\ref{lem:reverse_Funk_unique_geodesic}, we get that $\Phi U$
is not a pure-reverse-Funk part. Since it is necessarily pure, it can not
be a reverse-Funk part.
\end{proof}

\begin{lemma}
\label{lem:opposite_types}
Let $W$ and $Z$ be pure parts of the horofunction boundary of a Hilbert
geometry. Assume that $W$ is maximal, that $\closure W$ and $\closure Z$ have a
point in common, and that $Z\not\subset \closure W$. Then, $W$ and $Z$
are of opposite types.
\end{lemma}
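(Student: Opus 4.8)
The plan is to show that $W$ and $Z$ are of opposite types (one pure-Funk, one pure-reverse-Funk) by exploiting the product structure of the Hilbert parts together with the maximality of $W$. Recall from the preceding subsection that every pure part is a product $R \times F$ of a reverse-Funk part and a Funk part in which one factor is a singleton. A pure-Funk part has $R = \{r_w\}$ a singleton (fixed by some extreme point $w$ of $\closure D$), while a pure-reverse-Funk part has $F = \{f^{(u)}\}$ a singleton. The strategy is to argue by contradiction: assume $W$ and $Z$ are of the \emph{same} type and derive a violation of either the maximality of $W$ or the hypothesis $Z \not\subset \closure W$.

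First I would record the explicit descriptions of the closures stated at the end of Section~\ref{sec:hilbert_horoboundary}: if $W$ is a maximal pure-Funk part attached to an extreme point $w$, then $\closure W = \{r_w + f \mid f \in B(w)\}$, and the analogous formula holds for pure-reverse-Funk parts. The key geometric fact to extract is what the common point of $\closure W$ and $\closure Z$ forces. A Hilbert Busemann point $r_x + f$ determines both its reverse-Funk component $r_x$ (equivalently, the extreme set of $\closure D$ whose relative interior contains $x$) and its Funk component $f$. So a point lying in both $\closure W$ and $\closure Z$ must have reverse-Funk and Funk components compatible with both parts simultaneously.

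The main case to rule out is that $W$ and $Z$ are both pure-Funk. In that case both have singleton reverse-Funk factors, $R_W = \{r_w\}$ and $R_Z = \{r_{w'}\}$ for extreme points $w, w'$. A common point of the closures forces the reverse-Funk components to be compatible; since the reverse-Funk part associated to an extreme point is a singleton, this should pin down $w = w'$, so that $Z$ and $W$ share the same reverse-Funk singleton and differ only in their Funk factors, both of which live in the Funk boundary approachable from the same extreme point $w$. Using the product-of-Hilbert-geometries structure of the closure, $Z$ and $W$ then both sit inside a single larger pure-Funk part governed by $B(w)$; maximality of $W$ would then give $\closure Z \subseteq \closure W$, and I would upgrade this to $Z \subseteq \closure W$, contradicting the hypothesis $Z \not\subset \closure W$. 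The symmetric argument, using the roles of the Funk and reverse-Funk factors interchanged, disposes of the case where both are pure-reverse-Funk, this time invoking the singleton Funk factor $f^{(u)}$ to force the Funk components to agree.

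\textbf{The hard part} will be the containment-vs-equality bookkeeping: translating ``$\closure W$ and $\closure Z$ meet'' into an honest statement about which extreme sets of $\closure D$ and $D^\circ$ are nested, and then confirming that maximality of $W$ combined with equality of the singleton factor really does force $Z$ into $\closure W$ rather than merely into some common ambient part. The containment relations among parts (``if $E_1 \subseteq E_2$ then the part of $E_1$ lies in the closure of the part of $E_2$'') and the compatibility criterion ($\langle y, x\rangle = 0$) from the cited propositions are the precise tools I expect to need here, and care is required to ensure the shared boundary point does not accidentally lie in some third part that breaks the nesting argument.
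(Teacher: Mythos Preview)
Your proposal is correct and follows essentially the same approach as the paper: assume $W$ is (say) maximal pure-Funk so that $\closure W = \{r_w + f \mid f \in B(w)\}$, note that a common point of $\closure W$ and $\closure Z$ forces the reverse-Funk component of any pure-Funk $Z$ to be the same singleton $r_w$, and conclude $Z \subset \closure W$, contradicting the hypothesis. The paper's version is slightly more direct---once $w = w'$ the containment $Z \subset \closure W$ follows immediately from the explicit formula for $\closure W$, so the ``hard part'' you flag (nesting arguments, third parts, upgrading $\closure Z \subseteq \closure W$ to $Z \subseteq \closure W$) does not actually arise.
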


\begin{proof}
We consider just the case where $W$ is a pure-Funk part; the other case
is handled similarly. So, $\closure W = \{ r_x + f \mid f \in B(x) \}$,
for some extreme point $x$ of the Hilbert geometry.
We deduce that $\closure Z$ contains a function of the form $r_x + f$,
with $f\in B(x)$. Therefore, if $Z$ was a pure-Funk part,
each of its elements would be of the form $r_x + f$ with $f\in B(x)$,
and $Z$ would be contained in $\closure W$, contrary
to our assumption. We conclude that $Z$ is a pure-reverse-Funk part.
\end{proof}

The following is the key lemma of this section.

\begin{lemma}
\label{lem:def_gaugereversing}
Let $\Phi\colon D \to D'$ be a surjective isometry between two Hilbert
geometries that maps a non-singleton maximal pure-Funk part
to a pure-reverse-Funk part.
Then, $\Phi$ arises as the projective action of a gauge-reversing map
from the cone over $D$ to the cone over $D'$.
\end{lemma}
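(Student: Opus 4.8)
The plan is to build the gauge-reversing map on the cones from the horofunction-boundary data that $\Phi$ supplies, working one maximal pure-Funk part at a time. The essential idea is that a singleton Funk horofunction is (the logarithm of) a linear functional, by Proposition~\ref{prop:funk_singletons}, and a singleton reverse-Funk horofunction corresponds to an extremal generator of the cone, by Proposition~\ref{prop:reverseFunk}. So I would first analyse what $\Phi$ does to the relevant singletons. Let $U$ be the given non-singleton maximal pure-Funk part, associated to an extreme point $u$ of $\closure D$; thus $\closure U = \{r_u + f \mid f\in B(u)\}$. Since $\Phi$ carries $U$ to a pure-reverse-Funk part and preserves the decomposition of parts into reverse-Funk and Funk factors (as recorded in the cited propositions about the detour metric), I would track how the reverse-Funk singleton factor $r_u$ and the Funk factor interchange roles under $\Phi$.

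The key mechanism I expect to exploit is Proposition~\ref{prop:gauge_reversing_swaps_F_RF}: a gauge-reversing map of a cone is exactly an isometry from the reverse-Funk geometry to the Funk geometry, and its boundary extension swaps reverse-Funk horofunctions with Funk horofunctions. So the statement I want is essentially the converse for this particular $\Phi$: an isometry of Hilbert geometries that swaps a maximal pure-Funk part with a pure-reverse-Funk part must, at the level of the two geometries' underlying Funk/reverse-Funk structure, be the restriction of such a swap. Concretely, I would use $\Phi$ to define a correspondence between extremal generators of the cone $C$ over $D$ and linear functionals on the cone $C'$ over $D'$ (i.e.\ extremal generators of $(C')^*$, hence of $C'$ by self-duality once established), via the images of singleton reverse-Funk horofunctions of $D$ under $\Phi$ landing in the Funk singletons of $D'$. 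Dually, singleton Funk horofunctions of $D$ (linear functionals on $C$) should map to reverse-Funk singletons of $D'$ (extremal generators of $C'$). This is precisely the data that Lemma~\ref{lem:linear_funcs} and Definition~\ref{def:dot_product} extract in the self-duality argument, so I would mirror that construction: use the correspondence to build a map $\phi\colon C\to C'$ sending each extremal generator to the appropriate point, check it is well defined and extends linearly-parametrised correctly, and verify it is gauge-reversing via Proposition~\ref{prop:gauge_reversing} (for instance, by checking antitonicity and Thompson-isometry, or anti-homogeneity and Thompson-isometry).

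The main obstacle, I expect, is bookkeeping the part structure to guarantee consistency: I must check that $\Phi$ sends \emph{every} relevant singleton to a singleton of the opposite type in a way that assembles into a single globally defined map, rather than just matching parts one at a time. Here Lemma~\ref{lem:segment_in_boundary} and Lemma~\ref{lem:opposite_types} do real work: maximality of $U$ together with the hypothesis that $\Phi U$ is reverse-Funk forces neighbouring parts to flip type consistently, so that the local swaps cohere. I would argue that since $\Phi$ maps the maximal pure-Funk part $U$ to a reverse-Funk part, Lemma~\ref{lem:opposite_types} propagates this type-reversal across all pure parts whose closures meet $\closure U$, and then iterate, using connectedness of the boundary structure, to conclude that \emph{all} maximal pure-Funk parts go to reverse-Funk parts and vice versa. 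Once the type-reversal is global, the correspondence on extremal generators and on linear functionals is unambiguous, and the reconstructed $\phi$ is forced to be anti-homogeneous and antitone. The final step is to confirm that the projective action of this $\phi$ agrees with $\Phi$; this follows because both are determined by their action on the horofunction boundary (isometries extend uniquely to the compactification), and by construction they induce the same boundary homeomorphism, so by Proposition~\ref{prop:sup_formula}, which recovers the metric, hence the points, from the Busemann data, they must coincide on $D$.
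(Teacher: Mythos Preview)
Your proposal correctly identifies several key ingredients---Lemmas~\ref{lem:segment_in_boundary} and~\ref{lem:opposite_types} for propagating the type-reversal across pure parts, and the principle that a gauge-reversing map swaps Funk and reverse-Funk horofunctions. However, there is a genuine gap in how you construct $\phi$.

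You propose to build $\phi\colon C\to C'$ ``from scratch'' via a correspondence between extremal generators of $C$ and linear functionals on $C'$, mirroring the self-duality argument of Section~\ref{sec:self_duality}. This is the wrong direction: we already have the map $\Phi$ on the projective level, and what is needed is simply to \emph{lift} it to the cones by choosing a scaling in each projective class. The paper does this in one line: pick $\phi(x)\in C'$ with $\proj(\phi(x))=\Phi(\proj(x))$ and $f^{(u)}(\phi(x))=r_u(x)$, where $f^{(u)}$ is the Funk singleton attached to the image part $\Phi(U)$. Anti-homogeneity is then immediate, and the projective action is $\Phi$ by construction---so your final verification step is unnecessary. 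Your remark ``hence of $C'$ by self-duality once established'' also reveals a misconception: this lemma applies to arbitrary Hilbert geometries and cannot invoke self-duality.

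Once $\phi$ is defined as this concrete lift, the remaining work is to show that $\phi$ pushes forward \emph{every} Funk horofunction to a reverse-Funk horofunction. Here your propagation idea is on target, but the paper carries it out by explicit horofunction arithmetic rather than by an abstract connectedness argument: given another extreme point $v$, one uses Lemma~\ref{lem:segment_in_boundary} to find a common supporting extremal $w$ of $C^*$, then Lemma~\ref{lem:opposite_types} twice to show both $\Phi W$ and $\Phi V$ have the correct type, and finally computes $\phi f_w$ and $\phi r_v$ directly from $\phi r_u=f^{(u)}$ and the Hilbert-horofunction identities $h_u=r_u+f_w$, $h_v=r_v+f_w$. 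The payoff comes from Proposition~\ref{prop:sup_formula}, applied to the Funk metric on $D$ and the reverse-Funk metric on $D'$, giving $\funk(x,y)=\rfunk'(\phi x,\phi y)$ and hence that $\phi$ is gauge-reversing.
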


\begin{proof}
We may assume without loss of generality that $\Phi(b)=b'$, where $b$ and $b'$
are the base-points of $D$ and $D'$, respectively.

Let $U$ be the maximal pure-Funk part in the statement of the lemma,
and let $\Phi(U)$ be its image, which by assumption is a pure-reverse-Funk
part. Associated to $U$ is an extreme point $u$ of $\closure D$, and associated
to $\Phi(U)$ is a Funk horofunction $f^{(u)}$.
So we may write
\begin{align*}
\closure U = \{ r_u + f \mid f\in B(u)\}
\qquad\text{and}\qquad
\closure \Phi(U) = \{ r_x + f^{(u)} \mid f^{(u)}\in B(x)\}.
\end{align*}

Let $C$ and $C'$ be the cones over $D$ and $D'$, respectively, and make the
identifications $\proj(C)=D$ and $\proj(C')=D'$. We extend the Funk,
reverse-Funk, and Hilbert horofunctions to these cones as described in
section~\ref{sec:horofunctions_extended}.

We define a map $\barphi\colon C\to C'$ as follows.
For each $x\in C$, let $\barphi(x)$ be such that
$\proj(\barphi(x))= \Phi(\proj(x))$ and
$f^{(u)}(\barphi(x)) = r_u(x)$.
Clearly, $\Phi$ is the projective action of $\barphi$, and,
from the homogeneity properties of $f^{(u)}$ and $r_u$,
we get that $\barphi$ is anti-homogeneous. Note also that $\phi(b)=b'$.
Moreover, the push-forward of $r_u$ is $f^{(u)}$, that is,
\begin{align}
\label{eqn:pushforward_of_r_u}
\barphi r_u := r_u \after \barphi^{-1} = f^{(u)}.
\end{align}

Let $v$ be any extreme point of $\closure D$ distinct from $u$,
and denote by $V$ the associated maximal pure-Funk part.
So, $\closure V = \{ r_v + f \mid f\in B(v) \}$.
By Lemma~\ref{lem:segment_in_boundary}, the straight line segment
connecting $u$ and $v$ lies in the boundary of $D$.
So, there exists an element of $\partial(C^*)$ that supports $C$ at both
$u$ and $v$. In fact, the set of elements of $\partial(C^*)$ that support $C$
at both $u$ and $v$ forms a proper extreme set of $C^*$, and so contains
an extremal generator $w$ of $C^*$.
Let $E := \relint\{x\in \partial D \mid \dotprod{w}{x}=0 \}$.
So, the maximal pure-reverse-Funk part associated to $w$ can be written
$W:=\{ r_x + f^{(w)} \mid x\in E \}$, where $f^{(w)}$ is the Funk Busemann
point associated to $w$.

Observe that $\dotprod{w}{u}=0$.
So, $\{ h_u := r_u + f^{(w)} \}$ is a singleton part of the Hilbert geometry
horofunction boundary of $D$, and is
contained in both $\closure U$ and $\closure W$.
Therefore, $W$ and $U$ satisfy the assumptions of
Lemma~\ref{lem:opposite_types}, and it follows that $\Phi W$ and $\Phi U$
do also. Applying the Lemma, we get that $\Phi W$ is of opposite type
to $\Phi U$, in other words, $\Phi W$ is a pure-Funk part.

Similarly, $\{ h_v := r_v + f^{(w)} \}$ is a singleton part and is
contained in both $\closure V$ and $\closure W$.
Using the same reasoning as in the previous paragraph, we get that
$\Phi W$ is also of opposite type to $\Phi V$, in other words,
$\Phi V$ is a pure-reverse-Funk part.

Let $z$ be the extreme point of the polar $(D')^\circ$ of $D'$
associated to the maximal pure-Funk part $\Phi W$.
Since $\Phi h_u$ and $\Phi h_v$ lie in $\closure \Phi W$, we may write them
\begin{align}
\label{eqn:phi_h_u}
\Phi h_u &= r_z + f^{(u)}
\qquad\text{and} \\
\label{eqn:phi_h_v}
\Phi h_v &= r_z + f^{(v)},
\end{align}
where $r_z$ is the reverse-Funk horofunction on $D'$ associated to $z$,
and $f^{(u)}$ and $f^{(v)}$ are Funk horofunctions on $D'$.

Observe that if $h$ is any function on $C$ that just depends on
the projective class, then $\barphi h$ is a function on $C'$ that also just
depends on the projective class, and $\barphi h$ agrees with $\Phi h$.
Thus,~(\ref{eqn:phi_h_u}) and~(\ref{eqn:phi_h_v}) hold with $\Phi$
replaced by $\barphi$.
Using this and~(\ref{eqn:pushforward_of_r_u}), we get
\begin{align*}
\barphi f_w
   = \barphi(h_u - r_u)
   = \barphi h_u - \barphi r_u
   = r_z + f^{(u)} - f^{(u)}
   = r_z.
\end{align*}
Therefore, $\barphi r_v = \barphi(h_v - f_w) = f^{(v)}$.

Let $f$ be any Funk horofunction in $B(v)$. So, $h:=r_v + f$ is a Hilbert
horofunction and is contained in the set $\closure V$.
But we have seen that $\Phi V$ is a pure-reverse-Funk part.
It follows that $\barphi h$ is a horofunction of the form $r_p + f^{(v)}$,
for some point $p$ in $\partial D'$.
Therefore, $\barphi f = \barphi(h - r_v) = r_p$.

But $v$ was chosen to be an arbitrary extreme point of $\closure D$,
and every Funk horofunction is contained in $B(v)$ for some choice of
extreme point $v$ of $\closure D$.
So, we have shown that every Funk horofunction is pushed
forward by $\barphi$ to a reverse-Funk horofunction.

\newcommand\funkimage{\funk'}
\newcommand\rfunkimage{\rfunk'}
\newcommand\hilimage{\hil'}

By Proposition~\ref{prop:sup_formula}, we have the following two formulae:
\begin{align}
\label{eqn:funk_sup_formula_for_length}
\funk(x,y) &= \sup_{f} \big(f(x) - f(y)\big),
\qquad\text{for all $x,y\in D$}, \\
\label{eqn:rfunk_sup_formula_for_length}
\rfunkimage(x,y) &= \sup_{r} \big(r(x) - r(y)\big),
\qquad\text{for all $x,y\in D'$},
\end{align}
where the suprema are taken over the set of all Busemann points in,
respectively, the Funk geometry on $D$ and the reverse-Funk geometry on $D'$.
In fact, since the quantities involved have the right homogeneity properties,
the formulae extend to all $x$ and $y$ in $C$ and $C'$, respectively.

So, for every $x,y\in D$ and
every Funk Busemann point $f$ of $D$,
\begin{align*}
f(x) - f(y)
   &= r(\barphi x) - r(\barphi y) \\
   &\le \rfunkimage(\barphi x, \barphi y),
\end{align*}
where $r:=\phi f:=f\after \phi^{-1}$ is the reverse-Funk horofunction of $D'$
that is the push-forward of $f$. We deduce that
$\funk(x,y) \le \rfunkimage(\barphi x, \barphi y)$, for all $x,y\in C$.
Using this inequality and that fact that $\barphi$ preserves the Hilbert
distance, we get the opposite inequality: for all $x,y\in C$,
\begin{align*}
\funk(x,y)
  &= \hil(x,y) - \funk(y,x) \\
  &\ge \hilimage(\barphi x, \barphi y) - \rfunkimage(\barphi y, \barphi x) \\
  &= \rfunkimage(\barphi x, \barphi y).
\end{align*}
Therefore, $\funk(x,y) = \rfunkimage(\barphi x, \barphi y)$,
for all $x,y\in C$.
It follows upon taking exponentials that $\barphi$ is gauge-reversing.
\end{proof}

Let $\Phi\colon D\to D'$ be a surjective isometry between finite-dimensional Hilbert
geometries $D$ and $D'$.
Consider the following property that $\Phi$ may or may not have:
\begin{property}
\label{property:cont_at_extremes}
For every extreme point $u$ of $D$, there is an extreme point $u'$ of $D'$
such that, for all $v\in D$, we have
 $\Phi((u,v)) = (u',\Phi(v))$ as an oriented line-segment,
\end{property}

In~\cite{lemmens_walsh_polyhedral}, it was shown that if $D$ and $D'$ are
polyhedral and $\Phi$ and $\Phi^{-1}$ have
property~\ref{property:cont_at_extremes}, then $\Phi$ is a collineation.
The proof is in two parts. First, it is was shown that $\Phi$ and $\Phi^{-1}$
extend continuously to the usual boundaries of $D$ and $D'$, respectively.
Then it was shown that an isometry between Hilbert geometries that extends
continuously to the boundary and has an inverse that does likewise is
a collineation. Inspecting the proof, one sees that the polyhedral assumption
was not used in any essential way and that the same proof works in the general
case provided one changes some terminology. In particular, one must consider
the extreme points rather than the vertices, and the relative interiors of
the extreme sets rather than the relatively open faces. Thus, we have the
following theorem.
\begin{theorem}[\cite{lemmens_walsh_polyhedral}]
\label{thm:collineation}
Let $\Phi$ be a surjective isometry between two finite-dimensional Hilbert
geometries such that both $\Phi$ and $\Phi^{-1}$ have
Property~\ref{property:cont_at_extremes}. Then, $\Phi$ is a collineation.
\end{theorem}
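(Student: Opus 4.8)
The plan is to follow the two-step strategy recalled above: first extend $\Phi$ to a homeomorphism $\bar\Phi\colon\closure D\to\closure D'$ of the closures, and then show that any isometry of Hilbert geometries admitting such an extension (together with its inverse) must be the restriction of a projective map. The engine for the second step is the fundamental theorem of projective geometry, so the real work lies in producing the boundary extension and in checking that it preserves collinearity.

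For the extension, Property~\ref{property:cont_at_extremes} does the bulk of the job. It fixes the image of each extreme point $u$ of $\closure D$, and, more importantly, it says that $\Phi$ sends the oriented open chord $(u,v)$ from $u$ through any interior point $v$ onto the oriented open chord $(u',\Phi(v))$. Letting $v$ run towards an arbitrary boundary point $p$ along such a chord, the images $\Phi(v)$ all lie on a single straight ray issuing from $u'$ and travel monotonically outward, hence converge to a boundary point, which I would take as the definition of $\bar\Phi(p)$. The substance of this step is to show that $\lim_{v\to p}\Phi(v)$ exists for \emph{every} interior sequence tending to $p$, not merely along one chord, and that the resulting map is continuous. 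Here I would squeeze an arbitrary approach to $p$ between chords issuing from nearby extreme points and use the isometry property to control the images; carrying out the same construction for $\Phi^{-1}$ yields continuity in both directions, so that $\bar\Phi$ is a homeomorphism. One may alternatively route this through the reverse-Funk horofunction boundary, which by Proposition~\ref{prop:reverseFunk} is exactly the usual boundary $\partial D$.

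For the collinearity step, I would argue that $\bar\Phi$ carries straight chords to straight chords. The key tool is Proposition~\ref{prop:unique_geodesics}: whenever a chord $(w,z)$ is a \emph{unique} geodesic line in $D$, the isometry $\Phi$ carries it to a unique geodesic line in $D'$ joining $\bar\Phi(w)$ and $\bar\Phi(z)$ (uniqueness being a purely metric property, hence preserved); but the straight chord between those endpoints is also a geodesic line, so by uniqueness the image must be that straight chord. Chords that are unique geodesics are dense, so continuity of $\bar\Phi$ upgrades this to the preservation of collinearity for all triples of points, and symmetrically for $\bar\Phi^{-1}$. A continuous bijection between open convex domains that preserves collinearity in both directions is the restriction of a projective transformation, which is precisely the assertion that $\Phi$ is a collineation.

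The main obstacle I anticipate is the continuity of the boundary extension. Property~\ref{property:cont_at_extremes} gives complete control along chords emanating from extreme points, but the passage to general boundary points, and to sequences approaching them off such chords, is delicate because convergence to the boundary occurs at infinite Hilbert distance; the $1$-Lipschitz property alone therefore says nothing, and one must exploit the finer chord structure while ensuring that the limit does not depend on the auxiliary extreme point chosen.
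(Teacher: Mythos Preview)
The paper does not give its own proof of this theorem: it is cited from \cite{lemmens_walsh_polyhedral}, and the paragraph immediately preceding the statement merely explains why the argument there, originally written under a polyhedral hypothesis, carries over verbatim to arbitrary Hilbert geometries once one replaces ``vertices'' by ``extreme points'' and ``relatively open faces'' by ``relative interiors of extreme sets''. That cited argument is precisely the two-step strategy you describe: first use Property~\ref{property:cont_at_extremes} to produce continuous extensions of $\Phi$ and $\Phi^{-1}$ to the closures, and then show that an isometry so extended must be projective. Your proposal therefore matches the paper's account of the proof.

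One remark on your second step: the density claim you invoke is correct but can be made more direct. By Proposition~\ref{prop:unique_geodesics}, any chord with at least one endpoint an extreme point of $\closure D$ is a unique geodesic line, and Property~\ref{property:cont_at_extremes} already tells you that $\Phi$ carries such chords to straight chords without any limiting argument. Since through any two interior points one can find enough extreme-point chords to pin down collinearity, this suffices. Your identification of the boundary-extension continuity as the delicate point is accurate; this is exactly where the polyhedral argument in \cite{lemmens_walsh_polyhedral} must be re-examined and where the substitution of extreme points for vertices matters.
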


We now prove the main theorem of this section.

\begin{proof}[Proof of Theorem~\ref{thm:hilbert_isometries}]
Let $\Phi$ be an isometry of $(D,\hil)$, and consider the action of $\Phi$
on the horofunction boundary.
Either every maximal pure-Funk part is mapped to a similar such part,
or there is a non-singleton maximal pure-Funk part that is mapped to a
pure-reverse-Funk part.

In the latter case, $\Phi$ arises as the projective action
of a gauge-reversing self-map on the cone $C$ over $D$,
by Lemma~\ref{lem:def_gaugereversing}.

Consider now the former case. Let $u$ be an extreme point of $D$,
and denote by $U$ the associated maximal pure-Funk part.
By assumption, $U$ is mapped by $\Phi$ to a maximal pure-Funk part
$U':=\Phi(U)$. This part is associated to some extreme point $u'$ of $D$.

Let $v\in D$. By Proposition~\ref{prop:unique_geodesics},
the straight half-line $(u,v)$ is a unique-geodesic.
So, its image $\Phi((u,v))$ is also a unique-geodesic,
and therefore a straight half-line.

Let $x_n$ be sequence in $(u,v)$ converging to $u$ in the usual
topology. Since $x_n$ is moving along a Hilbert geometry geodesic,
it must converge to a Hilbert geometry horofunction, which will be in $U$.
It follows that $\Phi(x_n)$ converges to a Hilbert horofunction in $U'$,
and this horofunction is necessarily of the form $r_{u'} + f$,
with $f\in B(u')$.
This implies that $\Phi(x_n)$ converges to $u'$ in the usual topology.
This establishes that $\Phi$ satisfies Property~\ref{property:cont_at_extremes}.
That $\Phi^{-1}$ satisfies the same property can be shown in the same way.
Applying Theorem~\ref{thm:collineation} gives that $\Phi$ is a collineation,
and so arises as the projective action of a gauge-preserving self-map of $C$.
\end{proof}

\begin{proof}[Proof of Corollary~\ref{cor:hilbert_isometries}]
Denote by $\Lambda^+$ the set of self-maps of $C$ that are gauge-preserving,
and by $\Lambda$ the set that are either gauge-preserving or gauge-reversing.
By Theorem~\ref{thm:hilbert_isometries}, every isometry of $D$ arises as the
projective action of a map in $\Lambda$.

If the cone $C$ is not symmetric, then by Theorem~\ref{thm:symmetric},
$\Lambda$ consists of only gauge-preserving maps, and so every isometry
is a collineation.

So, assume that $C$ is symmetric.

Observe that the composition of a gauge-reversing map and a gauge-preserving
map is gauge-reversing, and that the composition of two gauge-reversing maps
is gauge-preserving,
It follows easily that $\Lambda^+$ is a normal subgroup of index two in
$\Lambda$.
This implies that $\Lambda$ is generated by $\Lambda^+$ and the Vinberg
$*$-map associated to $C$, which is always gauge-reversing for symmetric cones.
So, $\isom(D)$ is generated by the collineations and the projective
action of the $*$-map.

If $C$ is Lorentzian, then the projective action of the $*$-map
is a collineation, so in this case $\isom(D)=\coll(D)$.

For all other symmetric cones, this projective action is not a collineation,
and therefore $\coll(D)$ is a normal subgroup of index two in $\isom(D)$.
\end{proof}

\section{Horofunction boundary of product spaces}
\label{sec:product}

Let $(X_1,d_1)$ and $(X_2,d_2)$ be metric spaces.
We define the $\ell_\infty$-product of these two spaces to be the space
$X:=X_1\times X_2$ endowed with the metric $d$ defined by
\begin{align*}
d\big((x_1,x_2),(y_1,y_2)\big) := \max\big\{d_1(x_1,y_1), d_2(x_2,y_2)\big\},
%\qquad\text{for all $x_1,y_1\in X_1$, and $x_2,y_2\in X_2$},
\end{align*}
for all $x_1,y_1\in X_1$, and $x_2,y_2\in X_2$, and we denote this space
\begin{align*}
(X,d) =: (X_1,d_1) \oplus_\infty (X_2,d_2).
\end{align*}

Our motivation for considering such spaces is that the Thompson metric
on a product cone has such a structure, a fact we will use when
studying the isometry group of the Thompson metric.

In this section, we will study the set of Busemann points and the detour cost
for $\ell_\infty$-product spaces.
We assume that $X_1$ and $X_2$ have base-points $b_1$ and $b_2$, respectively,
and we take $(b_1,b_2)$ to be the base-point of $X$.

\newcommand\Raug{\overline\R}

Let $\vee$ and $\wedge$ denote, respectively, maximum and minimum.
We use the convention that addition and subtraction take
precedence over these operators.
We write $x^+:=x\vee 0$ and $x^-:=x\wedge 0$.
Let $\Raug:=\R\union\{-\infty,+\infty\}$.
Given two real-valued functions $f_1$ and $f_2$,
and $c\in\Raug$, define
\begin{align*}
[f_1,f_2,c] := f_1 + c^- \vee f_2 - c^+.
\end{align*}

For the rest of this section, we will assume that $(X_1,d_1)$ and $(X_2,d_2)$
are proper geodesic metric spaces.

The following proposition shows that horofunctions of $\ell_\infty$-product
spaces have a simple form.

\begin{proposition}
\label{prop:horofunction_product}
Every horofunction of $(X,d)$ is of the form $[\xi_1,\xi_2,c]$,
with $\xi_1\in X_1(\infty)$ and $\xi_2\in X_2(\infty)$, and $c\in\Raug$.
\end{proposition}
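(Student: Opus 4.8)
The plan is to take a sequence $(z^{(n)})=(z_1^{(n)},z_2^{(n)})$ in $X=X_1\times X_2$ converging to a horofunction $\eta$ of the product, and to read off the structure of $\eta$ from the behaviour of the two coordinate sequences. The starting point is the explicit form of the distance function on the product: for the base-point $b=(b_1,b_2)$,
\begin{equation*}
\distfn_{z^{(n)}}(x) = \max\big\{d_1(x_1,z_1^{(n)}),\, d_2(x_2,z_2^{(n)})\big\} - \max\big\{d_1(b_1,z_1^{(n)}),\, d_2(b_2,z_2^{(n)})\big\}.
\end{equation*}
The quantity $c_n := d_1(b_1,z_1^{(n)}) - d_2(b_2,z_2^{(n)})$ records which coordinate dominates at the base-point, and the definition of $[f_1,f_2,c]$ is engineered precisely so that, after subtracting the max at $b$, a limit of these difference functions takes that bracketed form.

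First I would pass to a subsequence so that $c_n \to c$ in $\Raug$ (compactness of the extended reals guarantees a convergent subsequence). I would split into three cases according to whether $c\in\R$, $c=+\infty$, or $c=-\infty$. In the finite case, both $d_i(b_i,z_i^{(n)})$ must tend to infinity at comparable rates, so both coordinate sequences escape to the boundary; using Proposition~\ref{prop:inf_minus_infinity} (which gives $\inf\xi_i=-\infty$ for proper geodesic spaces) together with properness, I would extract convergence $z_i^{(n)}\to \xi_i \in X_i(\infty)$ in each factor and verify by a direct pointwise computation that $\distfn_{z^{(n)}}\to [\xi_1,\xi_2,c]$. In the case $c=+\infty$, the first coordinate dominates at the base-point by an unbounded margin; here $z_1^{(n)}$ escapes to a boundary point $\xi_1$, the max inside is resolved in favour of the first coordinate uniformly on bounded sets, and the limit collapses to $\xi_1 + c^- \vee \xi_2 - c^+ = \xi_1$ with the $\xi_2$-term annihilated — matching $[\xi_1,\xi_2,+\infty]$ for any $\xi_2$; the case $c=-\infty$ is symmetric.

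The main obstacle will be the bookkeeping in the case $c\in\R$: one must show that the two coordinate sequences \emph{both} converge to genuine boundary horofunctions and that neither stays in a bounded region of its factor. The worry is a mixed scenario where, say, $z_2^{(n)}$ stays bounded while $z_1^{(n)}\to\infty$; but this would force $c_n\to+\infty$, contradicting $c\in\R$, so finiteness of $c$ rules it out. The second delicate point is interchanging the limit with the outer max: since the metrics are proper and the functions $d_i(\cdot,z_i^{(n)}) - d_i(b_i,z_i^{(n)})$ are $1$-Lipschitz, convergence is uniform on bounded sets of the symmetrised metric, which lets me pass the limit through $\max$ and confirm the bracketed form. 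I expect the verification that the resulting pointwise limit is exactly $[\xi_1,\xi_2,c]$ to be a short but careful algebraic manipulation of $\max\{a,b\}-\max\{a',b'\}$, and the rest of the argument to be routine given the earlier propositions.
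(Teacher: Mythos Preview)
Your proposal is correct and follows essentially the same approach as the paper: extract a subsequence along which each coordinate converges in its horofunction compactification and $c_n \to c \in \Raug$, then compute the pointwise limit to obtain $[\xi_1,\xi_2,c]$. The paper carries this out in a single unified computation rather than case-splitting on $c$, and does not invoke Proposition~\ref{prop:inf_minus_infinity} (which is not needed here, since finiteness of $c$ already forces both coordinate distances to diverge); otherwise the arguments coincide.
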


\begin{proof}
Denote by $\overline X_1$ and $\overline X_2$ the horofunction
compactifications of $X_1$ and $X_2$, respectively.
Let $x^n=(x_1^n, x_2^n)$ be a sequence in $X$ converging to a horofunction
$\xi$. By passing to a subsequence if necessary, we may assume
that $x_1^n$ converges to $\xi_1\in\overline X_1$,
that $x_2^n$ converges to $\xi_2\in\overline X_2$, and that
\begin{align*}
d(b_1, x_1^n) - d(b_2, x_2^n) \to c,
\end{align*}
with $c\in\Raug$.
At least one of $\xi_1$ and $\xi_2$ must be a horofunction of its respective
space. Observe that, as $n$ tends to infinity,
\begin{align*}
d(b,x^n) - d(b_1, x_1^n) &\to -c^-,
\qquad\text{and} \\
d(b,x^n) - d(b_2, x_2^n) &\to c^+.
\end{align*}
Therefore, for $y=(y_1, y_2)$ in $X$, we have the following limit as $n$
tends to infinity:
\begin{align*}
d(y, x^n) - d(b, x^n)
   &= \big( d(y_1, x_1^n) \vee d(y_2, x_2^n) \big) - d(b, x^n) \\
   &= \big(d(y_1, x_1^n) - d(b_1, x_1^n) + c^- \big)
         \vee \big(d(y_2, x_2^n) - d(b_2, x_2^n) - c^+ \big) \\
   &\to [\xi_1, \xi_2, c](y).
\end{align*}
If $\xi_1$ is not a horofunction, then $c=-\infty$, and $\xi_1$ is irrelevant
in the expression $[\xi_1, \xi_2, c]$.
Likewise, if $\xi_2$ is not a horofunction, then $c=+\infty$,
and $\xi_2$ is irrelevant.
\end{proof}

Next, we will determine the Busemann points of product spaces.
We will need the following lemma.

\begin{lemma}
\label{lem:unique_split}
Let $f_1$ and $g_1$ be real-valued functions on a set $Y_1$,
and let $f_2$ and $g_2$ be real-valued functions on a set $Y_2$.
Assume that $f_1(x_1) \vee f_2(x_2) = g_1(x_1) \vee g_2(x_2)$ for all
$(x_1,x_2)\in Y_1\times Y_2$, and that $\inf f_2 = \inf g_2 = -\infty$.
Then, $f_1=g_1$.
\end{lemma}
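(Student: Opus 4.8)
The plan is to fix an arbitrary $x_1\in Y_1$ and prove $f_1(x_1)=g_1(x_1)$; since $x_1$ is then arbitrary, this yields $f_1=g_1$. I would argue by contradiction, assuming the two values differ, and treat the two possible strict inequalities separately, each reduced to one of the two infimum hypotheses.

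First suppose $f_1(x_1)<g_1(x_1)$. The hypothesis gives $f_1(x_1)\vee f_2(x_2)=g_1(x_1)\vee g_2(x_2)$ for every $x_2\in Y_2$. The right-hand side is at least $g_1(x_1)$, hence the left-hand side strictly exceeds $f_1(x_1)$; since the left-hand side is the maximum of $f_1(x_1)$ and $f_2(x_2)$, this forces $f_2(x_2)=f_1(x_1)\vee f_2(x_2)\ge g_1(x_1)$. As this holds for \emph{all} $x_2$, we obtain $\inf f_2\ge g_1(x_1)>-\infty$, contradicting the assumption $\inf f_2=-\infty$. The case $f_1(x_1)>g_1(x_1)$ is entirely symmetric: the same reasoning, with the roles of the two sides interchanged, shows $g_2(x_2)\ge f_1(x_1)$ for every $x_2$, whence $\inf g_2>-\infty$, contradicting $\inf g_2=-\infty$. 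Therefore $f_1(x_1)=g_1(x_1)$.

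The argument is short, and I do not anticipate a genuine obstacle. The only point requiring care is recognising that a strict gap between $f_1(x_1)$ and $g_1(x_1)$ propagates into a \emph{uniform} lower bound on one of the second-coordinate functions; this is precisely where, and the only place where, the hypotheses $\inf f_2=\inf g_2=-\infty$ enter, one infimum condition being consumed in each of the two cases. It is worth noting that the two conditions thus play symmetric but genuinely distinct roles, so that neither may be dropped.
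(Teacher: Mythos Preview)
Your proof is correct and is essentially the contrapositive of the paper's argument. The paper proceeds directly: fixing $x_1$, it uses $\inf f_2=-\infty$ to choose $x_2$ with $f_2(x_2)<f_1(x_1)$, so that $f_1(x_1)=f_1(x_1)\vee f_2(x_2)=g_1(x_1)\vee g_2(x_2)\ge g_1(x_1)$, and then obtains the reverse inequality symmetrically from $\inf g_2=-\infty$; your two contradiction cases are exactly these two inequalities read backwards.
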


\begin{proof}
Let $x_1\in Y_1$. Choose $x_2\in Y_2$ such that $f_2(x_2)<f_1(x_1)$. So,
\begin{align*}
f_1(x_1)=f_1(x_1) \vee f_2(x_2) =  g_1(x_1) \vee g_2(x_2) \ge g_1(x_1).
\end{align*}
The reverse inequality is proved similarly.
\end{proof}

We will also need the following characterisation of Busemann points
from~\cite[Theorem~6.2]{AGW-m}: a horofunction is a Busemann point
if and only if it can not be written as
the minimum of two $1$-Lipschitz functions, both different from it.

\begin{proposition}
\label{prop:busemann_product}
%Let $(X_1,d_1)$ and $(X_2,d_2)$ be proper geodesic metric spaces, and write
%$(X,d):=(X_1,d_1)\directproduct_\infty(X_2,d_2)$.
For every pair of Busemann points $\xi_1\in X_1(\infty)$ and
$\xi_2\in X_2(\infty)$, and every $c\in\Raug$, the function
$[\xi_1,\xi_2,c]$ is a Busemann point of $X$. Moreover, every Busemann point
of $X$ arises in this way.
\end{proposition}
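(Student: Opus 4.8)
The plan is to establish Proposition~\ref{prop:busemann_product} in two directions, using the characterisation from~\cite[Theorem~6.2]{AGW-m} that a horofunction is a Busemann point if and only if it cannot be written as the minimum of two $1$-Lipschitz functions, both different from it. First I would verify that every $[\xi_1,\xi_2,c]$ with $\xi_1,\xi_2$ Busemann points is itself a Busemann point by exhibiting an explicit almost-geodesic converging to it. Given almost-geodesics $\gamma_1\colon\Rplus\to X_1$ and $\gamma_2\colon\Rplus\to X_2$ converging to $\xi_1$ and $\xi_2$ (available on all of $\Rplus$ by Lemma~\ref{lem:whole_of_Rplus}), I would reparametrise them so that the base-point distances differ asymptotically by the prescribed constant $c$, and then define a path in $X$ componentwise. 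The $\ell_\infty$ structure means the distance along the product path is the maximum of the two component distances; checking conditions~(\ref{eqn:alt_geo_1}) and~(\ref{eqn:alt_geo_2}) from Lemma~\ref{lem:alt_almost_geo} reduces to the almost-geodesic estimates for $\gamma_1$ and $\gamma_2$ together with the bookkeeping of the offset $c$. The finite-$c$ case is the generic one; the cases $c=\pm\infty$ degenerate to a single factor and are handled by taking the corresponding component almost-geodesic directly.

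Next I would show the converse: every Busemann point of $X$ has the form $[\xi_1,\xi_2,c]$ with both $\xi_1$ and $\xi_2$ \emph{Busemann}. By Proposition~\ref{prop:horofunction_product}, any horofunction already has the form $[\xi_1,\xi_2,c]$ with $\xi_1,\xi_2$ horofunctions. The task is to rule out the possibility that one of the components, say $\xi_1$, is a non-Busemann horofunction. The strategy is contrapositive via the minimum-of-two-Lipschitz-functions criterion: if $\xi_1$ is not Busemann, then $\xi_1 = \eta_1 \wedge \eta_1'$ for two $1$-Lipschitz functions both distinct from $\xi_1$, and I would lift this splitting to the product. Writing $[\xi_1,\xi_2,c] = [\eta_1,\xi_2,c] \wedge [\eta_1',\xi_2,c]$ and checking that each factor is $1$-Lipschitz and genuinely different from $[\xi_1,\xi_2,c]$ would exhibit $[\xi_1,\xi_2,c]$ as a non-Busemann horofunction, giving the contradiction. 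Here Proposition~\ref{prop:inf_minus_infinity}, which guarantees $\inf\xi_2=-\infty$, is what lets me apply Lemma~\ref{lem:unique_split} to conclude that the splitting of the product really descends to a splitting in the first factor (so that the two lifted functions are distinct precisely because $\eta_1\neq\eta_1'$, and not by an accident cancelled in the maximum).

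The hard part will be the converse direction, and specifically making the lifting argument precise. The subtlety is that the operation $[\,\cdot\,,\xi_2,c]$ interacts with both $\wedge$ and the $1$-Lipschitz property through the $c^-\vee$ and $-c^+$ adjustments, so I need to confirm that forming a minimum in the first slot commutes with the bracket construction, and that a $1$-Lipschitz function in the first factor yields a $1$-Lipschitz function on the product after the bracket. The distinctness claim is where Lemma~\ref{lem:unique_split} is essential: two bracketed functions $[\eta_1,\xi_2,c]$ and $[\eta_1',\xi_2,c]$ agree on all of $X$ only if $\eta_1=\eta_1'$, and this implication is exactly Lemma~\ref{lem:unique_split} applied with the roles arranged so that the unbounded-below factor is $\xi_2$. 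I would also need to separately treat the boundary cases $c=\pm\infty$, where the product Busemann point collapses to a single factor and the criterion is inherited directly from that factor; these are routine once the finite-$c$ case is settled.
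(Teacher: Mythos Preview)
Your proposal is correct and matches the paper's proof closely: both directions use the same tools (the explicit almost-geodesic $\gamma(t)=(\gamma_1(t),\gamma_2((t-c)^+))$ for the forward direction, and the minimum-of-two-Lipschitz criterion together with Lemma~\ref{lem:unique_split} and Proposition~\ref{prop:inf_minus_infinity} for the converse). The only cosmetic difference is that the paper phrases the converse directly rather than contrapositively: it writes $\xi_1=f\wedge f'$, distributes to get $\xi=(f\vee(\xi_2-c))\wedge(f'\vee(\xi_2-c))$, uses that $\xi$ is Busemann to conclude $\xi$ equals one of the two, and then invokes Lemma~\ref{lem:unique_split} to deduce $\xi_1=f$; your version instead assumes $\eta_1,\eta_1'\neq\xi_1$ and uses the lemma to show the lifted functions are distinct from $\xi$, which is logically equivalent.
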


\begin{proof}
Let $\xi$ be a Busemann point of $X$. By Lemma~\ref{prop:horofunction_product},
we may write $\xi =[\xi_1,\xi_2,c]$, with $\xi_1\in X_1(\infty)$ and
$\xi_2\in X_2(\infty)$, and $c\in\Raug$.
Suppose $\xi_1=f \wedge f'$, where $f$ and $f'$ are real-valued $1$-Lipschitz
functions on $(X_1,d_1)$. We consider the case when $c\ge0$; the other case
is similar. So,
\begin{align*}
\xi &= (f \wedge f') \vee (\xi_2 - c) \\
    &= \big(f \vee (\xi_2-c)\big) \wedge \big(f' \vee (\xi_2-c)\big).
\end{align*}
Therefore, $\xi$ is the minimum of two $1$-Lipschitz functions on $(X,d)$.
This implies, since $\xi$ is Busemann, that it is equal to one of them,
say $\xi=f \vee (\xi_2-c)$.
Applying Lemma~\ref{lem:unique_split}, we get that $\xi_1$ equals $f$.
We conclude that $\xi_1$ is a Busemann point of $(X_1,d_1)$.
The proof that $\xi_2$ is Busemann is similar.

Now assume that $\xi_1$ and $\xi_2$ are Busemann points of $(X_1,d_1)$ and
$(X_2,d_2)$, respectively. So, there exists an
almost-geodesic $\gamma_1$ in $X_1$ converging to $\xi_1$, and an
almost-geodesic $\gamma_2$ in $X_2$ converging to $\xi_2$.
We may assume without loss of generality that $\gamma_1$ and $\gamma_2$
start, respectively, at $b_1$ and $b_2$, the base-points of the spaces.
By Lemma~\ref{lem:whole_of_Rplus}, we may also assume that the domain of
definition of these almost-geodesics is $\Rplus$. We furthermore assume that
$c\ge0$; the other case is handled similarly.

Define the path $\gamma\colon \Rplus\to X$ by
\begin{align*}
\gamma(t) := \Big( \gamma_1(t), \gamma_2\big((t-c)^+\big)\Big),
\qquad\text{for all $t\in\Rplus$}.
\end{align*}
By Lemma~\ref{lem:alt_almost_geo}, we have, as $t$ tends to infinity,
\begin{align}
\label{eqn:busemann_param_a}
d_1\big(b_1,\gamma_1(t)\big)-t &\to 0, \qquad\text{and} \\
\label{eqn:busemann_param_b}
d_2\big(b_2,\gamma_2(t)\big)-t &\to 0.
\end{align}
Observe that $(t-c)^+ -t$ converges to $-c$ as $t$ tends to infinity.
We deduce from this and~(\ref{eqn:busemann_param_b}) that
\begin{align*}
d_2\big(b_2,\gamma_2((t-c)^+)\big)-t \to -c,
\qquad\text{as $t\to\infty$}.
\end{align*}
From this and~(\ref{eqn:busemann_param_a}), we get
\begin{align}
\label{eqn:busemann_param}
d(b,\gamma(t))-t \to 0,
\qquad\text{as $t\to\infty$}.
\end{align}
This shows that condition~(\ref{eqn:alt_geo_1}) of
Lemma~\ref{lem:alt_almost_geo} holds for $\gamma$.
The proof of condition~(\ref{eqn:alt_geo_2}) of the same lemma is similar.
So, $\gamma$ is an almost-geodesic.

Using~(\ref{eqn:busemann_param}), we get, for all $x:=(x_1,x_2)$ in $X$,
\begin{align*}
\lim_{t\to\infty}\Big(d(x,\gamma(t))-d(b,\gamma(t))\Big)
   &= \lim_{t\to\infty}
        \Big(\big(d_1(x_1,\gamma_1(t)) \vee d_2(x_2,\gamma_2((t-c)^+))\big)- t\Big) \\
   &= \lim_{t\to\infty}
        \Big(\big(d_1(x_1,\gamma_1(t)) - t \big)
           \vee \big(d_2(x_2,\gamma_2((t-c)^+))- t\big)\Big) \\
   &= \xi_1(x_1) \vee \big(\xi_2(x_2) -c\big).
\end{align*}
In other words, $\gamma(t)$ converges to $\xi:=[\xi_1,\xi_2,c]$.
Therefore, $\xi$ is a Busemann point of $(X,d)$.
\end{proof}

We now calculate the detour cost in product spaces.
We use the convention that $-\infty$ is absorbing for addition,
that is, $(+\infty)+(-\infty)=-\infty$ and $(-\infty)-(-\infty)=-\infty$.

\begin{proposition}
\label{prop:detour_product}
Let $\xi=[\xi_1,\xi_2,u]$ and $\eta=[\eta_1,\eta_2,v]$ be Busemann points of
$X$. Then,
\begin{align*}
H(\xi,\eta) = \max\Big(H(\xi_1,\eta_1)-u^-+v^-, H(\xi_2,\eta_2)+u^+-v^+\Big).
\end{align*}
\end{proposition}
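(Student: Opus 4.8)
The plan is to reduce everything to the supremum formula of Proposition~\ref{prop:detour}, which is available because $\xi$ is a Busemann point, and then to decouple the supremum over the two factors. Write $x=(x_1,x_2)$ and abbreviate $G_1:=\xi_1(x_1)+u^-$, $G_2:=\xi_2(x_2)-u^+$, $F_1:=\eta_1(x_1)+v^-$, $F_2:=\eta_2(x_2)-v^+$, so that $\xi(x)=G_1\vee G_2$, $\eta(x)=F_1\vee F_2$, and $H(\xi,\eta)=\sup_{x}(F_1\vee F_2-G_1\vee G_2)$. By Proposition~\ref{prop:busemann_product} the components $\xi_1,\xi_2,\eta_1,\eta_2$ are themselves Busemann points, so applying Proposition~\ref{prop:detour} in each factor gives $\sup_{x_1}(F_1-G_1)=H(\xi_1,\eta_1)-u^-+v^-$ and $\sup_{x_2}(F_2-G_2)=H(\xi_2,\eta_2)+u^+-v^+$. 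Hence the claim is equivalent to the decoupling identity
\[
\sup_{x_1,x_2}\big(F_1\vee F_2-G_1\vee G_2\big)
   =\max\Big(\sup_{x_1}(F_1-G_1),\ \sup_{x_2}(F_2-G_2)\Big),
\]
which I will prove in the finite case $u,v\in\R$.

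For the inequality ``$\le$'' I would use the elementary bound $F_1\vee F_2-G_1\vee G_2\le\max(F_1-G_1,F_2-G_2)$: if, say, $F_1\ge F_2$, then the left side equals $F_1-G_1\vee G_2\le F_1-G_1$, and symmetrically otherwise. Since $F_1-G_1$ depends only on $x_1$ and $F_2-G_2$ only on $x_2$, the supremum of $\max(F_1-G_1,F_2-G_2)$ over $(x_1,x_2)$ splits into the maximum of the two separate suprema, giving ``$\le$''. This step uses nothing beyond the definitions.

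The substance is the reverse inequality, where the key input is Proposition~\ref{prop:inf_minus_infinity}: since $X_1$ and $X_2$ are proper geodesic, the horofunctions $\xi_1$ and $\xi_2$ have infimum $-\infty$. To bound $\sup_x(\eta-\xi)$ below by $\sup_{x_2}(F_2-G_2)$, I fix $x_2$ and choose $x_1$ with $\xi_1(x_1)$ so negative that $G_1\le G_2$; then $G_1\vee G_2=G_2$, while $F_1\vee F_2\ge F_2$ always, so $\eta(x)-\xi(x)\ge F_2-G_2$. Taking the supremum over $x_2$ yields $H(\xi,\eta)\ge\sup_{x_2}(F_2-G_2)$, and the symmetric argument, now driving $\xi_2(x_2)\to-\infty$ so that $G_2\le G_1$, gives $H(\xi,\eta)\ge\sup_{x_1}(F_1-G_1)$. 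Together with the upper bound this establishes the identity, and hence the formula, for finite $u$ and $v$.

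It remains to treat the cases $u\in\{-\infty,+\infty\}$ or $v\in\{-\infty,+\infty\}$, in which one component drops out of $\xi$ or $\eta$ (for instance $u=+\infty$ forces $\xi=\xi_1$). There I would recompute the supremum directly and check that, together with the convention that $-\infty$ is absorbing for addition, it reproduces the stated expression. I expect this bookkeeping with the infinite values—not the decoupling argument—to be the most error-prone part; conceptually the whole proof rests on the single observation that unboundedness below of the factor horofunctions lets one ``switch off'' whichever coordinate fails to realise the relevant term in each maximum.
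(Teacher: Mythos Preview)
Your argument is correct and follows essentially the same route as the paper: both invoke Proposition~\ref{prop:detour} to write $H(\xi,\eta)$ as a supremum, bound it above via the elementary inequality $a\vee b - c\vee d \le (a-c)\vee(b-d)$, and bound it below by using Proposition~\ref{prop:inf_minus_infinity} to drive the irrelevant coordinate of $\xi$ to $-\infty$. The paper folds the infinite-$u,v$ cases into the main argument (by first disposing of $M=+\infty$ and then checking which $\pm\infty$ configurations are compatible with $M<+\infty$), whereas you defer them to a separate case check, but that is only a presentational difference.
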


\begin{proof}
We extend the definition of $H$ somewhat by letting
$H(\xi+u,\eta+v):=H(\xi,\eta)+v-u$ for all
Busemann points $\xi$ and $\eta$, and $u,v\in[-\infty,0]$.

Write
\begin{align*}
\bar\xi_1 &:= \xi_1 + u^-, & \bar\xi_2 &:= \xi_2 - u^+, \\
\bar\eta_1 &:= \eta_1 + v^-, & \bar\eta_2 &:= \eta_2 - v^+. \\
\end{align*}
So, $\xi=\bar\xi_1 \vee \bar\xi_2$ and $\eta=\bar\eta_1 \vee \bar\eta_2$.

By Proposition~\ref{prop:detour},
\begin{align*}
\eta_1(\cdot) - \xi_1(\cdot) &\le H(\xi_1,\eta_1),
\qquad\text{and} \\
\eta_2(\cdot) - \xi_2(\cdot) &\le H(\xi_2,\eta_2).
\end{align*}
Therefore,
\begin{align*}
\bar\eta_1(\cdot) - \bar\xi_1(\cdot) &\le H(\bar\xi_1,\bar\eta_1),
\qquad\text{and} \\
\bar\eta_2(\cdot) - \bar\xi_2(\cdot) &\le H(\bar\xi_2,\bar\eta_2).
\end{align*}
Let $M:=\max\big( H(\bar\xi_1,\bar\eta_1), H(\bar\xi_2,\bar\eta_2) \big)$.
If $M=+\infty$, then clearly $H(\xi,\eta)\le M$, so assume that $M<+\infty$.
This implies that it is not the case that $u=-\infty$ and $v>-\infty$,
nor that $u=+\infty$ and $v<+\infty$. It follows that
\begin{align*}
\bar\eta_1(\cdot) &\le H(\bar\xi_1,\bar\eta_1) + \bar\xi_1(\cdot),
\qquad\text{and} \\
\bar\eta_2(\cdot) &\le H(\bar\xi_2,\bar\eta_2) + \bar\xi_2(\cdot).
\end{align*}
Therefore, using Proposition~\ref{prop:detour},
\begin{align*}
H(\xi,\eta)
   &= \sup_{(x_1,x_2)\in X}
          \Big( \big(\bar\eta_1(x_1) \vee \bar\eta_2(x_2) \big)
       -  \big(\bar\xi_1(x_1) \vee \bar\xi_2(x_2) \big) \Big) \\
   &\le \sup_{(x_1,x_2)\in X}
          \Big( \big(H(\bar\xi_1,\bar\eta_1)+\bar\xi_1(x_1)
             \vee H(\bar\xi_2,\eta_2)+\bar\xi_2(x_2) \big)
       -  \big(\bar\xi_1(x_1) \vee \bar\xi_2(x_2) \big) \Big) \\
   &\le \sup_{(x_1,x_2)\in X}
          \Big( \big(M+\bar\xi_1(x_1)
             \vee M+\bar\xi_2(x_2) \big)
       -  \big(\bar\xi_1(x_1) \vee \bar\xi_2(x_2) \big) \Big) \\
   &= M.
\end{align*}

We now wish to prove the reverse inequality. We have
\begin{align*}
H(\xi,\eta)
   \ge \sup_{(x_1,x_2)\in X} \Big( \bar\eta_1(x_1)
             - \big( \bar\xi_1(x_1) \vee \bar\xi_2(x_2) \big)
       \Big).
\end{align*}
Fix $x_1\in X_1$. From Proposition~\ref{prop:inf_minus_infinity} and the fact
that $u^+\ge0$, we get that $\inf\bar\xi_2=-\infty$. Therefore, we can choose
$x_2\in X_2$ to make $\bar\xi_2(x_2)$ as negative as we wish. So we see that
$H(\xi,\eta)\ge \bar\eta_1(x_1) - \bar\xi_1(x_1)$.
We conclude that
\begin{align*}
H(\xi,\eta)
   \ge \sup_{x_1\in X_1} \Big( \bar\eta_1(x_1) - \bar\xi_1(x_1) \Big)
   = H(\bar\xi_1,\bar\eta_1).
\end{align*}
Similar reasoning shows that $H(\xi,\eta) \ge H(\bar\xi_2,\bar\eta_2)$.
\end{proof}

Using this proposition, we can characterise the singletons of product
spaces.

\begin{corollary}
\label{cor:product_singletons}
The following are equivalent:
\begin{itemize}
\item
$\xi$ is a singleton Busemann point in the horofunction boundary of $X$;
\item
$\xi$ takes one of the following two forms:
$\xi(x_1,x_2) = \xi_1(x_1)$ with $\xi_1$ a singleton Busemann point of $X_1$,
or $\xi(x_1,x_2) = \xi_2(x_2)$ with $\xi_2$ a singleton Busemann point of
$X_2$.
\end{itemize}
\end{corollary}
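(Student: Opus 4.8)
The plan is to read everything off from the detour-cost formula of Proposition~\ref{prop:detour_product} together with the description of Busemann points in Proposition~\ref{prop:busemann_product}. Recall that a Busemann point $\xi$ is a singleton exactly when every Busemann point $\eta$ with $\delta(\xi,\eta)=H(\xi,\eta)+H(\eta,\xi)<\infty$ coincides with $\xi$. So I would first write $\xi=[\xi_1,\xi_2,u]$ with $\xi_1\in X_1(\infty)$ and $\xi_2\in X_2(\infty)$ Busemann points of the factors and $u\in\Raug$, and observe that the two forms in the statement are precisely the cases $u=+\infty$, where $[\xi_1,\xi_2,+\infty]=\xi_1$, and $u=-\infty$, where $[\xi_1,\xi_2,-\infty]=\xi_2$. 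The whole argument then reduces to deciding, from the value of $u$ alone, which Busemann points share the part of $\xi$.

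First I would dispose of the case of finite $u$ by showing $\xi$ is then not a singleton. Here both $\xi_1$ and $\xi_2$ are genuine Busemann points, so $H(\xi_1,\xi_1)=H(\xi_2,\xi_2)=0$. For any finite $v\neq u$, put $\eta:=[\xi_1,\xi_2,v]$, which is Busemann by Proposition~\ref{prop:busemann_product}. Feeding $\xi$ and $\eta$ into Proposition~\ref{prop:detour_product} gives $H(\xi,\eta)=\max(v^- - u^-,\,u^+ - v^+)$ together with the symmetric expression for $H(\eta,\xi)$, so $\delta(\xi,\eta)$ is finite. It then remains to check $\eta\neq\xi$: evaluating $[\xi_1,\xi_2,u]$ at points $(x_1,b_2)$ and $(b_1,x_2)$ and letting $\xi_1(x_1)\to-\infty$ and $\xi_2(x_2)\to-\infty$ — which is legitimate because $\inf\xi_i=-\infty$ by Proposition~\ref{prop:inf_minus_infinity} — produces the limiting values $-u^+$ and $u^-$. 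Since $u\neq v$ forces $u^+\neq v^+$ or $u^-\neq v^-$, one of these two evaluations separates $\xi$ from $\eta$, so $\xi$ lies in a nontrivial part.

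It remains to treat $u=\pm\infty$; I would carry out $u=+\infty$, that is $\xi=\xi_1$, the other case being symmetric. The key computation, using the absorbing-infinity conventions, is that for an arbitrary Busemann point $\eta=[\eta_1,\eta_2,v]$ the second entry of the maximum in Proposition~\ref{prop:detour_product} equals $-\infty$ when $v=+\infty$ and $+\infty$ otherwise. Consequently $H(\xi,\eta)=+\infty$ whenever $v\neq+\infty$, while $H(\xi,\eta)=H(\xi_1,\eta_1)$ when $v=+\infty$. Thus the part of $\xi$ consists exactly of the points $\eta=\eta_1$ with $\eta_1$ in the part of $\xi_1$ in $X_1$, and there $\delta(\xi,\eta)=\delta_1(\xi_1,\eta_1)$, where $\delta_1$ is the detour metric of $X_1$. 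This yields both directions at once in the infinite case: $\xi$ is a singleton of $X$ if and only if $\xi_1$ is a singleton of $X_1$. Combined with the finite-$u$ case, which rules out singletons entirely, this establishes the claimed equivalence.

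I expect the main obstacle to be the careful bookkeeping with $\pm\infty$ in the detour formula — in particular ensuring the stated conventions $(+\infty)+(-\infty)=-\infty$ and $(-\infty)-(-\infty)=-\infty$ are applied consistently so that the degenerate entry of the maximum genuinely drops out — and, to a lesser extent, the distinctness check in the finite-$u$ case, which is exactly where Proposition~\ref{prop:inf_minus_infinity} is indispensable.
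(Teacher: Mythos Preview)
Your proposal is correct and follows essentially the same route as the paper: write $\xi=[\xi_1,\xi_2,c]$, use Proposition~\ref{prop:detour_product} to show that finite $c$ never gives a singleton and that for $c=\pm\infty$ the part of $\xi$ is identified with the part of the surviving factor. The only difference is that in the finite-$c$ case the paper computes $\delta(\xi,\xi^\epsilon)=|\epsilon|$ outright, which already yields distinctness since $\delta$ is a genuine metric on Busemann points---your separate evaluation argument via Proposition~\ref{prop:inf_minus_infinity} is valid but unnecessary.
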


\begin{proof}
Let $\xi$ be a Busemann point of $X$.
By Proposition~\ref{prop:busemann_product}, we can write $\xi=[\xi_1,\xi_2,c]$,
where $\xi_1$ and $\xi_2$ are Busemann points of $X_1$ and $X_2$, respectively,
and $c\in\Raug$.

Consider the case where $c$ is finite.
For each $\epsilon\in\R$, define the Busemann point
$\xi^\epsilon := [\xi_1,\xi_2,c+\epsilon]$.
We can calculate from Proposition~\ref{prop:detour_product} that
$H(\xi,\xi^\epsilon) + H(\xi^\epsilon,\xi) = |\epsilon|$, for all
$\epsilon\in\R$. This shows that $\xi^\epsilon$ is distinct from $\xi$,
but in the same part as it, for all $\epsilon\in\R\backslash\{0\}$,
and hence that $\xi$ is not a singleton.

In the case where $c=\infty$, we have $\xi=\xi_1$. If $\xi_1$ is not a
singleton of $X_1$, then let $\xi'_1$ be another Busemann point in the same
part as $\xi_1$, and write $\xi'=\xi'_1$.
From Proposition~\ref{prop:detour_product}, we get
$H(\xi,\xi')+H(\xi',\xi) = H(\xi_1,\xi'_1) + H(\xi'_1,\xi_1)<\infty$,
and so $\xi$ is not a singleton.

The case where $c=\infty$ and $\xi_2$ is not a singleton of $X_2$
is handled similarly.

Now let $\xi_1$ be a singleton Busemann point of $X_1$,
and write $\xi(x_1,x_2) := \xi_1(x_1)$ for all $x_1\in X_1$ and $x_2\in X_2$.
Observe that $\xi=[\xi_1,\xi_2,\infty]$, for any Busemann point $\xi_2$
of $X_2$. Let $\eta:=[\eta_1,\eta_2,c]$ be a Busemann point of $X$ in the same
part of the horofunction boundary as $\xi$.
Here, of course, $\eta_1$ and $\eta_2$ are Busemann points of $X_1$ and $X_2$,
respectively, and $c\in\Raug$. 
Since $H(\xi,\eta)$ is finite,
looking at Proposition~\ref{prop:detour_product}, we see that
$H(\xi_2,\eta_2)+\infty-c^+ < \infty$, and hence $c=\infty$.
Using this and the same proposition again, we get
$H(\xi_1,\eta_1) = H(\xi,\eta)<\infty$ and
$H(\eta_1,\xi_1) = H(\eta,\xi)<\infty$.
Since $\xi_1$ was assumed to be a singleton, we conclude that
$\eta_1=\xi_1$, and therefore $\eta=\xi$.
We have thus shown that $\xi$ is a singleton.

That $\xi:=\xi_2$ is a singleton point of $X$ whenever
$\xi_2$ is singleton point of $X_2$ may be proved in a similar manner.
\end{proof}

\section{Horofunction boundary of the Thompson metric}
\label{sec:horofunction_thompson}

\newcommand\tmetric{d_T}

In this section, we determine the horofunction boundary of the Thompson
geometry and its set of Busemann points. We then calculate the detour metric
on the boundary.

The results of this section will resemble somewhat those of the last.
This is because the Thompson metric is the maximum of the Funk and
reverse-Funk metrics, and, as consequence, its boundary is related to those
of these two metrics
in a way similar to how the boundary of a $\ell_\infty$-product space
is related to the boundaries of its components.

For each $x\in D$, let $A(x)$ denote the set of horofunctions of the Funk
geometry that may be approached by a sequence in $D$ converging to $x$.
Also, for each $x\in D$, define the following function on $C$:
\begin{align*}
f_{C,x}(\cdot):= \log\frac{\gauge{C}{\cdot}{x}}{\gauge{C}{b}{x}}.
\end{align*}

We start off by describing the horofunctions of the Thompson metric..

\begin{proposition}
\label{prop:horo_thompson}
Let $(C,\tmetric)$ be a proper open convex cone with its Thompson metric.
Its horofunction boundary is
\begin{align*}
C(\infty) =
   \{r_x \mid x\in D\}
   \union \{f_{C,x} \mid x\in D\}
   \union\{ [r_x,f,c] \mid x\in \partial D, f\in A(x), c\in \Raug \},
\end{align*}
where $D$ is a cross section of $C$.
\end{proposition}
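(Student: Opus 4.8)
The plan is to mimic the proof of Proposition~\ref{prop:horofunction_product} for $\ell_\infty$-products, exploiting that $\thomp=\max(\funk,\rfunk)$. The genuine difference is that here a \emph{single} sequence simultaneously drives the Funk and the reverse-Funk ``components'', whereas in a product these are independent. The reconciling observation, which is the heart of the argument, will be that both components are governed by one datum: the limit in $\closure D$ of the cross-sectional part of the sequence. The extra direction ``to infinity'' available on the cone (as opposed to on $D$), namely scaling, will be absorbed into the parameter $c$.

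First I would record the key algebraic identity. Writing a point of the cone as $z=e^{s}\hat z$ with $\hat z\in D$ and $s\in\R$, homogeneity of the gauge gives $\funk(y,z)=\funk(y,\hat z)-s$ and $\rfunk(y,z)=\rfunk(y,\hat z)+s$. Inserting these into $\distfn_z=\max(\funk(\cdot,z),\rfunk(\cdot,z))-\max(\funk(b,z),\rfunk(b,z))$ and using $\max(a,g)=g+(a-g)^+$, a short manipulation yields
\begin{align*}
\distfn_z = \big[\,r_{\hat z},\,f_{C,\hat z},\,c(z)\,\big],
\qquad\text{with}\quad c(z):=\rfunk(b,\hat z)-\funk(b,\hat z)+2s,
\end{align*}
where $r_{\hat z}$ and $f_{C,\hat z}$ are the reverse-Funk and Funk distance functions to $\hat z$. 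Crucially, $r_{\hat z}$ and $f_{C,\hat z}$ do not see the scaling $s$, which enters only through $c(z)$.

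For the inclusion of $C(\infty)$ in the claimed union, let $z_n=e^{s_n}\hat z_n$ converge to a horofunction $\xi$. By compactness I may pass to a subsequence with $\hat z_n\to x$ in the usual topology of $\closure D$, with $c(z_n)\to c\in\Raug$, and with $f_{C,\hat z_n}$ converging in the Funk compactification to some $f_\infty$. By Proposition~\ref{prop:reverseFunk} the reverse-Funk compactification of $D$ is just $\closure D$, so $r_{\hat z_n}\to r_x$; hence, by the identity above and continuity (respectively the collapse behaviour) of the bracket, $\xi=[r_x,f_\infty,c]$. Two cases arise. If $x\in D$, then $f_\infty=f_{C,x}$, and a \emph{finite} $c$ would make $[r_x,f_{C,x},c]$ equal to $\distfn_w$ for the interior point $w=e^{s}x$ with $c(w)=c$, contradicting that $\xi$ is a horofunction; thus $c=\pm\infty$, and the bracket collapses to $r_x$ (when $c=+\infty$) or to $f_{C,x}$ (when $c=-\infty$), giving the first two families. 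If $x\in\partial D$, then $f_\infty\in A(x)$ by the very definition of $A(x)$, and $\xi=[r_x,f_\infty,c]$ lies in the third family. The point to stress is that the reverse-Funk limit $r_x$ and the Funk limit $f_\infty$ are automatically attached to the \emph{same} $x$, since both are dictated by $\hat z_n\to x$; this is precisely where the coupled nature of $\thomp$ is handled.

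For the reverse inclusion I would exhibit approximating sequences. For $r_x$ and $f_{C,x}$ with $x\in D$, the rays $z_n=\lambda_n x$ with $\lambda_n\to\infty$, respectively $\lambda_n\to0$, keep $r_{\hat z_n}=r_x$ and $f_{C,\hat z_n}=f_{C,x}$ fixed while sending $c(z_n)$ to $+\infty$, respectively $-\infty$, so that $\distfn_{z_n}\to r_x$, respectively $f_{C,x}$. For $[r_x,f,c]$ with $x\in\partial D$, $f\in A(x)$ and $c\in\Raug$, I would choose $\hat z_n\to x$ with $f_{C,\hat z_n}\to f$ (possible since $f\in A(x)$), which already forces $r_{\hat z_n}\to r_x$, and then select the scalings $s_n$ so that $c(z_n)\to c$ (solving $c(z_n)=c$ exactly when $c$ is finite, and letting $s_n\to\pm\infty$ when $c=\pm\infty$); the identity and continuity of the bracket then give $\distfn_{z_n}\to[r_x,f,c]$. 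The main obstacle, as indicated, is organising the limits so that the single usual-topology limit $x$ controls both boundary components at once; the remaining points — equi-Lipschitz bounds to pass limits uniformly on bounded sets, and the equivalence of the Thompson, Funk and reverse-Funk bounded sets — are routine.
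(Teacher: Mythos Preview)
Your proof is correct and follows essentially the same route as the paper's: split $z_n$ into a cross-sectional part $\hat z_n$ and a scaling, use compactness to assume convergence in the reverse-Funk and Funk compactifications of $D$ together with convergence of the scaling parameter, and read off the three families from the resulting data. The one presentational difference is that you package the computation into the single identity $\psi_z=[r_{\hat z},f_{C,\hat z},c(z)]$ up front, which makes both inclusions fall out uniformly from continuity of the bracket; the paper instead does the case analysis and limit computation inline, choosing representatives with $\gauge{}{b}{y_n}=\gauge{}{y_n}{b}$ rather than your cross-sectional normalisation. Your explicit handling of the ``coupling'' issue---that the Funk and reverse-Funk limits are attached to the same $x$ because both are driven by $\hat z_n\to x$---is a nice clarification of something the paper leaves implicit.
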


\begin{proof}
First, we show that each of the functions in the statement is a horofunction.
Functions of the form $r_x$, with $x\in D$, may be approached by taking
$\lambda x$ as $\lambda>0$ tends to infinity.
Similarly, $f_{C,x}$, with $x\in D$, is approached by $x/\lambda$
as $\lambda$ tends to infinity.

Let $x\in\partial D$, and $f\in A(x)$, and $c\in\Raug$.
So, there exists a sequence $x_n\in\proj(C)$ such that $x_n$ converges to $x$
and $\funk(\cdot,x_n)-\funk(b,x_n)$ converges pointwise to $f$.
For each $n$, we may choose in $C$ a representative $y_n$ of $x_n$ such that
$\gauge{}{b}{y_n} = \gauge{}{y_n}{b}$. It is not difficult to show that
the limit of the sequence $y_n\exp(c_n/2)$ in the horofunction
compactification is $[r_x,f,c]$, for any any sequence $c_n$
in $\R$ converging to $c$.

Now we show that all horofunctions take one of the given forms.
Let $y_n$ be a sequence in $C$ converging to a horofunction $\xi$.
Using compactness, we may assume that $y_n$ converges in both the Funk
and reverse-Funk horofunction compactifications.
If $y_n$ converges projectively to the projective class of some point
$y\in D$, then $\xi$ must equal $r_y$ if $y_n$ heads away from the origin,
or $f_{C,y}$ if $y_n$ heads towards the origin.
Otherwise, $y_n$ converges to a horofunction $f$ in the Funk geometry,
and to a horofunction $r_x$ in the reverse-Funk geometry,
with $x\in\partial D$ and $f\in A(x)$.
By taking a subsequence if necessary, we may
assume that $\rfunk(b,y_n) - \funk(b,y_n)$ converges to a limit $c$
in $\Raug$. One may calculate then that $\tmetric(\cdot,y_n)-\tmetric(b,y_n)$
converges to $[r_x, f, c]$ as $n$ tends to infinity.
\end{proof}

The following lemma parallels Lemma~\ref{lem:unique_split}.

\begin{lemma}
\label{lem:unique_split_on_cone}
Let $f_1$, $f_2$, $g_1$, and $g_2$ be real-valued functions on a cone $C$
satisfying
\begin{align*}
f_1(\lambda x) &= -\log\lambda+f_1(x),
   & f_2(\lambda x) &= \log\lambda+f_2(x), \\
g_1(\lambda x) &= -\log\lambda+g_1(x),
   & g_2(\lambda x) &= \log\lambda+g_2(x),
\end{align*}
for all $\lambda>0$ and $x\in C$.
Assume that $f_1 \vee f_2 = g_1 \vee g_2$ on $C$.
Then, $f_1=g_1$ and $f_2=g_2$.
\end{lemma}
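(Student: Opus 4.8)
The plan is to mimic the proof of Lemma~\ref{lem:unique_split}, exploiting the homogeneity hypotheses to decide, on each ray of the cone, which of the two functions in the maximum dominates. Fix a point $x\in C$ and examine the behaviour of both sides of the identity $f_1\vee f_2 = g_1\vee g_2$ along the ray $\lambda\mapsto\lambda x$, $\lambda>0$.

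First I would push $\lambda$ to infinity. From the homogeneity relations, $f_2(\lambda x)-f_1(\lambda x) = f_2(x)-f_1(x)+2\log\lambda$, which tends to $+\infty$; hence $f_1(\lambda x)\vee f_2(\lambda x)=f_2(\lambda x)=f_2(x)+\log\lambda$ once $\lambda$ is large enough. The same computation with $g_1,g_2$ in place of $f_1,f_2$ shows that $g_1(\lambda x)\vee g_2(\lambda x)=g_2(x)+\log\lambda$ for $\lambda$ large. Equating the two sides of the hypothesis gives $f_2(x)+\log\lambda=g_2(x)+\log\lambda$, so $f_2(x)=g_2(x)$. Since $x$ is arbitrary, $f_2=g_2$.

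For the other equality I would instead let $\lambda\to 0^+$. Now $f_1(\lambda x)-f_2(\lambda x)=f_1(x)-f_2(x)-2\log\lambda\to+\infty$, so the first function dominates: $f_1(\lambda x)\vee f_2(\lambda x)=f_1(x)-\log\lambda$ for $\lambda$ small, and likewise $g_1(\lambda x)\vee g_2(\lambda x)=g_1(x)-\log\lambda$. Comparing yields $f_1(x)=g_1(x)$, and hence $f_1=g_1$.

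There is no serious obstacle here; the only thing to watch is that one genuinely needs both limits (large and small $\lambda$) rather than a single evaluation, since each extracts just one of the two summands, and that the $\log\lambda$ terms cancel cleanly precisely because the matched pairs $f_2,g_2$ and $f_1,g_1$ share the same homogeneity degree. The argument is the multiplicative analogue of Lemma~\ref{lem:unique_split}: there one function had infimum $-\infty$ and could be driven below the other, whereas here moving along a ray sends one term of each maximum to $-\infty$ while its partner goes to $+\infty$, so the maximum is eventually governed by a single, explicitly homogeneous term.
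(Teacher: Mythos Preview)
Your proof is correct and takes essentially the same approach as the paper: both exploit the homogeneity relations to move along a ray $\lambda x$ so that one term in each maximum dominates, and then cancel the $\log\lambda$ contribution. The paper's version is marginally different in that it only forces $f_2(\lambda x)<f_1(\lambda x)$ (not the analogous inequality for the $g_i$), obtaining $f_1(\lambda x)\ge g_1(\lambda x)$ first and then appealing to symmetry for the reverse inequality, whereas you choose $\lambda$ large (resp.\ small) enough for both pairs at once and get the equality in one stroke; this is a cosmetic difference only.
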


\begin{proof}
Let $x\in C$, and choose $\lambda>0$ small enough that
$f_2(\lambda x)<f_1(\lambda x)$. So,
\begin{align*}
f_1(\lambda x)
   = f_1(\lambda x) \vee f_2(\lambda x)
   =  g_1(\lambda x) \vee g_2(\lambda x) \ge g_1(\lambda x).
\end{align*}
Therefore, $f_1(x) \ge g_1(x)$.
The reverse inequality is proved similarly.

The proof that $f_2=g_2$ goes along the same lines.
\end{proof}

In~\cite{AGW-m}, the notion of almost-geodesic was defined slightly
differently. A sequence $(x_k)$ in a metric space $(X,d)$ was said
to be an \emph{$\epsilon$-almost-geodesic} if
\begin{align*}
d(x_0,x_1) + \cdots + d(x_m,x_{m+1}) \le d(x_0,x_{m+1}) + \epsilon,
\qquad\text{for all $m\in\N$.}
\end{align*}
It was shown in~\cite{AGW-m} that every $\epsilon$-almost-geodesic has
a subsequence that may be parameterised to give an almost-geodesic
in the sense of Rieffel. Conversely, given any almost-geodesic
in the sense of Rieffel, it was shown that one may obtain an
$\epsilon$-almost-geodesic by taking a sequence of points along it.

Recall again that a horofunction is a Busemann point if and only if it
can not be written as the minimum of two $1$-Lipschitz functions,
both different from it~\cite[Theorem~6.2]{AGW-m}.
In the context of a distance $d$ that is not symmetric, a function $f$
being $1$-Lipschitz means that $f(x) \le d(x,y) + f(y)$,
for all points $x$ and $y$.

\begin{proposition}
\label{prop:busemann_thompson}
The set of Busemann points of the Thompson geometry is
\begin{align*}
\{r_x \mid x\in D\}
   \union \{f_{C,x} \mid x\in D\}
   \union\{ [r_x,f,c] \mid x\in \partial D, f\in B(x), c\in \Raug \}.
\end{align*}
\end{proposition}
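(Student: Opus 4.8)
The plan is to prove the two inclusions separately, closely following the template of Proposition~\ref{prop:busemann_product} for $\ell_\infty$-products and using the characterisation from~\cite[Theorem~6.2]{AGW-m} that a horofunction is a Busemann point exactly when it cannot be written as the minimum of two $1$-Lipschitz functions both different from it. The horofunctions $r_x$ and $f_{C,x}$ with $x\in D$ are disposed of immediately: the scaling ray $t\mapsto e^{t}x$ is a genuine Thompson geodesic, since $\tmetric(e^{s}x,e^{t}x)=t-s$, and it converges to $r_x$, while $t\mapsto e^{-t}x$ is a geodesic converging to $f_{C,x}$; hence both limits are Busemann. Thus the whole content concerns the family $[r_x,f,c]$ with $x\in\partial D$, and by Proposition~\ref{prop:horo_thompson} it remains only to show that such a function is Busemann if and only if its Funk component $f$ lies in $B(x)$ rather than merely in $A(x)$.

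For the forward direction I would suppose $[r_x,f,c]$ is a Busemann point and deduce that $f$ is a Funk Busemann point. The key preliminary observation is that, since $\tmetric=\max(\funk,\rfunk)$, any function that is $1$-Lipschitz for the Funk or the reverse-Funk quasi-metric is automatically $1$-Lipschitz for $\tmetric$. Treating the case $c\ge 0$ (the cases $c<0$ and $c=\pm\infty$ being similar or immediate), we have $[r_x,f,c]=r_x\vee(f-c)$. If $f$ were not Busemann we could write $f=g\wedge g'$ with $g,g'$ Funk-$1$-Lipschitz and both distinct from $f$; distributivity of $\vee$ over $\wedge$ then gives
\[
[r_x,f,c]=\big(r_x\vee(g-c)\big)\wedge\big(r_x\vee(g'-c)\big),
\]
a minimum of two $\tmetric$-$1$-Lipschitz functions. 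Since $[r_x,f,c]$ is Busemann it must equal one of them, say $r_x\vee(g-c)$, whereupon Lemma~\ref{lem:unique_split_on_cone}, applied with reverse-Funk component $r_x+c^-$ and Funk components $f-c^+$ and $g-c^+$, forces $f=g$, a contradiction. Hence $f\in B(x)$.

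For the converse I would construct, for each $[r_x,f,c]$ with $f\in B(x)$, an almost-geodesic converging to it, mirroring the delayed-factor construction $\gamma(t)=(\gamma_1(t),\gamma_2((t-c)^+))$ of Proposition~\ref{prop:busemann_product}. Starting from a Funk almost-geodesic $\beta\colon\Rplus\to D$ with $\beta(0)=b$ converging to $f$ (which exists since $f\in B(x)$, and which may be chosen to approach $x$ in the usual topology, so that it converges to $r_x$ in the reverse-Funk geometry by Proposition~\ref{prop:reverseFunk}), I would lift $\beta$ to the cone by a scaling schedule $\gamma(t)=\mu(t)\,\beta(s(t))$, choosing $\log\mu$ so that the offset $\rfunk(b,\gamma)-\funk(b,\gamma)$ tends to $c$, and then reparametrise by Thompson arc length; the cases $c=\pm\infty$ degenerate to a pure reverse-Funk or a pure Funk approach. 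One then verifies, via Lemma~\ref{lem:alt_almost_geo} and the homogeneity identities for $\funk$, that $\gamma$ is a Thompson almost-geodesic with $\gamma(t)\to[r_x,f,c]$.

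I expect the verification in this last step to be the main obstacle. In the product setting the two coordinate almost-geodesics live in independent factors, so the estimates decouple cleanly; here the reverse-Funk and Funk directions are coupled through the single path $\beta$ and its approach to the common boundary point $x$. The delicate point is to control the reverse-Funk increments $\rfunk(\gamma(s),\gamma(t))$ along $\beta$, which are not directly governed by its Funk-almost-geodesic property, and to show that after the scaling the correct one of $\funk$ and $\rfunk$ dominates each Thompson increment, so that condition~(\ref{eqn:alt_geo_2}) of Lemma~\ref{lem:alt_almost_geo} holds. I would handle this by exploiting that both endpoints lie near $x$ together with the reverse-Funk Busemann property of $r_x$, and if needed pass to the $\epsilon$-almost-geodesic formulation of~\cite{AGW-m} to keep the bookkeeping uniform.
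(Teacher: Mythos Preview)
Your forward direction is essentially the paper's argument: the same distributivity trick followed by Lemma~\ref{lem:unique_split_on_cone}. (The paper treats $c\le 0$, you treat $c\ge 0$; this is immaterial.)

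The converse direction, however, has a genuine gap, and you have in fact put your finger on it yourself. You start from a Funk almost-geodesic $\beta$ and hope to control the reverse-Funk increments $\rfunk(\gamma(s),\gamma(t))$ along it; but the Funk almost-geodesic property gives you nothing about $\rfunk$, and ``exploiting that both endpoints lie near $x$ together with the reverse-Funk Busemann property of $r_x$'' does not yield the needed quantitative bound---that $r_x$ is Busemann only says \emph{some} path is a reverse-Funk almost-geodesic, not that your $\beta$ is. The paper resolves this by importing an external fact, \cite[Lemma~4.3]{walsh_hilbert}: for $f\in B(x)$ there is a single sequence $x_n\to x$ in $D$ which is simultaneously an $\epsilon$-almost-geodesic for \emph{both} the Funk and the reverse-Funk metrics, with Funk limit $f$ and reverse-Funk limit $r_x$. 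This simultaneous control is exactly what decouples the two halves of the Thompson metric in the way the product-space argument needs.

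With that ingredient, the paper also sidesteps your intended verification of Lemma~\ref{lem:alt_almost_geo}. Instead it rescales $x_n$ to $z_n$ so that $\rfunk(b,z_n)-\funk(b,z_n)=c$, observes from \cite[Lemma~5.2]{walsh_stretch} that $\rfunk(b,z_n)+r_x(z_n)\to 0$ and $\funk(b,z_n)+f(z_n)\to 0$, and combines these to get $\thomp(b,z_n)+[r_x,f,c](z_n)\to 0$. Since $z_n\to[r_x,f,c]$ in the Thompson compactification (proof of Proposition~\ref{prop:horo_thompson}), this shows $H(\xi,\xi)=0$, hence $\xi$ is Busemann. This is cleaner than checking the two conditions of Lemma~\ref{lem:alt_almost_geo} directly.
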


\begin{proof}
Assume that $\xi$ is a Busemann point.
By Proposition~\ref{prop:horo_thompson}, if $\xi$ is not of the form $r_x$
or $f_{C,x}$, with $x\in D$, then it is of the form $[r_x, f, c]$, with
$x\in\partial D$, and $f\in A(x)$, and $c\in\Raug$. Write $f= g\wedge g'$,
where $g$ and $g'$ are real-valued functions on $C$ that are $1$-Lipschitz
with respect to the Funk metric $\funk$.
Observe that this property of $g$ and of $g'$ implies that each of them
is the logarithm of an homogeneous function.
We consider the case when $c\le 0$; the other case is similar. We have
\begin{align*}
\xi &= \Big( (r_x+c) \vee (g \wedge g') \Big) \\
    &= \Big( (r_x+c) \vee g \Big) \wedge \Big( (r_x+c) \vee g' \Big).
\end{align*}
So, $\xi$ is the minimum of two functions on $C$ that are $1$-Lipschitz
with respect to Thompson's metric $\tmetric$. Since $\xi$ is a Busemann
point, it must be equal to one of them, say $(r_x+c) \vee g$.
From Lemma~\ref{lem:unique_split_on_cone}, we then get that $f=g$.
We have shown that if $f$ is written as the minimum of two functions that are
$1$-Lipschitz with respect to the Funk metric, then it equals one of them.
Since $f$ is a Funk horofunction, it follows that $f$ is a Busemann
point of the Funk geometry.

Functions of the form $r_x$ or $f_{C,x}$ with $x\in D$ are clearly Busemann
points of the Thompson geometry since they are the limits, respectively,
of the geodesics $t\mapsto e^t x$ and $t\mapsto e^{-t} x$.

Let $x\in\partial D$, and $f\in B(x)$, and $c\in\Raug$. Choose $\epsilon>0$.
We consider only the case where $c$ is finite; the case where it is infinite
is similar and easier.

Since $f$ is in $B(x)$, there exists,
by the proof of~\cite[Lemma~4.3]{walsh_hilbert},
a sequence $x_n$ in $D$ that converges to $f$ and to $r_x$, respectively,
in the Funk and reverse-Funk geometries, and furthermore is an
$\epsilon$-almost-geodesic with respect to both of these metrics.
So, as discussed above, by passing to a subsequence if necessary and
parametrising in the right way, we obtain an almost-geodesic converging
to $r_x$ in the reverse-Funk geometry.
Applying~\cite[Lemma~5.2]{walsh_stretch}, we get that
\begin{align}
\label{eqn:revLimit}
\rfunk(b,x_n) + r_x(x_n) \to 0.
\end{align}
In a similar fashion, one may show that
\begin{align}
\label{eqn:funkLimit}
\funk(b,x_n) + f(x_n) \to 0.
\end{align}
For each $n\in\N$, choose $z_n$ in the same projective class as $x_n$ such
that $\gauge{}{z_n}{b} = \gauge{}{b}{z_n}e^c$. So, for all $n\in\N$,
\begin{align*}
%\thomp(b,z_n) = \rfunk(b,z_n) - c^- = \funk(b,z_n) + c^+,
\thomp(b,z_n)
   &= \rfunk(b,z_n) \vee \funk(b,z_n) \\
   &= \rfunk(b,z_n) - c^- \\
   &= \funk(b,z_n) + c^+.
\end{align*}
Observe that both~(\ref{eqn:revLimit}) and~(\ref{eqn:funkLimit}) also hold
with $z_n$ in place of $x_n$ since, for each $n\in\N$, $x_n$ and $z_n$
are related by a positive
scalar. Combining all this, we have
\begin{align*}
\thomp(b,z_n) + [r_x, f, c](z_n)
   &= \big( \thomp(b,z_n) + r_x(z_n) + c^- \big)
           \vee \big( \thomp(b,z_n) + f(z_n) - c^+ \big) \\
   &= \big( \rfunk(b,z_n) + r_x(z_n) \big)
           \vee \big( \funk(b,z_n) + f(z_n) \big) \\
   &\to 0,
\end{align*}
as $n$ tends to infinity.

We have seen in the proof of Proposition~\ref{prop:horo_thompson}
that $z_n$ converges to $\xi:=[r_x, f, c]$ in the Thompson geometry.
We deduce that $H(\xi,\xi)=0$, and so $[r_x, f, c]$ is a Busemann point.
\end{proof}

Recall that we have extended the definition of $H$ by setting
$H(\xi+u,\eta +v):=H(\xi,\eta)+v-u$ for all Busemann points $\xi$ and $\eta$,
and $u,v\in[-\infty,0]$. We are also using the convention that $-\infty$
is absorbing for addition.

\begin{proposition}
\label{prop:detour_thompson}
The detour distance between two Busemann points $\xi$ and $\eta$ in the
horofunction boundary of the Thompson metric is $\delta(\xi,\eta)=\hil(x,y)$
if $\xi=r_x$ and $\eta=r_y$, with $x,y\in D$.
The same formula holds when $\xi=f_{C,x}$ and $\eta=f_{C,y}$,
with $x,y\in D$.
If $\xi=[r_x,f,c]$ and $\eta=[r_{x'},f',c']$, with $x,x'\in \partial D$,
$f\in B(x)$, $f'\in B(x')$, and $c,c'\in \Raug$, then
\begin{align*}
\delta(\xi,\eta)=
   \max\Big( H(\bar r_x,\bar r_{x'}), H(\bar f, \bar f') \Big)
   + \max\Big( H(\bar r_{x'},\bar r_{x}), H(\bar f', \bar f) \Big),
\end{align*}
where
\begin{align*}
\bar r_x &:= r_x + c^- , &\bar f &:= f - c^+ , \\
\bar r_{x'} &:= r_{x'} + c'^- , &\bar f' &:= f' - c'^+ .
\end{align*}
In all other cases, $\delta(\xi,\eta)=\infty$.
\end{proposition}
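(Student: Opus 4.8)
The plan is to compute the detour cost $H(\xi,\eta)$ in each of the cases listed in the statement, since the detour metric is obtained by symmetrising $H$ via $\delta(\xi,\eta)=H(\xi,\eta)+H(\eta,\xi)$. Throughout I would lean on Proposition~\ref{prop:detour}, which identifies $H(\xi,\eta)$ with $\sup_{z\in C}\big(\eta(z)-\xi(z)\big)$ whenever $\xi$ is a Busemann point. The three ``diagonal'' cases---$\xi,\eta$ both of reverse-Funk type ($r_x,r_y$), both of Funk type ($f_{C,x},f_{C,y}$), or both of the general mixed type $[r_x,f,c]$---should be handled separately, and all the remaining ``off-diagonal'' pairings must be shown to give $\delta=\infty$.

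First I would dispatch the two pure cases. For $\xi=r_x$ and $\eta=r_y$ with $x,y\in D$, the functions $r_x,r_y$ are exactly the reverse-Funk horofunctions extended to the cone, so $H(r_x,r_y)=\sup_z(r_y(z)-r_x(z))$ coincides with the reverse-Funk detour cost; symmetrising recovers the reverse-Funk detour metric, which for two points in the relative interior of the same extreme set (here all of $D$, a single part) equals the Hilbert distance $\hil(x,y)$. The Funk case $\xi=f_{C,x},\eta=f_{C,y}$ is handled symmetrically using the Funk detour metric. The main work is the mixed case. Here I would write $\xi=\bar r_x\vee\bar f$ and $\eta=\bar r_{x'}\vee\bar f'$ with the shifted pieces $\bar r_x:=r_x+c^-$, $\bar f:=f-c^+$, etc., exactly as in the statement. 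This is structurally identical to Proposition~\ref{prop:detour_product} for $\ell_\infty$-products, and I would reuse that argument almost verbatim: the upper bound $H(\xi,\eta)\le\max\big(H(\bar r_x,\bar r_{x'}),H(\bar f,\bar f')\big)$ follows by substituting the componentwise Lipschitz inequalities $\bar r_{x'}\le H(\bar r_x,\bar r_{x'})+\bar r_x$ and $\bar f'\le H(\bar f,\bar f')+\bar f$ into $\sup_z\big((\bar r_{x'}\vee\bar f')-(\bar r_x\vee\bar f)\big)$, while the matching lower bound uses that $\inf\bar r_x=-\infty$ and $\inf\bar f=-\infty$ (by Proposition~\ref{prop:inf_minus_infinity}, since the Funk and reverse-Funk geometries are proper geodesic) to decouple the two suprema. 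Symmetrising then yields the displayed formula.

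For the off-diagonal pairings I would argue that the detour metric is infinite. The cleanest route is to observe that each type of Busemann point carries a distinguishing homogeneity or limiting behaviour: $r_x$ is the logarithm of an anti-homogeneous function, $f_{C,y}$ the logarithm of a homogeneous one, and the mixed points $[r_x,f,c]$ combine both pieces, with $x\in\partial D$ rather than $x\in D$. Concretely, to show $H(\xi,\eta)=\infty$ when $\xi$ and $\eta$ are of incompatible types, I would exhibit a direction in $C$ along which $\eta-\xi$ is unbounded above---for instance scaling by $\lambda x$ as $\lambda\to\infty$ exploits the opposite homogeneity of an $r$-type versus an $f$-type function, and pushing toward a boundary point $x\in\partial D$ separates the pure points $r_y,f_{C,y}$ (with $y\in D$) from the mixed points. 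In each such pairing at least one of $H(\xi,\eta)$, $H(\eta,\xi)$ is $+\infty$, so $\delta=\infty$.

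The step I expect to be the main obstacle is the lower-bound direction of the mixed case, specifically justifying that one may drive $\bar r_x(z)$ (or $\bar f(z)$) to $-\infty$ while holding the other component essentially fixed, so that $\sup_z\big(\bar r_{x'}(z)-(\bar r_x\vee\bar f)(z)\big)$ genuinely attains $H(\bar r_x,\bar r_{x'})$. Unlike the product setting, where $X_1$ and $X_2$ have independent coordinates, here both $\bar r_x$ and $\bar f$ are functions of the \emph{same} cone variable $z\in C$, so one cannot move them separately by fiat. The key will be to use that the reverse-Funk and Funk horofunctions respond oppositely to radial scaling---$r(\lambda z)=r(z)-\log\lambda$ while $f(\lambda z)=f(z)+\log\lambda$---so that replacing $z$ by $\lambda z$ shifts $\bar r_x$ and $\bar f$ in opposite directions and lets one make $\bar f(z)$ arbitrarily negative relative to $\bar r_x(z)$; combined with the infimum statement of Proposition~\ref{prop:inf_minus_infinity} applied within the reverse-Funk geometry, this should suffice to decouple the two suprema and close the argument.
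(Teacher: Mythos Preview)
Your approach is essentially the paper's: for the mixed case you correctly identify that the radial scaling $z\mapsto\lambda z$, under which $(\bar r_{x'}-\bar r_x)(\lambda z)$ is constant while $(\bar r_{x'}-\bar f)(\lambda z)\to+\infty$ as $\lambda\to 0^+$, is exactly the device that decouples the two suprema in the lower bound. Note that this scaling argument alone suffices---your appeal to Proposition~\ref{prop:inf_minus_infinity} is unnecessary (and that proposition is stated for symmetric metrics, whereas the Funk and reverse-Funk geometries are only quasi-metrics); the paper's proof also leaves the off-diagonal cases implicit, so your sketch there goes slightly beyond it.
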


\begin{proof}
For $x$ and $y$ in $D$, we have, by Proposition~\ref{prop:detour},
\begin{align*}
H(r_x, r_y)
   &= \sup_{z\in C} \big( r_y(z) - r_x(z) \big) \\
   &= \sup_{z\in C} \big(
         \rfunk(z,y) - \rfunk(b,y) - \rfunk(z,x) + \rfunk(b,x)
         \big) \\
   &= \rfunk(x,y) - \rfunk(b,y) + \rfunk(b,x).
\end{align*}
Here we have used the triangle inequality to get an upper bound on the
supremum, and taken $z=x$ to get a lower bound.
Symmetrising, we get that $\detourmet(r_x,r_y)=\hil(x,y)$.

We use similar reasoning in the case where the two Busemann points are
of the form $f_{C,x}$ and $f_{C,y}$, with $x,y\in D$.

Now let $\xi:=[r_x, f, c]$ and $\eta:=[r_{x'}, f', c']$,
with $x,x'\in \partial D$,
$f\in B(x)$, $f'\in B(x')$, and $c,c'\in \Raug$.
By Proposition~\ref{prop:detour},
\begin{align*}
r_{x'}(\cdot) &\le r_x(\cdot) + H(r_{x}, r_{x'})
\qquad\text{and} \\
f'(\cdot) &\le f(\cdot) + H(f, f').
\end{align*}
If either $H(\bar r_x, \bar r_{x'})$ or $H(\bar f, \bar f')$ equals $+\infty$,
then $H(\xi,\eta)$ is trivially less than or equal to the maximum of the two.
So, assume that both quantities are less than $+\infty$. This rules out the
possibility that $c'>c=-\infty$, and the possibility that $c'<c=+\infty$. So,
\begin{align*}
\bar r_{x'}(\cdot) &\le \bar r_x(\cdot) + H(\bar r_{x}, \bar r_{x'})
\qquad\text{and} \\
\bar f'(\cdot) &\le \bar f(\cdot) + H(\bar f, \bar f').
\end{align*}

Therefore, 
\begin{align}
H(\xi,\eta)
   &= \sup_{z\in C} \Big( [r_{x'}, f', c'](z) - [r_{x}, f, c](z)
         \Big) \nonumber \\
   &= \sup_{z\in C} \Big(
         \big( \bar r_{x'}(z) \vee \bar f'(z) \big)
             - \big( \bar r_{x}(z) \vee \bar f(z) \big)
         \Big) \label{eqn:Hexpression} \\
   &\le \sup_{z\in C} \Big(
         \big( \bar r_{x}(z) + H(\bar r_{x}, \bar r_{x'})
            \vee \bar f(z) + H(\bar f, \bar f') \big)
             - \big( \bar r_{x}(z) \vee \bar f(z) \big)
         \Big) \nonumber \\
   &\le H(\bar r_{x}, \bar r_{x'}) \vee H(\bar f, \bar f'). \nonumber
\end{align}

We now wish to show that $H(\xi,\eta) \ge H(\bar r_{x}, \bar r_{x'})$.
This is trivial if $H(\bar r_{x}, \bar r_{x'})=-\infty$, so we assume
the contrary, which is equivalent to assuming that $\bar r_{x'}$ is finite
everywhere. Let $z\in C$. From~(\ref{eqn:Hexpression}), we have,
for all $\lambda>0$,
\begin{align*}
H(\xi,\eta)
   &\ge \bar r_{x'}(\lambda z)
             - \big( \bar r_{x}(\lambda z) \vee \bar f(\lambda z) \big) \\
   &= (\bar r_{x'} - \bar r_{x})(\lambda z)
          \wedge (\bar r_{x'} - \bar f)(\lambda z).
\end{align*}
Recall that $\bar r_{x}(\lambda z) = \bar r_{x}(z) - \log\lambda$
and $\bar f(\lambda z) = \bar f(z) + \log\lambda$, for all $\lambda>0$.
So, $(\bar r_{x'} - \bar r_{x})(\lambda z)$ is independent of $\lambda$.
Moreover, by choosing $\lambda$ small enough, we may make
$(\bar r_{x'} - \bar f)(\lambda z)$ as large as we wish.
We conclude that $H(\xi,\eta)\ge (\bar r_{x'} - \bar r_{x})(z)$.
Taking the supremum over $z\in C$ gives us what we wish.

The proof that $H(\xi,\eta)\ge H(\bar f, \bar f')$ is similar.

The result now follows on symmetrising.
\end{proof}

\begin{corollary}
\label{cor:singletons_thompson}
The singletons of the Thompson geometry are the Busemann points of the
form $r_x$ with $x$ an extreme point of $\closure D$, or of the form
$\log(\dotprod{y}{\cdot}/\dotprod{y}{b})$,
with $y$ an extremal generator of $C^*$.
\end{corollary}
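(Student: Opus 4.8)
The plan is to run through the list of Thompson Busemann points provided by Proposition~\ref{prop:busemann_thompson} and, for each one, use the detour-metric formula of Proposition~\ref{prop:detour_thompson} to decide whether its part in the horofunction boundary is a single point. Recall that a Busemann point $\xi$ is a singleton exactly when $\delta(\xi,\eta)=\infty$ for every other Busemann point $\eta$. There are three families to examine: the functions $r_x$ with $x$ in the open cross-section $D$, the functions $f_{C,x}$ with $x\in D$, and the functions $[r_x,f,c]$ with $x\in\partial D$, $f\in B(x)$, and $c\in\Raug$.

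First I would dispose of the points that are not singletons. For the family $r_x$ with $x\in D$, Proposition~\ref{prop:detour_thompson} gives $\delta(r_x,r_y)=\hil(x,y)<\infty$ for all $y\in D$, so this whole copy of $(D,\hil)$ lies in a single part; the same applies to the family $f_{C,x}$. Since $D$ has more than one point, none of these is a singleton. For a point $[r_x,f,c]$ with $c$ finite, I would exhibit, exactly as in the proof of Corollary~\ref{cor:product_singletons}, the nearby distinct Busemann point $[r_x,f,c+\epsilon]$: substituting into Proposition~\ref{prop:detour_thompson} yields $\delta([r_x,f,c],[r_x,f,c+\epsilon])=|\epsilon|$, which is finite and nonzero, so these points are not singletons either.

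This leaves the two extreme cases $c=\pm\infty$, where $[r_x,f,+\infty]=r_x$ (a pure reverse-Funk point with $x\in\partial D$) and $[r_x,f,-\infty]=f$ (a pure Funk point). For $\xi=r_x$ with $x\in\partial D$, I would put $c=+\infty$ into the detour formula: the terms $\bar f,\bar f'$ become $-\infty$, so any partner $\eta=[r_{x'},f',c']$ with $c'<+\infty$ forces $H(\bar f,\bar f')=+\infty$ and hence $\delta(\xi,\eta)=\infty$, while for $c'=+\infty$ the Funk terms drop out and the formula collapses to $\delta(r_x,r_{x'})=\delta_R(x,x')$, the reverse-Funk detour metric. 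Partners of types $r_y$ or $f_{C,y}$ with $y\in D$ are covered by the ``all other cases'' clause of Proposition~\ref{prop:detour_thompson} and give $\delta=\infty$. Invoking the known description of $\delta_R$ (finite precisely when $x,x'$ lie in the relative interior of a common extreme set of $\closure D$), I conclude that $r_x$ is a singleton if and only if $x$ is an extreme point of $\closure D$. The case $c=-\infty$ is handled symmetrically: $\xi=f$ can only have a finite-distance partner of the form $f'$, with $\delta(f,f')=\delta_F(f,f')$, so $f$ is a singleton if and only if it is a singleton of the Funk geometry; by Proposition~\ref{prop:funk_singletons} these are exactly the functions $\log(\dotprod{y}{\cdot}/\dotprod{y}{b})$ with $y$ an extremal generator of $C^*$.

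The main obstacle I anticipate is bookkeeping rather than anything deep: one must handle the $[-\infty,+\infty]$-valued arithmetic in Proposition~\ref{prop:detour_thompson} carefully—in particular the convention that $-\infty$ is absorbing for addition—in order to verify rigorously both that the cross-type detour distances are infinite (a singleton reverse-Funk point cannot be finitely far from any mixed or Funk-type point) and that in the two pure cases the formula genuinely reduces to $\delta_R$ and $\delta_F$. Once these reductions are justified, the characterisation follows by combining them with the already-established descriptions of the singletons of the reverse-Funk and Funk geometries.
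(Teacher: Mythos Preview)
Your proposal is correct and follows essentially the same route as the paper: dispose of the interior families $r_x$, $f_{C,x}$ (same part, via $\delta=\hil$), kill finite $c$ by the perturbation $c\mapsto c+\epsilon$, and for $c=\pm\infty$ use Proposition~\ref{prop:detour_thompson} and the absorbing-$(-\infty)$ arithmetic to force any finite-distance partner to have the same $c$, reducing to the reverse-Funk or Funk detour metric and hence to the known singleton characterisations there. The only cosmetic slip is the phrase ``the terms $\bar f,\bar f'$ become $-\infty$'': only $\bar f$ is forced to $-\infty$ by $c=+\infty$, and it is precisely the case $c'<+\infty$ (so $\bar f'\not\equiv-\infty$) that makes $H(\bar f,\bar f')=+\infty$; your subsequent case split handles this correctly.
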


\begin{proof}

Since the Busemann points of the form $r_x$, with $x\in D$, all lie in the
same part, none of them are singletons. Similarly, no Busemann point of the
form $f_{C,x}$, with $x\in D$, is a singleton.

Consider now a Busemann point $\xi:=[r_x,f,c]$,
with $x\in\partial D$, and $f\in B(x)$, and $c\in\Raug$.

If $c$ is finite, we define the Busemann point
$\xi^\epsilon := [r_x,f,c+\epsilon]$, for each $\epsilon\in\R$.
A simple calculation using Proposition~\ref{prop:detour_thompson} then gives
$\delta(\xi,\xi^\epsilon) = |\epsilon|$, for all
$\epsilon\in\R$. So $\xi^\epsilon$ is distinct from $\xi$,
but in the same part as it, for all $\epsilon\in\R\backslash\{0\}$,
and hence $\xi$ is not a singleton.

In the case where $c=\infty$ and $x$ is not an extreme point of $\closure D$,
we have that $x$ is in the relative interior of some extreme set of
$\closure D$ that contains another point $x'$ distinct from $x$.
One can then show using Proposition~\ref{prop:detour_thompson}
that $\xi':=[r_{x'},f',\infty]$ is distinct from $\xi$ but in the same
part, where $f'$ is any function.

The case where $c=-\infty$ and $f$ is not a singleton of the Funk geometry
is similar.

Now let $x$ be an extreme point of $\closure D$, so that $r_x$ is a singleton
of the reverse-Funk geometry.
Write $\xi:=r_x=[r_x,f,\infty]$, for any Funk horofunction $f$ in $B(x)$.
Let $\xi':=[r_{x'},f',c]$ be a Busemann point of the Thompson geometry
lying in the same part of the horofunction boundary as $\xi$.
Here, of course, $x'\in\partial D$, and $f'\in B(x')$, and $c\in\Raug$.
Since $\delta(\xi,\xi')$ is finite,
looking at Proposition~\ref{prop:detour_thompson}, we see that
$H(f,f')+\infty-c^+ < \infty$, and hence $c=\infty$.
Using this and the same proposition again, we get
$\infty>\delta(\xi,\xi')= \delta(r_x,r_{x'})$.
Since $r_x$ was assumed to be a singleton, we conclude that
$x'=x$, and therefore $\xi'=\xi$.
We have thus shown that $\xi$ is a singleton.

The proof is similar in the case of Busemann points of the form
$\log(\dotprod{y}{\cdot}/\dotprod{y}{b})$,
with $y$ an extremal generator of $C^*$.
\end{proof}

\section{Isometries of the Thompson metric}
\label{sec:thompson_isometries}

\newcommand\lin{\operatorname{lin}}
\newcommand\projection{P}

Let $C_1$, $C_2$, and $C$ be non-empty convex cones in the linear space $\linspace$.
We say that $C$ is the direct product of $C_1$ and $C_2$ if $C=C_1+C_2$
and $\lin C_1 \intersection \lin C_2 = \{0\}$.
Here $\lin$ denotes the linear span of a set.
In this case we write $C=C_1\directproduct C_2$.
If $C=C_1\directproduct C_2$, then $\lin C$
is the (linear space) direct sum of $\lin C_1$ and $\lin C_2$. Denoting by
$\projection_1$ and $\projection_2$ the corresponding projections, we have
$\projection_1(C)= C_1$ and $\projection_2(C)= C_2$.
We note that $C$ is relatively open if and only if both $C_1$ and $C_2$ are.

Let $C= C_1 \directproduct C_2$ be a product cone.
We write the Thompson metrics on $C_1$ and $C_2$ as $\thomp^1$
and $\thomp^2$, respectively.
Let $x_1,y_1\in C_1$ and $x_2,y_2\in C_2$.
One may easily verify that $x_1 + x_2 \le_C y_1 + y_2$ if and only if both
$x_1 \le_{C_1} y_1$ and $x_2 \le_{C_2} y_2$.
It follows that
\begin{align*}
\gauge{C}{x_1 + x_2}{y_1 + y_2}
    = \gauge{C_1}{x_1}{y_1} \vee \gauge{C_2}{x_2}{y_2},
\end{align*}
and hence that the Thompson metric on $C$ is
\begin{align}
\label{eqn:product_thompson_metric}
\thomp(x_1 + x_2, y_1 + y_2) = \thomp^1(x_1,y_1) \vee \thomp^2(x_2,y_2).
\end{align}

Assume that $C$ is proper, open, and convex.
Suppose that $C_2$ admits a gauge-reversing
bijection $\phi_2$, for example, one may think of $(0,\infty)$ with the
map $x\mapsto 1/x$, for $x\in(0,\infty)$.
Then, as pointed out in~\cite[Proposition 10.1]{noll_schaffer_orders_gauge},
there exists a Thompson metric isometry of $C$ that is neither
gauge-preserving nor gauge-reversing, namely the map $\phi\colon C\to C$
defined by
\begin{align*}
\phi(x_1+x_2) := x_1 + \phi_2(x_2),
\qquad\text{for all $x_1\in C_1$ and $x_2\in C_2$}.
\end{align*}
Indeed, we are applying here the identity map to the first component and
$\phi_2$ to the second. These maps are Thompson isometries on $C_1$ and $C_2$,
respectively, and so~(\ref{eqn:product_thompson_metric}) gives that $\phi$
is an isometry on $C$. However, $\phi$ is clearly neither homogeneous
nor anti-homogeneous, which implies by
Propositions~\ref{prop:gauge_preserving} and~\ref{prop:gauge_reversing}
that $\phi$ is neither gauge-preserving nor gauge-reversing.

We see from the following theorem that this is the only way in which such
isometries arise.

\begin{mytheorem}
Let $C$ and $C'$ be proper open convex cones, and
let $\phi\colon C\to C'$ be a surjective isometry of the Thompson metric.
Then, there exist decompositions $C=C_1\directproduct C_2$ and
$C'=C'_1\directproduct C'_2$ such that $\phi$ takes the form
$\phi(x_1+x_2)= (\phi_1(x_1)+\phi_2(x_2))$, where $\phi_1$ is a
gauge-preserving map from $C_1$ to $C'_1$, and $\phi_2$ is a gauge-reversing
map from $C_2$ to $C'_2$.
\end{mytheorem}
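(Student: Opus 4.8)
The plan is to run the whole argument on the horofunction boundary, using the description of the Thompson Busemann points, their detour metric, and their singletons obtained in Section~\ref{sec:horofunction_thompson}, together with the product-space results of Section~\ref{sec:product}. First I would observe that, being a surjective isometry, $\phi$ extends to a homeomorphism between the horofunction compactifications of $(C,\thomp)$ and $(C',\thomp')$ that preserves the detour metric; hence it carries Busemann points to Busemann points, parts to parts, and in particular singletons to singletons. By Corollary~\ref{cor:singletons_thompson} the singletons are of exactly two kinds: the reverse-Funk singletons $r_e$, in bijection with the extremal generators $e$ of $C$, and the Funk singletons $\log(\dotprod{y}{\cdot}/\dotprod{y}{b})$, in bijection with the extremal generators $y$ of $C^*$. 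Thus for each extremal generator $e$ of $C$ the singleton $\phi\cdot r_e$ is either a reverse-Funk singleton of $C'$ or a Funk singleton of $C'$; I would call $e$ \emph{preserved} in the former case and \emph{reversed} in the latter, and let $C_1$ and $C_2$ be the subcones generated by the preserved and the reversed extremal generators, respectively, with the analogous splitting of $C'$ coming from the images.

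The crux, and the step I expect to be the main obstacle, is to show that $C=C_1\directproduct C_2$ really is a direct product and that $\phi$ respects it. Here I would argue by reducibility, reading off the comparability and face-incidence data of the cone from the detour-cost formula of Proposition~\ref{prop:detour_thompson}: the finiteness of $\delta$, together with the behaviour of the balance parameter $c$, records precisely when two singletons lie over a common extreme set. The goal is to match the characterisation of Corollary~\ref{cor:product_singletons}, namely that the singletons fall into two families interacting only through the $\ell_\infty$-combination~(\ref{eqn:product_thompson_metric}), which is the signature of a product cone. Concretely I would assemble a linear correspondence on the preserved generators (as in the Noll--Sch\"affer theorem, Theorem~\ref{thm:linear}) and a symmetric pairing on the reversed generators (as in the domain-of-positivity construction of Section~\ref{sec:self_duality}); showing that these two families have complementary and mutually orthogonal linear spans $\lin C_1$ and $\lin C_2$ is what forces the splitting, and is consistent with $C_2$ and $C_2'$ being symmetric, as they must be by Corollary~\ref{cor:two_cones} once a gauge-reversing map between them is produced.

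With the decomposition $C=C_1\directproduct C_2$ and $C'=C_1'\directproduct C_2'$ established, and $\phi$ seen to map $C_i$ onto $C_i'$ compatibly with~(\ref{eqn:product_thompson_metric}), I would finish exactly as at the end of Lemma~\ref{lem:def_gaugereversing}. On $C_1$ the restriction $\phi_1$ preserves the type of every Busemann point, hence pushes reverse-Funk Busemann points to reverse-Funk ones and Funk Busemann points to Funk ones; substituting into the sup-formula of Proposition~\ref{prop:sup_formula} gives $\funk(x,y)=\funk'(\phi_1 x,\phi_1 y)$, so $\phi_1$ is gauge-preserving and therefore linear. On $C_2$ the restriction $\phi_2$ reverses type, which by Proposition~\ref{prop:gauge_reversing_swaps_F_RF} is what a gauge-reversing map does; the same sup-formula computation then yields $\rfunk(x,y)=\funk'(\phi_2 x,\phi_2 y)$, and since $\rfunk(x,y)=\funk(y,x)$ this reads $\gauge{C_2'}{\phi_2 x}{\phi_2 y}=\gauge{C_2}{y}{x}$, so $\phi_2$ is gauge-reversing.

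The one remaining point, that type-preservation on $C_1$ and type-reversal on $C_2$ propagate from the singletons to \emph{all} Busemann points on each factor (which is what the sup-formula step needs), I would fold into the reducibility argument of the second paragraph: on an irreducible factor the action of $\phi$ on the $c$-parameter is forced to be uniform, so a single preserved (respectively reversed) generator determines the behaviour of the whole factor, and the two global cases recombine into the claimed decomposition $\phi(x_1+x_2)=\phi_1(x_1)+\phi_2(x_2)$.
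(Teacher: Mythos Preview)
Your overall strategy---extend $\phi$ to the horofunction boundary, sort the singletons into two classes according to whether $\phi$ preserves or reverses type, and build the product decomposition from this dichotomy---is exactly the paper's. But the middle paragraph, where you promise to extract the decomposition $C=C_1\directproduct C_2$ from ``reducibility'' and ``comparability and face-incidence data'' read off the detour-cost formula, is not a proof; it is a hope. This is precisely the step where the paper does something concrete that you are missing.

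Two substantive differences are worth noting. First, the paper partitions the \emph{Funk} singletons (extremal generators of $C^*$, hence linear functionals on $C$) rather than the reverse-Funk singletons (extremal generators of $C$) as you do. Linearity is the point: writing $F=F_1\cup F_2$ according to whether $f\circ\phi^{-1}$ lands in $F'$ or $R'$, one defines $C_1$ and $C_2$ directly as exposed faces, namely $C_i=\relint\{z\in\closure C:f(z)=0\text{ for all }f\in F_{3-i}\}$. Your cones ``generated by preserved/reversed extremal rays'' need not be faces, and showing $\lin C_1\cap\lin C_2=\{0\}$ from that description looks genuinely hard. Second, the paper produces the projections by an explicit limit that exploits the opposite homogeneities of Funk and reverse-Funk horofunctions: setting $P_1(z,\alpha):=\alpha^{-1}\phi^{-1}(\alpha\phi(z))$ and $P_2(z,\alpha):=\alpha^{-1}\phi^{-1}(\alpha^{-1}\phi(z))$, one checks that $f(P_1(z,\alpha))$ equals $f(z)$ for $f\in F_1$ and $\alpha^{-2}f(z)$ for $f\in F_2$, so the limits $P_i(z):=\lim_{\alpha\to\infty}P_i(z,\alpha)$ exist, land in $\closure C_i$, and satisfy $z=P_1(z)+P_2(z)$. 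This single computation replaces your entire second paragraph and simultaneously shows that $\phi$ respects the splitting.

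The endgame also differs. You plan to push type-preservation from singletons to all Busemann points and then invoke the sup-formula; you yourself flag the propagation step as unfinished. The paper avoids this entirely: from the projection formula one reads off directly that $P_2'(\phi(\lambda x))=\lambda^{-1}P_2'(\phi(x))$, so $\phi_2$ is anti-homogeneous, and since it is already a Thompson isometry, Proposition~\ref{prop:gauge_reversing} gives gauge-reversing immediately (and likewise $\phi_1$ is homogeneous, hence gauge-preserving by Proposition~\ref{prop:gauge_preserving}). This is both shorter and sidesteps the issue you left open.
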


We will prove this theorem by considering the action of $\phi$ on the
horofunction boundary, more specifically, its action on the singletons.
Choose base-points $b$ and $b'$ in $C$ and $C'$, respectively, such that
$\phi(b)=b'$.

Recall that a singleton is a Busemann point that lies in a part consisting
of a single point, in other words, that is an infinite distance to every
other Busemann point in the detour metric.
We have seen in Corollary~\ref{cor:singletons_thompson}
that each singleton of the Thompson geometry is either a singleton of the
Funk geometry or a singleton of the reverse-Funk geometry, on the same cone.

Let $S$ be the set of functions of the form $\exp\after g$, where $g$ is a
singleton of the Thompson geometry on $C$. So each element of $S$ is the
restriction to $C$ of a function either of the form
$\dotprod{y}{\cdot}/\dotprod{y}{b}$, with $y$ an extremal generator of
$C^*$, or of the form $\gauge{}{x}{\cdot}/\gauge{}{x}{b}$, with
$x$ an extremal generator of $C$.
Denote by $F$ those of the former kind, and by $R$ those of the latter.
On $C'$, we define the sets of functions $S'$, $F'$, and $R'$ in the same way.

Since the action of $\phi$ on the horofunction boundary preserves the
detour metric, it maps parts to parts, and hence maps singletons to
singletons. Therefore, a function $f$ is in $S$ if and only if
$f\after\phi^{-1}$, which is its image under $\phi$,  is in $S'$.

Observe that each element of $F$ is a linear functional on $\linspace$, and together
they span the dual space $\linspace^*$. The idea of the proof is to examine which
of these linear functionals get mapped by $\phi$ to a linear functional,
and which get mapped to something non-linear.

Let $F_1$ denote the elements of $F$ that are mapped to elements of $F'$,
and $F_2$ denote those that are mapped to elements of $R'$. Similarly,
define an element $f'$ of $F'$ to be in either $F_1'$ or $F_2'$ depending
on whether its image $f'\after \phi$ under $\phi^{-1}$ is in $F$ or $R$.
So we have the picture given in Figure~\ref{fig:singleton_action}.

\begin{figure}
%\fbox{
\input{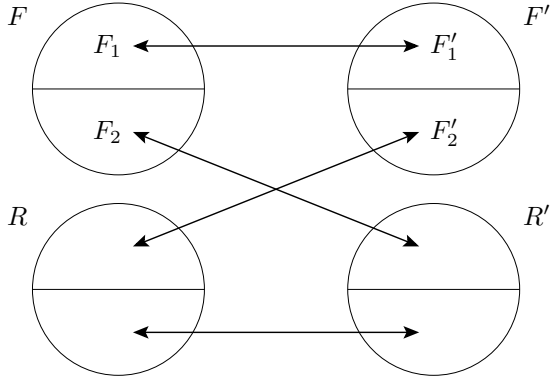}
%}
\caption{The action of an isometry on the singletons in the boundary.}
\label{fig:singleton_action}
\end{figure}

Define
\begin{align*}
C_1 &:= \relint \Big\{
           z\in\closure C \mid \text{$f(z)=0$ for all $f\in F_2$}
       \Big\}
\qquad\text{and} \\
C_2 &:= \relint \Big\{
           z\in\closure C \mid \text{$f(z)=0$ for all $f\in F_1$}
       \Big\}.
\end{align*}
Observe that $\closure C_1$ and $\closure C_2$ are exposed faces of
$\closure C$. We define $C'_1$ and $C'_2$ in an analogous way.

\begin{lemma}
\label{lem:funks_determine}
If $x$ and $y$ are in $C_1$, and $f(x)=f(y)$ for all $f\in F_1$, then $x=y$.
Similarly, if $x$ and $y$ are in $C_2$, and $f(x)=f(y)$ for all $f\in F_2$,
then $x=y$.
\end{lemma}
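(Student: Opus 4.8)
The plan is to reduce the whole statement to one linear-algebra fact: the functionals in $F$ span the dual space $\linspace^*$. This holds because $C$ is proper, so its dual cone $C^*$ is pointed and full-dimensional, hence spanned by its extremal generators; and by Proposition~\ref{prop:funk_singletons} the elements of $F$ are exactly the (normalised) functionals $\dotprod{y}{\cdot}/\dotprod{y}{b}$ attached to these extremal generators. (This is the same spanning fact already used in the proof of Lemma~\ref{lem:dop}.) I would also record at the outset that, since the boundary action of $\phi$ carries singletons to singletons and every singleton of $C'$ lies in $F'$ or in $R'$, each element of $F$ goes either into $F'$ or into $R'$; that is, $F=F_1\cup F_2$.

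For the first assertion, suppose $x,y\in C_1$ satisfy $f(x)=f(y)$ for all $f\in F_1$. By linearity this gives $f(x-y)=0$ for every $f\in F_1$. On the other hand, $C_1$ is by definition the relative interior of $\{z\in\closure C \mid f(z)=0 \text{ for all } f\in F_2\}$, so both $x$ and $y$ lie in that set; hence $f(x)=f(y)=0$, and therefore $f(x-y)=0$, for every $f\in F_2$. Combining the two families and using $F=F_1\cup F_2$, I get $f(x-y)=0$ for all $f\in F$. Since $F$ spans $\linspace^*$, this forces $x-y=0$, i.e. $x=y$.

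The second assertion follows by interchanging the roles of $F_1$ and $F_2$, equivalently of $C_1$ and $C_2$: membership of $x,y$ in $C_2$ supplies the vanishing of $F_1$ on $x-y$, while the hypothesis supplies the vanishing of $F_2$, and again one concludes via the spanning of $F$. I expect no genuine obstacle here; the only points needing care are the verification that $F=F_1\sqcup F_2$ exhausts $F$ (from the singleton-preserving property of the boundary action) and the spanning claim for $F$. Note the argument also handles degenerate cases automatically: if, say, $F_1=\emptyset$, then $C_1=\{0\}$ and the conclusion is trivially true, while the computation above still yields $f(x-y)=0$ for all of $F$.
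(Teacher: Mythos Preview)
Your proof is correct and follows essentially the same route as the paper: combine the hypothesis $f(x)=f(y)$ for $f\in F_1$ with the fact that membership in $C_1$ forces $f(x)=f(y)=0$ for $f\in F_2$, then use that $F=F_1\cup F_2$ spans $\linspace^*$ to conclude $x=y$. You spell out in more detail why $F$ spans $\linspace^*$ and why $F=F_1\cup F_2$, but the underlying argument is the same.
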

\begin{proof}
Under the assumptions of the first statement,
$f(x)=f(y)$ for all $f\in F_1\union F_2$.
However, $F_1\union F_2$ is exactly the set of extreme points $f$ of
$\closure C^*$ satisfying $f(b)=1$. Since this set spans $\linspace^*$, we have $x=y$.

The proof of the second statement is similar.
\end{proof}

\begin{lemma}
\label{lem:unique_decomposition}
Suppose $z\in C$ can be written $z = x_1 + x_2 = y_1 + y_2$, with $x_1$ and
$y_1$ in $C_1$, and $x_2$ and $y_2$ in $C_2$.
Then $x_1 = y_1$ and $x_2 = y_2$.
\end{lemma}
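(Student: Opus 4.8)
The plan is to test the two decompositions against the families $F_1$ and $F_2$ of singleton linear functionals and then invoke Lemma~\ref{lem:funks_determine}. The key observation is that the definitions of $C_1$ and $C_2$ make these families complementary: every $f\in F_2$ vanishes on all of $C_1$, and every $f\in F_1$ vanishes on all of $C_2$, because $C_1$ and $C_2$ are the relative interiors of the faces cut out, respectively, by $F_2$ and $F_1$. This is exactly the leverage needed to separate the two components of $z$.

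First I would fix an arbitrary $f\in F_1$. Since $x_2$ and $y_2$ lie in $C_2$, we have $f(x_2)=f(y_2)=0$, and therefore $f(x_1)=f(x_1+x_2)=f(z)=f(y_1+y_2)=f(y_1)$. As this holds for every $f\in F_1$ and both $x_1$ and $y_1$ lie in $C_1$, the first assertion of Lemma~\ref{lem:funks_determine} gives $x_1=y_1$. The argument for the second components is the mirror image: for any $f\in F_2$, the memberships $x_1,y_1\in C_1$ force $f(x_1)=f(y_1)=0$, whence $f(x_2)=f(z)=f(y_2)$; since this holds for all $f\in F_2$ with $x_2,y_2\in C_2$, the second assertion of Lemma~\ref{lem:funks_determine} yields $x_2=y_2$.

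I do not expect a serious obstacle here; the substance is entirely carried by Lemma~\ref{lem:funks_determine}, which itself rests on the fact that $F_1\union F_2$ is the full set of normalised extreme points of $\closure C^*$ and so spans $\linspace^*$. The only point deserving a moment's care is verifying the complementary vanishing conditions, namely that points of $C_1$ really do annihilate every functional in $F_2$ and points of $C_2$ annihilate every functional in $F_1$; but this is immediate, since $C_1$ and $C_2$ are by definition contained in the faces $\{z\in\closure C\mid f(z)=0 \text{ for all } f\in F_2\}$ and $\{z\in\closure C\mid f(z)=0 \text{ for all } f\in F_1\}$, respectively.
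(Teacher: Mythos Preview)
Your argument is correct and is essentially identical to the paper's: both exploit that each $f\in F_2$ vanishes on $C_1$ (and each $f\in F_1$ on $C_2$), deduce that the functionals in the complementary family agree on the two candidate components, and then invoke Lemma~\ref{lem:funks_determine}. The only cosmetic difference is that the paper handles $x_2=y_2$ first and declares the other half ``similar'', whereas you write out both halves.
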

\begin{proof}
Each $f\in F_2$ is linear and takes the value zero on $C_1$,
and so $f(z) = f(x_2) = f(y_2)$. Therefore, by Lemma~\ref{lem:funks_determine},
$x_2=y_2$.

The proof that $x_1 = y_1$ is similar.
\end{proof}

Define
\begin{align*}
\proj_1(z,\alpha)
   &:= \frac{1}{\alpha} \phi^{-1}\big(\alpha\phi(z)\big)
\qquad\text{and} \\
\proj_2(z,\alpha)
   &:= \frac{1}{\alpha} \phi^{-1}\Big(\frac{1}{\alpha}\phi(z)\Big),
\end{align*}
for all $z\in C$ and $\alpha\in(0,\infty)$.

\begin{lemma}
\label{lem:projection_maps}
The cone $C$ is the direct product of $C_1$ and $C_2$, and the maps
$\proj_1(z) := \lim_{\alpha\to\infty} \proj_1(z,\alpha)$ and
$\proj_2(z) := \lim_{\alpha\to\infty} \proj_2(z,\alpha)$
are the projection maps onto $C_1$ and $C_2$, respectively.
\end{lemma}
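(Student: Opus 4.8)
The plan is to exploit the homogeneity of the singleton functions and of their images under $\phi$. Recall from Corollary~\ref{cor:singletons_thompson} that each element of $F$ is a linear functional (homogeneous of degree $+1$), while each element of $R$ is of the form $\gauge{}{x}{\cdot}/\gauge{}{x}{b}$, hence homogeneous of degree $-1$. Since $\phi$ sends singletons to singletons and $\phi(b)=b'$, the image of a function $f\in S$ under $\phi$ is exactly $f\after\phi^{-1}$, already normalised to take the value $1$ at $b'$. Thus for $f\in F_1$ the function $f\after\phi^{-1}$ lies in $F'$ and is homogeneous of degree $+1$ on $C'$, whereas for $f\in F_2$ it lies in $R'$ and is homogeneous of degree $-1$.

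First I would compute, for $z\in C$ and $f\in F$, the value of $f$ on the approximating maps, using that $f$ is linear together with the homogeneity of $f\after\phi^{-1}$. One finds
\begin{align*}
f\big(\proj_1(z,\alpha)\big) = \frac{1}{\alpha}\,(f\after\phi^{-1})\big(\alpha\phi(z)\big) =
   \begin{cases} f(z), & f\in F_1, \\ \alpha^{-2}f(z), & f\in F_2, \end{cases}
\end{align*}
and symmetrically $f(\proj_2(z,\alpha))=\alpha^{-2}f(z)$ for $f\in F_1$ and $f(\proj_2(z,\alpha))=f(z)$ for $f\in F_2$. Because $F_1\union F_2$ spans $\linspace^*$, choosing a basis from it and reading off coordinates shows that $\proj_1(z,\alpha)$ and $\proj_2(z,\alpha)$ converge in $\linspace$ as $\alpha\to\infty$, so the limits $\proj_1(z)$ and $\proj_2(z)$ exist. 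Passing to the limit gives $f(\proj_1(z))=f(z)$, $f(\proj_2(z))=0$ for $f\in F_1$, and $f(\proj_1(z))=0$, $f(\proj_2(z))=f(z)$ for $f\in F_2$. Adding these over the spanning set $F_1\union F_2$ yields $\proj_1(z)+\proj_2(z)=z$; and since each $\proj_i(z,\alpha)\in C$, the limits lie in $\closure C$, so in fact $\proj_1(z)\in\closure C_1$ and $\proj_2(z)\in\closure C_2$.

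Next I would upgrade these inclusions to the relative interiors. For $f\in F_1$ we have $f(\proj_1(z))=f(z)>0$, since $z\in C$ and $f$ is a nonzero element of $C^*$; hence no functional of $F_1$ vanishes at $\proj_1(z)$, which places $\proj_1(z)$ in the relative interior $C_1$ of the exposed face $\closure C_1$, and symmetrically $\proj_2(z)\in C_2$. This gives $C\subseteq C_1+C_2$. For transversality, each $f\in F_2$ annihilates $\lin C_1$ and each $f\in F_1$ annihilates $\lin C_2$, so (as in Lemma~\ref{lem:unique_decomposition}) the spanning property of $F_1\union F_2$ forces $\lin C_1\intersection\lin C_2=\{0\}$; combined with $C\subseteq C_1+C_2$ and the fact that $C$ spans $\linspace$, this gives $\lin C_1\directproduct\lin C_2=\linspace$. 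The summation map $(w_1,w_2)\mapsto w_1+w_2$ is therefore a linear isomorphism, so it carries the open set $C_1\times C_2$ to an open subset $C_1+C_2$ of $\linspace$; this set is contained in the convex cone $\closure C$, hence in $\interior\closure C=C$. With the reverse inclusion already established, $C=C_1\directproduct C_2$. Finally, since $\proj_i(z)\in\lin C_i$ and $\proj_1(z)+\proj_2(z)=z$, uniqueness of the direct-sum decomposition identifies $\proj_i$ with the linear projection $\projection_i$, which by the general direct-product facts maps $C$ onto $C_i$.

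The step I expect to be the main obstacle is the relative-interior claim: one must show that the boundary point $\proj_1(z)$, arising as a limit of \emph{interior} points of $C$, lands in the relative interior of the face $\closure C_1$ and not merely in the face. I would settle this with the segment-extension criterion for relative interior, verifying that for every $v\in\closure C_1$ the point $(1+\epsilon)\proj_1(z)-\epsilon v$ stays in $\closure C_1$ for small $\epsilon>0$: the $F_2$-constraints are preserved automatically because both $\proj_1(z)$ and $v$ are killed by $F_2$, and the $F_1$-constraints $f((1+\epsilon)\proj_1(z)-\epsilon v)\ge 0$ hold uniformly in $f$ because $f(\proj_1(z))=f(z)$ is bounded below by a positive constant over the compact slice $\{f\in\closure{C^*}\mid f(b)=1\}$ while $f(v)$ is bounded above there.
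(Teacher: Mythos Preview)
Your proposal is correct and follows the same approach as the paper: the key computation of $f(\proj_i(z,\alpha))$ via the homogeneity of $f\circ\phi^{-1}$, the existence of the limits from the fact that $F$ spans $\linspace^*$, and the identity $\proj_1(z)+\proj_2(z)=z$ are exactly as in the paper. The paper, however, sidesteps the relative-interior issue you flagged as the main obstacle: it works with closures throughout, obtains $\closure C=\closure C_1\directproduct\closure C_2$ via Lemma~\ref{lem:unique_decomposition}, and then passes to relative interiors in one line at the end---so your segment-extension argument, while correct (and it does need the uniform bound you supply over the compact slice, not merely the pointwise positivity claimed earlier in your third paragraph), is unnecessary.
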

\begin{proof}
Let $z\in C$. For $f\in F_1$, we have that $f\after\phi^{-1}$ is the
exponential of a Funk horofunction, and is therefore homogeneous.
In this case,
\begin{align*}
f\big(\proj_1(z,\alpha)\big)
   &= \frac{1}{\alpha} f\after\phi^{-1}\big(\alpha\phi(z)\big) \\
   &= f(z).
\end{align*}
On the other hand, for $f\in F_2$, we have that $f\after\phi^{-1}$ is
the exponential of a reverse-Funk horofunction, and is therefore
anti-homogeneous, which gives that
$f\big(\proj_1(z,\alpha)\big) = f(z)/\alpha^2$.

So, the limit of $f\big(\proj_1(z,\alpha)\big)$ as $\alpha$ tends to infinity
exists for all $f\in F$. This implies that the limit $\proj_1(z)$ defined
in the statement of the lemma exists.

Moreover, we have
\begin{align}
\label{eqn:projection_coords}
f\big(\proj_1(z)\big) =
   \begin{cases}
   f(z), & \text{for $f\in F_1$}; \\
   0,    & \text{for $f\in F_2$}.
   \end{cases}
\end{align}
So, $\proj_1(z)$ is in $\closure C_1$.
Similarly, one can show that the limit $\proj_2(z)$ exists and
lies in $\closure C_2$, and that
\begin{align}
\label{eqn:projection_coords_b}
f\big(\proj_2(z)\big) =
   \begin{cases}
   0,    & \text{for $f\in F_1$}; \\
   f(z) & \text{for $f\in F_2$}.
   \end{cases}
\end{align}
Observe that $f\big(\proj_1(z)\big)+f\big(\proj_2(z)\big) = f(z)$,
for all $f\in F$. It follows that $z=\proj_1(z)+\proj_2(z)$.
We have shown that $C$ is a subset of $\closure C_1 + \closure C_2$.
It follows that $\closure C$ is also a subset of this set
since the latter set is closed. That $\closure C_1 + \closure C_2$ is a subset
of $\closure C$ follows from the fact that $\closure C_1$ and $\closure C_2$
are contained in $\closure C$.

Lemma~\ref{lem:unique_decomposition} implies that
\begin{align*}
\lin \closure C_1 \intersection \lin \closure C_2
   = \lin C_1 \intersection \lin C_2
   = \emptyset.
\end{align*}
So we see that the cone $\closure C$ is the direct product of
$\closure C_1$ and $\closure C_2$.
It follows immediately that $C$ is the direct product of $C_1$ and $C_2$.
\end{proof}

We define maps $\proj_1'$ and $\proj_2'$ on $C'$ analogously to how we
defined $\proj_1$ and $\proj_2$.

\begin{lemma}
\label{lem:map_first_coord}
Let $x$ and $y$ in $C$ be such that $\proj_1(x)=\proj_1(y)$.
Then $\proj'_1\big(\phi(x)\big)=\proj'_1\big(\phi(y)\big)$.
\end{lemma}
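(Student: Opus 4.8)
The plan is to reduce the claimed equality of projections in $C'$ to an equality of \emph{values} under the singleton linear functionals in $F'_1$, and then to transport that equality back to $C$ by precomposing with $\phi$. For the reduction, note that both $\proj'_1(\phi(x))$ and $\proj'_1(\phi(y))$ lie in $\closure C'_1$, on which (by the definition of $C'_1$) every $f'\in F'_2$ vanishes, while the $C'$-analogue of~(\ref{eqn:projection_coords}) gives $f'(\proj'_1(w))=f'(w)$ for each $f'\in F'_1$. Since $F'_1\cup F'_2$ is exactly the set of extreme points of the closed dual cone of $C'$ normalised at $b'$, it spans $(\linspace')^*$; hence, exactly as in the proof of Lemma~\ref{lem:funks_determine}, two points of $\closure C'_1$ that agree on every $f'\in F'_1$ must coincide. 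So it suffices to prove that $f'(\phi(x))=f'(\phi(y))$ for every $f'\in F'_1$.

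The key step is to identify, for each $f'\in F'_1$, the composition $g:=f'\after\phi$ as an element of $F_1$. Indeed, the image of the singleton $f'$ under $\phi^{-1}$ is precisely the precomposition $f'\after\phi$: the horofunction transformation rule introduces no extra additive constant because $\phi(b)=b'$ and every function in $S$ or $S'$ takes the value $1$ at the base-point, so the exponentiated image is $f'\after\phi$ on the nose. By the very definition of $F'_1$ this image lies in $F$, and it lies in $F_1$ because its own image under $\phi$ is $(f'\after\phi)\after\phi^{-1}=f'\in F'$.

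Granting this, the conclusion is immediate: $f'(\phi(x))=(f'\after\phi)(x)=g(x)$, and since $g\in F_1$, property~(\ref{eqn:projection_coords}) yields $g(\proj_1(x))=g(x)$ and likewise $g(\proj_1(y))=g(y)$. The hypothesis $\proj_1(x)=\proj_1(y)$ then forces $g(x)=g(y)$, that is, $f'(\phi(x))=f'(\phi(y))$; feeding this back into the reduction of the first paragraph gives $\proj'_1(\phi(x))=\proj'_1(\phi(y))$. The step I expect to require the most care is the bookkeeping of the correspondence $F'_1\leftrightarrow F_1$ in the second paragraph — specifically, checking that $f'\after\phi$ is an element of $F_1$ with no stray positive scalar, since the homogeneity built into~(\ref{eqn:projection_coords}) and the defining identities~(\ref{eqn:projection_coords}) and~(\ref{eqn:projection_coords_b}) of the projections must hold exactly for the final argument to go through; everything else is routine.
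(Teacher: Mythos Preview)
Your proof is correct and follows essentially the same route as the paper's: both arguments use that $\phi$ induces a bijection $F_1\leftrightarrow F'_1$ via $f\mapsto f\after\phi^{-1}$, together with~(\ref{eqn:projection_coords}) and its $C'$-analogue, to show $f'(\phi(x))=f'(\phi(y))$ for every $f'\in F'_1$, and then conclude from the spanning property of $F'_1\cup F'_2$. The only cosmetic difference is that the paper runs the argument forward from $F_1$ to $F'_1$, whereas you start from $F'_1$ and pull back to $F_1$; your concern about stray scalars is well-handled by the choice $\phi(b)=b'$.
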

\begin{proof}
From~(\ref{eqn:projection_coords}), we have $f(x)=f(y)$, for all $f\in F_1$.
Equivalently,
$f\after\phi^{-1}(\phi(x)) = f\after\phi^{-1}(\phi(y))$,
for all $f\in F_1$.
But $f\after\phi^{-1}$ is in $F'_1$ if and only if $f$ is in $F_1$,
and so $f'(\phi(x)) = f'(\phi(y))$ for all $f'\in F'_1$.
It follows that $\proj'_1(\phi(x)) = \proj'_1(\phi(y))$.
\end{proof}

We say that a function $f$ on a product cone $C_1\directproduct C_2$
is independent of the first component if $f(x)=f(y)$
whenever $\proj_2(x)=\proj_2(y)$.
Similarly, we say that $f$ is independent of the second component
if $f(x)=f(y)$ whenever $\proj_1(x)=\proj_1(y)$.

Equations~(\ref{eqn:projection_coords}) and (\ref{eqn:projection_coords_b})
imply, respectively, that each function in $F_1$ is independent of the
second component, and each function in $F_2$ is independent of the 
first component.

Let $\thomp^1$ and $\thomp^2$  be the Thompson metrics on $C_1$ and $C_2$,
respectively. Since $C$ is the direct product of $C_1$ and $C_2$,
we may write
\begin{align*}
\thomp(x_1+x_2, y_1+y_2) = \thomp^1(x_1,y_1) \vee \thomp^2(x_2,y_2),
\end{align*}
for all $x_1,y_1\in C_1$ and $x_2,y_2\in C_2$.
So, $(C,\thomp)$ is the $\ell_\infty$-product of the spaces
$(C_1,\thomp^1)$ and $(C_2,\thomp^2)$ in the sense of
Section~\ref{sec:product}.
It was shown there, in Corollary~\ref{cor:product_singletons},
that each singleton of such a product depends only on one of the two
components. We conclude that each element of $S$ is either independent of
the first component or independent of the second.

\begin{lemma}
\label{lem:independent}
Let $f\in S$. Then, $f$ is independent of the first component if and only if
$f\after\phi^{-1}$ is.
Likewise, $f$ is independent of the second component if and only if
$f\after\phi^{-1}$ is.
\end{lemma}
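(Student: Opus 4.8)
The plan is to reduce everything to Lemma~\ref{lem:map_first_coord} together with the symmetry between the roles of $C$ and $C'$. First note that, since $\phi$ carries singletons to singletons, $f\after\phi^{-1}$ lies in $S'$; hence, by Corollary~\ref{cor:product_singletons} applied to the $\ell_\infty$-decomposition $(C,\thomp)=(C_1,\thomp^1)\oplus_\infty(C_2,\thomp^2)$ and to the analogous one for $C'$, each of $f$ and $f\after\phi^{-1}$ is independent of at least one of its two components. Neither can be independent of both, since an element of $S$ is the exponential of a nonconstant horofunction (concretely a functional $\dotprod{y}{\cdot}/\dotprod{y}{b}$ or a gauge $\gauge{}{x}{\cdot}/\gauge{}{x}{b}$); so each is independent of exactly one component. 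Consequently the two assertions of the lemma are equivalent to one another, and it suffices to prove the ``second component'' statement: $f$ is independent of the second component if and only if $f\after\phi^{-1}$ is.

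The key step is to record the $\phi^{-1}$-version of Lemma~\ref{lem:map_first_coord}: if $\proj'_1(w)=\proj'_1(w')$ then $\proj_1(\phi^{-1}w)=\proj_1(\phi^{-1}w')$. This follows by repeating the proof of Lemma~\ref{lem:map_first_coord} with the roles of $C$ and $C'$, and of $\phi$ and $\phi^{-1}$, interchanged, once one checks the single compatibility $\{f'\after\phi\mid f'\in F'_1\}=F_1$. That identity is immediate from the definitions: if $f'\in F'_1$ then $f'\after\phi\in F$, and $(f'\after\phi)\after\phi^{-1}=f'\in F'$ shows $f'\after\phi\in F_1$; conversely, for $g\in F_1$ the functional $g\after\phi^{-1}$ lies in $F'$ and satisfies $(g\after\phi^{-1})\after\phi=g\in F$, so it lies in $F'_1$ and maps back to $g$. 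Thus, by~(\ref{eqn:projection_coords}) and Lemma~\ref{lem:funks_determine} on both cones, $\proj'_1(w)=\proj'_1(w')$ is equivalent to $f'(w)=f'(w')$ for all $f'\in F'_1$, which by the identity above is equivalent to $g(\phi^{-1}w)=g(\phi^{-1}w')$ for all $g\in F_1$, i.e.\ to $\proj_1(\phi^{-1}w)=\proj_1(\phi^{-1}w')$.

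With this in hand the second-component statement is immediate. Saying that $f$ is independent of the second component is the same as saying that $f$ is a function of $\proj_1$ alone. If this holds and $\proj'_1(w)=\proj'_1(w')$, then the $\phi^{-1}$-version above gives $\proj_1(\phi^{-1}w)=\proj_1(\phi^{-1}w')$, whence $(f\after\phi^{-1})(w)=f(\phi^{-1}w)=f(\phi^{-1}w')=(f\after\phi^{-1})(w')$; thus $f\after\phi^{-1}$ is a function of $\proj'_1$, that is, independent of the second component. The reverse implication is the same argument with $\phi$ in place of $\phi^{-1}$, now invoking Lemma~\ref{lem:map_first_coord} directly. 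Finally, the first-component statement follows from the second by the exclusivity established in the first paragraph: $f$ is independent of the first component precisely when it is not independent of the second, and likewise for $f\after\phi^{-1}$.

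I expect the main obstacle to be exactly the asymmetry that forces this indirect route. One cannot simply transcribe Lemma~\ref{lem:map_first_coord} for $\proj_2$, because $\phi$ sends the linear singletons $F_2$ into the reverse-Funk singletons $R'$ rather than into the linear family $F'_2$, so the definitions of $F_2$ and $F'_2$ are not dual in the way needed to rerun the argument of Lemma~\ref{lem:map_first_coord} on the second factor. Handling the first component therefore has to be deferred to the complementarity step, which is why the clean statement of Corollary~\ref{cor:product_singletons}---that a product singleton depends on exactly one factor---is doing genuine work here rather than being a formality.
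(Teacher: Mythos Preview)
Your proof is correct and follows essentially the same line as the paper's. The paper proves the implication ``$f\circ\phi^{-1}$ independent of the second component $\Rightarrow$ $f$ independent of the second component'' via Lemma~\ref{lem:map_first_coord}, dismisses the converse with ``similar manner'' (which amounts exactly to your explicit $\phi^{-1}$-version of that lemma), and then handles the first-component statement by the same exclusivity argument you use. Your write-up is more explicit on both the $\phi^{-1}$-side of Lemma~\ref{lem:map_first_coord} and on why a singleton cannot be independent of both components, and your closing remark about why one cannot simply rerun Lemma~\ref{lem:map_first_coord} for $\proj_2$ is exactly the point---indeed, the paper proves Lemma~\ref{lem:map_second_coord} only \emph{after} this lemma and by appealing to it.
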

\begin{proof}
Assume $f\after\phi^{-1}$ is independent of the second component.
Let $x$ and $y$ in $C$ be such that $\proj_1(x)=\proj_1(y)$.
So, by Lemma~\ref{lem:map_first_coord},
$\proj'_1\big(\phi(x)\big) = \proj'_1\big(\phi(y)\big)$. Therefore,
$f\after\phi^{-1}\big(\phi(x)\big) = f\after\phi^{-1}\big(\phi(y)\big)$,
or equivalently, $f(x)=f(y)$. We conclude that $f$ is independent of the
second component.

The implication in the opposite direction is proved in a similar manner.

Now assume that $f\after\phi^{-1}$ is independent of the first component,
and so not independent of the second.
Since $f$ is in $S$, it must be independent of either the first or
second component. However, the latter possibility is ruled out by what we
have just proved. Again, the implication in the opposite direction is similar.
\end{proof}

\begin{lemma}
\label{lem:map_second_coord}
Let $x$ and $y$ in $C$ be such that $\proj_2(x)=\proj_2(y)$.
Then $\proj'_2\big(\phi(x)\big)=\proj'_2\big(\phi(y)\big)$.
\end{lemma}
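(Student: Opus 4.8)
The plan is to reduce the claim to showing that $f'(\phi(x))=f'(\phi(y))$ for every $f'\in F'_2$, and then to obtain each such equality by transporting $f'$ back to $C$ and invoking independence of components. First I would record the reduction: by the $C'$-analogue of~(\ref{eqn:projection_coords_b}), the point $\proj'_2(w)$ satisfies $f'(\proj'_2(w))=f'(w)$ for $f'\in F'_2$ and $f'(\proj'_2(w))=0$ for $f'\in F'_1$; since $F'=F'_1\union F'_2$ spans the dual of the ambient space of $C'$, the values of $\proj'_2(w)$ on all of $F'$ --- and hence $\proj'_2(w)$ itself --- are determined by $\{f'(w)\mid f'\in F'_2\}$. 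Thus it suffices to prove $f'(\phi(x))=f'(\phi(y))$ for all $f'\in F'_2$, just as in Lemma~\ref{lem:map_first_coord} the conclusion followed once the functionals in $F'_1$ agreed.

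Note that, unlike in Lemma~\ref{lem:map_first_coord}, one cannot simply push the relevant functionals forward: $F_2$ is mapped by $\phi$ into $R'$ rather than $F'_2$, and $F'_2$ is the image under $\phi^{-1}$ of part of $R$ rather than $F_2$; the two families are of opposite type and are not matched by $\phi$. The fix is to work with the type-neutral notion of independence of a component. Fix $f'\in F'_2$. By the $C'$-analogue of the statement (following~(\ref{eqn:projection_coords_b})) that each element of $F_2$ is independent of the first component, the functional $f'$ is independent of the first component of $C'$. Set $g:=f'\after\phi$; by the definition of $F'_2$ we have $g\in R\subseteq S$, and $g\after\phi^{-1}=f'$. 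Applying Lemma~\ref{lem:independent} to $g$, the fact that $g\after\phi^{-1}=f'$ is independent of the first component forces $g$ to be independent of the first component of $C$. Since $\proj_2(x)=\proj_2(y)$ by hypothesis, independence of the first component yields $g(x)=g(y)$, that is, $f'(\phi(x))=f'(\phi(y))$. As $f'\in F'_2$ was arbitrary, the reduction of the first paragraph finishes the proof.

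The only genuinely delicate point --- and the step I expect to be the main obstacle --- is realising that the naive mirror of Lemma~\ref{lem:map_first_coord} breaks down: there $\phi$ carries $F_1$ bijectively onto $F'_1$, so one stays within the linear functionals, whereas for the second component $\phi$ swaps Funk and reverse-Funk singletons, so the image of $F_2$ leaves $F'$ altogether and the direct push-forward argument is unavailable. Recasting everything through Lemma~\ref{lem:independent}, which is insensitive to whether a singleton is of Funk or reverse-Funk type, sidesteps this cleanly. The remaining ingredients are routine bookkeeping: the determination of $\proj'_2$ from its values on $F'_2$ by the spanning argument above, and keeping straight that $g\after\phi^{-1}=f'$ so that Lemma~\ref{lem:independent} is applied in the correct direction.
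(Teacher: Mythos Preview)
Your proof is correct and follows essentially the same route as the paper: pull back each $f'\in F'_2$ to $g=f'\circ\phi\in R\subseteq S$, use Lemma~\ref{lem:independent} to transfer independence of the first component from $f'$ to $g$, deduce $f'(\phi(x))=f'(\phi(y))$, and then conclude via the $C'$-analogue of~(\ref{eqn:projection_coords_b}) together with the fact that $F'_1\cup F'_2$ spans the dual space. Your added commentary explaining why the direct mirror of Lemma~\ref{lem:map_first_coord} fails is apt, but the actual argument matches the paper's.
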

\begin{proof}
Let $f'\in F'_2$. So, $f'\after\phi$ is in $R$, and,
by Lemma~\ref{lem:independent},
it is independent of the first component since $f'$ is. In particular,
$f'(\phi(x)) = f'(\phi(y))$. Using~(\ref{eqn:projection_coords_b})
we get that $f'\big(\proj'_2(\phi x)\big)=f'\big(\proj'_2(\phi y)\big)$,
for all $f'\in  F'_2$.
But the same equation also holds for all $f'\in  F'_1$,
since, by~(\ref{eqn:projection_coords_b}), both sides are zero in this case.
The conclusion follows, since the set of linear functions $F'_1\union F'_2$
spans the dual space of $\linspace':=\lin C'$.
\end{proof}

\begin{proof}[Proof of Theorem~\ref{thm:thompson_isometries}]
It was shown in Lemma~\ref{lem:projection_maps} that $C$ and $C'$
decompose in the way claimed, and in Lemmas~\ref{lem:map_first_coord}
and~\ref{lem:map_second_coord} that $\phi$ is of the form
$\phi(x_1 + x_2) = \phi_1(x_1) + \phi_2(x_2)$, for all $x_1\in C_1$
and $x_2\in C_2$,
for some maps $\phi_1\colon C_1\to C'_1$ and $\phi_2\colon C_2\to C'_2$.

Since $C$ is a direct product of $C_1$ and $C_2$, its Thompson metric
can be written
\begin{align*}
\thomp(x_1 + x_2, y_1 + y_2)
   = \max\Big(\thomp^1(x_1,y_1),\thomp^2(x_2,y_2)\Big),
\end{align*}
in terms of the Thompson metrics on $C_1$ and $C_2$.
A similar expression holds for~$\thomp'$.
So, for $z\in C_1$ and $x_2, y_2\in C_2$, we have
\begin{align*}
\thomp(z + x_2, z + y_2) &= \thomp^2(x_2,y_2)
\end{align*}
and
\begin{align*}
\thomp'\big(\phi(z + x_2), \phi(z + y_2)\big)
   &= \thomp'\big(\phi_1(z) + \phi_2(x_2), \phi_1(z) + \phi_2(y_2)\big) \\
   &= \thomp^{\prime 2}\big(\phi_2(x_2),\phi_2(y_2)\big).
\end{align*}
We conclude that $\phi_2$ is an isometry from $(C_2,\thomp^2)$ to
$(C'_2,\thomp^{\prime 2})$.

Moreover, for all $x\in C$ and $\lambda>0$,
\begin{align*}
\proj'_2\big(\phi(\lambda x)\big)
   &= \lim_{\alpha\to\infty}
         \frac{1}{\lambda}
         \frac{\lambda}{\alpha}
         \phi\Big(\frac{\lambda}{\alpha} x\Big) \\
   &= \frac{1}{\lambda} \proj'_2\big(\phi(x)\big).
\end{align*}
Hence, $\phi_2$ is anti-homogeneous.

We now apply Proposition~\ref{prop:gauge_reversing} to get that $\phi_2$
is gauge-reversing.

A similar argument shows that $\phi_1$ is also a Thompson-metric isometry,
but this time homogeneous, and hence gauge-preserving
by Proposition~\ref{prop:gauge_preserving}.
\end{proof}

%\cite{
%gunawardena_walsh,
%thompson,
%guler_hyperbolic_interior,
%schaffer_orders_II}

%rothaus_order_isomorphisms,
%kai_order_reversing,
%noll_schaffer_orders_gauge,

\bibliographystyle{plain}
\bibliography{gaugereversing}

\end{document}